\newtheorem{theorem}{Theorem}[section]
\newtheorem{lemma}[theorem]{Lemma}
\newtheorem{proposition}[theorem]{Proposition}
\newtheorem{corollary}[theorem]{Corollary}
\theoremstyle{definition}
\newtheorem{remark}[theorem]{Remark}
\theoremstyle{definition}
\newtheorem{definition}[theorem]{Definition}
\newtheorem{conjecture}[theorem]{Conjecture}
\theoremstyle{definition}
\newtheorem{problem}[theorem]{Problem}
\theoremstyle{definition}
\newtheorem{example}[theorem]{Example}
\def\Ccal{\mathcal{C}}\def\Ecal{\mathcal{E}}\def\Ical{\mathcal{I}}\def\Mcal{\mathcal{M}}\def\Tcal{\mathcal{T}}\def\Zcal{\mathcal{Z}}
\def\C{\mathbb{C}}
\def\R{\mathbb{R}}
\def\N{\mathbb{N}}
\def\Z{\mathbb{Z}}
\newcommand\parr[1]{{({#1})}}
\def\<{{\langle}}
\def\>{{\rangle}}
\def\RP{{\R P}}
\def\Id{\operatorname{Id}}
\def\det{{ \operatorname{det}}}
\def\Ker{{ \operatorname{Ker}}}
\def\diag{{ \operatorname{diag}}}
\def\rk{{\mathrm{rk}}}
\def\op{{ \operatorname{op}}}
\def\Span{ \operatorname{span}}
\def\wt{\operatorname{wt}}
\def\dim{\operatorname{dim}}
\def\Res{\operatorname{Res}}
\def\RR{{\mathbb R}}
\def\RP{{\RR\mathbb P}}
\def\GL{\operatorname{GL}}
\def\Mat{\operatorname{Mat}}
\def\Gr{\operatorname{Gr}}
\def\Grtnn{\Gr_{\ge 0}}
\def\Grtp{\Gr_{>0}}
\newcommand\Grge[1]{\Gr_{\ge #1}}
\def\Grtnnm{\Grge m}
\def\Grtnnk{\Gr_{\ge m}}
\def\Grtnnl{\Gr_{\ge m}}
\def\am{\mathcal{A}}
\def\uamk{\am_{n,k,\ge m}}
\def\uaml{\am_{n,\l,\ge m}}
\def\ampl{\mathcal{A}_{n,k,m}(Z)}
\def\amplnlmZ{\mathcal{A}_{n,\l,m}(Z)}
\def\amplk{\uamk(Z)}
\def\ampll{\uaml(\Zt)}
\def\amplint{\mathcal{A}^{\circ}_{n,k,m}(Z)}
\def\Id{\operatorname{Id}}
\def\V{V}
\def\v{V}
\def\Zp{W}
\def\zp{W}
\def\VZ{U}
\def\vz{U}
\def\Vt{\tilde{\V}}
\def\vt{\tilde{\v}}
\def\Zt{\tilde{Z}}
\def\Zpt{\tilde{\Zp}}
\def\zpt{\tilde{\zp}}
\def\VZt{\tilde{\VZ}}
\def\vzt{\tilde{\vz}}
\def\Fib{\operatorname{F}}
\def\Fl{{\rm Fl}}
\def\Aff{\tilde{S_n}}
\newcommand\id[1]{\operatorname{id}_{#1}}
\newcommand\shift[2]{#1+#2}
\def\Bound{{\mathcal{B}}}
\newcommand\ce[2]{\Pi_{\shift{#1}{#2}}^\C}
\newcommand\poscell[1]{\Pi^{>0}_{#1}}
\newcommand\poscelltnn[1]{\Pi^{\ge0}_{#1}}
\newcommand\pc[2]{\poscell{\shift{#1}{#2}}}
\newcommand\pctnn[2]{\poscelltnn{\shift{#1}{#2}}}
\def\inv{\operatorname{inv}}
\def\alt{\operatorname{alt}}
\def\twistpm{\tau^{\parr k}}
\def\twistpm{\tau}
\def\twist{\tau}
\newcommand{\dashedrightarrow}{\dashrightarrow}
\def\l{\ell}
\def\stw{\theta} 
\def\tstw{{\tilde\stw}} 
\def\stack{\operatorname{stack}}
\def\op{{\operatorname{op}}}
\def\transp{t}
\def\Le{\scalebox{-1}[1]{L}}
\def\V{V}
\def\v{V}
\def\W{W}
\def\w{W}
\def\U{U}
\def\u{U}
\def\Vt{{\tilde \V}}
\def\Wt{{\tilde \W}}
\def\Ut{{\tilde \U}}
\def\Zt{{\tilde Z}}
\def\vt{{\tilde \v}}
\def\wt{{\tilde \w}}
\def\ut{{\tilde \u}}
\newcommand\Grtperp[2]{\Gr_{>0}^\perp(#1,#2)}
\def\omk{\omega_{k,n}}
\def\oml{\omega_{\l,n}}
\def\omkl{\omega_{k+\l,n}}
\newcommand\om[2]{\omega_{\shift{#1}{#2}}}
\newcommand\D[3]{d^{#1\times#2}(#3)}
\newcommand\positroid[2]{\Mcal_{\shift{#1}{#2}}}
\newcommand\grneck[2]{\Ical_{\shift{#1}{#2}}}
\def\simkl{\simeq_{k,\l}}
\newcommand\pmin[2]{\Delta_{#1}^{\operatorname{circ}}(#2)}
\def\Deltas{D}
\def\omref{\omega^{\operatorname{ref}}}
\def\omrefklk{\omref_{\Fl(k,k+\l;n)}}
\begin{document}
\numberwithin{equation}{section}
\title{Parity duality for the amplituhedron}
\author{Pavel Galashin}
\address{Department of Mathematics, Massachusetts Institute of Technology, 77 Massachusetts Avenue,
Cambridge, MA 02139, USA}
\email{\href{mailto:galashin@mit.edu}{galashin@mit.edu}}
\address{Department of Mathematics, University of Michigan, 2074 East Hall, 530 Church Street, Ann Arbor, MI 48109-1043, USA}
\email{\href{mailto:tfylam@umich.edu}{tfylam@umich.edu}}
\author{Thomas Lam}
\thanks{ T.L.\ acknowledges support from the NSF under agreement No.\ DMS-1464693.}

\subjclass[2010]{Primary:  14M15. 
  Secondary: 13F60, 
  52B99.
}

\keywords{Amplituhedron, twist map, total positivity, Grassmannian, positroid cells, scattering amplitudes, cyclic polytopes.}

\begin{abstract}
The (tree) amplituhedron $\mathcal A_{n,k,m}(Z)$ is a certain subset of the Grassmannian introduced by Arkani-Hamed and Trnka in 2013 in order to study scattering amplitudes in $N=4$ supersymmetric Yang-Mills theory.  Confirming a conjecture of the first author, we show that when $m$ is even, a collection of affine permutations yields a triangulation of $\mathcal A_{n,k,m}(Z)$ for any $Z\in \operatorname{Gr}_{>0}(k+m,n)$ if and only if the collection of their inverses yields a triangulation of $\mathcal A_{n,n-m-k,m}(Z)$ for any $Z\in\operatorname{Gr}_{>0}(n-k,n)$. We prove this duality using the twist map of Marsh and Scott. We also show that this map preserves the canonical differential forms associated with the corresponding positroid cells, and hence obtain a parity duality for amplituhedron differential forms.
\end{abstract}

\date{\today}
\maketitle

\setcounter{tocdepth}{1}
\tableofcontents

\section{Introduction}
 \subsection{The amplituhedron and its triangulations}
The amplituhedron $\ampl$ is a remarkable compact subspace of the real Grassmannian $\Gr(k,k+m)$ that was introduced by Arkani-Hamed and Trnka \cite{AT} in order to give a geometric basis for computing scattering amplitudes in $N= 4$ super Yang-Mills theory. 
It is defined to be the image of Postnikov's totally nonnegative Grassmannian $\Grtnn(k,n)$ under the (rational) map $Z: \Gr(k,n) \dashedrightarrow \Gr(k,k+m)$ induced by a $(k+m) \times n$ matrix $Z$ with positive maximal minors. Triangulations of the amplituhedron play a key role in the connection between the geometry of the amplituhedron and the physics of scattering amplitudes.

The totally nonnegative Grassmannian $\Gr(k,n)_{\geq 0}$ has a stratification by positroid cells denoted $\pc k f$, where $f$ is an \emph{affine permutation}.  We say that a collection of affine permutations $f_1,f_2,\ldots,f_N$ forms a {\it $Z$-triangulation} if the images $Z(\pc k {f_i})$ all have the same dimension as $\ampl$ and are mutually non-overlapping, the union of the images is dense in $\ampl$, and the maps $Z: \pc k {f_i} \to Z(\pc k {f_i})$ are injective.  We say that these affine permutations form an \emph{$(n,k,m)$-triangulation} if they form a $Z$-triangulation for all $Z \in \Grtp(k+m,n)$. 

The amplituhedron depends on nonnegative integer parameters $k, m, n$ where $k + m \leq n$.  Setting $\l := n-k-m$, we have $k+\l+m = n$. Throughout the paper, we only deal with the case where $m$ is even. Our first main result states that $(n,k,m)$-triangulations of the amplituhedron are in bijection with $(n,\l,m)$-triangulations of the amplituhedron.  This bijection is obtained by taking an affine permutation to its inverse.  One of the enumerative motivations of our work is a conjectural formula of Karp, Williams, and Zhang \cite{KWZ} for the number of cells in a particular triangulation of the amplituhedron.  This formula is invariant under the operation of swapping the parameters $k$ and $\l$, which is called {\it parity duality} in physics.

\subsection{The stacked twist map}
 The twist map is a rational map $\twist: \Mat(r,n) \dashedrightarrow \Mat(r,n)$ on matrices first defined by Marsh and Scott \cite{MS}.  It was motivated by the twist map on a unipotent group, due to Berenstein, Fomin, and Zelevinsky \cite{BFZ1,BFZ2}.  Marsh and Scott's work has subsequently been extended by Muller and Speyer \cite{MuS}.  The main interest of Marsh and Scott, and Muller and Speyer, is the induced twist map $\Gr(r,n) \dashedrightarrow \Gr(r,n)$ on the Grassmannian.  Taking $r := k+\l$, we study instead the {\it stacked twist map}
 $$
 \stw: \Gr(k,n) \times \Gr(\l,n) \dashedrightarrow \Gr(\l,n) \times \Gr(k,n).
 $$
 At the heart of our approach is the result that the (rational) stacked twist map restricts to a diffeomorphism
 $$
\stw: \pc k f\times \Grtperp{k+m}n\to \Grtperp{\l+m}n\times \pc \l {f^{-1}}
 $$
 on positroid cells indexed by affine permutations related by inversion. (See~\eqref{eq:Grtperp} for a definition of $\Grtperp{k+m}n$.) This allows us not only to relate triangulations under parity duality, but to study other geometric questions such as subdivisions and whether images of positroid cells overlap.

\subsection{The amplituhedron form}
The physical motivation for the study of the amplituhedron is an important (but conjectural) top-degree rational differential form $\omega_{\ampl}$ on the Grassmannian $\Gr(k,k+m)$ called the {\it amplituhedron form}.  The case of main interest in physics is the case $m = 4$.  Roughly speaking, scattering amplitudes in $N=4$ super Yang-Mills theory can be computed by formally integrating $\omega_{\ampl}$ to obtain a rational function depending on a $4 \times n$ matrix, thought of as the momentum four-vectors of $n$ particles.  The integral removes the dependence of the form on the Grassmannian $\Gr(k,k+4)$, leaving only the dependence on the $(k+4) \times n$ matrix $Z$; furthermore, $k$ of the $k+4$ rows of $Z$ are taken to be Grassmann or fermionic parameters to account for the supersymmetry of the theory~\cite{AT}.
 
The amplituhedron form $\omega_{\ampl}$ can be obtained from a triangulation of the amplituhedron as follows.  Each positroid cell $\pc k f$ has a {\it canonical form} $\om k f$ of top degree in $\pc k f$.  Conjecturally, $\omega_{\ampl}$ is obtained by summing the pushforwards $Z_* \om k f$ over any triangulation of the amplituhedron. 
 
The form $\omega_{\ampl}$ has degree $km$ while the form $\omega_{\mathcal{A}_{n,\l,m}(Z')}$ has degree $\l m$.  To relate them, we consider the {\it universal amplituhedron} $ \am_{n,k,m}$, a compact subset of the two-step flag variety $\Fl(\l,k+\l;n)$.  The fiber of $ \am_{n,k,m}$ over a point $Z^\perp \in \Gr(\l,n)$ can be canonically identified with the amplituhedron $\ampl$, hence the terminology ``universal''.  A related construction in the complex algebraic setting was studied in \cite{LamCDM}.  The dimension of $\am_{n,k,m}$ is equal to $k \l +  km+\l m$.
 
We show that the stacked twist map descends to a homeomorphism $\tstw: \uamk \to \uaml$ between subsets $\uamk \subset \am_{n,k,m}$ and $\uaml \subset \am_{n,\l,m}$ that contain the respective interiors.  In particular, triangulations of $\am_{n,k,m}$ are taken to triangulations of $\am_{n,\l,m}$ via the stacked twist map.  We introduce the {\it universal amplituhedron form}, a top-degree rational differential form $\omega_{\uamk}$ defined by summing pushforward canonical forms over a triangulation of the universal amplituhedron.  Our main result concerning universal amplituhedron forms is that they are related by pullback under the stacked twist map: $\tstw^*\omega_{\uaml} = \omega_{\uamk}$.

\subsection*{Outline}
We recall some preliminaries on the positroid stratification of $\Grtnn(k,n)$ in Section~\ref{sec:tnn_gr}. Next, we define triangulations of the amplituhedron in Section~\ref{sec:triang-ampl} and state a duality result for them (Theorem~\ref{thm:main}). We then explain the construction and some properties of the stacked twist map $\stw$ and use them to give a short proof to Theorem~\ref{thm:main} in Section~\ref{sec:stw}. After that, we discuss canonical forms of positroid cells in Section~\ref{sec:can_forms} and state that they are preserved by $\stw$ in Theorems~\ref{thm:top_form_stw} and~\ref{thm:lower_form_stw}. In Section~\ref{sec:examples}, we give several examples that illustrate our main results. In Section~\ref{sec:subdivisions}, we define subdivisions of the amplituhedron, and give an analog of Theorem~\ref{thm:main} for them (Theorem~\ref{thm:main_subd}). In the same section, we define the universal amplituhedron and its subdivisions. We explain the relationship of our results with physics in Section~\ref{sec:ampl_form}, where we define the universal amplituhedron form and show that it is preserved by the stacked twist map (Theorem~\ref{thm:univ_ampl_form}). We then proceed to proving the numerous properties of the stacked twist map that we have mentioned in the previous sections. We discuss the topology of the amplituhedron in Section~\ref{sec:interior}. In Section~\ref{sec:stw_properties}, we prove that $\stw$ takes triangulations to triangulations. In Section~\ref{sec:can_forms_3}, we prove that $\stw$ preserves the canonical form, and in Section~\ref{sec:ampl_form_proofs}, we show that $\stw$ preserves the universal amplituhedron form. Finally, we discuss some open problems in Section~\ref{sec:future}.

\medskip
\noindent
{\bf Acknowledgments.} We are grateful to Steven Karp for numerous conversations on topics related to this work, and for helpful comments and suggestions at various stages of this project.  We also thank Nima Arkani-Hamed, Jake Bourjaily, and Alex Postnikov for inspiring conversations.

\section{The totally nonnegative Grassmannian}\label{sec:tnn_gr}

Let $n \geq 2$ be a positive integer, and denote $[n]:=\{1,2,\dots,n\}$.  Let $\Aff$ be the affine Coxeter group of type $A$: the elements of $\Aff$ are all bijections $f:\Z\to\Z$ satisfying
\begin{enumerate}
\item\label{item:add_n} $f(i+n)=f(i)+n$ for all $i\in \Z$;
\item\label{item:sum} $\sum_{i=1}^{n}\left(f(i)-i\right)=0$.
\end{enumerate}
Given any bijection $f:\Z\to\Z$ satisfying~\eqref{item:add_n}, its \emph{number of inversions} $\inv(f)$ is the number of pairs $(i,j)$ such that $i \in [n]$, $j \in \Z$, $i < j$, and $f(i)>f(j)$.  We will work with certain finite subsets of $\Aff$. Namely, for integers $a,b\geq0$, define
\[\Aff(-a,b):=\{f\in\Aff\mid i-a\leq f(i)\leq i+b\text{ for all $i\in\Z$}\}.\]

A \emph{$(k,n)$-bounded affine permutation} is a bijection $f:\Z\to\Z$ satisfying
\begin{itemize}
\item $f(i+n)=f(i)+n$ for all $i\in \Z$;
\item $\sum_{i=1}^{n}\left(f(i)-i\right)=kn$;
\item $i\leq f(i)\leq i+n$ for all $i\in\Z$.
\end{itemize}
We let $\Bound(k,n)$ denote the set of $(k,n)$-bounded affine permutations.
There is a distinguished element $\id k\in\Bound(k,n)$ defined by $\id k(i):=i+k$ for all $i\in\Z$, and it is clear that we have $\shift k f\in\Bound(k,n)$ if and only if $f\in\Aff(-k,n-k)$. Here $\shift kf$ is defined by $(\shift k f)(i):=f(i)+k$ for all $i\in\Z$. Throughout the text, we will work with $\Aff(-k,n-k)$ rather than with $\Bound(k,n)$.

Let $k \leq n$ and $\Gr(k,n)$ denote the Grassmannian of $k$-planes in $\R^n$.
We let ${[n]\choose k}$ be the collection of all $k$-element subsets of $[n]$. For $I\in{[n]\choose k}$ and $X\in\Gr(k,n)$, we denote by $\Delta_I(X)$ the \emph{Pl\"ucker coordinate} of $X$, i.e., the maximal minor of $X$ with column set $I$. The \emph{totally nonnegative Grassmannian} $\Grtnn(k,n)\subset\Gr(k,n)$ consists of all $X\in\Gr(k,n)$ such that $\Delta_I(X)\geq0$ for all $I\in{[n]\choose k}$. Similarly, the \emph{totally positive Grassmannian} $\Grtp(k,n)\subset\Grtnn(k,n)$ is the set of $X\in\Gr(k,n)$ with all Pl\"ucker coordinates strictly positive.

The totally nonnegative Grassmannian has a stratification by positroid cells that are indexed by bounded affine permutations, or equivalently by $\Aff(-k,n-k)$.
As we will explain in Definition~\ref{dfn:pos_cell}, each $f\in\Aff(-k,n-k)$ gives rise to a certain collection $\positroid k f$ of $k$-element subsets of $[n]$ called a \emph{positroid}, and then we associate an open \emph{positroid cell} $\pc k f\subset \Grtnn(k,n)$ to $f$ by
\[\pc k f:=\{X\in\Grtnn(k,n)\mid \text{$\Delta_J(X)>0$ for $J\in\positroid k f$ and $\Delta_J(X)=0$ for $J\notin\positroid k f$}\}.\]
We similarly define a closed \emph{positroid cell} $\pctnn k f$ by requiring $\Delta_J(X)\geq0$ for $J\in\positroid k f$ and $\Delta_J(X)=0$ for $J\notin\positroid k f$. The positroid cell has dimension $\dim(f):=k(n-k)-\inv(f)$, so we say that it has \emph{codimension} $\inv(f)$.  The unique cell $\poscell {\id k}$ of codimension zero coincides with $\Grtp(k,n)$. Each element $X\in\Grtnn(k,n)$ belongs to $\pc k f$ for a unique $f\in\Aff(-k,n-k)$, and thus the totally nonnegative Grassmannian decomposes as a union of positroid cells:
\begin{equation}\label{eq:Grtnn_strata}
  \Grtnn(k,n)
  =\bigsqcup_{f\in\Aff(-k,n-k)} \pc k f.
\end{equation}
See Lemma~\ref{lemma:ranks} for an alternative definition of $\pc k f$.

\section{Triangulations of amplituhedra}\label{sec:triang-ampl}
Fix four nonnegative integers $k,\l,m,n$ such that $m$ is even and $k+\l+m=n$. Let $Z\in\Grtp(k+m,n)$ be a matrix with all maximal minors positive.\footnote{Abusing notation, we treat elements of $\Gr(r,n)$ as full rank $r\times n$ matrices. For example, choosing different representatives of $Z\in\Grtp(k+m,n)$ yields different subsets of $\Gr(k,k+m)$, but does not affect the notion of a triangulation of the amplituhedron.} We treat $Z$ as a map from $\Grtnn(k,n)$ to $\Gr(k,k+m)$ sending (the row span of) a $k\times n$ matrix $\V$ to (the row span of) $\V\cdot Z^\transp$,  where $ ^\transp$ denotes matrix transpose.  The \emph{(tree) amplituhedron} $\ampl$ is defined to be the image of $\Grtnn(k,n)$ under $Z$. We note that $\ampl\subset\Gr(k,k+m)$ is a $km$-dimensional compact subset of $\Gr(k,k+m)$.

\begin{definition}
Let $f\in\Aff(-k,n-k)$ be an affine permutation. We say that $f$ is \emph{$Z$-admissible} if the restriction of the map $Z$ to the positroid cell $\pc k f$ is injective. We say that $f$ is \emph{$(n,k,m)$-admissible} if it is $Z$-admissible for any $Z\in\Grtp(k+m,n)$.
\end{definition}

Clearly, in order for an affine permutation $f$ to be $Z$-admissible, the dimension of $\pc k f$ must be at most $km$. Thus the codimension of $\pc k f$ in $\Gr(k,n)$ must be at least $k\l$, equivalently, we must have $\inv(f)\geq k\l$. If $f$ is $Z$-admissible, then in order for $Z(\pc k f)$ to be a full-dimensional subset of $\ampl$ it is necessary to have $\inv(f)=k\l$. However, it is not sufficient:
\begin{lemma}\label{lemma:non-admissible}
Let $f\in\Aff(-k,n-k)$ be an affine permutation with $\inv(f)=k\l$ and let $Z\in\Grtp(k+m,n)$. If $f\notin\Aff(-k,\l)$ then $f$ is not $Z$-admissible, the dimension of $Z(\pc k f)$ is less than $km$, and $Z(\pc k f)$ is a subset of the boundary of $\ampl$. 
\end{lemma}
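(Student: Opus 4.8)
The plan is to exhibit, for every matrix $X\in\pc k f$, a nonzero vector in its row span whose image under $Z$ lies in a fixed $m$‑plane spanned by columns of $Z$; this forces $Z(\pc k f)$ onto a lower‑dimensional ``twistor'' hypersurface of $\Gr(k,k+m)$ which, by positivity, meets $\ampl$ only along its boundary. (We may assume $k,m\ge1$; otherwise $\Aff(-k,\l)=\Aff(-k,n-k)$ or $\Gr(k,n)$ is a point, and the hypothesis is vacuous.) First, translate $f\notin\Aff(-k,\l)$ into a rank condition: pick $i_0\in[n]$ with $f(i_0)>i_0+\l$, so the bounded affine permutation $\shift kf$ has $(\shift kf)(i_0)=f(i_0)+k>i_0+\l+k=i_0+(n-m)$, i.e.\ $(\shift kf)(i_0)\ge i_0+n-m+1$. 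By the rank description of positroid cells (Lemma~\ref{lemma:ranks}), which recovers $\shift kf$ from the columns $v_1,\dots,v_n\in\R^k$ of $X$ (extended $n$‑periodically with the usual sign twist) via $(\shift kf)(i)=\min\{j\ge i:v_i\in\Span(v_{i+1},\dots,v_j)\}$, this says $v_{i_0}\notin\Span(v_{i_0+1},\dots,v_{i_0+n-m})$. Writing $I:=\{i_0-m+1,\dots,i_0\}\pmod n$ for the cyclic interval of size $m$ ending at $i_0$, we have $[n]\setminus I=\{i_0+1,\dots,i_0+n-m\}$, hence $\rk\{v_j:j\notin I\}\le k-1$, and so the $k$‑plane $X\subseteq\R^n$ meets the coordinate subspace $\Span\{e_j:j\in I\}$ in dimension $k-\rk\{v_j:j\notin I\}\ge 1$.

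Next, push this relation through $Z$. Choose $0\ne x$ in the row span of $X$ supported on $I$, say $x=\xi X$ with $\xi\in\R^k$. Then $xZ^\transp=\xi XZ^\transp\in Z(X)$ and $xZ^\transp=\sum_{j\in I}x_jZ_j\in\Span\{Z_j:j\in I\}$; moreover $xZ^\transp\ne0$, since the $m\le k+m$ columns $Z_j$, $j\in I$, of the totally positive matrix $Z$ are linearly independent. Thus the $k$‑plane $Z(X)$ meets the $m$‑plane $\Span\{Z_j:j\in I\}$ nontrivially, so the consecutive twistor coordinate $\<Y\,(i_0-m+1)\cdots i_0\>$ — the $(k+m)\times(k+m)$ determinant stacking a basis of $Y$ above $Z_{i_0-m+1},\dots,Z_{i_0}$ — vanishes at $Y=Z(X)$. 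Hence $Z(\pc k f)$ is contained in the vanishing locus $D$ of this twistor coordinate, a subvariety of $\Gr(k,k+m)$ of dimension $km-1$. In particular $\dim Z(\pc k f)\le km-1<km=k(n-k)-\inv(f)=\dim\pc k f$, where we used $\inv(f)=k\l$; since an injective semialgebraic map preserves dimension, $Z$ cannot be injective on $\pc k f$, i.e.\ $f$ is not $Z$‑admissible.

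It remains to see $Z(\pc k f)\subseteq\partial\ampl$, for which I would invoke amplituhedron positivity. For $m$ even the consecutive twistor coordinate expands as $\<Y\,(i_0-m+1)\cdots i_0\>=\varepsilon\sum_{J}\Delta_J(X)\,\Delta_{J\sqcup I}(Z)$ for a fixed sign $\varepsilon=\pm1$, the sum over $k$‑subsets $J\subseteq[n]\setminus I$, and — crucially — with all summands of the same sign. Hence $\<Y\,(i_0-m+1)\cdots i_0\>\ne0$ at $Y=Z(X)$ whenever $X\in\Grtp(k,n)$ (then every $\Delta_J(X)>0$, every $\Delta_{J\sqcup I}(Z)>0$, and such $J$ exist as $k\le n-m$). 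Since $\mathrm{int}(\ampl)\subseteq Z(\Grtp(k,n))$ (see Section~\ref{sec:interior}), any point of $Z(\pc k f)\cap\mathrm{int}(\ampl)$ would have its twistor coordinate both equal to $0$ (lying in $D$) and nonzero (being $Z(X)$ for a totally positive $X$); therefore $Z(\pc k f)\cap\mathrm{int}(\ampl)=\emptyset$, i.e.\ $Z(\pc k f)\subseteq\partial\ampl$.

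The step I expect to be the main obstacle is the last one, and within it the sign bookkeeping in the twistor expansion: checking that all summands $\Delta_J(X)\Delta_{J\sqcup I}(Z)$ carry a common sign is exactly where $m$ being even is used — both in reordering the $m$ columns $Z_j$ and, when $I$ wraps past $n$, in absorbing the periodic sign twist — so this is the ingredient I would want to isolate cleanly (or cite). Alternatively one can sidestep the interior characterization: from $\varepsilon\<Y\,(i_0-m+1)\cdots i_0\>\ge0$ on all of $\ampl$ one sees that a point of $Z(\pc k f)$ at which this polynomial vanishes transversally cannot be interior; this requires a little extra care only in the degenerate range $\rk\{v_j:j\notin I\}<k-1$, where one passes instead to a twistor coordinate $\<Y\,J'\>$ for a suitable cyclic interval $J'$ overlapping $I$ along which the vanishing is transversal.
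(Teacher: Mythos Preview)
Your approach is the paper's: both identify the consecutive twistor coordinate $\alpha(Y,Z,j)$ (the paper's notation in Lemma~\ref{lemma:boundary_ineq}; your $\langle Y\,(i_0{-}m{+}1)\cdots i_0\rangle$), show it vanishes identically on $Z(\pc k f)$ for a fixed $j$ determined by $f$, and then use the fixed-sign property of $\alpha$ on $\ampl$ to conclude that this image sits on the boundary. The paper packages the vanishing and fixed-sign statements as Lemma~\ref{lemma:boundary_ineq} (citing \cite[Proposition~20.3]{LamCDM} for the latter), while you derive the rank condition and the Cauchy--Binet-type expansion directly; this is a presentational difference only.

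There is, however, one genuine gap. In your boundary argument you invoke $\mathrm{int}(\ampl)\subseteq Z(\Grtp(k,n))$ and point to Section~\ref{sec:interior}, but that section establishes the \emph{opposite} inclusion $Z(\Grtp(k,n))\subseteq\mathrm{int}(\ampl)$ (Proposition~\ref{prop:open_dense}, Lemma~\ref{lemma:closure_interior}). The reverse containment is not proved anywhere in the paper, so the line ``a point of $Z(\pc k f)\cap\mathrm{int}(\ampl)$ would be $Z(X)$ for a totally positive $X$'' is unjustified.

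Your own alternative in the last paragraph is the correct repair, and it is exactly the paper's route. From the same-sign expansion you obtain $\varepsilon\,\alpha(\cdot,Z,j)\ge 0$ on all of $\ampl$. After a $\GL_{k+m}$ change of basis sending $Z(j),\dots,Z(j{+}m{-}1)$ to standard basis vectors, $\alpha(\cdot,Z,j)$ becomes a single Pl\"ucker coordinate of $Y$, which takes both signs in every neighborhood of any of its zeros. Hence a point where $\alpha=0$ cannot lie in the interior of a region on which $\alpha$ has a fixed sign, and $Z(\pc k f)\subseteq\partial\ampl$ follows. You do not need the transversality caveat or the degenerate-rank case split you mention at the end; the sign-change argument applies uniformly.
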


\begin{definition}
Let $f,g\in\Aff(-k,n-k)$ be two affine permutations. We say that $f$ and $g$ are \emph{$Z$-compatible} if the images $Z(\pc k f)$ and $Z(\pc k g)$ do not intersect. We say that $f$ and $g$ are \emph{$(n,k,m)$-compatible} if they are $Z$-compatible for any $Z\in\Grtp(k+m,n)$. 
\end{definition}

\begin{definition}\label{dfn:triangulation}
Let $f_1,\dots,f_N\in\Aff(-k,n-k)$ be a collection of $Z$-admissible and pairwise $Z$-compatible affine permutations, and assume that $\inv(f_s)=k\l$ for $1\leq s\leq N$. We say that $f_1,\dots,f_N$ \emph{form a $Z$-triangulation} if the images $Z(\pc k{f_1}),\dots,Z(\pc k{f_N})$ form a dense subset of $\ampl$. We say that $f_1,\dots,f_N$ \emph{form an $(n,k,m)$-triangulation} if they form a $Z$-triangulation for any $Z\in\Grtp(k+m,n)$.
\end{definition}

Thus by Lemma~\ref{lemma:non-admissible}, if $f_1,\dots,f_N\in\Aff(-k,n-k)$ form a $Z$-triangulation for some $Z$ then they all must belong to $\Aff(-k,\l)$. We are now ready to state our first main result.

\begin{theorem}\label{thm:main}\leavevmode
  \begin{enumerate}
  \item\label{item:main:admissible} Let $f\in\Aff(-k,\l)$ be an affine permutation. Then $f$ is $(n,k,m)$-admissible if and only if $f^{-1}\in\Aff(-\l,k)$ is $(n,\l,m)$-admissible.
  \item\label{item:main:compatible} Let $f,g\in\Aff(-k,\l)$ be two affine permutations. Then $f$ and $g$ are $(n,k,m)$-compatible if and only if $f^{-1},g^{-1}\in\Aff(-\l,k)$ are $(n,\l,m)$-compatible.
  \item\label{item:main:triang}   Let $f_1,\dots,f_N\in\Aff(-k,\l)$ be a collection of affine permutations. Then they form an $(n,k,m)$-triangulation of the amplituhedron if and only if $f_1^{-1},\dots,f_N^{-1}\in\Aff(-\l,k)$ form an $(n,\l,m)$-triangulation of the amplituhedron.
  \end{enumerate}
\end{theorem}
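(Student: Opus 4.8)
The plan is to deduce all three parts from the single geometric input emphasized in the introduction --- that the stacked twist map restricts to a diffeomorphism $\stw\colon\pc k f\times\Grtperp{k+m}n\to\Grtperp{\l+m}n\times\pc\l{f^{-1}}$ for $f\in\Aff(-k,\l)$ --- after rephrasing admissibility, compatibility and the triangulation property so that no particular matrix $Z$ appears. The point is that $Z\in\Grtp(k+m,n)$ enters the amplituhedron map only through the $\l$-plane $\hat Z^\perp$, which by \eqref{eq:Grtperp} ranges exactly over $\Grtperp{k+m}n$, and that for $\V_1,\V_2\in\Grtnn(k,n)$ one has $Z(\V_1)=Z(\V_2)$ if and only if $\V_1+\hat Z^\perp=\V_2+\hat Z^\perp$ (using the standard fact that $\V_i\cap\hat Z^\perp=0$). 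So, given a collection $f_1,\dots,f_N\in\Aff(-k,\l)$, let $\Phi\colon\bigsqcup_{s}\pc k{f_s}\times\Grtperp{k+m}n\to\Fl(\l,k+\l;n)$ be the map $(\V,W)\mapsto(W\subseteq\V+W)$, and write $\Phi_{f_s}$ for its restriction to the $s$-th piece. Since $\Phi$ remembers the second coordinate, $\Phi$ is injective if and only if each $f_s$ is $(n,k,m)$-admissible and each pair $f_s,f_t$ is $(n,k,m)$-compatible; and $f_1,\dots,f_N$ with $\inv(f_s)=k\l$ form an $(n,k,m)$-triangulation if and only if, moreover, the closed maps $\overline{\Phi_{f_s}}$ (the extensions of $\Phi_{f_s}$ to $\pctnn k{f_s}\times\overline{\Grtperp{k+m}n}$) jointly surject onto the compact universal amplituhedron $\am_{n,k,m}$ --- this last replaces the density requirement, because $\pctnn k{f_s}$ and $\overline{\Grtperp{k+m}n}$ are compact, so ``dense in $\ampl$ for every $Z$'' is the same as ``the closed maps cover every fibre of $\am_{n,k,m}\to\Gr(\l,n)$''. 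The mirror statements, with $k\leftrightarrow\l$ and $f_s\leftrightarrow f_s^{-1}$, characterise $(n,\l,m)$-admissibility, $(n,\l,m)$-compatibility and $(n,\l,m)$-triangulations via the analogous map $\Psi\colon\bigsqcup_s\pc\l{f_s^{-1}}\times\Grtperp{\l+m}n\to\Fl(k,k+\l;n)$.

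Next I would use, in addition to the cell-level diffeomorphism, the compatibility of $\stw$ with these projections: namely that $\stw$ descends to the homeomorphism $\tstw\colon\uamk\to\uaml$ of the introduction, which intertwines $\Phi$ and $\Psi$ in the sense that $\tstw\circ\Phi=\Psi\circ\stw$ (and likewise for the closed extensions). Granting this, the theorem is formal: $\stw$ is a bijection of sources and $\tstw$ a bijection of targets, so $\Phi$ is injective if and only if $\Psi$ is, and $\operatorname{Im}\Phi_f\cap\operatorname{Im}\Phi_g=\varnothing$ if and only if $\operatorname{Im}\Psi_{f^{-1}}\cap\operatorname{Im}\Psi_{g^{-1}}=\varnothing$ (apply $\tstw$ to images, using that $\stw$ is onto its target), and the joint surjectivity of the $\overline{\Phi_{f_s}}$ onto the universal amplituhedron is equivalent, after applying $\tstw$, to that of the $\overline{\Psi_{f_s^{-1}}}$. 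Taking the collection to be $\{f\}$ gives part~\ref{item:main:admissible}; taking it to be $\{f,g\}$ and using the image description of compatibility directly (which does not involve admissibility) gives part~\ref{item:main:compatible}; and the general case gives part~\ref{item:main:triang}, once one also notes that $\inv(f)=\inv(f^{-1})$, so the dimension condition $\inv(f_s)=k\l$ passes to $\inv(f_s^{-1})=k\l$, and that $f\in\Aff(-k,\l)$ if and only if $f^{-1}\in\Aff(-\l,k)$.

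I expect the real difficulty to sit entirely in the two facts this argument treats as black boxes, both proved only later in the paper: (i) that the \emph{rational} map $\stw$ is actually a diffeomorphism $\pc k f\times\Grtperp{k+m}n\to\Grtperp{\l+m}n\times\pc\l{f^{-1}}$ --- which requires a precise study of the Marsh--Scott/Muller--Speyer twist, showing that it sends the given positroid cell and perp-cell to the correct ones and that neither $\stw$ nor its inverse acquires a pole on the relevant totally nonnegative loci --- and (ii) that $\stw$ genuinely transports the ``$W\subseteq\V+W$'' data, so that $\tstw$ is well defined and a homeomorphism (extending to a bijection of the compact completions used in the density statement). Within the deduction itself the only step that is not bookkeeping is the translation of Definition~\ref{dfn:triangulation} --- density for \emph{all} $Z$ --- into the single surjectivity assertion about the closed maps; carrying this out cleanly requires knowing the precise relation between $\uamk$, the compact universal amplituhedron $\am_{n,k,m}$, and the union of the closed cells, so that no fibre of the projection to $\Gr(\l,n)$ is overlooked. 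Granting (i) and (ii), the remainder is a short formal argument.
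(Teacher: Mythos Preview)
Your treatment of parts~\eqref{item:main:admissible} and~\eqref{item:main:compatible} is essentially the paper's argument, recast in the universal-amplituhedron language of Section~\ref{sec:universal_ampl}: your map $\Phi$ is the paper's $\Zcal$, and the fiber formulation you use is exactly Proposition~\ref{prop:fibers} together with Theorem~\ref{thm:stw}\eqref{item:stw:inverse}.

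For part~\eqref{item:main:triang}, however, your black box (ii) overreaches. The paper establishes that $\tstw$ is a homeomorphism $\uamk\to\uaml$, but $\uamk$ is \emph{not} compact and $\tstw$ does \emph{not} extend to the closures: the twist map requires $\stack(V,W)\in\topcell$, which fails precisely on $\am_{n,k,m}\setminus\uamk$. So ``extending to a bijection of the compact completions'' is not available, and your surjectivity transfer for the closed maps $\overline{\Phi_{f_s}}$ does not go through as written. The paper's proof avoids this by never leaving the open locus where $\stw$ is defined. The key extra ingredient is Proposition~\ref{prop:fibers_Grtnnm}: if $V\in\Grtnnm(k,n)$ then the \emph{entire} fiber $\Fib(V,Z)$ lies in $\Grtnnm(k,n)$. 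Thus for any $V\in\Grtnnm(k,n)$ the point $V'\in\pctnn k{f_s}\cap\Fib(V,Z)$ guaranteed by the triangulation actually lies in some open cell $\pc k f$ with $f\in\Aff(-k,\l)$ and $f\leq f_s$, so $\stw$ applies to it, landing in $\pc\l{f^{-1}}\subset\pctnn\l{f_s^{-1}}$. This shows the closed images $\Zt(\pctnn\l{f_s^{-1}})$ cover $\ampll$; since their union is closed and $\ampll$ is dense in $\amplnlmZ$ (Proposition~\ref{prop:open_dense}), density follows. Your outline becomes correct once you replace the compact-closure step by this argument.
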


Using the properties of the stacked twist map, we give a simple proof of this theorem in Section~\ref{sec:stw}.  In Section~\ref{sec:subdivisions}, we define a more general notion of \emph{$Z$-subdivisions} and give an analog of Theorem~\ref{thm:main}.

\subsection{Motivation and further remarks}\label{sec:motivation}
We discuss the relationship of Theorem~\ref{thm:main} with some results present in the literature on amplituhedra.

\begin{remark}
For $k=1$, the amplituhedron $\ampl\subset \Gr(1,m+1)$ is just the cyclic polytope in the $m$-dimensional projective space. The combinatorics of its triangulations\footnote{The notion of a triangulation of a polytope imposes an additional restriction that the intersection of any two simplices is their common face. However, this extra condition is not necessary for cyclic polytopes, see e.g.~\cite[Theorem~2.4]{OT}. We thank Alex Postnikov for bringing this subtle point to our attention.} is well-understood, see e.g.~\cite{Rambau,ERR,Thomas,OT}. Therefore in this case Theorem~\ref{thm:main} describes explicitly the triangulations of the $\l=1$ amplituhedron, which has not yet been studied in the literature. We refer the reader to Section~\ref{sec:examples} for an illustration.
\end{remark}

\begin{remark}\label{rmk:BCFW}
From the point of view of physics, the most interesting case is $m=4$, which corresponds to computing tree level scattering amplitudes in the planar $N=4$ supersymmetric Yang-Mills theory. In particular, the terms in the \emph{BCFW recurrence} introduced in~\cite{BCF,BCFW} for computing these amplitudes correspond to a collection of affine permutations called the \emph{BCFW triangulation}. (It is a conjecture that these affine permutations indeed form an $(n,k,m)$-triangulation of the amplituhedron.)  We note that in this special $m=4$ case, the map $f\mapsto f^{-1}$ sends a BCFW triangulation for $(k,\l)$  to a BCFW triangulation for $(\l,k)$. It amounts to switching the colors of the vertices in the corresponding plabic graphs in $\Gr(k+2,n)$ and $\Gr(\l+2,n)$ (the indices are shifted by $2$ because of a transition from the \emph{momentum space} to the \emph{momentum-twistor space}). See~\cite[Section~11]{AHTT}, \cite[Proposition~8.4]{KWZ}, or Section~\ref{sec:BCFW} for further discussion.
\end{remark}

The following conjecture states that the notions of $Z$-admissibility, $Z$-compatibility, and $Z$-triangulation do not depend on the choice of $Z\in\Grtp(k+m,n)$. It is widely believed to be true in the physics literature.
\begin{conjecture}\label{conj:physics}
  Fix some $Z\in\Grtp(k+m,n)$.
  \begin{enumerate}[(1)]
  \item Let $f\in\Aff(-k,n-k)$ be an affine permutation. Then $f$ is $Z$-admissible if and only if $f$ is $(n,k,m)$-admissible.
  \item Let $f,g\in\Aff(-k,n-k)$ be two affine permutations. Then $f$ and $g$ are $Z$-compatible if and only if they are $(n,k,m)$-compatible.
  \item  Let $f_1,\dots,f_N\in\Aff(-k,n-k)$ be a collection of affine permutations. Then they form a $Z$-triangulation if and only if they form an $(n,k,m)$-triangulation.
  \end{enumerate}
\end{conjecture}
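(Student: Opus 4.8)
The natural strategy is a connectedness argument in the parameter $Z$. Fix the combinatorial data (an affine permutation $f$, a pair $f,g$, or a tuple $f_1,\dots,f_N$), and let $\mathcal G\subseteq\Grtp(k+m,n)$ be the set of $Z$ for which the relevant conclusion ($Z$-admissibility, $Z$-compatibility, or ``forms a $Z$-triangulation'') holds. By hypothesis $\mathcal G\neq\emptyset$, and $\Grtp(k+m,n)$ is connected (indeed homeomorphic to an open ball), so it suffices to show that $\mathcal G$ is both open and closed in $\Grtp(k+m,n)$. Before attempting this, one should cut down the parameter space: rescaling $Z\mapsto Zt$ by a positive diagonal matrix $t=\diag(t_1,\dots,t_n)$ satisfies $(Zt)(\V)=Z(\V t)$, and $\V\mapsto\V t$ is an automorphism of each positroid cell $\pc k f$, so $\mathcal A_{n,k,m}(Z)$ and all of the combinatorial data are literally unchanged; likewise left multiplication $Z\mapsto gZ$ by $g\in\GL_{k+m}$ transforms $\mathcal A_{n,k,m}(Z)$ by a projective automorphism of $\Gr(k,k+m)$, which preserves dimensions, overlaps, and density. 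Hence $\mathcal G$ descends to $\Grtp(k+m,n)/(\R_{>0})^n$; but this quotient is still positive-dimensional in general, so this reduction alone does not finish the proof.

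For openness, one wants each ``good'' property to persist under small perturbations of $Z$. The subtlety is that the amplituhedron map $Z|_{\pc k f}$ is \emph{not} proper, so a priori injectivity and non-overlap could be destroyed by points escaping toward the boundary of the positroid cell. The way to control this is to pass to the closures $\pctnn k f$ and to the stratified map $Z\colon\bigsqcup_{g}\pctnn k g\to\Gr(k,k+m)$, using Lemma~\ref{lemma:non-admissible} to locate where the boundary strata land, and then to argue that collisions of images occur only along a closed locus of codimension at least one in $Z$. Density for triangulations should then follow from a degree/measure computation once the top-dimensional images are known to remain non-overlapping and of full dimension.

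The closedness step is where I expect the real difficulty, and it is essentially why the statement is still a conjecture. One is handed $Z_i\to Z_\infty$ inside $\Grtp(k+m,n)$ with every $Z_i$ good, and must rule out, for instance, that $Z_\infty(\pc k f)$ and $Z_\infty(\pc k g)$ touch in the limit, or that $Z_\infty|_{\pc k f}$ fails to be injective even though every $Z_i|_{\pc k f}$ is. There is at present no mechanism forbidding such a ``collision at finite distance'' that appears only in the limit, and supplying one seems to require genuinely new input: either an intrinsic, manifestly $Z$-independent cell decomposition of $\mathcal A_{n,k,m}(Z)$ (known only in special cases), or a uniform quantitative separation estimate between the images of distinct positroid cells. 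Lacking either, I would not expect a complete proof.

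The realistic plan, then, is to first settle the cases where an independent model exists: $k=1$, where $\mathcal A_{n,1,m}(Z)$ is a cyclic polytope and the $Z$-independence of its triangulation combinatorics is classical; and small even $m$ (notably $m=2$, and $m=4$ for small $n$) via sign-flip descriptions of the amplituhedron. One can then attempt to propagate progress across parity duality, using the stacked twist map underlying Theorem~\ref{thm:main} to transport any statement established for parameters $(n,k,m)$ to $(n,\l,m)$. I emphasize that this is only a partial program; Conjecture~\ref{conj:physics} in full generality appears to require structural understanding of how $\mathcal A_{n,k,m}(Z)$ varies with $Z$ that goes beyond the methods of this paper.
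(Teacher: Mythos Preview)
The statement is labeled a \emph{conjecture} in the paper and the paper does not attempt to prove it; the authors simply remark that it ``is widely believed to be true in the physics literature.'' So there is no proof in the paper to compare your proposal against.

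Your write-up reflects this correctly: you explicitly identify the closedness step as the real obstruction, say you ``would not expect a complete proof,'' and describe your text as ``only a partial program.'' That is the right assessment. A few comments on the content of your sketch:
\begin{itemize}
\item Your torus-reduction observation (rescaling columns of $Z$ and left-multiplying by $\GL_{k+m}$) is sound but, as you note, does not collapse the parameter space enough to finish.
\item The openness heuristic is reasonable in spirit, but even there the argument is incomplete: ``collisions occur along a closed locus of codimension at least one in $Z$'' is exactly the sort of statement that needs proof, not assertion, and is close in difficulty to what you flag as the hard step.
\item Your suggestion to settle $k=1$ directly and then transport via parity duality to $\l=1$ is consistent with how the paper proceeds for its actual theorems (see the discussion around Conjecture~\ref{conj:degree_1}), but note that Theorem~\ref{thm:main} relates $(n,k,m)$-notions to $(n,\l,m)$-notions, not $Z$-notions to $\Zt$-notions for a \emph{fixed} $Z$; so parity duality does not automatically transfer a proof of Conjecture~\ref{conj:physics} from $k=1$ to $\l=1$ without further argument that the stacked twist map sweeps out all of $\Grtp(\l+m,n)$ as $Z$ varies (which it does, but this should be said).
\end{itemize}
In short: your proposal is an honest discussion of obstacles rather than a proof, which is appropriate here since the paper offers none.
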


Therefore, assuming Conjecture~\ref{conj:physics}, the notion of, say, $Z$-compatibility does not depend on $Z$, and thus gives rise to an interesting binary relation on the set $\Aff(-k,n-k)$.

Let us now explain some motivation that led to Theorem~\ref{thm:main}. For integers $a,b,c\geq1$, define
\[M(a,b,c):=\prod_{p=1}^a\prod_{q=1}^b\prod_{r=1}^c \frac{p+q+r-1}{p+q+r-2}\]
to be the number of \emph{plane partitions} that fit into an $a\times b\times c$ box~\cite{MacMahon}. This expression is symmetric in $a,b,c$. In~\cite[Conjecture~8.1]{KWZ}, the authors noted that in all cases where a (conjectural) triangulation of $\ampl$ is known, it has precisely $M(k,\l,m/2)$ cells, and they conjectured that for all $n,k,m$, there \emph{exists} a triangulation of the amplituhedron with $M(k,\l,m/2)$ cells. For example, for $m=4$, the number of terms in the \emph{BCFW recurrence}, which correspond to the cells in the conjectural BCFW triangulation of $\ampl$, equals $M(k,\l,2)=\frac1{n-3}{n-3\choose k+1}{n-3\choose k}$ (a \emph{Narayana number}), see~\cite[Eq.~(17.7)]{abcgpt}. We generalize~\cite[Conjecture~8.1]{KWZ} as follows.
\begin{conjecture}\label{conj:M(a,b,c)}
For \emph{every} triangulation $f_1,\dots,f_N\in\Aff(-k,n-k)$ of $\ampl$ (where $m$ is even), the number $N$ of cells equals $M(k,\l,m/2)$.
\end{conjecture}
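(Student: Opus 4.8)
The plan is to deduce Conjecture~\ref{conj:M(a,b,c)} from Theorem~\ref{thm:main} together with the special cases where triangulations are already understood. First I would observe that Conjecture~\ref{conj:M(a,b,c)} is really two assertions bundled together: (a) every triangulation has the \emph{same} number of cells (an ``invariance of Euler characteristic'' statement), and (b) that common number equals $M(k,\l,m/2)$. For part (a), the natural approach is volumetric: each $f_s\in\Aff(-k,\l)$ with $\inv(f_s)=k\l$ has $Z(\pc k{f_s})$ a full-dimensional cell, and if one can show that the pushforward canonical form $Z_*\om k{f_s}$ has a fixed residue behaviour on a generic point of $\ampl$, then summing over a triangulation and comparing with the (conjectural but here only used heuristically) amplituhedron form would force $N$ to be constant. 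More robustly, one should integrate a fixed positive top-form (e.g.\ the restriction of a volume form on $\Gr(k,k+m)$) over $\ampl$; since the cells tile $\ampl$ up to measure zero and $Z$ is injective on each $\pc k{f_s}$, the integral equals $\sum_s \int_{\pc k{f_s}} Z^*(\text{vol})$, but this does not obviously pin down $N$ unless the cells are ``equivalent'' in a suitable sense, so the cleaner route is a combinatorial/topological one.

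The topological route I would actually pursue: a $Z$-triangulation gives a CW-like decomposition of $\ampl$ (minus a lower-dimensional set) into $N$ open balls $Z(\pc k{f_s})$ of dimension $km$. If one adjoins the lower-dimensional strata coming from the boundaries $\partial\pc k{f_s}$, one gets a regular cell complex whose underlying space is $\ampl$, which is known (or expected, cf.\ Section~\ref{sec:interior}) to be homeomorphic to a closed ball of dimension $km$. Then the alternating sum of cell counts is the Euler characteristic, which is $1$; this gives a \emph{linear} relation among the face numbers but not directly the top face number $N$. To upgrade this I would use that the boundary structure is itself governed by the same combinatorics (the faces of $\pc k f$ are positroid cells $\pc k g$ with $g$ obtained by a cover relation), so the $f$-vector of any triangulation is determined by its top cells; combined with the Dehn--Sommerville-type relations for a triangulated ball, this should force all $f$-vectors to coincide, hence $N$ is an invariant. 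This is where I expect the main obstacle: making the boundary combinatorics of $\ampl$ precise enough to run a Dehn--Sommerville argument is exactly the content of several hard open problems about the facial structure of the amplituhedron, so unconditionally this step is probably not available, and the honest statement is that Conjecture~\ref{conj:M(a,b,c)} strengthens \cite[Conjecture~8.1]{KWZ} but is still a conjecture.

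Granting part (a), part (b) follows by evaluation at the cases where a triangulation is known: for $m=0$ the amplituhedron is a point and $N=1=M(k,\l,0)$; for $k=1$ it is a cyclic polytope in $\mathbb{P}^m$ whose triangulations have $M(1,\l,m/2)$ simplices (this is classical, see \cite{Rambau,ERR,Thomas,OT} and the MacMahon box-counting identity), and by Theorem~\ref{thm:main}\eqref{item:main:triang} the case $\l=1$ follows from $k=1$ with $N=M(k,1,m/2)$; and for $m=4$ the BCFW triangulation (conjecturally, but the count is unconditional) has $M(k,\l,2)$ cells by \cite[Eq.~(17.7)]{abcgpt}. The role of Theorem~\ref{thm:main} here is twofold: it shows the conjectured value $M(k,\l,m/2)$ is consistent with the $k\leftrightarrow\l$ symmetry it must enjoy, and it lets one transport any future construction of a triangulation for $(n,k,m)$ to one for $(n,\l,m)$ with the same cell count. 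So the proof plan is: (i) prove invariance of $N$ among triangulations via the cell-complex/Dehn--Sommerville argument (the hard, currently conjectural step); (ii) read off the value of $N$ from the known base cases $k=1$, $\l=1$, $m=0$, using Theorem~\ref{thm:main} to fill in $\l=1$; (iii) invoke the symmetry of $M(a,b,c)$ to check consistency. I would flag step (i) explicitly as the obstruction and present Conjecture~\ref{conj:M(a,b,c)} as remaining open, with Theorem~\ref{thm:main} providing the ``parity consistency'' that makes the conjectured formula natural.
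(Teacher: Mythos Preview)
The statement is a \emph{conjecture} in the paper, not a theorem: the paper does not prove it in general. The paper's only unconditional claims are that it holds for $k=1$ (classical, via cyclic polytopes) and hence for $\l=1$ (by applying Theorem~\ref{thm:main}), together with an experimental verification for $k=\l=m=2$, $n=6$. You correctly recognize this, and your final paragraph concedes the conjecture remains open, so there is no disagreement about the status.

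Two remarks on the strategy you outline. First, your proposed route to invariance (a) via a Dehn--Sommerville argument on a cell decomposition of $\ampl$ is different from anything in the paper; the paper instead suggests (Conjectures~\ref{conj:flip} and~\ref{conj:flip_connected}) that invariance would follow from flip-connectedness of the set of triangulations, since a flip by construction replaces one half of the boundary of a nearly admissible cell by the other and preserves the cell count. Neither approach is currently known to work, but the flip approach is more concrete: it does not require understanding the full boundary stratification of $\ampl$, only local moves, and it is already established for $k=1$ (Rambau) and hence $\l=1$.

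Second, there is a gap in your ``granting (a), deduce (b)'' step that you may be underselling. Even if every triangulation for a fixed $(n,k,m)$ had the same size, you would still need, for \emph{every} $(n,k,m)$, at least one triangulation with exactly $M(k,\l,m/2)$ cells in order to identify that common size. Your base cases $m=0$, $k=1$, $\l=1$, and $m=4$ (the last only conjecturally) do not cover all parameters; the existence of such a triangulation for all $(n,k,m)$ is precisely \cite[Conjecture~8.1]{KWZ}, which is itself open. So (a) and (b) are genuinely independent obstructions, not a hard step (a) followed by an easy step (b).
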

This conjecture holds for $k=1$ (see e.g.~\cite{Rambau}), and therefore by Theorem~\ref{thm:main} it holds for $\l=1$ as well. Additionally, we have experimentally verified it in the case $k=\l=m=2$, $n=6$, where there are $120$ triangulations of the amplituhedron, and each of them has size $M(2,2,1)=6$. We discuss this example in more detail in Section~\ref{sec:baues}. See also~\cite[Section~8]{KWZ} for a list of other cases where Conjecture~\ref{conj:M(a,b,c)} holds in a weaker form.

\begin{remark}
In~\cite[Remark~8.2]{KWZ}, a similar conjecture is given for odd values of $m$. One could generalize this conjecture as follows: for odd $m$, the \emph{maximal} number of cells in a triangulation of the amplituhedron is equal to $M(k,\l,\frac{m+1}2)$. This conjecture remains open, but we note that the result of Theorem~\ref{thm:main} does not hold for odd $m$, as one can easily observe already in the case $k=1$, $\l=2$, $m=1$, $n=4$. In this case, the maximal number of cells in a triangulation is indeed $M(1,2,1)=3$ and is invariant under switching $k$ and $\l$, however,  replacing each affine permutation with its inverse does not take $(n,k,m)$-triangulations to $(n,\l,m)$-triangulations.
\end{remark}
\begin{remark}
Theorem~\ref{thm:main} provides partial progress to answering~\cite[Question~8.5]{KWZ}: assuming that for each $n$, $k$, and even $m$ there exists an $(n,k,m)$-triangulation of the amplituhedron, \cite[Question~8.5]{KWZ} has positive answer.
\end{remark}

\begin{remark}
For $m = 4$, it was already observed in \cite{motivic, AHTT} that the twist map of $\Gr(4,n)$ is related to the parity duality of scattering amplitudes.  However, without the stacked twist map, the remarkable combinatorics and geometry of triangulations is hidden.
\end{remark}

\section{The stacked twist map}\label{sec:stw}
In this section, we explain a construction that lies at the core of the proof of Theorem~\ref{thm:main}. It is based on the \emph{twist map} of Marsh and Scott~\cite{MS}. For our purposes, it is more convenient to use the version of the twist map from~\cite{MuS}, which differs from the one in~\cite{MS} by a column rescaling. See~\cite[Remark~6.3]{MuS} for details.

Let $k,\l,m,n$ be as in the previous section, that is, they are all nonnegative integers such that $m$ is even and $n=k+\l+m$. Let $\V$ be a $k\times n$ matrix and $\W$ be an $\l\times n$ matrix. For $j=1,2,\dots,n$, define $\v(j)\in\R^k$ (resp., $\w(j)\in\R^\l$) to be the $j$-th column of $\V$ (resp., $\W$). We extend this to any $j\in\Z$ by the condition that $\v(j+n)=(-1)^{k-1}\v(j)$ and $\w(j+n)=(-1)^{k-1}\w(j)$. Define
\[\U=\stack(\V,\W):=\begin{pmatrix}
    \V\\
    \W
  \end{pmatrix}\]
to be a $(k+\l)\times n$ block matrix obtained by stacking $\V$ on top of $\W$. That is, the $j$-th column $\u(j)$ of $\U$ equals $(\v(j),\w(j))\in\R^{k+\l}$, and thus we also have $\u(j+n)=(-1)^{k-1}\u(j)$ for all $j\in\Z$.

\def\topcell{\Mat^\circ(k+\l,n)}
\def\topcellC{\Mat_\C^\circ(k+\l,n)}
\begin{definition}\label{dfn:topcell}
Given a $(k+\l)\times n$ matrix $\U$, we write $\U\in\topcell$ if for all $j\in\Z$, the vectors $\u(j-\l),\u(j-\l+1),\dots,\u(j+k-1)$ form a basis of $\R^{k+\l}$.
\end{definition}

We now define a map $\twistpm:\topcell\to\topcell$ sending a matrix $\U$ to a matrix $\Ut$ with columns $\ut(j),j\in\Z$. Up to sign changes and a cyclic shift, this map coincides with the \emph{right twist} of~\cite{MuS}, see~\eqref{eq:twist_MS}. For each $j\in\Z$, define a vector $\ut(j)\in\R^{k+\l}$ by the following conditions:
\begin{equation}\label{eq:stw}
\<\ut(j),\u(j-\l)\>=(-1)^{j},\quad \<\ut(j),\u(j-\l+1)\>=\dots=\<\ut(j),\u(j+k-1)\>=0,
\end{equation}
where $\<\cdot,\cdot\>$ denotes the standard inner product on $\R^{k+\l}$. Since the vectors $\u(j-\l),\u(j-\l+1),\dots,\u(j+k-1)$ form a basis, the above conditions determine $\ut(j)$ uniquely. It is also not hard to see directly that $\Ut\in\topcell$ (alternatively, see~\eqref{eq:max_minors_of_U}).  Moreover, we have $\ut(j+n)=(-1)^{n+k-1}\ut(j)=(-1)^{\l-1}\ut(j)$, because $m$ is assumed to be even. This defines $\Ut=\twistpm(\U)\in\topcell$.  Muller and Speyer also define a \emph{left twist} $\stackrel{\leftarrow}{\tau}:\topcell\to\topcell$.  They show: 
\begin{theorem}[{\cite[Theorem 6.7]{MuS}}]\label{thm:MuS}
The left and right twists are inverse diffeomorphisms of $\topcell$.
\end{theorem}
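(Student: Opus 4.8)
The plan is to reduce the assertion to two facts: (i) both $\twist$ and $\stackrel{\leftarrow}{\tau}$ map $\topcell$ into itself and are smooth there; and (ii) the composites $\stackrel{\leftarrow}{\tau}\circ\twist$ and $\twist\circ\stackrel{\leftarrow}{\tau}$ are both the identity map of $\topcell$. Given (i) and (ii), $\twist$ and $\stackrel{\leftarrow}{\tau}$ are mutually inverse bijections of $\topcell$, each smooth, hence each a diffeomorphism. Smoothness is immediate from the construction: for fixed $j$, the column $\ut(j)$ is the unique solution of the linear system~\eqref{eq:stw}, whose coefficient matrix has columns $\u(j-\l),\dots,\u(j+k-1)$ and determinant $\Deltas_j$ nowhere vanishing on $\topcell$ (Definition~\ref{dfn:topcell}); Cramer's rule then expresses each entry of $\ut(j)$ as a polynomial in the entries of $\U$ divided by $\Deltas_j$. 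The same reasoning applies to $\stackrel{\leftarrow}{\tau}$.

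The technical core is a \emph{banded biorthogonality} property of the matrix $G=G(\U)$ with entries $G_{ij}=\<\u(i),\ut(j)\>$, where $\Ut=\twist(\U)$. Directly from~\eqref{eq:stw}, $G_{ij}=0$ for every $i$ in the length-$(k+\l-1)$ window $\{j-\l+1,\dots,j+k-1\}$, and $G_{j-\l,\,j}=(-1)^j$. For the boundary entries I would expand $\u(i)$ in the basis $\u(j-\l),\dots,\u(j+k-1)$ and apply Cramer's rule; the key computation is that $G_{j+k,\,j}=\pm\,\Deltas_{j+1}/\Deltas_j\neq0$, the sign being controlled by the parity of $k+\l=n-m$ (even, because $m$ is even). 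Read along rows rather than columns, this says: $\u(j)$ is orthogonal to the $k+\l-1$ consecutive vectors $\ut(j-k+1),\dots,\ut(j+\l-1)$, pairs non-trivially with $\ut(j-k)$ and $\ut(j+\l)$, and satisfies $\<\u(j),\ut(j+\l)\>=(-1)^{j+\l}$.

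Two consequences follow. First, $\twist(\U)\in\topcell$: with $\tilde A$ the matrix whose columns are $\ut(j-\l),\dots,\ut(j+k-1)$ and $A$ that whose columns are $\u(j-\l),\dots,\u(j+k-1)$, we have $\det\tilde A=\det(A^{\transp}\tilde A)/\det A$, and $M:=A^{\transp}\tilde A$ satisfies $M_{rs}=0$ exactly when $r-s\in\{-\l+1,\dots,k-1\}$; since the nonzero entries of $M$ thus occupy only the far upper-right and far lower-left corners, a unique permutation ($\sigma(r)=r+\l$ for $r\le k$, $\sigma(k+s)=s$ for $s\le\l$) contributes to $\det M$, and all of its entries are among the nonzero boundary pairings above, so $\det M\neq0$. (This incidentally recovers the maximal-minor formula for $\Ut$ on consecutive windows alluded to after Definition~\ref{dfn:topcell}.) Second, the left twist is defined like $\twist$ but with the column order reversed --- equivalently, with $k$ and $\l$ interchanged --- so that $\stackrel{\leftarrow}{\tau}(\Ut)(j)$ is characterized by being orthogonal to $\ut(j-k+1),\dots,\ut(j+\l-1)$ together with a single normalization of the form $\<\,\cdot\,,\ut(j+\l)\>=(-1)^{j+\l}$; by the previous paragraph $\u(j)$ satisfies precisely these conditions, so uniqueness gives $\stackrel{\leftarrow}{\tau}(\twist(\U))=\U$. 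Swapping the roles of the two twists (and of $k$ and $\l$) in the same computation yields $\twist\circ\stackrel{\leftarrow}{\tau}=\Id$.

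The step I expect to be the main obstacle is the middle one: pinning down the exact zero pattern of $G$ and evaluating the boundary entries $G_{j-\l,\,j}$ and $G_{j+k,\,j}$ with the correct signs, and then checking that these signs agree exactly with the normalization built into the definition of the left twist, so that the uniqueness argument applies verbatim. This is precisely where the parity of $m$ is used, via the periodicities $\u(j+n)=(-1)^{k-1}\u(j)$ and $\ut(j+n)=(-1)^{\l-1}\ut(j)$; the rest is linear algebra and careful bookkeeping.
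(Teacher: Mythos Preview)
The paper does not supply its own proof of this theorem: it is quoted verbatim from Muller--Speyer and used as a black box. So there is no in-paper argument to compare your proposal against; the ``paper's proof'' is simply the citation.

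Your sketch is essentially the standard argument, and is close in spirit to what is done in the original reference: exploit the banded biorthogonality pattern $\<\u(i),\ut(j)\>=0$ for $j-\l+1\le i\le j+k-1$ together with the boundary normalization, then read the same relations row-wise to recognize $\u(j)$ as the unique solution to the system defining the left twist of $\Ut$. The smoothness and the $\Ut\in\topcell$ steps are fine (the latter is in fact handled in the paper by Lemma~\ref{lemma:max_minors_of_U}, which you essentially reprove).

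The one genuine gap is that the paper never writes down the defining equations of the \emph{left} twist $\stackrel{\leftarrow}{\tau}$, so your assertion that it is ``$\twist$ with column order reversed, equivalently with $k$ and $\l$ interchanged,'' together with the specific normalization $\<\,\cdot\,,\ut(j+\l)\>=(-1)^{j+\l}$, is an assumption rather than something you can verify from the text. Moreover, equation~\eqref{eq:twist_MS} shows that the paper's $\twist$ differs from Muller--Speyer's right twist by both a cyclic shift and column signs, so the corresponding left twist in the paper's conventions would inherit analogous adjustments. Your sign-matching step (``checking that these signs agree exactly with the normalization built into the definition of the left twist'') is therefore not just bookkeeping but actually requires fixing a definition that the paper omits. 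With that caveat acknowledged, the argument is sound.
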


Finally, let $\Wt$ (resp., $\Vt$) be the $k\times n$ matrix (resp., the $\l\times n$ matrix) such that $\Ut=\stack(\Wt,\Vt)$. In other words, we let the $j$-th column $\wt(j)$ of $\Wt$ and $\vt(j)$ of $\Vt$ be defined so that $\ut(j)=(\wt(j),\vt(j))$.

\begin{definition}
Given a $k\times n$ matrix $V$ and an $\l\times n$ matrix $W$ such that $\stack(V,W)\in\topcell$, we denote $\stw(V,W):=(\Wt,\Vt)$, where $\Wt$ and $\Vt$ are defined above. The map $\stw$ is called the \emph{stacked twist map}.
\end{definition}
The relationship between the maps $\twistpm$, $\stack$, and $\stw$ is shown in~\eqref{eq:stw:cd}. Dashed (resp., solid) arrows denote rational (resp., regular) maps.
\begin{equation}\label{eq:stw:cd}
\begin{tikzcd}
\Mat(k,n)\times \Mat(\l,n) \arrow[d,dashrightarrow,"\stack"] \arrow[r,dashrightarrow, "\stw"] & \Mat(k,n)\times \Mat(\l,n)  \arrow[d,dashrightarrow,"\stack"] \\
\topcell \arrow[r, "\twistpm"] &  \topcell
\end{tikzcd}
\quad
\begin{tikzcd}
(V,W) \arrow[d,dashrightarrow,mapsto,"\stack"] \arrow[r,dashrightarrow,mapsto, "\stw"] & (\Wt,\Vt)  \arrow[d,dashrightarrow,mapsto,"\stack"] \\
\U \arrow[r,mapsto, "\twistpm"] &  \Ut
\end{tikzcd}
\end{equation}

Our first goal is to view $\stw$ as a map on pairs of subspaces rather than pairs of matrices. Thus we would like to treat $V$ and $W$ not as matrices but as their row spans. The following result will be deduced as a simple consequence of Lemma~\ref{lemma:inverse_transpose}.

\begin{lemma}\label{lemma:stw:Gr}
  The map $\stw$ descends to a rational map
  \[\stw:\Gr(k,n)\times \Gr(\l,n)\dashedrightarrow \Gr(k,n)\times \Gr(\l,n).\]
\end{lemma}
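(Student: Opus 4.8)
The plan is to reduce everything to one linear-algebra identity describing how the twist $\twist$ interacts with left multiplication by an invertible matrix, and then to check that the relevant block structure is preserved. Rationality of the induced map will be automatic: $\stw$ is already a rational map on matrices (it is assembled from $\twist$ and $\stack$ as in~\eqref{eq:stw:cd}, and $\twist$ is computed by Cramer's rule applied to the linear system~\eqref{eq:stw}), so it suffices to show that the pair of \emph{row spans} of $(\Wt,\Vt)=\stw(V,W)$ depends only on the pair of row spans of $(V,W)$.

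So let $(V,W)$ and $(V',W')$ represent the same point of $\Gr(k,n)\times\Gr(\l,n)$, i.e.\ $V'=gV$ and $W'=hW$ for some $g\in\GL_k(\R)$, $h\in\GL_\l(\R)$. Put $\U=\stack(V,W)$ and let $D:=\diag(g,h)$ be the block-diagonal matrix with blocks of sizes $k$ and $\l$; then $\stack(V',W')=D\U$. I would first record (this is where Lemma~\ref{lemma:inverse_transpose} does the work, but it is also immediate from~\eqref{eq:stw}) that for every $D\in\GL_{k+\l}(\R)$ one has
\[\twist(D\U)=(D^\transp)^{-1}\,\twist(\U).\]
Indeed, if $\ut(j)$ solves~\eqref{eq:stw} for $\U$, then $\langle (D^\transp)^{-1}\ut(j),\,D\u(j')\rangle=\langle\ut(j),\u(j')\rangle$ for all $j'$, so $(D^\transp)^{-1}\ut(j)$ solves the same system for $D\U$; uniqueness of the solution gives the identity.

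Now $(D^\transp)^{-1}=\diag\bigl((g^\transp)^{-1},(h^\transp)^{-1}\bigr)$ is again block-diagonal with blocks of sizes $k$ and $\l$ \emph{in that order}, which is exactly the block structure of $\Ut=\stack(\Wt,\Vt)$ (top $k$ rows $\Wt$, bottom $\l$ rows $\Vt$). Hence $\twist(D\U)=\stack\bigl((g^\transp)^{-1}\Wt,\ (h^\transp)^{-1}\Vt\bigr)$, i.e.\ $\stw(V',W')=\bigl((g^\transp)^{-1}\Wt,\ (h^\transp)^{-1}\Vt\bigr)$. Since left multiplication by an invertible matrix does not change a row span, $(\Wt,\Vt)$ and $\stw(V',W')$ represent the same point of $\Gr(k,n)\times\Gr(\l,n)$, so $\stw$ descends to a well-defined rational map $\Gr(k,n)\times\Gr(\l,n)\dashedrightarrow\Gr(k,n)\times\Gr(\l,n)$ whose domain of definition contains the image of $\{(V,W):\stack(V,W)\in\topcell\}$.

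The one point to watch — and the only thing resembling an obstacle, though a minor one — is the role reversal of the two blocks under $\twist$: $\stack(V,W)$ carries its $k$-block on top, while $\twist$ returns $\stack(\Wt,\Vt)$ with the new $k$-block $\Wt$ on top and the $\l$-block $\Vt$ below, so one must confirm that the block sizes of $(D^\transp)^{-1}$ still match up after this swap. They do, precisely because the transpose-inverse of a block-diagonal matrix is block-diagonal with the same block sizes in the same order; everything else is bookkeeping.
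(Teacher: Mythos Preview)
Your proof is correct and follows exactly the paper's approach: apply the identity $\twist(D\U)=(D^\transp)^{-1}\twist(\U)$ (Lemma~\ref{lemma:inverse_transpose}) to the block-diagonal $D=\diag(g,h)$ and read off that $\stw(gV,hW)=\bigl((g^\transp)^{-1}\Wt,(h^\transp)^{-1}\Vt\bigr)$. The only difference is cosmetic---you rederive Lemma~\ref{lemma:inverse_transpose} inline from~\eqref{eq:stw} rather than citing it.
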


It turns out that $\stw$ has remarkable properties when restricted to the totally nonnegative Grassmannian. For each $0\leq m\leq n-k$, define a subset $\Grtnnk(k,n)\subset \Grtnn(k,n)$ by
\begin{equation}\label{eq:Grtnnm}
\Grtnnm (k,n):=\bigsqcup_{f\in\Aff(-k,n-k-m)} \pc k f.
\end{equation}
Comparing this to~\eqref{eq:Grtnn_strata}, we see that setting $m=0$ indeed recovers $\Grtnn(k,n)$. More generally, we have
\[\Grge 0(k,n)\supset \Grge 1(k,n)\supset\dots\supset \Grge{n-k}(k,n)=\Grtp(k,n).\]
As we will see in Lemma~\ref{lemma:ranks}, the set $\Grtnnm(k,n)$ can be alternatively described as follows.
\begin{proposition}\label{prop:ranks}
For $0\leq m\leq n-k$ and $\l=n-k-m$, we have
  \[\Grtnnm(k,n)=\{\V\in\Grtnn(k,n): \rk([\v(j-\l)|\v(j-\l+1)|\dots|\v(j+k-1)])=k \text{ for all $j\in\Z$}\}.\]
\end{proposition}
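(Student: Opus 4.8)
The plan is to prove the two inclusions separately, reducing both to the combinatorics of how the vanishing of Plücker coordinates of a totally nonnegative $\V$ interacts with the bounded affine permutation $f$ indexing its positroid cell. Write $C_j := [\v(j-\l)|\cdots|\v(j+k-1)]$, a $k\times(k+\l)$ submatrix built from $k+\l$ cyclically consecutive columns (with the sign convention $\v(j+n)=(-1)^{k-1}\v(j)$, so the statement is well-defined on $\Gr(k,n)$). The key point is that $\rk(C_j)=k$ if and only if at least one of the $k\times k$ minors of $C_j$ is nonzero, i.e.\ there is some $I\in\binom{[n]}{k}$ with $I$ contained in the cyclic interval $\{j-\l,\dots,j+k-1\}$ (mod $n$) and $\Delta_I(\V)\ne 0$. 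So the rank condition at $j$ is equivalent to: the positroid $\positroid k f$ meets the set of $k$-subsets lying in that cyclic window of size $k+\l=n-m$.

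First I would handle the inclusion $\supseteq$: suppose $\V\in\pc k f$ for some $f\in\Aff(-k,\l)$ (equivalently $f\in\Aff(-k,n-k-m)$), and I must exhibit, for each $j$, a nonvanishing Plücker coordinate supported in the window $\{j-\l,\dots,j+k-1\}$. Here I would use the standard description of the positroid of $f$ via Postnikov's Grassmann necklace / the $\le$ order on $\Z$, or equivalently the characterization in Lemma~\ref{lemma:ranks} that the paper refers to: for a $(k,n)$-bounded affine permutation $\shift k f$, the rank of the window of columns starting at $i$ has an explicit formula in terms of $f$, and the constraint $f(i)\le i+\l$ for all $i$ (which is exactly $f\in\Aff(-k,\l)$) forces each size-$(k+\l)$ cyclic window of columns to already have full rank $k$. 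Concretely, the column matroid of $\V$ has the property that the flat spanned by columns $i, i+1, \dots$ stops growing after at most $f^{-1}$-controlled steps; translating $f(i)\le i+\l$ says the span of any $n-m$ consecutive columns is all of $\R^k$. I would make this precise using the lemma the paper cites rather than reproving it.

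For the reverse inclusion $\subseteq$, I would argue contrapositively: if $\V\in\Grtnn(k,n)$ lies in $\pc k f$ with $f\notin\Aff(-k,n-k-m)$, then $f(i) > i+\l$ for some $i$, and I want to produce a $j$ with $\rk(C_j)<k$. Again via the rank formula for the positroid of $f$, the inequality $f(i)>i+\l$ is precisely the statement that the $k$ columns in some window of size $k+\l$ starting near $i$ span a space of dimension strictly less than $k$; equivalently every $k$-subset inside that window is a nonbasis, so every $k\times k$ minor of the corresponding $C_j$ vanishes and $\rk(C_j)<k$. This is the same equivalence as before read in the other direction, so once the dictionary ``$f(i)\le i+\l$ for all $i$'' $\Longleftrightarrow$ ``every cyclic window of $n-m$ columns has full rank'' is established, both inclusions follow.

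The main obstacle is setting up that dictionary cleanly — i.e.\ relating the bound $f(i)\le i+\l$ directly to ranks of windows of columns rather than windows of rows or to the matroid-theoretic data of $\positroid k f$. The cleanest route is to invoke the ``alternative definition of $\pc k f$'' promised in Lemma~\ref{lemma:ranks}, which presumably states exactly that $\pc k f$ is cut out by rank conditions $\rk[\,\v(i)|\v(i+1)|\cdots|\v(i+t)\,]$ determined by $f$; granting that, Proposition~\ref{prop:ranks} is the special case where one asks which $f$ make \emph{all} windows of a fixed length $n-m$ full rank, and the answer $\Aff(-k,n-k-m)$ drops out by unwinding the inequalities. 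I would therefore structure the writeup to defer all the positroid combinatorics to Lemma~\ref{lemma:ranks} and present Proposition~\ref{prop:ranks} as a short corollary.
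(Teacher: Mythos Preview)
Your proposal is correct and matches the paper's approach exactly: the paper states that ``Proposition~\ref{prop:ranks} follows from Lemma~\ref{lemma:ranks} as a simple corollary,'' and your dictionary between the bound $f(i)\le i+\l$ and full rank of every size-$(k+\l)$ cyclic window is precisely how that corollary is unpacked. Your contrapositive for the reverse inclusion is also the right reading of the same rank formula.
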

Here  $[\v(j-\l)|\v(j-\l+1)|\dots|\v(j+k-1)]$ denotes the $k\times (k+\l)$ matrix with columns $\v(j-\l),\dots,\v(j+k-1)$, and $\rk$ denotes its rank. We set
\begin{equation}\label{eq:ampll}
\amplk:=Z(\Grtnnm(k,n))\subset\ampl.
\end{equation}

Let us also define
\begin{equation}\label{eq:Grtperp}
\Grtperp{k+m} n:=\{Z^\perp \mid Z\in\Grtp(k+m,n)\}\subset\Gr(\l,n).
\end{equation}
Here for $Z\in\Grtp(k+m,n)$ we denote by $\W:=Z^\perp\in\Gr(\l,n)$ the subspace orthogonal to $Z$.
\begin{remark}
The sets $\Grtperp{k+m}n$ and $\Grtp(\l,n)$ are related as follows. Define a map $\alt:\Gr(\l,n)\to\Gr(\l,n)$ sending a matrix $\W\in\Gr(\l,n)$ with columns $\W(j), 1\leq j\leq n$ to a matrix $\W':=\alt(\W)$ with columns $\W'(j)=(-1)^{j-1}\W(j)$. It turns out $\alt$ is a diffeomorphism between $\Grtp(\l,n)$ and $\Grtperp{k+m}n$. In fact, for any set $I\subset [n]$ of size $\l$, we have
\begin{equation}\label{eq:alt_duality}
\Delta_I(\alt(\W))=\Delta_{[n]\setminus I}(Z),
\end{equation}
where $Z:=\W^\perp$. See~\cite[Lemma~3.3]{KW} and references therein.
\end{remark}

The following result is the main ingredient of the proof of Theorem~\ref{thm:main}.
\begin{theorem}\label{thm:stw}\leavevmode
Let $\V\in \Grtnnm(k,n)$ and $\W\in\Grtperp{k+m}n$. 
  \begin{enumerate}[\normalfont (i)]
  \item\label{item:stw:well_defined} We have $\stack(\V,\W)\in\topcell$, thus $\stw(\V,\W)=(\Wt,\Vt)$ is well defined.
  \item\label{item:stw:positivity}  We have $\Wt\in\Grtperp{\l+m}n$ and $\Vt\in\Grtnnm(\l,n)$.
  \item\label{item:stw:inverse}  Let $f\in\Aff(-k,\l)$ be the unique affine permutation such that $\V\in\pc k f$. Then $\Vt~\in~\pc \l {f^{-1}}$. 
  \end{enumerate}
\end{theorem}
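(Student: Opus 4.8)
The plan is to reduce all three parts to the twist map $\twistpm$ on $\topcell$ via the commuting diagram~\eqref{eq:stw:cd}, and then exploit the combinatorial description of the twist in terms of Plücker coordinates that comes from \cite{MuS}. The first task is to understand the entries of $\stack(\V,\W)$. Writing $\U=\stack(\V,\W)$, the condition $\U\in\topcell$ says that every cyclically consecutive window $\u(j-\l),\dots,\u(j+k-1)$ of length $k+\l$ is a basis of $\R^{k+\l}$. I would prove part~\eqref{item:stw:well_defined} by computing the relevant maximal minor of $\U$ via the Cauchy--Binet/Laplace expansion along the first $k$ rows versus the last $\l$ rows: a window minor of $\U$ is a signed sum of products $\Delta_I(\V)\cdot\Delta_{J}(\W)$ over complementary subsets $I\sqcup J$ of the window. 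The point is that $\V\in\Grtnnm(k,n)=\Grge m(k,n)$ guarantees (by Proposition~\ref{prop:ranks}) that the $k\times(k+\l)$ window submatrix of $\V$ has full rank $k$, and $\W\in\Grtperp{k+m}n$ means $\W=Z^\perp$ with $Z$ totally positive; I want to show that all the surviving terms in the Laplace expansion have the same sign, so there is no cancellation and the window minor is nonzero. The cleanest route is to use~\eqref{eq:alt_duality}: $\Delta_J(\W)$ up to a predictable sign equals $\Delta_{J^c}(\alt(\W))$ with $\alt(\W)\in\Grtp(\l,n)$, so $\Delta_J(\W)$ has a sign depending only on $|J|$ and the window; combined with the nonnegativity of the Plücker coordinates of $\V$ and the full-rank condition, the sum is a sum of nonnegative terms, at least one strictly positive.

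For parts~\eqref{item:stw:positivity} and~\eqref{item:stw:inverse} I would invoke the Muller--Speyer machinery directly. Their right twist has the defining feature that Plücker coordinates transform in a controlled way: $\Delta_I(\Ut)$ is (up to an explicit sign/monomial) a ratio or product of Plücker coordinates of $\U$ indexed by cyclic intervals, and crucially the twist map induces an isomorphism of the (open) positroid variety of $\U$ with that of $\Ut$, sending the positroid cell of $\U$ to the positroid cell of $\Ut$ whose affine permutation is obtained from that of $\U$ by the appropriate cyclic-shift-and-inverse recipe (this is the content behind~\eqref{eq:twist_MS} and \cite[Theorem~6.7]{MuS}, together with the bounded-affine-permutation combinatorics of Section~\ref{sec:tnn_gr}). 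So I would: (a) identify the $(k+\l,n)$-bounded affine permutation $h$ of $\U=\stack(\V,\W)$ in terms of $f$ (the affine permutation of $\V$ in $\Grge m(k,n)$, lying in $\Aff(-k,\l)$) and the affine permutation of $\W$ (which, since $\W\in\Grtperp{k+m}n$ is a twist/shift of a totally positive $\l$-plane, is essentially a shift of the identity); (b) apply the twist to see that $\Ut$ lies in the positroid cell for $h^{-1}$ up to the sign conventions; (c) read off, from the block form $\Ut=\stack(\Wt,\Vt)$, that $\Wt$ is again (a twist of) a totally positive $k$-plane — hence $\Wt\in\Grtperp{\l+m}n$ — and that $\Vt$ lies in $\Grge m(\l,n)$ with affine permutation $f^{-1}\in\Aff(-\l,k)$. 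Here I would use that the roles of $k$ and $\l$ are swapped consistently because $m$ is even (so the periodicity signs $(-1)^{k-1}$ and $(-1)^{\l-1}$ behave as claimed), and that $\stack(\Wt,\Vt)\in\topcell$ forces, via Proposition~\ref{prop:ranks} again, the window-rank condition on $\Vt$ that defines $\Grge m(\l,n)$.

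The main obstacle is bookkeeping the signs and the precise dictionary between the Muller--Speyer right twist (with its column rescalings, \cite[Remark~6.3]{MuS}) and the normalization~\eqref{eq:stw} used here, and in particular pinning down exactly which bounded affine permutation corresponds to $\stack(\V,\W)$ and verifying it dualizes to the one for $\stack(\Wt,\Vt)$ with the $\V\leftrightarrow\Vt$ slot carrying $f\mapsto f^{-1}$. Once the affine permutation $h$ of $\U$ is correctly computed — I expect $h$ to be built from $f$ on the "first $k$ coordinates worth" of windows and from a shift of $\id{\l}$ elsewhere, fused by the stacking — everything else follows formally from Theorem~\ref{thm:MuS} and the fact (to be checked, but routine) that $\W\in\Grtperp{k+m}n$ is exactly the image under an appropriate twist of $\Grtp(\l,n)$, matching the $\alt$-duality~\eqref{eq:alt_duality}. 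I would isolate the minor-sign computation and the affine-permutation identification as two separate lemmas (likely the deferred Lemma~\ref{lemma:inverse_transpose} and Lemma~\ref{lemma:ranks} are doing part of this work) and then assemble them here in a few lines.
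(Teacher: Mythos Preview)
Your approach to part~\eqref{item:stw:well_defined} is correct and matches the paper: Lemma~\ref{lemma:consecutive} is proved exactly by the Laplace expansion with the sign analysis via $\alt$-duality that you outline.

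For parts~\eqref{item:stw:positivity} and~\eqref{item:stw:inverse}, however, there is a genuine gap. Your plan hinges on identifying the bounded affine permutation $h$ of $\U=\stack(\V,\W)$ in $\Gr(k+\l,n)$ and then using the Muller--Speyer result that the twist carries the positroid cell for $h$ to the one for $h^{-1}$. But $\U\in\topcell$ means precisely that all cyclic $(k+\l)\times(k+\l)$ minors of $\U$ are nonzero, so $\U$ lies in the \emph{top} positroid cell of $\Gr(k+\l,n)$ and $h=\id0$. The affine permutation of $\U$ depends only on the row span of $\U$, not on its block decomposition into $\V$ and $\W$, so it carries no information about $f$ whatsoever. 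Consequently, ``$\Ut$ lies in the positroid cell for $h^{-1}$'' is the trivial statement that $\Ut\in\topcell$, and you cannot read off from it which positroid cell $\Vt$ occupies in $\Gr(\l,n)$, nor that $\Wt$ lands in $\Grtperp{\l+m}n$.

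The paper supplies two ingredients you are missing. First, for the combinatorics of~\eqref{item:stw:inverse}, Lemma~\ref{lemma:ranks_after_twist} computes $\rk(\Vt;b-k,a+\l)$ directly from $\rk(\V;a,b)$ by an orthogonality argument using~\eqref{eq:stw}; combined with the rank characterization~\eqref{eq:ranks_inverse} of $f^{-1}$, this pins down the cell of $\Vt$ \emph{once positivity is known}. Second, and this is the step your proposal does not anticipate, the positivity in~\eqref{item:stw:positivity} (that the nonzero Pl\"ucker coordinates of $\Vt$ and $\alt(\Wt)$ are actually positive, not merely nonzero) is obtained by a continuity argument: one exhibits a single explicit pair $(\V_0,\W_0)$ built from roots of unity (the cyclically symmetric amplituhedron, Proposition~\ref{prop:cs}) for which the image is verified by hand, and then connects an arbitrary $(\V,\W)$ to it by a path along which the relevant circular minors cannot vanish (Lemma~\ref{lemma:ranks_after_twist} again). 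Nothing in the Muller--Speyer positroid-cell statement for $\Gr(k+\l,n)$ gives this, because the positivity in question concerns the blocks $\Wt,\Vt$ separately.
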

By Theorem \ref{thm:MuS}, we conclude that $\stw$ restricts to a homeomorphism 
\[\stw:\Grtnnm(k,n)\times \Grtperp{k+m}n\to \Grtperp{\l+m}n\times \Grtnnm(\l,n),\]
and for each $f\in\Aff(-k,\l)$, $\stw$ restricts to a diffeomorphism 
\begin{equation}\label{eq:stw_regular_map}
\stw:\pc k f\times \Grtperp{k+m}n\to \Grtperp{\l+m}n\times \pc \l {f^{-1}}.
\end{equation}

Finally, we note that $\stw$ ``preserves the fibers'' of the map $Z:\Grtnnm(k,n)\to\amplk$. More precisely, given $\V\in\Grtnnm(k,n)$, let us define a \emph{fiber} $\Fib(\V,Z)$ of $Z$ by
\begin{equation}\label{eq:fiber_dfn}
\Fib(\V,Z):=\{\V'\in\Grtnn(k,n)\mid \V\cdot Z^\transp=\V'\cdot Z^\transp\}.
\end{equation}
As we will see later in Proposition~\ref{prop:fibers_Grtnnm}, for $\V\in\Grtnnm(k,n)$ we have $\Fib(\V,Z)\subset\Grtnnm(k,n)$.
\begin{proposition}\label{prop:fibers}
  Let $\V\in\Grtnnm(k,n)$, $Z\in\Grtp(k+m,n)$, $\W:=Z^\perp\in\Grtperp{k+m}n$, $\stw(\V,\W)=(\Wt,\Vt)$, and $\Zt:=\Wt^\perp\in\Grtp(\l+m,n)$. Then for any $\V'\in\Fib(\V,Z)$, we have $\stw(\V',\W)=(\Wt,\Vt')$ for some $\Vt'\in\Fib(\Vt,\Zt)$. The map $\V'\mapsto \Vt'$ is a homeomorphism between $\Fib(\V,Z)\subset \Grtnnm(k,n)$ and $\Fib(\Vt,\Zt)\subset\Grtnnm(\l,n)$.
\end{proposition}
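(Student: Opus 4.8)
The plan is to reduce the entire statement to one covariance identity for the twist $\twist$ under $\GL_{k+\l}$, together with the block-triangular shape of the matrix that relates a fiber of $Z$ to the corresponding fiber of $\Zt$. First I would record the covariance: for every $M\in\GL_{k+\l}$ and every $\U\in\topcell$ one has $M\U\in\topcell$, because the window-basis condition of Definition~\ref{dfn:topcell} is invariant under left multiplication; and comparing the defining equations~\eqref{eq:stw} for $\twist(M\U)$ and $\twist(\U)$ --- using $\langle x,M\u(i)\rangle=\langle M^\transp x,\u(i)\rangle$ --- shows, by the uniqueness of the columns, that
\[
\twist(M\U)=(M^{-1})^\transp\,\twist(\U).
\]
Substituting $\U=\twist^{-1}(\,\cdot\,)$ shows that the left twist $\twist^{-1}$ obeys the same identity.

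Next I would prove the first assertion. Fix $\V'\in\Fib(\V,Z)$; by Proposition~\ref{prop:fibers_Grtnnm} it lies in $\Grtnnm(k,n)$, so $\stack(\V',\W)\in\topcell$ and $\stw(\V',\W)$ is defined (Theorem~\ref{thm:stw}). From $\V'Z^\transp=\V Z^\transp$ each row of $\V'-\V$ is killed by $Z^\transp$, hence lies in the row span of $\W=Z^\perp$; thus $\V'=\V+C\W$ for a unique $k\times\l$ matrix $C$, and $\stack(\V',\W)=M\cdot\stack(\V,\W)$ with $M=\bigl(\begin{smallmatrix}I_k&C\\0&I_\l\end{smallmatrix}\bigr)$. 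Now $(M^{-1})^\transp=\bigl(\begin{smallmatrix}I_k&0\\-C^\transp&I_\l\end{smallmatrix}\bigr)$ is block lower triangular with $I_k$ in the top-left corner, so applied to $\Ut=\stack(\Wt,\Vt)$ it fixes the top $k$ rows and alters only the bottom $\l$ ones. Hence the covariance identity gives $\stw(\V',\W)=(\Wt,\Vt')$ with the \emph{same} first component $\Wt$ and with $\Vt'=\Vt-C^\transp\Wt$. Since every row of $\Vt'-\Vt$ then lies in the row span of $\Wt$ and $\Wt\Zt^\transp=0$ (because $\Zt=\Wt^\perp$), we get $\Vt'\Zt^\transp=\Vt\Zt^\transp$; together with $\Vt'\in\Grtnnm(\l,n)$ (Theorem~\ref{thm:stw}) this gives $\Vt'\in\Fib(\Vt,\Zt)$, and the latter lies in $\Grtnnm(\l,n)$ by Proposition~\ref{prop:fibers_Grtnnm}.

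Finally, for the homeomorphism I would note that $\V'\mapsto\Vt'$ is the composite of $\V'\mapsto\stw(\V',\W)$ with the projection onto the second factor, hence continuous by the homeomorphism $\stw\colon\Grtnnm(k,n)\times\Grtperp{k+m}n\to\Grtperp{\l+m}n\times\Grtnnm(\l,n)$ recorded after Theorem~\ref{thm:stw}, and that by the previous paragraph it maps $\Fib(\V,Z)$ into $\{\Wt\}\times\Fib(\Vt,\Zt)$. Running the same computation for $\stw^{-1}$ (which is continuous on $\Grtperp{\l+m}n\times\Grtnnm(\l,n)$ and whose underlying left twist obeys the same covariance): for $\Vt'\in\Fib(\Vt,\Zt)$, writing $\Vt'=\Vt+D\Wt$ yields $\stw^{-1}(\Wt,\Vt')=(\V',\W)$ with $\V'=\V-D^\transp\W\in\Fib(\V,Z)$, so $\Vt'\mapsto\V'$ is continuous as well. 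Since $\stw$ and $\stw^{-1}$ are mutually inverse and, by the previous paragraph, the first component of $\stw(\V',\W)$ is $\Wt$ for every $\V'\in\Fib(\V,Z)$ (and dually the second component of $\stw^{-1}(\Wt,\Vt')$ is $\W$), the two maps $\V'\mapsto\Vt'$ and $\Vt'\mapsto\V'$ are mutually inverse, so the former is a homeomorphism $\Fib(\V,Z)\to\Fib(\Vt,\Zt)$.

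I expect the main obstacle to be keeping the covariance identity and the block-triangular bookkeeping straight: this is precisely what pins the $\Wt$-component down along a $Z$-fiber while identifying the induced motion on the image side as motion along a $\Zt$-fiber; everything else follows. The only remaining care is minor --- one should check that $\Fib(\V,Z)$ can be treated faithfully as a set of matrices (true since $\V Z^\transp$ has rank $k$ for $\V\in\Grtnnm(k,n)$, so no two matrix representatives satisfying $\V Z^\transp=\V'Z^\transp$ differ by a nontrivial element of $\GL_k$), and that the left twist inherits the covariance law, which is formal.
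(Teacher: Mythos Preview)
Your proof is correct and follows essentially the same route as the paper: the covariance identity $\twist(M\U)=(M^{-1})^\transp\twist(\U)$ is exactly Lemma~\ref{lemma:inverse_transpose}, and your block-triangular computation showing $\Wt$ is fixed while $\Vt'=\Vt-C^\transp\Wt$ is precisely the content of Lemma~\ref{lemma:A^T}, from which the paper deduces Proposition~\ref{prop:fibers} in one line. You have simply unpacked the argument in more detail, including the homeomorphism claim via the inverse (left) twist, which the paper leaves implicit.
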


We postpone the proof of Theorem~\ref{thm:stw} and Proposition~\ref{prop:fibers} until Section~\ref{sec:stw_properties}. Let us now give a simple proof of Theorem~\ref{thm:main} using these results.

\begin{proof}[Proof of Theorem~\ref{thm:main}]
  Let $f\in\Aff(-k,\l)$ be an affine permutation. Then $f$ is $(n,k,m)$-admissible if and only if $\pc k f\cap\Fib(\V,Z)$ contains at most one point for each $\V\in\Grtnnm(k,n)$ and $Z\in\Grtp(k+m,n)$. By Proposition~\ref{prop:fibers}, this fiber $\Fib(\V,Z)$ maps bijectively to a fiber $\Fib(\Vt,\Zt)$ for some $\Zt\in\Grtp(\l+m,n)$ and $\Vt\in\Grtnnm(\l,n)$. Moreover, the points of $\pc k f\cap\Fib(\V,Z)$ map bijectively to the points of $\pc \l {f^{-1}} \cap \Fib(\Vt,\Zt)$ by Theorem~\ref{thm:stw}, part~\eqref{item:stw:inverse}. This proves Theorem~\ref{thm:main}, part~\eqref{item:main:admissible}.

  Similarly, if $f,g\in\Aff(-k,\l)$ are two affine permutations then they are $(n,k,m)$-compatible if and only if either one of $\pc k f\cap \Fib(\V,Z)$ or $\pc k g\cap \Fib(\V,Z)$ is empty for each $\V\in\Grtnnm(k,n)$ and $Z\in\Grtp(k+m,n)$. Again by Proposition~\ref{prop:fibers} and Theorem~\ref{thm:stw}, part~\eqref{item:stw:inverse}, this is equivalent to saying that either one of $\pc \l {f^{-1}}\cap \Fib(\Vt,\Zt)$ or $\pc \l {g^{-1}}\cap \Fib(\Vt,\Zt)$ is empty, showing Theorem~\ref{thm:main}, part~\eqref{item:main:compatible}.

  Finally, let $f_1,\dots,f_N\in\Aff(-k,\l)$ be a collection of $(n,k,m)$-admissible and pairwise $(n,k,m)$-compatible affine permutations. By the previous two parts, we see that $f_1^{-1},\dots,f_N^{-1}\in\Aff(-\l,k)$ are $(n,\l,m)$-admissible and pairwise $(n,\l,m)$-compatible. Suppose now that $f_1,\dots,f_N$ form an $(n,k,m)$-triangulation. This means that the union of the images $Z(\pctnn k {f_1})$, $\dots$, $Z(\pctnn k{f_N})$ equals to $\ampl$ for each $Z\in\Grtp(k+m,n)$. In particular, this implies that for all $\V\in\Grtnnm(k,n)$ and $Z\in\Grtp(k+m,n)$, $\Fib(\V,Z)$ contains a point $\V'$ of $\pctnn k {f_s}$ for some $1\leq s\leq N$. Since $\V$ belongs to $\Grtnnm(k,n)$, there is a unique affine permutation $f\in\Aff(-k,\l)$ such that $\V'\in\pc kf\subset\pctnn k{f_s}$. Applying Theorem~\ref{thm:stw}, part~\eqref{item:stw:inverse} together with Proposition~\ref{prop:fibers}, we get that for all $\Vt\in\Grtnnm(\l,n)$ and $\Zt\in\Grtp(\l+m,n)$, $\Fib(\Vt,\Zt)$ contains a point $\Vt'\in\pc \l{f^{-1}}\subset\pctnn \l {f_s^{-1}}$ for some $1\leq s\leq N$. Thus the union of the images $\Zt(\pctnn \l {f_1^{-1}}),\dots,\Zt(\pctnn \l{f_N^{-1}})$ contains $\ampll$. Since this union is closed, it must contain the closure of $\ampll$ which is clearly equal to $\amplnlmZ$ (see also Proposition~\ref{prop:open_dense}), finishing the proof of Theorem~\ref{thm:main}, part~\eqref{item:main:triang}.
\end{proof}

\section{The canonical form}\label{sec:can_forms}
In this section, we give an accessible introduction to the canonical form on the Grassmannian and explain its relationship with the stacked twist map.

\subsection{The canonical form for the top cell}
The Grassmannian $\Gr(k,n)$ is a $k(n-k)$-dimensional manifold. It can be covered by various coordinate charts. For example, one can represent any generic element $X\in\Gr(k,n)$ as the row span of a $k\times n$ block matrix $[\Id_k\mid A]$ for a unique $k\times (n-k)$ matrix $A$. Here $\Id_k$ denotes the $k\times k$ identity matrix. Alternatively, any generic $X$ is determined by its Pl\"ucker coordinates lying in a \emph{cluster} $\{\Delta_J\mid J\in\Ccal\}$ (see~\cite{FZ,scott_06} and Definition~\ref{dfn:clusters}), where $\Ccal$ is a collection of $k(n-k)+1$ subsets of $[n]$, each of size $k$, satisfying certain properties. Here we view $\{\Delta_J\mid J\in\Ccal\}$ as an element of the $k(n-k)$-dimensional projective space. There is a (rational) differential $k(n-k)$-form $\omk$ on $\Gr(k,n)$ with remarkable properties called \emph{the canonical form}. Before we give its definition, we note that $\omk$ is a rational \emph{top form} (i.e., an $r$-form on an $r$-dimensional manifold), and on the chart $[\Id_k\mid A]$ it can therefore be written as $f(A)\,\D k{(n-k)}A$, where $f(A)$ is some rational function of the entries of $A$, and
\[\D k{(n-k)}A:=da_{1,1}\wedge da_{1,2}\wedge\dots\wedge da_{k,n-k}.\]
Thus defining $\omk$ amounts to giving a formula for $f(A)$. For $X\in\Gr(k,n)$, define
\[\pmin k X:=\prod_{i=1}^n \Delta_{\{i,i+1,\dots,i+k-1\}}(X)\]
to be the product of all \emph{circular} $k\times k$ minors of $X$ (the indices are taken modulo $n$).
\begin{definition}\label{dfn:canonical_form_top_cell}
  The canonical form $\omk$ on $\Gr(k,n)$ is defined as follows: if $X\in\Gr(k,n)$ is the row span of $[\Id_k\mid A]$ for a $k\times (n-k)$ matrix $A$ then
  \[\omk(X):=\pm\frac{\D k{(n-k)}A}{\pmin k X}.\]
Until Section~\ref{sec:ampl_form}, we view $\omk$ as only being defined up to a sign.
\end{definition}

\begin{example}
Consider an element $X\in\Gr(2,4)$ given by the row span of
\begin{equation}\label{eq:2x4_matrix}
\begin{pmatrix}
    1 & 0 & p & q\\
    0 & 1 & r & s
  \end{pmatrix}.
\end{equation}
Then we have
\begin{equation}\label{eq:2x4_form}
\omk(X)=\pm\frac{dp\wedge dq\wedge dr\wedge ds}{p(ps-rq)s}.
\end{equation}
\end{example}

What makes $\omk$ special is that it can be alternatively computed using other coordinate charts on $\Gr(k,n)$ in a simple way. For the proof of the following result, see~\cite[Proposition~13.3]{LamCDM}.
\begin{proposition}\label{prop:form:clusters}
  Suppose that the Pl\"ucker coordinates $\{\Delta_J\mid J\in\Ccal\}$ form a cluster for some $\Ccal=\{J_0,J_1,\dots,J_{k(n-k)}\}$ (see Definition~\ref{dfn:clusters}), and suppose that they are normalized so that $\Delta_{J_0}(X')=1$ for all $X'$ in the neighborhood of $X\in\Gr(k,n)$. Then
  \begin{equation} \label{eq:cancluster} \omk(X)=\pm \frac{d(\Delta_{J_1}(X))\wedge d(\Delta_{J_2}(X))\wedge\dots\wedge d(\Delta_{J_{k(n-k)}}(X))}{\Delta_{J_1}(X) \cdot\Delta_{J_2}(X)\cdots\Delta_{J_{k(n-k)}}(X)}.\end{equation}
\end{proposition}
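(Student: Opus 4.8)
The plan is to reduce the proposition to two statements: (a) the right-hand side of \eqref{eq:cancluster} does not depend, up to sign, on which cluster $\Ccal$ one uses; and (b) it equals $\omk$ for one conveniently chosen cluster. Together, (a) and (b) yield the formula for every cluster, so I would organize the proof around these.

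For (a), it is cleanest to pass to the affine cone $\widehat{\Gr}(k,n)$ over the Grassmannian, on which each Pl\"ucker coordinate $\Delta_J$ is an honest function and a cluster $\Ccal=\{J_0,\dots,J_{k(n-k)}\}$ of size $k(n-k)+1$ furnishes a birational coordinate system. On the cone one forms the top-degree form $\Omega_{\Ccal}:=\bigwedge_{i=0}^{k(n-k)} d\log\Delta_{J_i}$, and I would show it changes only by a sign under a single mutation. Indeed, mutation replaces some $\Delta_J$ by $\Delta_{J'}$ with an exchange relation $\Delta_J\Delta_{J'}=M_1+M_2$, where $M_1,M_2$ are monomials in the cluster variables common to both seeds; hence $d\log\Delta_{J'}=-d\log\Delta_J+d\log(M_1+M_2)$, and $d\log(M_1+M_2)$ is a combination of the $d\log$ of those common variables. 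Wedging with the product of $d\log$ of all common variables therefore annihilates the $d\log(M_1+M_2)$ term, so $\Omega_{\Ccal}$ changes by $-1$; since the cluster exchange graph of $\Gr(k,n)$ is connected \cite{FZ,scott_06}, $\Omega_{\Ccal}$ is cluster-independent up to sign. Contracting $\Omega_{\Ccal}$ with the scaling (Euler) vector field and restricting to the slice $\{\Delta_{J_0}=1\}$ descends it to exactly the right-hand side of \eqref{eq:cancluster} on $\Gr(k,n)$, an operation manifestly compatible with the overall sign; this gives (a).

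For (b), I would take the cluster whose frozen variables are the circular minors $\Delta_{\{i,i+1,\dots,i+k-1\}}$, $i\in[n]$, set $J_0:=\{1,\dots,k\}$, and compute in the chart in which $X$ is the row span of $[\Id_k\mid A]$. There $\Delta_{J_0}(X)=1$, each entry $a_{i,j}$ of $A$ equals, up to sign, $\Delta_{([k]\setminus\{i\})\cup\{k+j\}}(X)$, and the remaining $n-1$ frozen minors evaluate to $\pm1$ or to minors of $A$; crucially, the product of all $n$ circular minors at $X$ is $\pmin k X$ by definition, and $\Delta_{J_0}=1$, so $\prod_{l\ge1}\Delta_{J_l}=\pmin k X\cdot\prod_{\text{mutable }l}\Delta_{J_l}$. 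Expressing the right-hand side of \eqref{eq:cancluster} in the coordinates $a_{i,j}$, it becomes $\pm\bigl(\bigwedge_{l\ge1}d\Delta_{J_l}\bigr)\big/\bigl(\pmin k X\cdot\prod_{\text{mutable}}\Delta_{J_l}\bigr)$; and a direct computation of the determinant $\det\bigl(\partial\Delta_{J_l}/\partial a_{i,j}\bigr)$, done by expanding each $d\Delta_{J_l}$ and tracking which monomial survives the wedge, shows $\bigwedge_{l\ge1}d\Delta_{J_l}=\pm\bigl(\prod_{\text{mutable}}\Delta_{J_l}\bigr)\,\D k{(n-k)}A$. Hence the right-hand side equals $\pm\,\D k{(n-k)}A/\pmin k X=\omk$ by Definition~\ref{dfn:canonical_form_top_cell}. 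Since the right-hand side of \eqref{eq:cancluster} is cluster-independent by (a), this single verification completes the proof.

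The main obstacle is step (b): the mutation invariance in (a) is essentially formal, but the base case requires pinning down a cluster well adapted to the chart $[\Id_k\mid A]$ together with an honest, sign-aware Jacobian computation tying together $\D k{(n-k)}A$, the product of the frozen circular minors, and the mutable cluster coordinates. The sign ambiguity in the definition of $\omk$ is genuinely helpful here and lets one postpone all sign bookkeeping to Section~\ref{sec:ampl_form}, as the paper does. A secondary technical point is making precise the descent from the affine cone to $\Gr(k,n)$ (contraction with the scaling field, restriction to $\{\Delta_{J_0}=1\}$) and checking that it commutes with mutation, which once more reduces to the $\pm1$ rescaling already observed.
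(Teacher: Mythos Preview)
The paper does not give its own proof of this proposition: immediately before the statement it writes ``For the proof of the following result, see~\cite[Proposition~13.3]{LamCDM},'' and it later points to the same reference (Section~13 of \cite{LamCDM}) for the equivalence of the three descriptions of $\omk$. So there is no in-paper argument to compare against; what you have written is an independent attempt.

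Your overall strategy is the standard one and is correct in outline. Part (a), mutation invariance of the $d\log$ volume form up to sign, is essentially complete: the exchange relation $\Delta_J\Delta_{J'}=M_1+M_2$ gives $d\log\Delta_{J'}=-d\log\Delta_J+d\log(M_1+M_2)$, the second term dies in the wedge with the common variables, and connectedness of the exchange graph for $\Gr(k,n)$ is known. The descent from the cone to the slice $\{\Delta_{J_0}=1\}$ is also fine.

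Part (b), however, is where the content lies, and as written it is not yet a proof. Two concrete gaps: first, ``the cluster whose frozen variables are the circular minors'' does not single out a cluster, since every cluster in $\Gr(k,n)$ has those same frozen variables; you need to name the specific seed you intend (the rectangles cluster coming from the $\Le$-diagram is the usual choice, precisely because its Pl\"ucker labels are nested so that the change of variables $A\mapsto\{\Delta_{J_l}\}$ is triangular in a suitable order). Second, the asserted Jacobian identity $\bigwedge_{l\ge 1} d\Delta_{J_l}=\pm\bigl(\prod_{\text{mutable}}\Delta_{J_l}\bigr)\,\D k{(n-k)}A$ is exactly the nontrivial computation, and ``expanding each $d\Delta_{J_l}$ and tracking which monomial survives the wedge'' is a description of the task rather than an argument. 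For the rectangles seed this Jacobian is indeed the product of the mutable coordinates, and the clean way to see it is either via the triangularity just mentioned or, equivalently, via Postnikov's boundary-measurement parametrization of $\pc k{\id 0}$ by edge or face weights of a plabic graph, in which the canonical form is manifestly $\bigwedge d\log(\text{face weight})$; this is the route taken in \cite[Section~13]{LamCDM}. Until you either specify the seed and exhibit the triangular Jacobian, or invoke the plabic-graph parametrization, step (b) remains an assertion.
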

The normalization $\Delta_{J_0}(X')=1$ means that the homogeneous coordinates $\Delta_{J_i}(X)$ become rational functions on $\Gr(k,n)$.  Instead of normalization, one could also replace $\Delta_{J_i}(X)$ by the rational function $\Delta_{J_i}(X)/\Delta_{J_0}(X)$ in \eqref{eq:cancluster}. 

The form $\omk(X)$ depends neither on the choice of the cluster $\Ccal$ nor on the choice of $J_0\in\Ccal$. This result will serve as a definition of the canonical form for any positroid cell in the next section.

\begin{example}
There are two clusters in $\Gr(2,4)$: $\Ccal:=\{\{1,2\},\{2,3\},\{3,4\},\{1,4\},\{1,3\}\}$ and $\Ccal':=\{\{1,2\},\{2,3\},\{3,4\},\{1,4\},\{2,4\}\}$. For $X$ being the row span of the matrix given in~\eqref{eq:2x4_matrix}, we have
\[    \Delta_{12}(X)=1,\quad \Delta_{23}(X)=-p,\quad \Delta_{34}(X)=ps-rq,\quad \Delta_{14}(X)=s,\]
\[    \Delta_{13}(X)=r,\quad \Delta_{24}(X)=-q.\]
Letting $J_0$ be $\{1,2\}$ and applying Proposition~\ref{prop:form:clusters} to $\Ccal$ and $\Ccal'$ yields the following two expressions for $\omk(X)$:
\[\omk(X)=\pm\frac{dp\wedge d(ps-rq)\wedge ds\wedge dr}{p(ps-rq)sr}=\pm\frac{dp\wedge d(ps-rq)\wedge ds\wedge dq}{p(ps-rq)sq}.\]
One easily checks that each of these expressions is indeed equal up to a sign to the one given in~\eqref{eq:2x4_form}: we have $d(ps-rq)=p\,ds+s\,dp-r\,dq-q\,dr$, but only one of these four terms stays nonzero after taking the wedge with  $dp\wedge ds\wedge dr$ or $dp\wedge ds\wedge dq$.
\end{example}

\begin{remark}
Yet another simple alternative way to compute $\omk$ is to use Postnikov's parametrization~\cite{Pos} in terms of \emph{edge weights of a plabic graph}. See~\cite[Section~13]{LamCDM} for the proof that all three ways of computing $\omk$ produce the same answer.
\end{remark}
\begin{remark}
The definition of the canonical form given in Proposition~\ref{prop:form:clusters} generalizes to all cluster algebras, see~\cite[Section~5.7]{ABL}.
\end{remark}

Before we state the main result of this section, let us give a warm-up result on the relationship between the twist map of~\cite{MS} and the canonical form. Recall that $\twistpm:\topcell\to\topcell$ sends a matrix $U\in\topcell$ to another matrix $\Ut\in\topcell$ defined in~\eqref{eq:stw}. Similarly to Lemma~\ref{lemma:stw:Gr}, one can prove that $\twistpm$ descends to a rational map $\Gr(k+\l,n)\dashedrightarrow\Gr(k+\l,n)$. Let $\omkl$ denote the canonical form on $\Gr(k+\l,n)$. 

\begin{proposition}\label{prop:top_form_twist}
The twist map $\twistpm:\Gr(k+\l,n)\dashedrightarrow\Gr(k+\l,n)$ \emph{preserves} the canonical form $\omkl$ up to a sign. That is, $\omkl$ equals to the pullback $\pm\twistpm^\ast\omkl$ of $\omkl$ via $\twistpm$.
\end{proposition}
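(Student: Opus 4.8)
The plan is to prove the statement by computing the pullback of the canonical form $\omkl$ via $\twistpm$ using the cluster characterization of the canonical form from Proposition~\ref{prop:form:clusters}, exploiting the fact that the twist map of Marsh--Scott and Muller--Speyer has very controlled behavior on Pl\"ucker coordinates. First I would recall the explicit formula expressing the Pl\"ucker coordinates of $\Ut=\twistpm(\U)$ in terms of those of $\U$. The Muller--Speyer twist has the property that for the ``cyclic intervals'' $I_j := \{j-\l, j-\l+1, \dots, j+k-1\}$ of size $k+\l$ appearing in Definition~\ref{dfn:topcell}, the minor $\Delta_{I_j}(\Ut)$ is (up to a monomial in other circular minors of $\U$ and signs) the reciprocal of $\Delta_{I_j}(\U)$; more generally, the twist sends each cluster Pl\"ucker coordinate to a Laurent monomial times another Pl\"ucker coordinate. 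The precise statement I need is that $\twistpm$ is, on an appropriate open dense set, a \emph{quasi-cluster automorphism} (or at worst a composition of a cluster automorphism with a rescaling by frozen variables), in the sense that it permutes clusters and on each cluster acts by a monomial transformation with the combinatorial structure of a sequence of mutations (or the identity).

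Concretely, the key steps are: (1) fix a convenient cluster $\Ccal = \{J_0, J_1, \dots, J_{(k+\l)(n-k-\l)}\}$ of Pl\"ucker coordinates on $\Gr(k+\l,n)$ --- for instance one adapted to the cyclic intervals, built from a reduced plabic graph --- and normalize so $\Delta_{J_0}=1$; (2) invoke the Marsh--Scott/Muller--Speyer formulas to show that $\Delta_{J_i}(\twistpm(\U)) = (\text{monomial in the }\Delta_{J}(\U), J\in\Ccal) \cdot \Delta_{\sigma(J_i)}(\U)$ for some bijection $\sigma$ from $\Ccal$ to another cluster $\Ccal'$; (3) substitute into the cluster formula~\eqref{eq:cancluster} for $\twistpm^*\omkl$ and observe that the logarithmic-derivative shape of~\eqref{eq:cancluster} is invariant under monomial substitutions: if $y_i = \prod_j x_j^{a_{ij}}$ with $(a_{ij})$ having determinant $\pm1$, then $\bigwedge_i \frac{dy_i}{y_i} = \pm \bigwedge_j \frac{dx_j}{x_j}$. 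Here the unimodularity of the exponent matrix is exactly the statement that the monomial transformation comes from a sequence of cluster mutations (each mutation has a unimodular exchange relation at the level of $d\log$'s, a standard fact used already for the invariance of $\omk$ under cluster transformations in~\cite[Section~5.7]{ABL} and~\cite[Section~13]{LamCDM}); (4) conclude $\twistpm^*\omkl = \pm\,\frac{d\Delta_{\sigma(J_1)}\wedge\cdots}{\Delta_{\sigma(J_1)}\cdots} = \pm\,\omkl$ by applying Proposition~\ref{prop:form:clusters} again, now to the cluster $\Ccal'$, since $\omkl$ is independent of the choice of cluster.

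The main obstacle I anticipate is step~(2): pinning down precisely which cluster $\Ccal$ is sent to which cluster $\Ccal'$ under $\twistpm$, and verifying that the exponent matrix of the induced monomial map is genuinely unimodular rather than merely integral. The twist is known to interact well with the Scott cluster structure, but the bookkeeping of signs (the $(-1)^j$ in~\eqref{eq:stw}, the quasi-periodicity $\ut(j+n)=(-1)^{\l-1}\ut(j)$) and of the frozen-variable rescaling distinguishing the Muller--Speyer twist from the Marsh--Scott twist must be handled carefully; since the statement is only up to a sign, the sign issues are harmless, but the combinatorial matching still requires care. An alternative, possibly cleaner route --- which I would pursue if the cluster bookkeeping becomes unwieldy --- is to use Postnikov's plabic-graph boundary-measurement parametrization: Muller--Speyer show the twist corresponds to a simple reversal/relabeling operation on plabic graphs (``$\partial$''-reversal), and under the edge-weight parametrization the canonical form $\omkl$ is (up to sign) the standard $d\log$ form in the edge weights (see~\cite[Section~13]{LamCDM}); checking that the twist acts on edge weights by a monomial transformation with unimodular exponent matrix is then a finite, explicit verification. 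Either way, the crux is the same invariance principle: the canonical form is the wedge of $d\log$'s of any cluster, and the twist permutes clusters by unimodular monomial maps.
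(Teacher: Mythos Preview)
Your proposal is correct and follows essentially the same approach as the paper: exhibit two clusters $\Ccal,\Ccal'$ related by the twist via Laurent monomial formulas, then use the $d\log$ invariance under unimodular monomial substitutions together with Proposition~\ref{prop:form:clusters}. The paper resolves your anticipated obstacle~(2) by choosing the clusters very concretely: $\Ccal$ consists of those $I\in\binom{[n]}{k+\l}$ which are unions of two cyclic intervals one of which has the form $[c,1+k)$, and $\Ccal'$ is defined similarly with one interval of the form $[1,1+p)$; then the Marsh--Scott formula (Lemma~\ref{lemma:I_J}) gives $\Delta_J(\Ut)=\pm\Delta_I(\U)/(\Delta_K(\U)\Delta_L(\U))$ for $J\in\Ccal'$ and cyclic intervals $K,L$, while Lemma~\ref{lemma:max_minors_of_U} gives $\Delta_J(\Ut)=\pm\Delta_I(\U)^{-1}$ on cyclic intervals. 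The exponent matrix is thus block-triangular (the cyclic-interval block is $-\mathrm{Id}$, the remaining block is $\mathrm{Id}$ modulo columns already present in the wedge), so unimodularity is automatic and the argument concludes exactly as you outlined.
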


\begin{example}
Let $k+\l=2$, $n=4$, and suppose that $\U$ is the row span of~\eqref{eq:2x4_matrix}. After possibly switching the signs of some columns, $\Ut:=\twistpm(\U)$ is the row span of
\[
\begin{pmatrix}
  0 & 1 & r & s\\
  -1 & 0 & -p & -q
\end{pmatrix} \cdot \diag \left(\frac1s,1,\frac1p,\frac1{ps-rq}\right)\simeq_{k+\l}
\begin{pmatrix}
  1 & 0 & s & \frac{qs}{ps-rq}\\
  0 & 1 & \frac r p & \frac s{ps-rq}
\end{pmatrix}.
\]
Here $\diag$ denotes a diagonal matrix, and the matrix on the right hand side is obtained from the matrix on the left hand side by multiplying the bottom row by $-s$ and switching the two rows (these operations preserve its row span). Thus a cluster of Pl\"ucker coordinates of $\Ut$ is given by
\[\Delta_{12}=\pm1,\quad \Delta_{23}=\pm s,\quad \Delta_{34}=\pm \frac s p,\quad \Delta_{14}=\pm \frac s{ps-rq},\quad \Delta_{13}=\pm\frac r p.\]
Applying Proposition~\ref{prop:form:clusters}, we see that the pullback of $\omkl$ under $\twistpm$ is equal to
\begin{equation}\label{eq:2x4_form:twist}
  \pm\frac{ds\wedge d \left(\frac{s}{p}\right)\wedge d\left(\frac{r}{p}\right)\wedge d\left(\frac{s}{ps-rq}\right)}{s\cdot \frac{s}{p}\cdot \frac{r}{p}\cdot \frac{s}{ps-rq}}= \pm\frac{ds\wedge \frac{s\,dp}{p^2}\wedge \frac{dr}{p}\wedge \frac{sr\,dq}{(ps-rq)^2}}{s\cdot \frac sp\cdot \frac rp\cdot \frac s{ps-rq}}.
\end{equation}
Here e.g. $d(\Delta_{23})=\pm ds$ and $d(\Delta_{34})=\pm \left(\frac{ds}{p}-\frac{s\,dp}{p^2}\right)$, but because we are already taking the wedge with $ds$, only the $\frac{s\,dp}{p^2}$ term appears in the numerator of~\eqref{eq:2x4_form:twist}. We see that after some cancellations, \eqref{eq:2x4_form:twist} indeed yields the same result as~\eqref{eq:2x4_form}, in agreement with Proposition~\ref{prop:top_form_twist}.
\end{example}

Consider the manifold $\Gr(k,n)\times \Gr(\l,n)$ for some $k+\l+m=n$ as above with $m$ even. One can consider a top form $\omk\wedge\oml$ on this manifold. We are now ready to state the main result of this section. 
\begin{theorem}\label{thm:top_form_stw}
The stacked twist map $\stw:\Gr(k,n)\times \Gr(\l,n)\dashedrightarrow\Gr(k,n)\times \Gr(\l,n)$ preserves the form $\omk\wedge\oml$ up to a sign.
\end{theorem}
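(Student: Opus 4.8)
The plan is to deduce this from Proposition~\ref{prop:top_form_twist} by factoring $\omk\wedge\oml$ through the (descended) stacking map $\stack\colon\Gr(k,n)\times\Gr(\l,n)\dashedrightarrow\Gr(k+\l,n)$, $(\V,\W)\mapsto\V\oplus\W$. Since $\stw$ is rational, it suffices to work on a dense open set; fix the block affine chart in which $\V$ is the row span of $[\Id_k\mid A_1\mid A_2]$ and $\W$ is the row span of $[C\mid\Id_\l\mid D]$, where $A_1$ is $k\times\l$, $C$ is $\l\times k$, and $A_2,D$ are $k\times m$ and $\l\times m$. Then $\U:=\stack(\V,\W)$ is the row span of $\bigl(\begin{smallmatrix}\Id_k&A_1&A_2\\ C&\Id_\l&D\end{smallmatrix}\bigr)$; setting $P:=\bigl(\begin{smallmatrix}\Id_k&A_1\\ C&\Id_\l\end{smallmatrix}\bigr)$ and $E:=P^{-1}\bigl(\begin{smallmatrix}A_2\\ D\end{smallmatrix}\bigr)$, this matrix row-reduces to $[\Id_{k+\l}\mid E]$, so $(A_1,C,E)$ is a coordinate system adapted to $\stack$: the fiber of $\stack$ over a point is a locus $\{E=\mathrm{const}\}$ of dimension $2k\l$, and $(A_1,C)$ are coordinates along it.

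First I would establish, by a direct computation with Definition~\ref{dfn:canonical_form_top_cell} in these charts (using that the canonical form is cyclically invariant up to sign, to handle the shifted identity block of $\W$, and the substitution $\bigl(\begin{smallmatrix}A_2\\ D\end{smallmatrix}\bigr)=PE$, whose Jacobian on $m$-column matrices is $(\det P)^m$), the identity of rational top forms
\[\omk\wedge\oml \;=\; \pm\,\stack^{\ast}\omkl \;\wedge\;\nu ,\qquad \nu\;:=\;\frac{(\det P)^m\,\pmin{k+\l}{[\Id_{k+\l}\mid E]}}{\pmin k\V\cdot\pmin\l\W}\;dA_1\wedge dC .\]
Here $\stack^{\ast}\omkl=\pm\,dE/\pmin{k+\l}{[\Id_{k+\l}\mid E]}$ involves only the differentials $dE$ (because $\U$ as a subspace depends only on $E$), so $\nu$ is a ``relative top form'' transverse to the fibers of $\stack$; the quantity $\pmin{k+\l}{[\Id_{k+\l}\mid E]}$ differs from $\pmin{k+\l}\U$ only by a power of $\det P$.

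Next I would use the commuting square $\stack\circ\stw=\twist\circ\stack$ of~\eqref{eq:stw:cd}. Write $(\Wt,\Vt)=\stw(\V,\W)$ and apply the displayed identity on the target copy, obtaining $\omk\wedge\oml=\pm\,\stack^{\ast}\omkl\wedge\tilde\nu$ for the analogous form $\tilde\nu$ built from $\Wt,\Vt$. Since $\stw^{\ast}\stack^{\ast}\omkl=\stack^{\ast}\twist^{\ast}\omkl=\pm\,\stack^{\ast}\omkl$ by Proposition~\ref{prop:top_form_twist}, pulling back yields
\[\stw^{\ast}\!\left(\omk\wedge\oml\right)\;=\;\pm\,\stack^{\ast}\omkl\;\wedge\;\stw^{\ast}\tilde\nu .\]
Because $\stack^{\ast}\omkl$ already exhausts the $dE$-directions, only the part of $\stw^{\ast}\tilde\nu$ containing no $dE$ contributes to the wedge, and that part equals $\det(J_{\mathrm{fib}})\cdot\bigl((\text{coefficient of }\tilde\nu)\circ\stw\bigr)\cdot dA_1\wedge dC$, where $J_{\mathrm{fib}}$ is the Jacobian of $\stw$ restricted to a fiber of $\stack$ (which $\stw$ maps to the fiber over $\twist(\U)$), read in the coordinates $A_1,C$ and their tilded analogues. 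Thus the theorem reduces to the single fiberwise claim that $\det(J_{\mathrm{fib}})$ times the pulled-back coefficient of $\tilde\nu$ equals $\pm$ the coefficient of $\nu$ — that is, that $\stw$ preserves the relative form $\nu$ up to sign.

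The step I expect to be the main obstacle is exactly this fiberwise claim. I would prove it by reading off, from the orthogonality relations~\eqref{eq:stw} defining the twist on the block matrix $\U$, how the decomposition data $P$ (equivalently $A_1,C$) transforms under $\stw$ along a fixed fiber of $\stack$, and then checking that the resulting Jacobian combines with the transformation of $\det P$ and of the circular-minor products $\pmin k\V$, $\pmin\l\W$ to produce $\pm1$. This is where the hypothesis that $m$ is even enters: it makes the sign conventions $\u(j+n)=(-1)^{k-1}\u(j)$ and $\ut(j+n)=(-1)^{\l-1}\ut(j)$ compatible and keeps the circular minors transforming cyclically. One can instead organize the same computation in cluster language, relating chosen clusters of $\Gr(k,n)$, $\Gr(\l,n)$ and $\Gr(k+\l,n)$ through the stacking map and through Muller--Speyer's monomial description of the twist, and reducing by Proposition~\ref{prop:form:clusters} to the unimodularity of a certain exponent matrix; but in any presentation the essential analytic input is Proposition~\ref{prop:top_form_twist}, and what remains is linear algebra and a careful sign count.
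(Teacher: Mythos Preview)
Your plan is essentially the paper's proof: the paper uses the same coordinates (calling your $(A_1,C,E)$ instead $(T,S,A,B)$), derives the same factorization $\omk\wedge\oml=\pm\,\omkl\wedge\omega_{\Ker}$ (your $\nu$ is their $\omega_{\Ker}$), and reduces via Proposition~\ref{prop:top_form_twist} to the fiberwise invariance of $\omega_{\Ker}$ modulo $dE$.

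For the step you flag as the main obstacle, here is exactly how the paper executes it. First, the scalar factor of $\nu$ is handled directly: Lemma~\ref{lemma:consecutive_V_W} (together with Lemma~\ref{lemma:max_minors_of_U}) yields $\dfrac{\pmin{k+\l}\U}{\pmin k\V\cdot\pmin\l\W}=\pm\dfrac{\pmin{k+\l}\Ut}{\pmin k\Wt\cdot\pmin\l\Vt}$, so one is left with showing that $\dfrac{dA_1\wedge dC}{(\det P)^{k+\l}}$ is preserved modulo $dE$. The structural input is Lemma~\ref{lemma:split}: with $P=\M(T,S)$ one has $\twistpm\bigl(\M(T,S)\cdot\N(0,0,A,B)\bigr)\simkl \M(-S^\transp,-T^\transp)\cdot\twistpm\bigl(\N(0,0,A,B)\bigr)$. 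After a cyclic shift by $\l$, the submatrix of $\twistpm(\N(0,0,A,B))$ on the first $k+\l$ columns is upper triangular with $\pm1$'s on the diagonal and depends only on $E$ (Lemma~\ref{lemma:H_is_upper_tr}). Hence $\M(\Tt,\St)\simkl\M(-S^\transp,-T^\transp)\cdot(\text{upper triangular, constant in }T,S)$; since $(T,S)\mapsto(-S^\transp,-T^\transp)$ obviously preserves the form, the remaining invariance under right multiplication by such a matrix is checked by a direct Chevalley-generator computation (Lemma~\ref{lemma:upper_tr}). Your sketch points at the right Jacobian-versus-minor bookkeeping, but the clean mechanism that makes it go through is this upper-triangularity.
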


Proposition~\ref{prop:top_form_twist} has a short proof using the results of~\cite{MS} and~\cite{MuS}, see Section~\ref{sec:proof-prop-top-form}. On the other hand, the proof of Theorem~\ref{thm:top_form_stw} is quite involved and is given in Section~\ref{sec:proof-thm-forms}. See Section~\ref{sec:ex_forms} for an example.

\subsection{The canonical form for lower positroid cells}\label{sec:can_form_lower}

Given an affine permutation $f\in\Aff(-k,n-k)$, one can consider certain collections $\Ccal\subset{[n]\choose k}$ of $k$-element subsets of $[n]$ called \emph{clusters}, see Definition~\ref{dfn:pos_cell}. All clusters have the same size, and give a parametrization of $\pc k f$:
\begin{proposition}[\cite{OPS}]\label{prop:OPS}
  Let $\Ccal$ be a cluster for $f\in\Aff(-k,n-k)$. Then it has size
  \begin{equation}\label{eq:Ccal_size}
|\Ccal|=k(n-k)+1-\inv(f).
  \end{equation}
  The map $\pc k f \to \RP_{>0}^\Ccal$ given by $X\mapsto \{\Delta_J(X)\mid J\in\Ccal\}$ is a diffeomorphism.
\end{proposition}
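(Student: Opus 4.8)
The plan is to unfold the definition of a cluster $\Ccal$ for $f$ as the collection of face labels of a reduced plabic graph $G$ representing $f$ (Definition~\ref{dfn:pos_cell}), and to factor the evaluation map
\[
\Phi_\Ccal\colon \pc k f\to\RP_{>0}^\Ccal,\qquad X\longmapsto\{\Delta_J(X)\mid J\in\Ccal\}
\]
through Postnikov's parametrization of $\pc k f$ by the face weights of $G$.

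First I would prove~\eqref{eq:Ccal_size}. By~\cite{Pos}, a reduced plabic graph representing a positroid cell of dimension $d$ has a face-labelling that is injective with image of size $d+1$, and any two reduced plabic graphs for the same cell have the same number of faces; by~\cite{OPS}, every cluster for $f$ (maximal weakly separated collection inside the positroid $\positroid k f$) arises as such a set of face labels. Since $\dim\pc k f=k(n-k)-\inv(f)$, this yields $|\Ccal|=k(n-k)+1-\inv(f)$.

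Next, the diffeomorphism. By~\cite{Pos}, the boundary measurement map attached to $G$ is a diffeomorphism $\psi_G$ from the positive torus $\bigl(\RR_{>0}\bigr)^{F(G)}\!/\RR_{>0}$ of face weights (here $F(G)$ is the set of faces of $G$, of size $\dim\pc k f+1$, and $\RR_{>0}$ acts by overall rescaling) onto $\pc k f$. Every $J\in\Ccal$ is a face label, hence belongs to $\positroid k f$, so $\Delta_J>0$ on $\pc k f$ and $\Phi_\Ccal$ indeed lands in $\RP_{>0}^\Ccal$. The crux is that the face weights and the projectivized Plücker coordinates $\{\Delta_J\}_{J\in\Ccal}$ are related by an invertible monomial change of coordinates: in Postnikov's normalization the pullback $\psi_G^*\Delta_J$ is, up to sign, a Laurent monomial in the face weights for each $J\in\Ccal$, and the corresponding integer exponent matrix — passed to the quotients by $\RR_{>0}$ on both sides — is unimodular. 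Hence $\Phi_\Ccal\circ\psi_G$ is an isomorphism of positive tori onto $\RP_{>0}^\Ccal$ (giving in particular surjectivity and injectivity of $\Phi_\Ccal$), and composing with the diffeomorphism $\psi_G$ shows $\Phi_\Ccal$ is a diffeomorphism. In cluster-algebraic terms, the Plücker coordinates in $\Ccal$ form a cluster on the affine cone $\widehat{\pc k f}$, hence a homogeneous coordinate system, and evaluation at it is a toric chart whose restriction to the totally positive part is a diffeomorphism.

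The step I expect to be the main obstacle is precisely this last ingredient — the monomial dictionary between face weights and the face-label Plücker coordinates, and the unimodularity of its exponent matrix. For the top cell $\Grtp(k,n)$ it is the classical statement that the face labels of a reduced plabic graph form a cluster in the cluster algebra of $\Gr(k,n)$~\cite{FZ,scott_06}; for a general positroid cell it rests on the plabic-graph realization of weak separation and mutation in~\cite{OPS} (square moves acting by cluster mutations on the $\Ccal$-coordinates) together with the flow/Lindström expansion of Plücker coordinates from~\cite{Pos}, which makes each face-label Plücker a single monomial in the face weights. Once this is in place, matching the two $\RR_{>0}$-quotients and concluding that $\Phi_\Ccal$ is a diffeomorphism is routine.
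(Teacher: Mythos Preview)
The paper does not give a proof of this proposition: it is stated with the attribution \cite{OPS} and no argument is supplied. There is therefore nothing in the paper to compare your proposal against.

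That said, your outline is essentially the correct route through the literature (Postnikov's boundary measurement parametrization combined with the Oh--Postnikov--Speyer identification of clusters with face labels of reduced plabic graphs). Two small corrections: first, the reference to Definition~\ref{dfn:pos_cell} is off --- in this paper a cluster is defined in Definition~\ref{dfn:clusters} as a maximal weakly separated collection inside $\positroid k f$ containing the Grassmann necklace, not as a set of plabic-graph face labels; the equivalence of the two descriptions is precisely the content of \cite{OPS} that you are invoking. Second, the ``monomial dictionary'' step you flag as the main obstacle is indeed the substantive part, and it is handled in \cite{MS} (twist of Pl\"ucker coordinates) and \cite{MuS} rather than following directly from \cite{OPS} alone; the result as stated here packages together input from several of these sources.
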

Here $\RP_{>0}^{\Ccal}$ denotes the subset of the $(|\Ccal|-1)$-dimensional real projective space where all coordinates are positive.

Thus we have various coordinate charts on $\pc k f$, which now allows us to use Proposition~\ref{prop:form:clusters} to define a canonical form on this manifold.
\begin{definition}\label{dfn:form_lower_cell}
  Let $\Ccal=\{J_0,J_1,\dots,J_N\}$ be a cluster for $f\in\Aff(-k,n-k)$, where $N=k(n-k)-\inv(f)$. Suppose that the values of the Pl\"ucker coordinates are rescaled so that $\Delta_{J_0}(X')=1$ for $X'$ in the neighborhood of $X\in\pc k f$.  Then the \emph{canonical form} $\om k f$ on $\pc k f$ is defined by
  \[\om k f(X):= \pm \frac{d(\Delta_{J_1}(X))\wedge d(\Delta_{J_2}(X))\wedge\dots\wedge d(\Delta_{J_N}(X))}{\Delta_{J_1}(X) \cdot\Delta_{J_2}(X)\cdots\Delta_{J_N}(X)}.\]
\end{definition}

\begin{proposition}\label{prop:om_k_f_canonical}
The form $\om k f$ depends neither on the choice of $\Ccal$ nor on the choice of $J_0\in\Ccal$.
\end{proposition}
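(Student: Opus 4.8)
The plan is to separate the two asserted independences and to handle the dependence on the cluster $\Ccal$ by reducing it to invariance under a single mutation, in parallel with the argument for the top cell (Proposition~\ref{prop:form:clusters}).

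First I would dispose of the choice of $J_0\in\Ccal$, which is a formal property of ``projective $d\log$ forms'' having nothing to do with positroids: for any nowhere-vanishing functions $x_0,\dots,x_N$ on an $N$-dimensional manifold, the top form $\bigwedge_{i\ne j}d\log(x_i/x_j)$ is independent of $j$ up to sign. To see this, set $a_i:=d\log x_i$ and expand $\bigwedge_{i\ne j}(a_i-a_j)$; every term with two factors $a_j$ vanishes, so the product collapses to $\pm\sum_{p=0}^{N}(-1)^p\,a_0\wedge\cdots\wedge\widehat{a_p}\wedge\cdots\wedge a_N$, an expression manifestly symmetric in $x_0,\dots,x_N$ up to sign. (Equivalently, changing the normalizing index is a monomial change of coordinates on the torus of homogeneous coordinates whose exponent matrix has determinant $\pm1$, under which $\bigwedge d\log$ of torus coordinates is invariant up to sign; this is the computation behind \cite[Section~5.7]{ABL}.) Granting this, I may write $\om k f[\Ccal]$ for the form of Definition~\ref{dfn:form_lower_cell}, now depending a priori only on the cluster $\Ccal$.

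Next I would invoke the fact that any two clusters for $f$ are connected by a sequence of mutations --- the content of the theory of weakly separated collections underpinning Definition~\ref{dfn:pos_cell} \cite{OPS} --- to reduce to two clusters $\Ccal=\{J\}\sqcup\Ccal_0$ and $\Ccal'=\{J'\}\sqcup\Ccal_0$ differing by a single mutation at $J$, with $|\Ccal_0|=N$; if $N=0$ then $\pc k f$ is a point and there is nothing to prove. Such a mutation is governed by a three-term Plücker relation $\Delta_J\Delta_{J'}=P$, where $P$ is a sum of two monomials, each a product of two Plücker coordinates from $\{\Delta_L\mid L\in\Ccal_0\}$; in particular $P$ is homogeneous of degree two. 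Using the $J_0$-independence just established, I may normalize $\Ccal$ at $J$ and $\Ccal'$ at $J'$, so that $\om k f[\Ccal]=\pm\bigwedge_{L\in\Ccal_0}d\log(\Delta_L/\Delta_J)$ and $\om k f[\Ccal']=\pm\bigwedge_{L\in\Ccal_0}d\log(\Delta_L/\Delta_{J'})$. Passing to the chart where $\Delta_J\equiv1$ (so the $\Delta_L$, $L\in\Ccal_0$, are local coordinates and $\Delta_{J'}=P$), write $d\log P=\sum_{L\in\Ccal_0}c_L\,d\log\Delta_L$. A direct expansion, discarding every wedge with a repeated factor, gives $\om k f[\Ccal]=\pm\omega$ with $\omega:=\bigwedge_{L\in\Ccal_0}d\log\Delta_L$, and $\om k f[\Ccal']=\pm(1-\sum_{L\in\Ccal_0}c_L)\,\omega$. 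Since Euler's identity for the degree-two homogeneous polynomial $P$ gives $\sum_{L\in\Ccal_0}c_L=2$, this yields $\om k f[\Ccal']=\pm\omega=\pm\om k f[\Ccal]$, completing the reduction.

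The only non-formal ingredient above is the mutation-connectivity of clusters for $f$, which I would simply cite from \cite{OPS}; everything else is $d\log$-calculus, whose sole substantive point is the degree-two homogeneity of Plücker exchange relations (equivalently, the $\pm1$-determinant property of mutation matrices). I expect the main obstacle to be expository rather than mathematical: extracting from Definition~\ref{dfn:pos_cell} the exact shape of the exchange relation at a mutation of clusters for $f$ and confirming that it is a three-term Plücker relation homogeneous of degree two --- after which the computation is routine. I would also remark that this proposition is a special case of the general well-definedness of the canonical form of a cluster variety, see \cite[Section~5.7]{ABL}; the direct argument is included only for self-containedness, and in the top-cell case it recovers Proposition~\ref{prop:form:clusters}.
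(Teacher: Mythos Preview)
Your argument is correct, but it takes a different route from the paper. The paper does not give a self-contained proof: it cites \cite[Theorem~13.2]{LamCDM} and remarks that $\om k f$ can be obtained from the top-cell form $\omk$ by iterated residues along a saturated chain $\id 0 \gtrdot \cdots \gtrdot f$ in Bruhat order (Proposition~\ref{prop:residues}), whence well-definedness of $\om k f$ is inherited from that of $\omk$ (Proposition~\ref{prop:form:clusters}) together with the coordinate-independence of the residue operator. Your approach instead stays inside the cell $\pc k f$ and reduces everything to the degree-two homogeneity of the Pl\"ucker exchange relation at a single square move, together with the mutation-connectivity of clusters from \cite{OPS}. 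This is more elementary and, as you note, is exactly the general cluster-variety argument of \cite[Section~5.7]{ABL}; it avoids any appeal to the ambient Grassmannian or to residues. The paper's residue viewpoint, on the other hand, is what it actually needs later: the proof of Theorem~\ref{thm:lower_form_stw} proceeds by taking residues of the top-cell identity, so knowing that $\om k f$ arises as an iterated residue of $\omk$ is the structurally relevant fact there. One small point worth tightening in your write-up: you should say explicitly why the four Pl\"ucker coordinates appearing in the exchange binomial lie in $\Ccal_0$ (they are the face labels adjacent to the mutated square in the plabic graph, hence common to both clusters), and note that on a positroid variety one of the two monomials may vanish identically --- your Euler-identity step only uses homogeneity of degree two, so this does not affect the argument.
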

See~\cite[Theorem~13.2]{LamCDM} for a proof. In fact, we will see later that the form $\om k f$ can be obtained from the form $\omk=\om k{\id 0}$ by subsequently taking \emph{residues} (see Section~\ref{sec:residues} for a definition). This implies Proposition~\ref{prop:om_k_f_canonical}, as well as a generalization of Theorem~\ref{thm:top_form_stw} to lower cells which we now explain.

By~\eqref{eq:alt_duality}, the set $\Grtperp{k+m}n$ can be identified with $\Grtp(\l,n)$ by changing the sign of every second column, and since the form $\oml$ is only defined up to a sign, we can view $\oml$ as a top rational form on $\Grtperp{k+m}n$ instead.
The following is our main result regarding canonical forms.
\begin{theorem}\label{thm:lower_form_stw}
  For $f\in\Aff(-k,\l)$, under the stacked twist map diffeomorphism
  \[\stw:\pc k f\times \Grtperp{k+m}n\to \Grtperp{\l+m}n\times \pc \l {f^{-1}}\]
  (see~\eqref{eq:stw_regular_map}) we have 
  \begin{equation}\label{eq:lower_form_stw}
  \stw^*(\omk\wedge\om \l{f^{-1}}) =  \pm \om k f\wedge\oml.
  \end{equation}
\end{theorem}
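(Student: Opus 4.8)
The plan is to reduce the identity \eqref{eq:lower_form_stw} to its top-cell special case $f=\id0$, which is precisely Theorem~\ref{thm:top_form_stw}, and then to descend through the positroid stratification one covering relation at a time by taking residues. The descent tool is the fact, to be established in Section~\ref{sec:residues}, that $\om k f$ is obtained from $\omk=\om k{\id0}$ by successively taking residues along codimension-one positroid strata; concretely, if $g\lessdot f$ is a covering relation in the positroid closure order then $\om k g=\pm\Res_{\pc k g}\om k f$, and likewise $\om \l{g^{-1}}=\pm\Res_{\pc \l{g^{-1}}}\om \l{f^{-1}}$. For the base case $f=\id0$ we have $f^{-1}=\id0$, both positroid cells are the respective totally positive Grassmannians, and $\om k{\id0}=\omk$, $\om \l{\id0}=\oml$, so \eqref{eq:lower_form_stw} becomes $\stw^*(\omk\wedge\oml)=\pm\,\omk\wedge\oml$; by Theorem~\ref{thm:stw} the stacked twist map is defined on $\Grtp(k,n)\times\Grtperp{k+m}n$ and carries it into $\Grtperp{\l+m}n\times\Grtp(\l,n)$, so this is simply the restriction of Theorem~\ref{thm:top_form_stw} to a subset of its domain of definition.

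For the inductive step, I would assume \eqref{eq:lower_form_stw} for some $f\in\Aff(-k,\l)$ and take $g\in\Aff(-k,\l)$ with $g\lessdot f$. Since $\stw$ is a homeomorphism $\Grtnnm(k,n)\times\Grtperp{k+m}n\to\Grtperp{\l+m}n\times\Grtnnm(\l,n)$ matching positroid cells, it preserves the closure order, and by the diffeomorphisms \eqref{eq:stw_regular_map} it preserves cell dimensions; hence $g^{-1}\lessdot f^{-1}$, and $\stw$ restricts to a diffeomorphism $\pc k f\times\Grtperp{k+m}n\xrightarrow{\sim}\Grtperp{\l+m}n\times\pc \l{f^{-1}}$ taking the boundary divisor $D:=\pc k g\times\Grtperp{k+m}n$ onto $D':=\Grtperp{\l+m}n\times\pc \l{g^{-1}}$. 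As $\stw$ is a rational map, $\stw^*$ commutes with the residue along these corresponding divisors; and since $\omk$ and $\oml$ are the canonical forms of the \emph{full} top-cell factors, where they are pole-free, the residue distributes over the relevant wedge products. Combining these observations,
\[
\stw^*\big(\omk\wedge\om \l{g^{-1}}\big)=\pm\,\stw^*\Res_{D'}\big(\omk\wedge\om \l{f^{-1}}\big)=\pm\,\Res_{D}\,\stw^*\big(\omk\wedge\om \l{f^{-1}}\big)=\pm\,\Res_{D}\big(\om k f\wedge\oml\big)=\pm\,\om k g\wedge\oml,
\]
where the third equality is the induction hypothesis. To finish, note that $\Grtnnm(k,n)=\bigsqcup_{h\in\Aff(-k,\l)}\pc k h$ is open in $\Grtnn(k,n)$ by the rank description of Proposition~\ref{prop:ranks}; being both open and a union of positroid cells, it is an up-set in the positroid closure order. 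Since that order is graded with $\id0$ as its maximum, every $f\in\Aff(-k,\l)$ is joined to $\id0$ by a saturated chain lying inside $\Aff(-k,\l)$, and iterating the inductive step along such a chain proves \eqref{eq:lower_form_stw} for all $f\in\Aff(-k,\l)$.

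The hard part will be the residue-compatibility underpinning the inductive step. This requires, first, the precise statement from Section~\ref{sec:residues} that crossing a single covering relation $g\lessdot f$ amounts to taking exactly one residue; and, second, a verification that the stacked twist map genuinely intertwines the residue on the $\Gr(k,n)$-factor with the residue on the $\Gr(\l,n)$-factor — equivalently, that $\stw$ identifies the codimension-one positroid boundary divisor on the source with the corresponding divisor on the target, compatibly with the analytic (pole) structure near it on which the residue depends. Once Theorems~\ref{thm:top_form_stw} and~\ref{thm:stw} together with the diffeomorphism \eqref{eq:stw_regular_map} are available, the remaining steps are formal.
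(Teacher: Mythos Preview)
Your proposal is correct and follows essentially the same route as the paper: induct on $\inv(f)$ with base case Theorem~\ref{thm:top_form_stw}, and for the inductive step pass from $f$ to a covering $g$ by taking residues (Proposition~\ref{prop:residues}) and using that $\stw$ intertwines the codimension-one boundary divisors, so that residues commute with $\stw^*$. The paper carries out the residue argument in the complex algebraic setting (on Zariski open subsets of $\ce k f\times\GrC(\l,n)$), which is exactly the rigorous version of the ``residue-compatibility'' you flag as the crux; your additional remark that the saturated chain can be taken inside $\Aff(-k,\l)$ is a nice point the paper leaves implicit.
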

We discuss the application of Theorem \ref{thm:lower_form_stw} to the amplituhedron form in Section~\ref{sec:ampl_form}.

\def\perms{P}
\def\notblack{black!50}

    \newcommand\drawperm[5]{
      \def\lw{1pt}
      \def\lblscl{0.8}
      \begin{tikzpicture}[xscale=0.5]
 
   \begin{scope}[even odd rule]
  \clip (-1.4,-1) rectangle (7.4,2);
  \foreach \i in {-4,-3,-2,-1,0,6,7,8,9}
  {
    \node[label=above:{\scalebox{\lblscl}{$\i$}},draw=\notblack,  scale=0.3,fill=\notblack] (u\i) at (\i,1) {};
    \node[label=below:{\scalebox{\lblscl}{$\i$}},draw=\notblack,  scale=0.3,fill=\notblack] (d\i) at (\i,0) {};
  }
  \foreach \i in {1,2,...,5}
  {
    \node[label=above:{\scalebox{\lblscl}{$\i$}},draw,  scale=0.3,fill=black] (u\i) at (\i,1) {};
    \node[label=below:{\scalebox{\lblscl}{$\i$}},draw,  scale=0.3,fill=black] (d\i) at (\i,0) {};
  }
  \pgfmathtruncatemacro{\zero}{#5 - 5}
  \pgfmathtruncatemacro{\minusone}{#4 - 5}
  \pgfmathtruncatemacro{\minustwo}{#3 - 5}
  \pgfmathtruncatemacro{\six}{#1 + 5}
  \pgfmathtruncatemacro{\seven}{#2 + 5}
  \pgfmathtruncatemacro{\eight}{#3 + 5}
  \draw[line width=\lw,\notblack] (u0) -- (d\zero);
  \draw[line width=\lw,\notblack] (u-1) -- (d\minusone);
  \draw[line width=\lw,\notblack] (u-2) -- (d\minustwo);
  \draw[line width=\lw,\notblack] (u6) -- (d\six);
  \draw[line width=\lw,\notblack] (u7) -- (d\seven);
  \draw[line width=\lw,\notblack] (u8) -- (d\eight);
  \draw[line width=\lw] (u1) -- (d#1);
  \draw[line width=\lw] (u2) -- (d#2);
  \draw[line width=\lw] (u3) -- (d#3);
  \draw[line width=\lw] (u4) -- (d#4);
  \draw[line width=\lw] (u5) -- (d#5);
      \end{scope}
\end{tikzpicture}
}
\begin{figure}
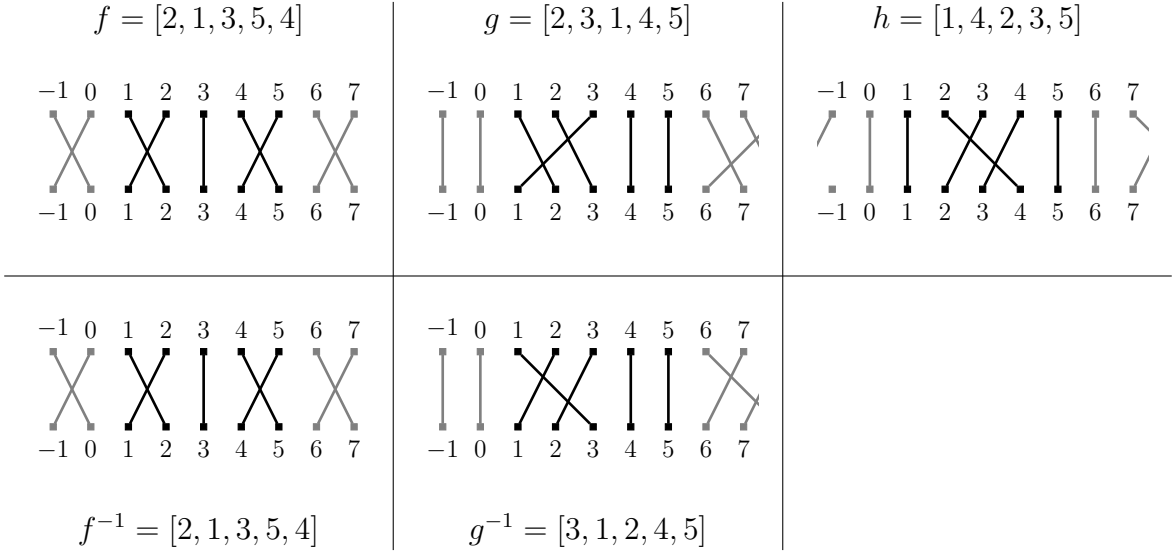

\begin{tabular}{c|c|c}

  $f=[2,1,3,5,4]$  & $g=[2,3,1,4,5]$ & $h=[1,4,2,3,5]$\\
\drawperm 2 1 3 5 4
&
\drawperm 2 3 1 4 5
&
\drawperm 1 4 2 3 5
  \\\hline
\drawperm 2 1 3 5 4
&
\drawperm 3 1 2 4 5
&
  \\
$f^{-1}=[2,1,3,5,4]$  & $g^{-1}=[3,1,2,4,5]$ &
\end{tabular}
    \caption{\label{fig:permutations} Up to rotation, there are three affine permutations in $\Aff(-k,n-k)$ for $k=2,n=5$ with two inversions (top). There are only two such affine permutations in $\Aff(-k,n-k)$ for $k=1,n=5$ (bottom).}
  \end{figure}
  
\section{Examples}\label{sec:examples}
In this section, we illustrate each of our main results by an example.

\subsection{Triangulations of a pentagon}
As a warm-up, we let $k=1$, $\l=2$, $m=2$, $n=5$. In this case, the amplituhedron $\ampl$ is a pentagon in $\RP^2$ whose vertices are the five columns of $Z\in\Grtp(3,5)$. We write $f\in\Aff$ in \emph{window notation}, i.e., $f=[f(1),f(2),\dots, f(n)]$. Let $\perms_{n,k,m}\subset\Aff(-k,n-k)$ be the set of affine permutations $f$ with $\inv(f)=k\l=2$. We say that $f,g\in\Aff$ are the same \emph{up to rotation} if for some $s\in\Z$ we have $f(i+s)=g(i)+s$ for all $i\in\Z$. There are ten affine permutations in $\perms_{5,1,2}$, and up to rotation, they form two classes of five affine permutations in each, shown in Figure~\ref{fig:permutations} (bottom). All of these ten affine permutations are $(n,k,m)$-admissible, and the images of the corresponding positroid cells are the ${5\choose 3}=10$  triangles in a pentagon. The affine permutation $[2,1,3,5,4]$ in Figure~\ref{fig:permutations} (bottom left) corresponds to the convex hull of the triangle with vertex set $\{1,3,4\}$ inside a pentagon, while the affine permutation $[3,1,2,4,5]$ in Figure~\ref{fig:permutations} (bottom right) corresponds to the convex hull of a triangle with vertex set $\{1,4,5\}$. These two permutations are therefore $(5,1,2)$-compatible and together with the permutation $[1,2,5,3,4]$ (corresponding to a triangle with vertex set $\{1,2,3\}$) they form a triangulation of $\ampl$.

\subsection{Triangulations of the $\l=1$ amplituhedron}
  We now study the ``dual'' case $k=2$, $\l=1$, $m=2$, $n=5$ in detail. There are fifteen affine permutations in $\perms_{5,2,2}$, and they form three classes (up to rotation) of five permutations in each, shown in Figure~\ref{fig:permutations} (top).

  The permutations $f=[2,1,3,5,4]$ and $g=[2,3,1,4,5]$ belong to $\Aff(-k,\l)$ while $h=[1,4,2,3,5]$ does not belong to $\Aff(-k,\l)$ because $h(2)=4>2+\l$. Thus by Lemma~\ref{lemma:non-admissible}, $h$ is not $(n,k,m)$-admissible (and it is indeed easy to check directly that $h$ is not $Z$-admissible for all $Z\in\Grtp(k+m,n)$). Note that $h^{-1}$ does not correspond to any triangle inside a pentagon.

Let us now fix $f=[2,1,3,5,4]$ and consider $\V\in\pc k f$ and $\W\in\Grtperp{k+m}{n}$ given by
  \begin{equation}\label{eq:ex:V:W}
  \V=\begin{pmatrix}
      1&0&0&-2&-1\\
      0&1&1&2&0
    \end{pmatrix};\quad  \W=\begin{pmatrix}
      1 &-1&3&-2&1
    \end{pmatrix}.
  \end{equation}

  \begin{figure}
    
  \newcommand\tabline[9]{
    \textcolor{\notblack}{$#1$} &     \textcolor{\notblack}{$#2$} & {$\bf #3$}&{$\bf #4$}&{$\bf #5$}&{$\bf #6$}&{$\bf #7$}&    \textcolor{\notblack}{$#8$} &     \textcolor{\notblack}{$#9$} 
  }
    \begin{tabular}{c|cccccccccc}
    $j$&-1&0&1&2&3&4&5&6&7\\\hline
       & \tabline 2   1 1 0  0{-2}{-1}{-1}{0}\\
$\U(j)$& \tabline {-2}0 0 1  1  2   0   0 {-1}\\\cdashline{2-10}
       & \tabline 2 {-1}1{-1}3{-2}{ 1}{-1}{ 1}\\
\multicolumn{1}{l}{} &\\
    $j$               &-1&0&1&2&3&4&5&6&7\\\hline
                      & \tabline{2}{ 0}{-1}{ 4}{-8}{ 2}{ 0}{-1}{4}\\
$(\Ut\cdot\Deltas)(j)$& \tabline{4}{-2}{ 1}{ 0}{-6}{ 4}{-2}{ 1}{0}\\\cdashline{2-10}
                      & \tabline{2}{ 0}{ 1}{ 0}{ 2}{ 2}{ 0}{ 1}{0}\\
    \end{tabular}

    \caption{\label{fig:stw}The columns of $\U$ (top) and of $\Ut\cdot \Deltas$ (bottom) for $j=-1,0,\dots,7$.}
  \end{figure}
  
The fact that $\V\in\pc k f$ can be easily checked using~\eqref{eq:ranks} while the fact that $\W\in\Grtperp{k+m}n$ follows since $\alt(\W)$ is totally positive, see~\eqref{eq:alt_duality}. We would like to compute $(\Wt,\Vt)=\stw(\V,\W)$. To do so, we first need to find the matrix $\Ut=\twistpm(\U)$. Let us introduce a diagonal $n\times n$ matrix $\Deltas=\diag(|\Delta_{J_1}(\U)|,\dots,|\Delta_{J_n}(\U)|)$, where $J_j=\{j-\l,j-\l+1,\dots,j+k-1\}$. We will consider the matrix $\Ut\cdot \Deltas$ rather than $\Ut$ itself in order to clear the denominators (cf.~\eqref{eq:twist_MS}). The matrix $\U$ is given in Figure~\ref{fig:stw} (top), so
\begin{equation}\label{eq:ex_D}
\Deltas=\diag(|-2|, |4|, |-8|, |10|,|-4|)=\diag(2,4,8,10,4).
\end{equation}
Thus the column $\Ut(j)$ gets rescaled by the absolute value of $\Delta_{J_j}(\U)$. The matrix $\Ut\cdot\Deltas$ is given in Figure~\ref{fig:stw} (bottom): we encourage the reader to check that the column $\Ut(j)$ is orthogonal to the columns $\U(j)$ and $\U(j+1)$, and that $\<\Ut(j),\U(j-1)\>=(-1)^j$. (Recall also that $\U(j+n)=(-1)^{k-1}\U(j)$ and $\Ut(j+n)=(-1)^{\l-1}\Ut(j)$, which is consistent with Figure~\ref{fig:stw}.)
Thus the output of the stacked twist map in this case is given by
\begin{equation}\label{eq:ex:Vt:Wt}
\Wt\cdot\Deltas=\begin{pmatrix}
    -1 & 4 & -8 & 2 & 0\\
     1 & 0 & -6 & 4 & -2
  \end{pmatrix};\quad \Vt\cdot\Deltas=\begin{pmatrix}
    1 & 0 & 2 & 2 & 0
  \end{pmatrix}. 
\end{equation}

\begin{figure}

\tikzset{%
    add/.style args={#1 and #2}{
        to path={%
 ($(\tikztostart)!-#1!(\tikztotarget)$)--($(\tikztotarget)!-#2!(\tikztostart)$)%
  \tikztonodes},add/.default={.2 and .2}}
}  
\def\nodescl{0.3}
\def\tikzscl{3}
\def\lw{1.5pt}
\def\lwnb{0.5pt}
\def\labelscl{0.8}
\def\minorscl{1}
\def\pink{pink!70!black}
\def\green{green!70!black}
\begin{tabular}{cc}
\begin{tikzpicture}[scale=\tikzscl]
\coordinate (A) at (-1,1);
\coordinate (B) at (0,1);
\coordinate (C) at (0,0);
\coordinate (D) at (-1,0);
\coordinate (E) at (0,-0.33);
\coordinate (F) at (1,0);

   \begin{scope}[even odd rule]
     \clip (D)--(B)--(C)--(D);
     \clip (A)--(E)--(F)--(A);
     \fill[fill=cyan,opacity=0.5] (-1,0) rectangle (0,1);
   \end{scope}
   
   \begin{scope}[even odd rule]
     \clip (-1.5,-0.7) rectangle (1.2,1.5);
     \draw [add= 10 and 10, red, line width=\lw] (D) to (B);
     \draw [add= 10 and 10, \green, line width=\lw] (B) to (C);
     \draw [add= 10 and 10, \pink, line width=\lw] (A) to (E);
     \draw [add= 10 and 10, brown, line width=\lw] (A) to (F);
     \draw [add= 10 and 10, blue, line width=\lw] (D) to (C);
     \draw [add= 10 and 10, \notblack, line width=\lwnb] (A) to (B);
     \draw [add= 10 and 10, \notblack, line width=\lwnb] (A) to (D);
     \draw [add= 10 and 10, \notblack, line width=\lwnb] (E) to (D);
     \draw [add= 10 and 10, \notblack, line width=\lwnb] (E) to (F);
     \draw [add= 10 and 10, \notblack, line width=\lwnb] (B) to (F);
   \end{scope}
   
\node[draw,circle,fill=black,scale=\nodescl] (An) at (A) {};
\node[draw,circle,fill=black,scale=\nodescl] (Bn) at (B) {};
\node[draw,circle,fill=black,scale=\nodescl] (Cn) at (C) {};
\node[draw,circle,fill=black,scale=\nodescl] (Dn) at (D) {};
\node[draw,circle,fill=black,scale=\nodescl] (En) at (E) {};
\node[draw,circle,fill=black,scale=\nodescl] (Fn) at (F) {};

\node[draw,circle,fill=black,scale=\nodescl] (12v34) at (-0.57,0.42) {};
\node[draw,circle,fill=black,scale=\nodescl] (15v34) at (-0.25,0) {};
\node[draw,circle,fill=black,scale=\nodescl] (12v45) at (-0.33,0.66) {};
\node[draw,circle,fill=black,scale=\nodescl] (23v45) at (0,0.5) {};
\node[anchor=north east,scale=\labelscl] (Al) at (A) {$(-1,1)$};
\node[anchor=north west,scale=\labelscl] (Al) at (B) {$(0,1)$};
\node[anchor=north west,scale=\labelscl] (Al) at (C) {$(0,0)$};
\node[anchor=north east,scale=\labelscl] (Al) at (D) {$(-1,0)$};
\node[anchor=north east,scale=\labelscl] (Al) at (E) {$(0,-\frac13)$};
\node[anchor=north,scale=\labelscl] (Al) at (F) {$(1,0)$};
\node[scale=\minorscl] (Al) at (-0.25,0.35) {$\V'\geq0$};
\node[anchor=west,scale=\minorscl] (Al) at (0.25,1.2) {\textcolor{red}{$\Delta_{12}(\V')=a-b+1$}};
\node[anchor=south,scale=\minorscl] (Al) at (0,1.5) {\textcolor{\green}{$\Delta_{23}(\V')=-4a$}};
\node[anchor=north west,scale=\minorscl] (Al) at (0.1,-0.7) {\textcolor{\pink}{$\Delta_{34}(\V')=8a+6b+2$}};
\node[anchor=west,scale=\minorscl] (Al) at (0.22,0.4) {\textcolor{brown}{$\Delta_{45}(\V')=-2a-4b+2$}};
\node[anchor=west,scale=\minorscl] (Al) at (1.2,0) {\textcolor{blue}{$\Delta_{15}(\V')=2b$}};
 \end{tikzpicture}
  
\end{tabular}
  \caption{\label{fig:fiber} Each of the ten lines represents the set of pairs $(a,b)$ for which one of the minors of $\V'$ is zero. We have $\V'\in\Grtnn(k,n)$ precisely when $(a,b)$ belongs to the shaded region.}
\end{figure}  

  We observe that $\alt(\Wt)$ is a totally positive $2\times 5$ matrix, and thus $\Wt\in\Grtperp{\l+m}n$. Similarly, $\Vt$ is a totally nonnegative $1\times 5$ matrix. Moreover, we have $\Vt\in \pc \l {f'}$ for $f'=[2,1,3,5,4]=f^{-1}$, as predicted by Theorem~\ref{thm:stw}, part~\eqref{item:stw:inverse}.

  We now consider the fibers of $Z$. Let $\V,\W$ be given by~\eqref{eq:ex:V:W}, and consider a $4\times 5$ matrix $Z:=\W^\perp$. Suppose that $\V'\cdot Z^\transp=\V\cdot Z^\transp$ for some $\V'\in\Grtnn(k,n)$, then we must have  $\V'=\V+A\cdot \W$
  for some $2\times 1$ matrix $A=\begin{pmatrix}
    a \\
    b
  \end{pmatrix}$ with $a,b\in\R$. The matrix $\U':=\stack(\V',\W)$ is therefore given by
\[\U'= \begin{pmatrix}
a + 1 & -a & 3  a & -2  a - 2 & a - 1 \\
b & -b + 1 & 3  b + 1 & -2  b + 2 & b \\
1 & -1 & 3 & -2 & 1
  \end{pmatrix}.\]
We then find the matrix $\Ut':=\twistpm(\U')$:
\[\Ut'\cdot\Deltas= \begin{pmatrix}
-1 & 4 & -8 & 2 & 0 \\
1 & 0 & -6 & 4 & -2 \\
a - b + 1 & -4  a & 8  a + 6  b + 2 & -2  a - 4  b + 2 & 2  b
\end{pmatrix}.\]
Here $\Deltas=\diag(2,4,8,10,4)$ is unchanged since $\U'$ is obtained from $\U$ by row operations. Setting $(\Wt',\Vt'):=\stw(\V',\W)$, we get
\[\Wt'\cdot\Deltas=\begin{pmatrix}
    -1 & 4 & -8 & 2 & 0\\
     1 & 0 & -6 & 4 & -2
  \end{pmatrix};\quad \Vt'\cdot\Deltas=\begin{pmatrix}
    a - b + 1 & -4  a & 8  a + 6  b + 2 & -2  a - 4  b + 2 & 2  b
  \end{pmatrix}.\]
(Note that by Proposition~\ref{prop:fibers},  $\Wt'=\Wt$. As we will later see in Lemma~\ref{lemma:A^T}, we have $\Vt'=\Vt-A^\transp\cdot\Wt$.) We can now write down the fiber $\Fib(\V,Z)$ as the set of pairs $(a,b)$ for which $\V'=\V+A\cdot\W$ is totally nonnegative. There are ${5\choose 2}=10$ minors of $\V'$, and each of them turns out to be a linear polynomial in the variables $a,b$. For example,
$\Delta_{13}(\V')=(a+1)(3b+1)-3ab=a+3b+1.$

The ten lines $\Delta_I(\V')=0$ in the $(a,b)$-plane are shown in Figure~\ref{fig:fiber}. For example, the line $\Delta_{13}(\V')=0$ is the black line passing through the points $(-1,0)$ and $(0,-\frac13)$. The region where $\V'$ is totally nonnegative is the pentagon shaded in blue. The vertices of this pentagon correspond to $\V'$ being in $\pc k {f'}$ for $f'$ equal to $f$ up to rotation (in particular, the vertex $(0,0)$ corresponds to $f$ itself). Thus $\Fib(\V,Z)$ intersects $\pc k {f'}$ for all such $f'$, and hence $f$ is not $(n,k,m)$-compatible with any of its four rotations. Observe that the inverses of these five permutations correspond to the triangles in a pentagon with vertex sets $\{1,3,4\}$, $\{2,4,5\}$, $\{1,3,5\}$, $\{1,2,4\}$, and $\{2,3,5\}$. No two of these five triangles can appear together in a triangulation of the pentagon, in agreement with part~\eqref{item:main:compatible} of Theorem~\ref{thm:main}.

On the other hand, $\Vt'$ is a $1\times 5$ matrix and thus has only five minors. However, the $(a,b)$-region where all of them are nonnegative turns out to be exactly the same as the shaded pentagon in Figure~\ref{fig:fiber}. More precisely, for each $j\in\Z$, we have $\Delta_{\{j\}}(\Vt'\cdot\Deltas)=\Delta_{\{j,j+1\}}(\V')$ (see Lemma~\ref{lemma:consecutive_V_W} for a general statement). For example,
\[\Delta_1(\Vt'\cdot\Deltas)=\Delta_{12}(\V')=a-b+1.\]
Since the boundary of the shaded region in Figure~\ref{fig:fiber} only involves circular minors of $\V'$ being zero, we see that the fibers $\Fib(\V,Z)$ and $\Fib(\Vt,\Zt)$ are equal as regions in the $(a,b)$-plane. However, the five vertices of the pentagon $\Fib(\Vt,\Zt)$ correspond to the five rotations of $f^{-1}$.
The main idea of our proof in Section~\ref{sec:stw_properties} is based on the observations made in this example.

\subsection{Canonical forms for lower cells}\label{sec:ex_forms}

  Let $k=2$, $\l=1$, $m=2$, $n=5$, $f=[2,3,1,5,4]$. Consider parametrizations of $\pc k f$ and $\Grtperp{k+m}n$ by
  \[\V:=\begin{pmatrix}
      1 & 0 & 0 & -a & -c\\
      0 & 1 & 0 & b & 0
    \end{pmatrix},\quad  \W:=\begin{pmatrix}
      p & -q & 1 & -r & s
    \end{pmatrix},\quad a,b,c,p,q,r,s>0.\]
  We can take $\Ccal:=\{\{1,2\},\{2,4\},\{4,5\},\{2,5\}\}$ to be a cluster for $\V$:
  \[\Delta_{12}(\V)=1,\quad \Delta_{24}(\V)=a,\quad \Delta_{45}(\V)=bc,\quad \Delta_{25}(\V)=c,\]
  and thus
  \[\om k f=\pm\frac{da\wedge d(bc)\wedge dc}{abc^2}=\pm\frac{da\wedge db\wedge dc}{abc}.\]
  Since $\oml=\pm\frac{dp\wedge dq\wedge dr\wedge ds}{pqrs}$, the form $\om k f\wedge \oml$ is given by
  \begin{equation}\label{eq:ex_om_wedge_om}
\om k f\wedge \oml=\pm\frac{da\wedge db\wedge dc\wedge dp\wedge dq\wedge dr\wedge ds}{abcpqrs}.
  \end{equation}
 Let us now see what happens to this form when we apply the stacked twist map. Let $(\Wt,\Vt):=\stw(\V,\W)$. We find
  \[\Wt:=\begin{pmatrix}
-\frac{p}{c p + s} & 1 & -\frac{b}{a} & \frac{s}{c} & 0 \\
\frac{q}{c p + s} & 0 & -1 & \frac{c r + a s}{b c} & -\frac{1}{b} \\
    \end{pmatrix},\quad \Vt:=\begin{pmatrix}
\frac{1}{c p + s} & 0 & 0 & 1 & 0
\end{pmatrix}.\]
Note that $f^{-1}=[3,1,2,5,4]$, and indeed $\Vt\in\pc \l{f^{-1}}$, while $\Wt\in\Grtperp{\l+m}n$ because $\alt(\Wt)\in\Grtp(k,n)$, see~\eqref{eq:alt_duality}. Choosing a cluster $\Ccal':=\{\{4\},\{1\}\}$ for $\pc \l{f^{-1}}$, we get that the pullback of $\om \l{f^{-1}}$ to $\pc k f\times \Grtperp{k+m}n$ under $\stw$ is
\begin{equation}\label{eq:ex_stw_ast_0}
\stw^\ast\om \l{f^{-1}}=\pm\frac{d \left(\frac{1}{c p + s}\right)}{\frac{1}{c p + s}}=\frac{\pm\left(p\, dc  +c\,dp+ds\right)}{c p + s}.
\end{equation}
\def\wdg{\,}
Let us now compute the pullback of $\omk$ to $\pc k f\times \Grtperp{k+m}n$ under $\stw$. Applying row operations to $\Wt$, we transform it into
\[\Wt'=\begin{pmatrix}
-\frac{a p + b q}{{\left(c p + s\right)} a} & 1 & 0 & -\frac{r}{a} & \frac{1}{a} \\
-\frac{q}{c p + s} & 0 & 1 & -\frac{c r + a s}{b c} & \frac{1}{b}
\end{pmatrix}.\]
The submatrix of $\Wt'$ with column set $\{2,3\}$ is an identity matrix, and thus we can use Definition~\ref{dfn:canonical_form_top_cell} (together with the fact that $\omk$ is cyclically invariant) to calculate
\begin{equation}\label{eq:ex_stw_ast_1}
\stw^\ast \omk=\pm\frac{d^{2\times 3}\begin{pmatrix}
-\frac{a p + b q}{{\left(c p + s\right)} a}  & -\frac{r}{a} & \frac{1}{a} \\
-\frac{q}{c p + s}  & -\frac{c r + a s}{b c} & \frac{1}{b}
\end{pmatrix}}{\pmin k {\Wt'}}.
\end{equation}
Computing the circular minors of $\Wt'$, we get
\[\Delta_{12}(\Wt')=\frac{q}{c p + s},\quad \Delta_{23}(\Wt')=1,\quad \Delta_{34}(\Wt')=\frac{r}{a},\quad \Delta_{45}(\Wt')=\frac{s}{bc},\quad \Delta_{15}(\Wt')=\frac{-p}{b(c p + s)}. \]
The denominator in the right hand side of~\eqref{eq:ex_stw_ast_1} is therefore
\[\pmin k{\Wt'}=\frac{-p q r s}{\left(c p + s\right)^2 a b^2 c}.\]
A longer computation is needed to find the numerator in the right hand side of~\eqref{eq:ex_stw_ast_1}, which in the end produces the following answer (the symbol $\wedge$ is omitted):
\begin{equation}\label{eq:ex_stw_ast_2}
\stw^\ast \omk=\pm \left(\frac{d a \wdg d b \wdg d c \wdg d p \wdg d r \wdg d s}{a b c p r s}  -\frac{d a \wdg d b \wdg d c \wdg d q \wdg d r \wdg d s}{a b c q r s}  + \frac{s\,d a \wdg d b \wdg d c \wdg d p \wdg d q \wdg d r}{{\left(c p + s\right)} a b c p q r}  + \frac{d a \wdg d b \wdg d p \wdg d q \wdg d r \wdg d s}{{\left(c p + s\right)} a b p q r}\right).
\end{equation}
The reader is invited to check that the form $\stw^\ast(\omk\wedge \om \l{f^{-1}})=\stw^\ast\omk\wedge \stw^\ast\om \l{f^{-1}}$, obtained by taking the wedge of the right hand sides of~\eqref{eq:ex_stw_ast_0} and~\eqref{eq:ex_stw_ast_2}, equals to $\pm\om k f\wedge \oml$, which is given on the right hand side of~\eqref{eq:ex_om_wedge_om}.

\section{Subdivisions and the universal amplituhedron}\label{sec:subdivisions}
In this section, we introduce a more general set up. First, similarly to triangulations of $\ampl$ defined in Section~\ref{sec:triang-ampl} we define its \emph{subdivisions}. Second, we describe the \emph{universal amplituhedron} (originally studied in the complex algebraic setting in~\cite[Section~18.1]{LamCDM}), and define its subdivisions as well. The universal amplituhedron provides a convenient framework to formulate the results of Section~\ref{sec:stw}, as well as some other results regarding canonical forms as we do in Section~\ref{sec:ampl_form}.

\subsection{Subdivisions}

Let $Z \in \Grtp(k+m,n)$. We denote by $\leq$ the \emph{Bruhat order} on $\Aff$, defined as follows. We say that $t\in\Aff$ is a \emph{transposition} if we have $t(i)\neq i$ for precisely two indices $i\in[n]$, and then we define the covering relation $\lessdot$ of the Bruhat order by
\begin{equation}\label{eq:bruhat_dfn}
f\lessdot g\quad\text{if and only if}\quad  f=g\circ t\text{ for some transposition $t\in\Aff$ and } \inv(g) = \inv(f) + 1.
\end{equation}
Here $\circ$ denotes the composition of bijections $\Z\to\Z$. Given two affine permutations $f,g\in\Aff(-k,n-k)$, we have $f \leq g$ if and only if $\pctnn k f \subseteq \pctnn k g$, see e.g.~\cite[Theorem~8.1]{LamCDM}.

\begin{definition}\label{dfn:subdivision}
Let $f_1,\dots,f_N\in\Aff(-k,n-k)$ be a collection of pairwise $Z$-compatible affine permutations.  We say that $f_1,\dots,f_N$ \emph{form a $Z$-subdivision} of the amplituhedron if the union of the images $Z(\pc k {f_1}),\dots,Z(\pc k {f_N})$ forms a dense subset of $\ampl$,  and we have $\dim (Z(\pc k{f_s}))=km$ for each $1\leq s\leq N$.

More generally, suppose that $g \in \Aff(-k,n-k)$ and $f_1,\dots,f_N\in\Aff(-k,n-k)$ is a collection of pairwise $Z$-compatible affine permutations satisfying $f_s \leq g$ for all $1 \leq s \leq N$.  We say that $f_1,\dots,f_N$ \emph{form a $Z$-subdivision} of  $Z(\pctnn k g)$ if the images $Z(\pc k {f_1}),\dots,Z(\pc k {f_N})$ form a dense subset of $Z(\pctnn k g)$, and we have $\dim (Z(\pc k{f_s}))=\dim (Z(\pc k g))$ for each $1\leq s\leq N$.

We say that $f_1,\dots,f_N$ \emph{form an $(n,k,m)$-subdivision} of the amplituhedron (resp., of $g$) if they form a $Z$-subdivision of the amplituhedron (resp., of $Z(\pctnn k g)$) for any $Z\in\Grtp(k+m,n)$.
\end{definition}
  
It thus follows that a \emph{($Z$- or $(n,k,m)$-) triangulation} of the amplituhedron is a ($Z$- or $(n,k,m)$-) subdivision $f_1,\dots,f_N\in\Aff(-k,n-k)$ of $\ampl$ such that the permutations $f_1,\dots,f_N$ are ($Z$- or $(n,k,m)$-)admissible. 

The following result gives an analog to part~\eqref{item:main:triang} of Theorem~\ref{thm:main}, with the same proof which we omit.

\begin{theorem}\label{thm:main_subd}
  Let $f_1,\dots,f_N\in\Aff(-k,\l)$ be a collection of affine permutations. Then they form an $(n,k,m)$-subdivision of the amplituhedron (resp., of $g \in \Aff(-k,\l)$) if and only if $f_1^{-1},\dots,f_N^{-1}\in\Aff(-\l,k)$ form an $(n,\l,m)$-subdivision of the amplituhedron (resp., of $g^{-1} \in \Aff(-\l,k)$).
\end{theorem}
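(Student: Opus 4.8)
The plan is to mimic the proof of Theorem~\ref{thm:main} given above almost verbatim, the only modification being that we drop the admissibility and injectivity assumptions and keep track only of the compatibility and density conditions, together with the dimension bookkeeping. The two inputs are again Theorem~\ref{thm:stw}, which gives, for each $f\in\Aff(-k,\l)$, the stacked-twist diffeomorphism $\stw:\pc k f\times\Grtperp{k+m}n\to\Grtperp{\l+m}n\times\pc\l{f^{-1}}$, and Proposition~\ref{prop:fibers}, which says that $\stw$ carries the fiber $\Fib(\V,Z)\subset\Grtnnm(k,n)$ homeomorphically onto a fiber $\Fib(\Vt,\Zt)\subset\Grtnnm(\l,n)$, where $\Zt:=\Wt^\perp$ and $\stw(\V,\W)=(\Wt,\Vt)$ with $\W=Z^\perp$. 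Since $Z\mapsto Z^\perp$ is a homeomorphism between $\Grtp(k+m,n)$ and $\Grtperp{k+m}n$, and similarly on the $\l$ side, as $Z$ ranges over $\Grtp(k+m,n)$ and $\V$ over $\Grtnnm(k,n)$, the pair $(\Vt,\Zt)$ ranges over all of $\Grtnnm(\l,n)\times\Grtp(\l+m,n)$.

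First I would record the fiberwise reformulation of an $(n,k,m)$-subdivision, exactly parallel to the proof of Theorem~\ref{thm:main}. For the subdivision of the whole amplituhedron: $f_1,\dots,f_N\in\Aff(-k,\l)$ form an $(n,k,m)$-subdivision if and only if (a) they are pairwise $(n,k,m)$-compatible, which by part~\eqref{item:main:compatible} of Theorem~\ref{thm:main} is equivalent to $f_1^{-1},\dots,f_N^{-1}$ being pairwise $(n,\l,m)$-compatible; (b) each $\dim Z(\pc k{f_s})=km$, which by Lemma~\ref{lemma:non-admissible} together with $f_s\in\Aff(-k,\l)$ is equivalent to $\inv(f_s)=k\l$, hence to $\inv(f_s^{-1})=k\l=\l k$, hence (again using $f_s^{-1}\in\Aff(-\l,k)$ and Lemma~\ref{lemma:non-admissible}) to $\dim\Zt(\pc\l{f_s^{-1}})=\l m$; and (c) the density condition, which I handle as in the proof of Theorem~\ref{thm:main}: the union $\bigcup_s Z(\pctnn k{f_s})$ equals $\ampl$ for all $Z$ iff for every $\V\in\Grtnnm(k,n)$ and every $Z$ the fiber $\Fib(\V,Z)$ meets some $\pctnn k{f_s}$, iff (applying Proposition~\ref{prop:fibers} and Theorem~\ref{thm:stw}\eqref{item:stw:inverse}, noting $\pc kf\subset\pctnn k{f_s}$ translates to $\pc\l{f^{-1}}\subset\pctnn\l{f_s^{-1}}$) for every $\Vt\in\Grtnnm(\l,n)$ and every $\Zt$ the fiber $\Fib(\Vt,\Zt)$ meets some $\pctnn\l{f_s^{-1}}$, iff $\bigcup_s\Zt(\pctnn\l{f_s^{-1}})$ is a closed set containing $\ampll$, hence equal to $\amplnlmZ$. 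Symmetry of all these equivalences gives the claim for the amplituhedron.

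For the relative version — a subdivision of $g\in\Aff(-k,\l)$ — I would run the same argument inside the closed cell $\pctnn kg$ in place of $\Grtnn(k,n)$. The two facts I need are: the condition $f_s\le g$ in Bruhat order is equivalent to $f_s^{-1}\le g^{-1}$ (the Bruhat order on $\Aff$ is invariant under $f\mapsto f^{-1}$, since inversion preserves $\inv$ and conjugates the covering relation $f=g\circ t$ to $f^{-1}=t\circ g^{-1}=g^{-1}\circ(g t g^{-1})^{-1}$... more simply, $f\lessdot g$ iff $f^{-1}\lessdot g^{-1}$ because a right multiplication by a transposition on one side is a left multiplication on the other and both preserve length), so $\pctnn kf\subseteq\pctnn kg$ iff $\pctnn\l{f^{-1}}\subseteq\pctnn\l{g^{-1}}$; and the dimension condition $\dim Z(\pc k{f_s})=\dim Z(\pc kg)$, which via the fiber dimension count (the fiber of $Z$ on $\pc kf$ has dimension $\inv(f)-k\l$ when $f\in\Aff(-k,\l)$) becomes $\inv(f_s)=\inv(g)$, hence $\inv(f_s^{-1})=\inv(g^{-1})$, hence the corresponding condition on the $\l$ side. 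The density argument then runs inside $\Zt(\pctnn\l{g^{-1}})$ exactly as above, using that $\stw$ restricts the fibers over $g$ to the fibers over $g^{-1}$.

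The main obstacle — and the only place any real work hides — is ensuring that the dimension conditions in Definition~\ref{dfn:subdivision} translate cleanly under $f\mapsto f^{-1}$. For the absolute case this is immediate from Lemma~\ref{lemma:non-admissible} once one knows $f_s\in\Aff(-k,\l)\Leftrightarrow f_s^{-1}\in\Aff(-\l,k)$, which is elementary. For the relative case one must check that $\dim Z(\pc k g)$ is governed by $\inv(g)$ in a way symmetric under inversion; the cleanest route is to observe that by Proposition~\ref{prop:fibers} the map $\stw$ identifies $\Fib(\V,Z)\cap\pctnn kg$ with $\Fib(\Vt,\Zt)\cap\pctnn\l{g^{-1}}$, so $Z(\pctnn kg)$ and $\Zt(\pctnn\l{g^{-1}})$ have fibers of the same dimension over corresponding base points, forcing the dimensions of the images to agree after subtracting from $\dim\pctnn kg=k(n-k)-\inv(g)$ and $\dim\pctnn\l{g^{-1}}=\l(n-\l)-\inv(g^{-1})$ respectively; since these fiber dimensions are equal, the two image-dimension conditions in Definition~\ref{dfn:subdivision} are equivalent. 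Everything else is a transcription of the proof of Theorem~\ref{thm:main}, which is why the paper omits it.
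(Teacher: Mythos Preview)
Your overall strategy is exactly what the paper intends (it says ``same proof which we omit''), and your handling of compatibility and density via Theorem~\ref{thm:stw} and Proposition~\ref{prop:fibers} is correct.  There is, however, a real error in step~(b) of the absolute case.  You claim that for $f_s\in\Aff(-k,\l)$ one has $\dim Z(\pc k{f_s})=km$ if and only if $\inv(f_s)=k\l$, invoking Lemma~\ref{lemma:non-admissible}.  This is false: take $f_s=\id 0$, so that $\pc k{f_s}=\Grtp(k,n)$; then $\inv(f_s)=0$, yet $Z(\Grtp(k,n))$ contains the interior of the amplituhedron by Proposition~\ref{prop:open_dense} and hence has dimension $km$.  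Lemma~\ref{lemma:non-admissible} only says that if $\inv(f)=k\l$ and $f\notin\Aff(-k,\l)$ then the image has dimension $<km$; it says nothing about the converse, and subdivisions (unlike triangulations) may well contain cells with $\inv(f_s)<k\l$.  Relatedly, your parenthetical ``the fiber of $Z$ on $\pc kf$ has dimension $\inv(f)-k\l$'' has the wrong sign and is in any case only a heuristic.

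The fix is already in your last paragraph: the ``cleanest route'' you describe for the relative case works verbatim for the absolute case.  Since $\stw$ identifies $\Fib(V,Z)\cap\pc kf$ with $\Fib(\Vt,\Zt)\cap\pc\l{f^{-1}}$, the fibre dimensions agree; subtracting from $\dim\pc kf$ and $\dim\pc\l{f^{-1}}$ respectively (which differ by $km-\l m$) gives $\dim Z(\pc k{f_s})=km\Leftrightarrow\dim\Zt(\pc\l{f_s^{-1}})=\l m$ for the corresponding pair $(Z,\Zt)$, and surjectivity of $\stw$ onto both $\Grtperp{k+m}n$ and $\Grtperp{\l+m}n$ upgrades this to ``for all $Z$'' iff ``for all $\Zt$''.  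Replace your step~(b) with this argument and the proof is complete.
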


\subsection{The universal amplituhedron}\label{sec:universal_ampl}

Let $\Fl(\l,k+\l;n)$ be the \emph{$2$-step flag variety} consisting of pairs of subspaces $W \subset U \subset \R^n$ such that $\dim(W) =\l$ and $\dim(U) = k+\l$.  Define the {\it universal amplituhedron} $\am_{n,k,m} \subset \Fl(\l,k+\l;n)$ to be the closure (in the \emph{analytic topology}\footnote{We refer to the standard topology on the real manifolds $\Gr(k,n)$ and $\Fl(\l,k+\l;n)$ as the \emph{analytic topology} in order to distinguish it from \emph{Zariski topology} which we will later use on their complex algebraic versions.})
\[ \am_{n,k,m}:= \overline{ \{(W,U) \mid W\in \Grtperp{k+m}n \text{ and  $\Span(V,W) = U$ for some $V \in \Grtnn(k,n)$}\}}.\]

Let
\[\pi: \Fl(\l,k+\l;n) \to \Gr(\l,n)\]
be the projection $(W,U)\mapsto W$.  Let us fix $Z \in \Grtp(k+m,n)$ and let $W:=Z^\perp$. The fiber $\pi^{-1}(W) \cap \am_{n,k,m}$ can be identified with the $Z$-amplituhedron $\ampl$ via the map sending $U$ to the row span of the $(k+m)\times (k+m)$ matrix $U\cdot Z^\transp$ (which has rank $k$).
Thus we have
$$
\dim(\am_{n,k,m}) = (k+m)(n-k-m) + km = km + k\l + \l m.
$$

\newcommand\amcellC[2]{\am^{\C}_{\shift{#1}{#2}}}
\newcommand\amcell[2]{\am^{>0}_{\shift{#1}{#2}}}
\newcommand\amcelltnn[2]{\am^{\ge0}_{\shift{#1}{#2}}}

We have a map
\[\Zcal: \Grtnn(k,n)\times\Grtperp{k+m}n \to \am_{n,k,m}\]
given by $(V,W) \mapsto (W,\Span(V,W))$.
Since the matrix $U\cdot Z^\transp$ has rank $k$, and the subspace $U$ contains the kernel $W$ of $Z$, it follows that the subspace $\Span(V,W)$ always has dimension $k+\l$.
For $f \in \Aff(-k,n-k)$, define 
\begin{align*}
\amcell k f & := \Zcal(\pc k f \times \Grtperp{k+m}n) \subset \am_{n,k,m}, \\
\amcelltnn k f & := \overline{\amcell k f} \subset \am_{n,k,m}.
\end{align*}

Define $\uamk \subset \am_{n,k,m}$ by 
\[ \uamk:= \{(W,U) \mid W\in \Grtperp{k+m}n \text{ and  $\Span(V,W) = U$ for some $V \in \Grtnnm(k,n)$}\}.\]

The map $\Zcal$ restricts to a map $\Zcal: \Grtnnl(k,n)\times\Grtperp{k+m}n\to \uamk$.  Let $\Zcal^\op:\Grtperp{\l+m}n \times \Grtnnm(k,n) \to \uaml$ denote the similar map sending $(\Wt,\Vt)$ to $(\Wt,\Span(\Wt, \Vt))$.

\begin{theorem}
  The stacked twist map
  \[\stw: \Grtnnl(k,n)\times\Grtperp{k+m}n \to \Grtperp{\l+m}n\times \Grtnnl(\l,n)\]
  descends to a homeomorphism $\tstw: \uamk \to \uaml$, forming a commutative diagram
\begin{equation}\label{eq:commutative_diagram}
\begin{tikzcd}
\Grtnnl(k,n)\times\Grtperp{k+m}n \arrow[d,"\Zcal"] \arrow[r, "\stw"] & \Grtperp{\l+m}n\times \Grtnnl(\l,n)  \arrow[d,"\Zcal^\op"] \\
\uamk \arrow[r, "\tstw"] &  \uaml
\end{tikzcd}
\end{equation}

The map $\tstw$ restricts to homeomorphisms $\tstw:\amcell k f \to \amcell \l {f^{-1}}$ for each $f\in\Aff(-k,\l)$.
\end{theorem}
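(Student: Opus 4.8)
The plan is to deduce the statement almost formally from Theorem~\ref{thm:stw} and Proposition~\ref{prop:fibers}: those two results already say that $\stw$ is a homeomorphism carrying fibers of the amplituhedron map onto fibers of the dual amplituhedron map. What remains is to check that $\stw$ descends to the quotients defining $\uamk$ and $\uaml$, and that the descended map is a homeomorphism — the latter being the only step needing genuine topological care.

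\emph{Well-definedness and bijectivity.} First I would identify the fibers of $\Zcal\colon\Grtnnm(k,n)\times\Grtperp{k+m}n\to\uamk$. One has $\Zcal(\V,\W)=\Zcal(\V',\W')$ if and only if $\W=\W'$ and $\Span(\V,\W)=\Span(\V',\W)$, which, writing $Z=\W^\perp$, is equivalent to $\V'\in\Fib(\V,Z)$; here Proposition~\ref{prop:fibers_Grtnnm} guarantees $\Fib(\V,Z)\subseteq\Grtnnm(k,n)$, so the fibers of $\Zcal$ are precisely the sets $\Fib(\V,Z)\times\{\W\}$, and likewise for $\Zcal^\op$. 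By Proposition~\ref{prop:fibers}, $\stw$ maps each such set bijectively onto a set $\{\Wt\}\times\Fib(\Vt,\Zt)$, which is a fiber of $\Zcal^\op$. Hence $\Zcal^\op\circ\stw$ is constant on the fibers of $\Zcal$ and factors uniquely through $\Zcal$, producing a map $\tstw$ with $\tstw\circ\Zcal=\Zcal^\op\circ\stw$; this is exactly the commutativity in~\eqref{eq:commutative_diagram}. Since $\stw$ is a bijection onto $\Grtperp{\l+m}n\times\Grtnnm(\l,n)$ (Theorems~\ref{thm:MuS} and~\ref{thm:stw}) matching the fibers of $\Zcal$ with those of $\Zcal^\op$, a short diagram chase gives that $\tstw$ is a bijection, with inverse induced in the same way by the left twist $\stw^{-1}$.

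\emph{Homeomorphism.} It now suffices to show that $\Zcal$ and $\Zcal^\op$ are topological quotient maps: then $\tstw$ is continuous because $\Zcal^\op\circ\stw$ is, and $\tstw^{-1}$ is continuous by the symmetric argument. For $\Zcal$ I would extend it to $\overline{\Zcal}\colon\Grtnn(k,n)\times\Grtperp{k+m}n\to\am_{n,k,m}^\circ$, where $\am_{n,k,m}^\circ:=\pi^{-1}(\Grtperp{k+m}n)\cap\am_{n,k,m}$; this is well defined because the amplituhedron map is everywhere defined on $\Grtnn(k,n)$, and it is a continuous surjection onto $\am_{n,k,m}^\circ$ using the identification of each fiber $\pi^{-1}(\W)\cap\am_{n,k,m}$ with the amplituhedron for $Z=\W^\perp$. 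The target $\am_{n,k,m}^\circ$ is open in the compact space $\am_{n,k,m}$, hence locally compact Hausdorff, and $\overline{\Zcal}$ is proper: if $K\subseteq\am_{n,k,m}^\circ$ is compact then $\pi(K)$ is a compact subset of $\Grtperp{k+m}n$, so $\overline{\Zcal}^{-1}(K)$ is a closed subset of the compact set $\Grtnn(k,n)\times\pi(K)$. A proper continuous map into a locally compact Hausdorff space is closed, hence a quotient map. Finally, $\Grtnnm(k,n)\times\Grtperp{k+m}n$ is open in $\Grtnn(k,n)\times\Grtperp{k+m}n$ (by Proposition~\ref{prop:ranks}, $\Grtnnm(k,n)$ is cut out by open rank conditions) and is $\overline{\Zcal}$-saturated with image $\uamk$ (by Proposition~\ref{prop:fibers_Grtnnm}); restricting a quotient map to an open saturated subset gives a quotient map onto its image, so $\Zcal$ — and symmetrically $\Zcal^\op$ — is a quotient map.

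The last assertion is then immediate from~\eqref{eq:stw_regular_map}: $\amcell k f=\Zcal(\pc k f\times\Grtperp{k+m}n)$, and $\stw$ restricts to a diffeomorphism $\pc k f\times\Grtperp{k+m}n\to\Grtperp{\l+m}n\times\pc\l{f^{-1}}$, so the commutative diagram gives $\tstw(\amcell k f)=\Zcal^\op(\Grtperp{\l+m}n\times\pc\l{f^{-1}})=\amcell\l{f^{-1}}$, and the restriction of the global homeomorphism $\tstw$ to $\amcell k f$ is a homeomorphism onto this image. The main obstacle is not any single computation — the essential content sits in Proposition~\ref{prop:fibers}, which is proved separately — but the topological bookkeeping in the homeomorphism step; the key to making it go through is to run the properness argument against the auxiliary target $\am_{n,k,m}^\circ=\pi^{-1}(\Grtperp{k+m}n)\cap\am_{n,k,m}$ rather than against $\am_{n,k,m}$ itself, where properness would fail because $\Grtperp{k+m}n$ is not closed.
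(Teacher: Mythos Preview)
Your argument is correct. The well-definedness and bijectivity step matches the paper's essentially verbatim (Proposition~\ref{prop:fibers} gives independence of the choice of $V$). Where you diverge is in the homeomorphism step: the paper simply observes that $\tstw$ can be realized as the restriction of a rational map $\Fl(\l,k+\l;n)\dashedrightarrow\Fl(k,k+\l;n)$, obtained directly from Lemma~\ref{lemma:inverse_transpose}, so continuity of $\tstw$ and of its inverse (via the left twist of Theorem~\ref{thm:MuS}) come for free. You instead prove that $\Zcal$ and $\Zcal^\op$ are topological quotient maps by a properness argument against the auxiliary target $\pi^{-1}(\Grtperp{k+m}n)\cap\am_{n,k,m}$, and then use the universal property. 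Your route is longer but more self-contained topologically and avoids invoking the rational-map structure on flag varieties; the paper's route is terser but relies on recognizing that structure. Either way the last assertion about cells reduces to~\eqref{eq:stw_regular_map}, as you note.
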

\begin{proof}
Suppose $(W,U) \in \uamk$.  Let $V \in \Grtnnl(k,n)$ be chosen so that $U = \Span(V,W)$.  Then $\tstw(W,U):=(\Wt,\Ut)$, where $\theta(V,W) = (\Wt,\Vt)$ and $\Ut = \Span(\Wt,\Vt)$.  It follows from Proposition~\ref{prop:fibers} that $(\Wt,\Ut)$ does not depend on the choice of $V$.  Thus $\tstw$ is well-defined.  (Alternatively, the rational map $\tstw: \Fl(\ell,k+\ell;n) \dashedrightarrow \Fl(k,k+\ell;n)$ can be defined directly from Lemma \ref{lemma:inverse_transpose}.)
The last statement is immediate from Theorem \ref{thm:stw}.
\end{proof}

\subsection{Subdivisions of the universal amplituhedron}
We recast the notions of $(n,k,m)$-triangulations/subdivisions in the language of the universal amplituhedron.
\begin{lemma}\leavevmode\label{lemma:subd:admissible_compatible}
  \begin{enumerate}
  \item   Let $f\in\Aff(-k,n-k)$ be an affine permutation. Then $f$ is $(n,k,m)$-admissible if and only if $\pi^{-1}(W)\cap \amcell k f$ contains at most $1$ point for all $W\in\Grtperp{k+m}n$.
\item   Let $f,g\in\Aff(-k,n-k)$ be two affine permutations. Then $f$ and $g$ are $(n,k,m)$-compatible if and only if $\amcell k f$ and $\amcell k g$ do not intersect. 
  \end{enumerate}
\end{lemma}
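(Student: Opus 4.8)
The plan is to reduce both parts to one linear-algebra observation identifying the fibers of the amplituhedron map $Z$ with the fibers of $\Zcal$. Fix $Z\in\Grtp(k+m,n)$ and put $W:=Z^\perp\in\Grtperp{k+m}n$; since taking orthogonal complements is a bijection $\Gr(k+m,n)\xrightarrow{\sim}\Gr(\l,n)$, the rule $Z\mapsto W$ is a bijection $\Grtp(k+m,n)\xrightarrow{\sim}\Grtperp{k+m}n$, so ``for all $Z$'' and ``for all $W$'' are interchangeable quantifiers. The key claim is that for all $V,V'\in\Grtnn(k,n)$ one has $Z(V)=Z(V')$ in $\Gr(k,k+m)$ if and only if $\Span(V,W)=\Span(V',W)$, equivalently $\Zcal(V,W)=\Zcal(V',W)$. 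To see this, view right multiplication by $Z^\transp$ as a linear map $\phi\colon\R^n\to\R^{k+m}$, $v\mapsto vZ^\transp$; its kernel is the orthogonal complement of the row span of $Z$, which is exactly $W$, so $\phi$ induces an isomorphism $\R^n/W\xrightarrow{\sim}\R^{k+m}$ taking $\Span(V,W)/W$ to the row span of $VZ^\transp$, i.e.\ to $Z(V)$. Hence $Z(V)=Z(V')$ iff $\Span(V,W)/W=\Span(V',W)/W$ iff $\Span(V,W)=\Span(V',W)$ (both contain $W$), which is $\Zcal(V,W)=\Zcal(V',W)$ by definition of $\Zcal$. Here we use the standard fact, implicit in the definition of the amplituhedron map, that $V\cap W=0$ for $V\in\Grtnn(k,n)$ and $W\in\Grtperp{k+m}n$, so that $Z(V)$ has rank $k$, $\Span(V,W)$ has dimension $k+\l$, and $\Zcal(V,W)$ is a genuine point of $\Fl(\l,k+\l;n)$.

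\textbf{Part (1).} By definition $f$ is $(n,k,m)$-admissible iff for every $Z$ the restriction of $Z$ to $\pc k f$ is injective, i.e.\ iff no two distinct $V,V'\in\pc k f$ satisfy $Z(V)=Z(V')$. By the key claim and the $Z\leftrightarrow W$ dictionary, this holds iff for every $W\in\Grtperp{k+m}n$ no two distinct $V,V'\in\pc k f$ satisfy $\Zcal(V,W)=\Zcal(V',W)$; equivalently, $\Zcal$ restricts to a bijection $\pc k f\times\Grtperp{k+m}n\to\amcell k f$, and for each fixed $W$ the map $V\mapsto\Zcal(V,W)$ is injective from $\pc k f$ onto $\pi^{-1}(W)\cap\amcell k f$, so each point of that fiber has at most one preimage in $\pc k f$. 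This is the assertion of part (1).

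\textbf{Part (2) and the obstacle.} $f$ and $g$ are $(n,k,m)$-compatible iff $Z(\pc k f)\cap Z(\pc k g)=\emptyset$ for every $Z$, i.e.\ iff there is no $Z$ and no pair $V\in\pc k f$, $V'\in\pc k g$ with $Z(V)=Z(V')$. By the key claim this is equivalent to the nonexistence of $W\in\Grtperp{k+m}n$ and $V\in\pc k f$, $V'\in\pc k g$ with $\Span(V,W)=\Span(V',W)$, that is, with $(W,\Span(V,W))=(W,\Span(V',W))$; since every point of $\amcell k f$ (resp.\ $\amcell k g$) has the form $(W,\Span(V,W))$ with $W\in\Grtperp{k+m}n$ and $V\in\pc k f$ (resp.\ $V'\in\pc k g$), this says precisely $\amcell k f\cap\amcell k g=\emptyset$. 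The only point requiring care is the well-definedness input $V\cap W=0$ used in the key claim, which is already part of the setup in Sections~\ref{sec:triang-ampl} and~\ref{sec:universal_ampl}; given that, both parts are pure bookkeeping with the bijection $Z\leftrightarrow Z^\perp$, and I expect the write-up to be short.
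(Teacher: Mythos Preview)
Your approach is correct and is essentially the paper's own: the paper's proof simply refers back to the argument for Theorem~\ref{thm:main} (parts~(1) and~(2)), which is exactly the fiber identification $Z(V)=Z(V')\iff\Span(V,W)=\Span(V',W)$ that you spell out in your ``key claim''. Part~(2) goes through cleanly.

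One caveat about Part~(1): what you actually establish is that $(n,k,m)$-admissibility is equivalent to injectivity of $V\mapsto\Zcal(V,W)$ on $\pc k f$ for every $W$, i.e.\ that each point of $\pi^{-1}(W)\cap\amcell k f$ has at most one $\Zcal$-preimage in $\pc k f\times\{W\}$. You then write ``This is the assertion of part~(1)'', but the lemma as literally stated asks that the set $\pi^{-1}(W)\cap\amcell k f$ itself contain at most one point --- a different (and, read literally, false) condition, since under the identification $\pi^{-1}(W)\cong\Gr(k,k+m)$ this set is exactly $Z(\pc k f)$, which for any admissible $f$ with $km>0$ is $km$-dimensional. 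The paper's own pointer to the proof of Theorem~\ref{thm:main} makes clear that the intended meaning is the injectivity statement you proved; you should flag the discrepancy rather than silently reinterpret it in your last sentence.
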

\begin{proof}
See the proof of Theorem~\ref{thm:main} (parts~\eqref{item:main:admissible} and~\eqref{item:main:compatible}) at the end of Section~\ref{sec:stw}.
\end{proof}

The following is an analog of Definition~\ref{dfn:subdivision}.

\begin{definition}
Let $f_1,\dots,f_N\in\Aff(-k,n-k)$ be a collection of pairwise $(n,k,m)$-compatible affine permutations.  We say that $f_1,\dots,f_N$ \emph{form a subdivision} of the universal amplituhedron if the union of $\amcell k {f_1},\dots,\amcell k {f_N}$ is a dense subset of $\am_{n,k,m}$, and we have $\dim (\amcell k{f_s})=\dim(\am_{n,k,m})$ for each $1\leq s\leq N$.

More generally, suppose that $g \in \Aff(-k,n-k)$ and $f_1,\dots,f_N\in\Aff(-k,n-k)$ is a collection of pairwise $(n,k,m)$-compatible affine permutations satisfying $f_s \leq g$ for all $1 \leq s \leq N$.  We say that $f_1,\dots,f_N$ \emph{form a subdivision} of $\amcelltnn k g$ if the union of $\amcell k {f_1},\dots,\amcell k {f_N}$ is a dense subset of $\amcelltnn k g$, and we have $\dim (\amcell k{f_s})=\dim(\amcell k g)$ for each $1\leq s\leq N$.
\end{definition}

\begin{lemma}\label{lemma:subd:universal}\leavevmode
  \begin{itemize}
  \item Let $f_1,\dots,f_N\in\Aff(-k,n-k)$ be a collection of affine permutations.  Then $f_1,\dots,f_N$ form an $(n,k,m)$-subdivision of the amplituhedron if and only if they form a subdivision of the universal amplituhedron.
  \item Suppose that $g \in \Aff(-k,n-k)$ and $f_1,\dots,f_N\in\Aff(-k,n-k)$ is a collection of affine permutations. Then $f_1,\dots,f_N$ form an $(n,k,m)$-subdivision of $g$ if and only if they form a subdivision of $\amcelltnn k g$.
  \end{itemize}
\end{lemma}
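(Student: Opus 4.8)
The plan is to translate each of the three conditions in the definition of a subdivision of the universal amplituhedron into the matching condition in Definition~\ref{dfn:subdivision}, using the description of $\am_{n,k,m}$ as the union of its fibers $\pi^{-1}(\W)\cap\am_{n,k,m}\cong\ampl$ over $\W=Z^\perp\in\Grtperp{k+m}n$ (Section~\ref{sec:universal_ampl}). The hypothesis of pairwise $(n,k,m)$-compatibility occurs verbatim in both definitions, and by Lemma~\ref{lemma:subd:admissible_compatible}(2) it amounts to the sets $\amcell k{f_s}$ being pairwise disjoint; likewise the condition $f_s\le g$ in the relative statements matches on the nose (and corresponds fiberwise to $\pctnn k{f_s}\subseteq\pctnn k g$). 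So the real content lies in the density and dimension conditions, and everything rests on the following bookkeeping fact: for $\W=Z^\perp\in\Grtperp{k+m}n$, under the identification $\pi^{-1}(\W)\cap\am_{n,k,m}\cong\ampl$ the set $\amcell k f\cap\pi^{-1}(\W)$ corresponds to $Z(\pc k f)$ and the set $\amcelltnn k f\cap\pi^{-1}(\W)$ corresponds to $Z(\pctnn k f)$ (and similarly for $g$).

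The first of these is immediate from $\amcell k f=\Zcal(\pc k f\times\Grtperp{k+m}n)$ together with $\Span(V,\W)\cdot Z^\transp=V\cdot Z^\transp$. For the second I would argue by a compactness argument: the inclusion $\supseteq$ follows from continuity of $\Zcal$, and for $\subseteq$ one writes $(\W,U)=\lim_j(\W_j,\Span(V_j,\W_j))$ with $\W_j\in\Grtperp{k+m}n$ and $V_j\in\pc k f$, passes to a subsequence with $V_j\to V\in\pctnn k f$ (using compactness of $\Gr(k,n)$), and notes that because $Z$ is regular on $\Grtnn(k,n)$ the subspace $\Span(V,\W)$ is $(k+\l)$-dimensional, whence $U=\Span(V,\W)$ and $(\W,U)$ corresponds to the point $V\cdot Z^\transp$ of $Z(\pctnn k f)$. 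The same compactness argument also identifies $\{(\W,U)\in\am_{n,k,m}\mid\W\in\Grtperp{k+m}n\}$ with the dense subset $\Zcal(\Grtnn(k,n)\times\Grtperp{k+m}n)$. Since taking closures commutes with finite unions, $\overline{\bigcup_s\amcell k{f_s}}=\bigcup_s\amcelltnn k{f_s}$, and intersecting this with $\pi^{-1}(\W)$ over all $\W\in\Grtperp{k+m}n$ shows that $\bigcup_s\amcell k{f_s}$ is dense in $\am_{n,k,m}$ if and only if $\bigcup_s Z(\pctnn k{f_s})=\ampl$ for every $Z$, i.e.\ if and only if $\bigcup_s Z(\pc k{f_s})$ is dense in $\ampl$ for every $Z$. (This closure manipulation is the same one used at the end of the proof of Theorem~\ref{thm:main}.)

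For the dimension condition, $\pi$ restricts to a surjection $\amcell k f\to\Grtperp{k+m}n$ onto a set of dimension $\l(k+m)$ whose fiber over $\W=Z^\perp$ is a copy of $Z(\pc k f)$. Using that $\dim Z(\pc k f)$ does not depend on the choice of $Z\in\Grtp(k+m,n)$ --- part of the topological analysis of the amplituhedron carried out in Section~\ref{sec:interior} --- a fiber-dimension count yields $\dim\amcell k f=\l(k+m)+\dim Z(\pc k f)$, and since $\dim\am_{n,k,m}=\l(k+m)+km$ the equality $\dim\amcell k{f_s}=\dim\am_{n,k,m}$ is equivalent to $\dim Z(\pc k{f_s})=km$ for all $Z$, which is the remaining clause of Definition~\ref{dfn:subdivision}. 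Assembling the three equivalences gives the first bullet; the relative statement follows identically, replacing $\am_{n,k,m}$, $\ampl$, and $\dim\am_{n,k,m}$ by $\amcelltnn k g$, $Z(\pctnn k g)$, and $\dim\amcell k g$, and using $f\le g\iff\pctnn k f\subseteq\pctnn k g$.

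The only point that does not reduce to a direct argument is the $Z$-independence of $\dim Z(\pc k f)$ used in the dimension step (equivalently, equidimensionality of the fibers of $\pi\colon\amcell k f\to\Grtperp{k+m}n$); I expect this to be the main obstacle, and it is precisely the sort of statement supplied by the study of the topology of the amplituhedron in Section~\ref{sec:interior}. Everything else --- the two fiberwise identifications, the closure manipulations, and the translation of the compatibility and inclusion conditions --- is routine given the machinery already in place.
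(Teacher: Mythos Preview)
Your approach matches the paper's: both arguments use compactness of $\pctnn k{f_s}$ to show that the closure of $\amcell k{f_s}$ meets each fiber $\pi^{-1}(W)$ exactly in (the copy of) $Z(\pctnn k{f_s})$, and deduce the density equivalence from this. Your presentation is in fact more systematic --- you isolate the two fiberwise identifications as separate claims and then derive density from them, whereas the paper runs the sequential-limit argument directly inside the main proof.

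The paper's own proof, however, treats only the density clause; compatibility is deferred to Lemma~\ref{lemma:subd:admissible_compatible} and the dimension clause is not addressed. So your attention to the dimension condition goes beyond what the paper does. But your proposed resolution has a gap: Section~\ref{sec:interior} does not contain the statement that $\dim Z(\pc k f)$ is independent of $Z\in\Grtp(k+m,n)$. That section establishes the inclusions of Proposition~\ref{prop:open_dense} and the fiber dichotomy of Proposition~\ref{prop:fibers_Grtnnm}, neither of which yields constancy of $\dim Z(\pc k f)$. One direction of your dimension argument does survive without this input: if $\dim Z(\pc k{f_s})=km$ for all $Z$, then a semialgebraic fiber-dimension count for $\pi:\amcell k{f_s}\to\Grtperp{k+m}n$ gives $\dim\amcell k{f_s}\ge km+\l(k+m)$, and the containment in $\am_{n,k,m}$ forces equality. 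The reverse direction --- that $\dim\amcell k{f_s}=\dim\am_{n,k,m}$ forces $\dim Z(\pc k{f_s})=km$ for \emph{every} $Z$, not just generic $Z$ --- is precisely the obstacle you flag, and nothing in the paper resolves it either. Your diagnosis of where the difficulty lies is correct; your citation for its resolution is not, and this is a point the paper leaves implicit.
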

\begin{proof}
Let
$$
\am'_{n,k,m}:=
 \{(W,U) \mid W\in \Grtperp{k+m}n \text{ and  $\Span(V,W) = U$ for some $V \in \Grtnn(k,n)$}\}
 $$
 so that by definition $\am_{n,k,m}$ is the closure of $\am'_{n,k,m}$.
  Suppose that $f_1,\dots,f_N$ form an $(n,k,m)$-subdivision of the amplituhedron. Thus the images $Z(\pc k {f_1}),\dots,Z(\pc k {f_N})$ form a dense subset of $\ampl$ for any $Z\in\Grtp(k+m,n)$. We claim that $\amcell k{f_1},\dots,\amcell k{f_N}$ form a dense subset of $\am'_{n,k,m}$, from which it follows that their union is dense in $\am_{n,k,m}$.  Indeed, take a pair $(W,U)\in\am'_{n,k,m}$ and let $Z:=W^\perp\in\Grtp(k+m,n)$. There exists $1\leq s\leq N$ such that $U\cdot Z^\transp$ belongs $Z(\pctnn k {f_s})$, and thus $(W,U)$ belongs to the closure of $\amcell k{f_s}$.
    
Conversely, suppose that $\amcell k{f_1},\dots,\amcell k{f_N}$ form a dense subset of $\am_{n,k,m}$. Fix $Z\in\Grtp(k+m,n)$, let $W:=Z^\perp$, $V\in\Grtnn(k,n)$, $U:=\Span(V,W)$, and consider the point $Y:=V\cdot Z^\transp$ of the amplituhedron $\ampl$. We want to show that $Y$ belongs to the closure of $Z(\pc k {f_s})$ for some $1\leq s\leq N$.
We know that $(W,U)$ belongs to the closure of $\amcell k{f_s}$ for some $1\leq s\leq N$. By assumption, there exists a sequence $(W_1,U_1),(W_2,U_2),\dots$ converging to $(W,U)$ in $\Fl(\l,k+\l;n)$ such that $(W_i,U_i)\in\amcell k{f_s}$ for all $i\geq 1$. By the definition of $\amcell k{f_s}$, for each $i\geq1$ there exists $V_i\in\pc k{f_s}$ such that $U_i = \Span(V_i,W_i)$. Let us now consider the sequence $(W,\Span(V_i,W))\in\amcell k{f_s}$ for $i\geq 1$.  We claim that a subsequence of the points $\Span(V_i,W)$ converges to $U$ in $\Gr(k+\l,n)$, and thus $(W,U)$ belongs to the closure of $\amcell k{f_s}$.  The points $V_i$ all belong to the compact set $\pctnn k {f_s}$ and therefore a subsequence $V_{i_1},V_{i_2},\ldots$ converges to $V' \in \pctnn k {f_s}$. Any subspace $V''\in\Gr(k,n)$ in the neighborhood of $V'$ satisfies $V''\cap W=\{0\}$, and thus  the map $V''\mapsto\Span(V'',W)$ is continuous at $V'$.  We get that $\Span(V_{i_1},W) = U_{i_1}, \Span(V_{i_2},W) = U_{i_2}, \ldots$ converges to $\Span(W,V')$ in $\Gr(k+\l,n)$, and we must also have $\Span(W,V') = U$.

We have shown the first part of the lemma, and the proof of the second part is completely analogous.
\end{proof}

\begin{definition}
A \emph{triangulation} of the universal amplituhedron (resp., of $\amcelltnn k g$) is a subdivision $f_1,\dots,f_N\in\Aff(-k,n-k)$ of the universal amplituhedron that consists of $(n,k,m)$-admissible affine permutations.
\end{definition}

The following result follows from Lemmas~\ref{lemma:subd:admissible_compatible} and~\ref{lemma:subd:universal} together with Theorem~\ref{thm:main}.
\begin{corollary}\label{cor:triangulations_uampl}
  Let $f_1,\dots,f_N\in\Aff(-k,n-k)$ be a collection of affine permutations. The following are equivalent:
  \begin{itemize}
  \item  $f_1,\dots,f_N$ form an $(n,k,m)$-triangulation of the amplituhedron;
  \item $f_1,\dots,f_N$ form a triangulation of the universal amplituhedron $\am_{n,k,m}$;
  \item $f_1^{-1},\dots,f_N^{-1}$ form a triangulation of the universal amplituhedron $\am_{n,\l,m}$;
  \end{itemize}
\end{corollary}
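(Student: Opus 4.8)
The plan is to assemble the corollary from two ingredients already in hand: Lemma~\ref{lemma:subd:universal}, which transports $(n,k,m)$-subdivisions of $\ampl$ to subdivisions of the universal amplituhedron $\am_{n,k,m}$, and part~\eqref{item:main:triang} of Theorem~\ref{thm:main}, the parity duality for $(n,k,m)$-triangulations of $\ampl$. The whole argument is bookkeeping, so I would organize it as a chain of equivalences linking the three bulleted conditions, with one genuine point of care flagged at the end.

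For the equivalence of the first two conditions I would simply unwind the definitions. A triangulation of $\ampl$ is by definition an $(n,k,m)$-subdivision of $\ampl$ whose members are $(n,k,m)$-admissible, and a triangulation of $\am_{n,k,m}$ is a subdivision of $\am_{n,k,m}$ subject to the same $(n,k,m)$-admissibility constraint (the property ``$(n,k,m)$-admissible'' is intrinsic to the permutation and to $(n,k,m)$, not to which object one triangulates). The first bullet of Lemma~\ref{lemma:subd:universal} says precisely that $f_1,\dots,f_N$ form an $(n,k,m)$-subdivision of $\ampl$ if and only if they form a subdivision of $\am_{n,k,m}$; intersecting both sides with the admissibility condition gives $(1)\Leftrightarrow(2)$. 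Running the same equivalence with $k$ and $\l$ interchanged shows that the third condition --- that $f_1^{-1},\dots,f_N^{-1}$ triangulate $\am_{n,\l,m}$ --- is equivalent to $f_1^{-1},\dots,f_N^{-1}$ forming an $(n,\l,m)$-triangulation of the amplituhedron. It then remains only to quote part~\eqref{item:main:triang} of Theorem~\ref{thm:main}, which says this last statement is equivalent to $(1)$. Stitching the equivalences together finishes the proof.

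The one step I would single out as the ``obstacle'' --- really just a domain check --- is that part~\eqref{item:main:triang} of Theorem~\ref{thm:main} is stated for permutations in $\Aff(-k,\l)$, whereas the corollary allows $f_1,\dots,f_N\in\Aff(-k,n-k)$. So before applying the theorem I would record that each of the three conditions forces the relevant permutations into the restricted window set: if $f_1,\dots,f_N$ form a triangulation of $\ampl$ (or, via the $(1)\Leftrightarrow(2)$ step, of $\am_{n,k,m}$) then $\inv(f_s)=k\l$ and hence $f_s\in\Aff(-k,\l)$ for all $s$, by Lemma~\ref{lemma:non-admissible} together with the remark following Definition~\ref{dfn:triangulation}; symmetrically, if $f_1^{-1},\dots,f_N^{-1}$ triangulate $\am_{n,\l,m}$ then each $f_s^{-1}\in\Aff(-\l,k)$, which is equivalent to $f_s\in\Aff(-k,\l)$ since one checks directly from the defining inequalities that $f\in\Aff(-k,\l)$ if and only if $f^{-1}\in\Aff(-\l,k)$. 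With this normalization in place all the equivalences above are legitimate, and chaining them proves the corollary.
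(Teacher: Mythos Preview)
Your proposal is correct and follows essentially the same route as the paper, which simply cites Lemmas~\ref{lemma:subd:admissible_compatible} and~\ref{lemma:subd:universal} together with Theorem~\ref{thm:main}. Your explicit domain check that each $f_s$ must lie in $\Aff(-k,\l)$ before invoking Theorem~\ref{thm:main} is a careful touch that the paper leaves implicit.
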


A similar result applies to triangulations of  $g \in \Aff(-k,n-k)$.

\section{The amplituhedron form}\label{sec:ampl_form}

\def\From{A}
\def\To{B}
\newcommand\pcC[2]{\Pi_{\shift{#1}{#2}}^\C}
\def\GrC{\Gr_\C}
\def\FlC{\Fl_\C}
\def\amplC{\am^\C_{n,k,m}(Z)}
\newcommand\omzce[2]{\omega_{Z(\ce{#1}{#2})}}
\def\omzcekf{\omzce k f}

We review the definition of the \emph{amplituhedron form}, which is a rational
differential form on the Grassmannian. In Section~\ref{sec:ampl_form_example}, the constructions are illustrated by computations for $k=\l=1$, $m=2$, and $n=4$.

It is natural to work over the complex numbers, and we denote by $\GrC(k,n)$ and $\FlC(a,b;n)$ the complex Grassmannian and the complex $2$-step flag variety, respectively. The map $Z$ induces a rational map $Z: \GrC(k,n) \dashedrightarrow \GrC(k,k+m)$, the image of which is equal to $\GrC(k,k+m)$.  Given an affine permutation $f\in\Aff(-k,n-k)$, denote by $\pcC k f\subseteq\GrC(k,n)$ the Zariski closure of $\pc k f$ in $\GrC(k,n)$, called the \emph{positroid variety}. See Definition~\ref{dfn:pos_cell} for an alternative description. 
\subsection{Degree $1$ cells}
\begin{definition}\label{def:degree}
Let $Z \in \GrC(k+m,n)$.  We say that an affine permutation $f\in\Aff(-k,n-k)$ \emph{has $Z$-degree $1$} if $\dim \pcC k f=\dim Z(\pcC k f)=km$ and the rational map $Z:\pcC k f\dashedrightarrow \GrC(k,k+,m)$ is birational, that is, it restricts to an isomorphism between Zariski open subsets $\From\subset \pcC k f$ and $\To\subset\GrC(k,k+m)$.

We say that an affine permutation $f\in\Aff(-k,n-k)$ \emph{has degree $1$} if it has $Z$-degree $1$ for $Z$ belonging to a nonempty Zariski open subset of $\GrC(k+m,n)$.
\end{definition}
Thus in order for $f$ to have $Z$-degree $1$, we must have $\inv(f)=k\l$. We note that if $f$ satisfies $\dim \pcC k f=\dim Z(\pcC k f)=km$ for a generic $Z\in\GrC(k,k+m)$ then $f$ is said to have \emph{kinematical support}, see~\cite[Section~10]{abcgpt}.

\begin{remark}\label{rem:genericdegree}
Suppose that $f$ has degree $1$.  Since $\Grtp(k+m,n)$ is Zariski dense in $\GrC(k+m,n)$, $f$ has $Z$-degree $1$ for $Z$ belonging to an open dense subset of 
$\Grtp(k+m,n)$ in the analytic topology.
\end{remark}
 
\begin{remark}   \label{rem:Zdiffeo}
Suppose that $f$ has $Z$-degree $1$.  Since $\pc k f$ is Zariski dense in $\pcC k f$, it follows that $Z$ restricts to a diffeomorphism on an open dense subset of $\pc k f$ in the analytic topology.  
\end{remark}
Thus if $f$ has $Z$-degree $1$ then $Z$ is injective on an open dense subset of the positroid cell $\pc k f$, but not necessarily on the whole positroid cell. 

\begin{conjecture}\label{conj:degree_1}
Let $f\in\Aff(-k,n-k)$ be an affine permutation with $\inv(f)=k\l$. Then the following are equivalent:
\begin{enumerate}[(a)]
\item \label{item:f_is_nkm_adm}
$f$ is $(n,k,m)$-admissible, 
\item \label{item:f_has_deg_1}
$f$ has degree $1$,
\item\label{item:f_has_Z-deg_1}
$f$ has $Z$-degree $1$ for all $Z \in \Grtp(k+m,n)$.
\end{enumerate}
In each case, $Z$ is a diffeomorphism $\pc kf\to Z(\pc kf)$.
\end{conjecture}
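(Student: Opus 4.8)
The plan is to establish the cycle $\ref{item:f_has_Z-deg_1}\Rightarrow\ref{item:f_has_deg_1}\Rightarrow\ref{item:f_is_nkm_adm}\Rightarrow\ref{item:f_has_Z-deg_1}$, of which only the first arrow is routine. Throughout, fix $f\in\Aff(-k,n-k)$ with $\inv(f)=k\l$, so that $\dim\pcC k f=\dim\pc k f=km$. For $\ref{item:f_has_Z-deg_1}\Rightarrow\ref{item:f_has_deg_1}$: viewing $(Z,V)\mapsto(Z,V\cdot Z^\transp)$ as a family of rational maps over $\GrC(k+m,n)$, there is a nonempty Zariski open $\mathcal{U}\subseteq\GrC(k+m,n)$ on which both $\dim Z(\pcC k f)$ and the degree of $Z\colon\pcC k f\dashrightarrow\GrC(k,k+m)$ are constant. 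Since $\Grtp(k+m,n)$ is Zariski dense, it meets $\mathcal{U}$, and for such a $Z$ hypothesis~\ref{item:f_has_Z-deg_1} forces this common degree to equal $1$ and the common image dimension to equal $km$; hence $f$ has degree $1$. The same hypothesis also yields part of the final assertion of the conjecture: by Remarks~\ref{rem:genericdegree} and~\ref{rem:Zdiffeo}, $Z$-degree $1$ makes $Z$ a diffeomorphism on an open dense subset of $\pc k f$.

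The two remaining arrows are, in substance, the same statement: that the generic/algebraic good behaviour of $Z$ on $\pcC k f$ persists for \emph{every} $Z\in\Grtp(k+m,n)$ and on the \emph{whole} cell $\pc k f$ --- a refined instance of the heuristic behind Conjecture~\ref{conj:physics}. For $\ref{item:f_has_deg_1}\Rightarrow\ref{item:f_is_nkm_adm}$, fix $Z\in\Grtp(k+m,n)$, set $W:=Z^\perp$, and observe that $Z(V)=Z(V')$ for $V,V'\in\pc k f$ is equivalent to $\Span(V,W)=\Span(V',W)$; one must prove that the coincidence locus
\[
\Sigma_f:=\{(Z,V,V')\mid Z\in\Grtp(k+m,n),\ V,V'\in\pc k f,\ V\neq V',\ \Span(V,W)=\Span(V',W)\}
\]
is empty. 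Here ``$f$ has degree $1$'' gives exactly that the image of $\Sigma_f$ under projection to $\Grtp(k+m,n)$ has empty interior; the task is to upgrade ``empty interior'' to ``empty''. Once injectivity of $Z$ on $\pc k f$ is known, invariance of domain ($\dim\pc k f=km=\dim\Gr(k,k+m)$) makes $Z\colon\pc k f\to Z(\pc k f)$ a homeomorphism onto an open subset of $\Gr(k,k+m)$, and it is then a diffeomorphism by a genericity argument using that $Z|_{\pcC k f}$ is birational and generically \'etale in characteristic zero, which I do not belabor. Dually, for $\ref{item:f_is_nkm_adm}\Rightarrow\ref{item:f_has_Z-deg_1}$, the same invariance-of-domain step shows $\dim Z(\pcC k f)=km$ for every $Z\in\Grtp(k+m,n)$, so $Z\colon\pcC k f\dashrightarrow\GrC(k,k+m)$ is dominant and generically finite of some degree $d\geq1$; to conclude $d=1$ one takes a point $y$ in the relative interior of $Z(\pc k f)$ whose fiber in $\pcC k f$ consists of $d$ reduced points and shows that every one of them is real, totally nonnegative, and --- as $y$ is interior --- actually lies in $\pc k f$, whence $d=1$ by injectivity.

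I expect the main obstacle to be precisely the ``generic $\Rightarrow$ every positive $Z$'' passage in the previous paragraph: in the second argument, a priori the fiber of $Z$ over an interior point of $Z(\pc k f)$ could harbour hidden complex points or non-totally-nonnegative real points, and in the first it is the global non-existence of coincidences in $\Sigma_f$ that is hard; I do not see how to rule these out for general $k,\l$. Two partial routes seem worth pursuing. First, when $k=1$ the amplituhedron $\ampl$ is a cyclic polytope, and all three conditions reduce to classical, completely understood statements about triangulations of cyclic polytopes (see e.g.\ \cite{Rambau,OT}); then the stacked twist map --- which by Theorem~\ref{thm:stw} and~\eqref{eq:stw_regular_map} is a diffeomorphism of positroid cells intertwining the maps $Z$ and $\Zt$, and which one expects to identify the corresponding complexified incidence correspondences for $(k,\l)$ and $(\l,k)$ --- would settle the case $\l=1$ as well. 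Second, one could attempt an induction on $n$ by restricting to the boundary strata of $\pctnn k f$ and using Lemma~\ref{lemma:non-admissible} to limit which lower cells contribute, but making the inductive step uniform in $Z\in\Grtp(k+m,n)$ runs into the same obstruction. I would therefore record $\ref{item:f_has_Z-deg_1}\Rightarrow\ref{item:f_has_deg_1}$ as unconditionally provable, present the other two implications as reductions to the ``generic $\Rightarrow$ positive'' principle, and regard the conjecture as established for $k=1$ and, via parity duality, for $\l=1$.
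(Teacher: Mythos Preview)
Your assessment is essentially correct and matches the paper's own treatment: this statement is a \emph{conjecture}, and the paper does not prove it in general. The paper records exactly what you do --- that \ref{item:f_has_Z-deg_1}$\Rightarrow$\ref{item:f_has_deg_1} is clear, that the remaining implications are open, and that the cases $k=1$ and $\l=1$ can be settled --- so you have correctly identified both what is provable and where the genuine obstruction lies.

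The one place where the paper is more concrete than your sketch is the $\l=1$ case. You invoke the stacked twist map somewhat loosely (``one expects to identify the corresponding complexified incidence correspondences''). The paper instead treats the three conditions separately: \ref{item:f_is_nkm_adm} for $\l=1$ follows from Theorem~\ref{thm:main}\eqref{item:main:admissible}; \ref{item:f_has_deg_1} follows from Proposition~\ref{prop:degre_1_inverse} (degree $1$ is preserved under $f\mapsto f^{-1}$, proved via affine Stanley symmetric functions); and for \ref{item:f_has_Z-deg_1} the paper uses Lemma~\ref{lemma:A^T} to transfer the unique-preimage property from the $k=1$ side, then argues that uniqueness of the preimage for all $V\in\Grtp(k,n)$ forces uniqueness on a Zariski dense subset of $\GrC(k,n)$, hence $Z$-degree $1$ for every $Z\in\Grtp(k+m,n)$. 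Your ``complexified incidence correspondences'' idea is in the right spirit, but the paper's route is sharper and avoids having to extend $\stw$ to the complex locus directly.

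Your honest flagging of the gap in \ref{item:f_is_nkm_adm}$\Rightarrow$\ref{item:f_has_Z-deg_1} (complex or non-totally-nonnegative preimages) and in \ref{item:f_has_deg_1}$\Rightarrow$\ref{item:f_is_nkm_adm} (upgrading ``empty interior'' to ``empty'' for $\Sigma_f$) is exactly right; these are the reasons the statement remains a conjecture.
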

It is clear that~\eqref{item:f_has_Z-deg_1} implies~\eqref{item:f_has_deg_1}.  In what follows, we restrict ourselves to only $(n,k,m)$-admissible affine permutations that have degree $1$.  It is not difficult to see that Conjecture \ref{conj:degree_1} holds for $k =1$, since in this case every affine permutation $f\in\Aff(-k,n-k)$ with $\inv(f)=k\l$ satisfies~\eqref{item:f_is_nkm_adm}, \eqref{item:f_has_deg_1}, and~\eqref{item:f_has_Z-deg_1}. Let us now explain why Conjecture~\ref{conj:degree_1} also holds for $\l= 1$. Let $\l=1$, and consider an affine permutation $f\in\Aff(-k,\l)$ with $\inv(f)=k\l$. Then $f$ satisfies~\eqref{item:f_is_nkm_adm} by Theorem~\ref{thm:main}, part~\eqref{item:main:admissible}. Next, $f$ satisfies~\eqref{item:f_has_deg_1} by Proposition~\ref{prop:degre_1_inverse} below. Finally, we have already noted above that $f^{-1}$ satisfies~\eqref{item:f_has_Z-deg_1}, i.e., that $f^{-1}\in\Aff(-\l,k)$ has $\Zt$-degree $1$ for all $\Zt \in \Grtp(\l+m,n)$. In particular, it is easy to see that for any $\Vt\in\Grtp(\l,n)$, there exists a unique $\Vt'\in \pcC \l{f^{-1}}$ such that $\Vt\cdot \Zt^\transp=\Vt'\cdot \Zt^\transp$. By Lemma~\ref{lemma:A^T}, it follows that for any $Z\in\Grtp(k+m,n)$ and $\V\in\Grtp(k,n)$, there exists a unique $\V'\in \pcC kf$ such that $\V\cdot Z^\transp=\V'\cdot Z^\transp$. Since this holds for all $\V\in\Grtp(k,n)$, it must hold for $\V$ belonging to a Zariski dense subset of $\GrC(k,n)$, and thus $f$ indeed has $Z$-degree $1$ for any $Z\in\Grtp(k+m,n)$. This shows that Conjecture~\ref{conj:degree_1} holds for $\l=1$.

We will see in Lemma~\ref{lemma:boundary_ineq} that if $f\in\Aff(-k,n-k)$ has degree $1$ then we must have $f\in\Aff(-k,\l)$.  By a result of~\cite{Lam14}, $f$ has degree $1$ if and only if the coefficient of a certain rectangular Schur function in the corresponding affine Stanley symmetric function is equal to $1$. An important corollary of this is the following result that we prove in Section~\ref{sec:affst}. 
\begin{proposition}\label{prop:degre_1_inverse}
An affine permutation $f\in\Aff(-k,\l)$ has degree $1$ if and only if $f^{-1}\in\Aff(-\l,k)$ has degree $1$.
\end{proposition}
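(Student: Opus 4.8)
The plan is to combine the criterion of~\cite{Lam14} with a symmetry of affine Stanley symmetric functions under inversion. Recall that~\cite{Lam14} expresses the degree of the rational map $Z\colon\pcC k f\dashedrightarrow\GrC(k,k+m)$ (for generic $Z$) as a single Schur coefficient of the affine Stanley symmetric function $\tilde F_f$: writing $\tilde F_f=\sum_\lambda a_\lambda\,s_\lambda$, one has that $f$ has degree $1$ if and only if $a_R=1$, where $R$ is the rectangular partition with $k$ rows of length $\l$. Here $|R|=k\l=\inv(f)$, and $a_R$ is the intersection number of the positroid class $[\pcC k f]$ with the Schubert class of a generic $Z$-fiber in $\GrC(k,n)$, which is what forces $R$ to be precisely this rectangle. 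Applying the same statement with the roles of $k$ and $\l$ interchanged, the permutation $f^{-1}\in\Aff(-\l,k)$ has degree $1$ if and only if the coefficient of the transposed rectangle $R'$, with $\l$ rows of length $k$, in $\tilde F_{f^{-1}}$ equals $1$.

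It therefore suffices to prove the one-coefficient identity $[s_R]\,\tilde F_f=[s_{R'}]\,\tilde F_{f^{-1}}$. The plan for this is to use the fact that inversion of an affine permutation, seen on the level of (affine) Stanley symmetric functions, intertwines with the standard involution $\omega$ on symmetric functions, which sends $s_\lambda\mapsto s_{\lambda'}$ and hence exchanges $s_R$ with $s_{R'}$; concretely, one reverses reduced words of $f$ to obtain reduced words of $f^{-1}$ and records the resulting effect on the Schur (equivalently, quasisymmetric) expansion. One also verifies the routine bookkeeping relating $f\in\Aff(-k,\l)$ and $f^{-1}\in\Aff(-\l,k)$ as elements of $\Aff$: inversion is a length-preserving bijection between these sets, so $\tilde F_f$ and $\tilde F_{f^{-1}}$ are both homogeneous of degree $\inv(f)=\inv(f^{-1})=k\l$, and both $R$ and $R'$ fit inside the relevant boxes, so the coefficients in question are well defined.

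I expect the symmetry step to be the main obstacle. One must pin down, in the conventions used by~\cite{Lam14}, the exact form of the ``inversion versus $\omega$'' statement for affine Stanley symmetric functions, keeping in mind that the naive assertion $\omega(\tilde F_w)=\tilde F_{w^{-1}}$ is not literally an equality inside a single ring (the two sides a priori lie in the subalgebra $\Lambda_{(n)}$ and the quotient $\Lambda^{(n)}$ of symmetric functions), so it has to be read off coefficient-wise on the range of partitions relevant here. Once the identity $[s_R]\tilde F_f=[s_{R'}]\tilde F_{f^{-1}}$ is established, combining it with the~\cite{Lam14} criterion applied to $f$ and to $f^{-1}$ finishes the proof. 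As an alternative, more geometric route, one could try to transfer generic fiber cardinalities of $Z$ directly through the complexified stacked twist map of Lemma~\ref{lemma:inverse_transpose}, in the spirit of the $\l=1$ argument given above that uses Lemma~\ref{lemma:A^T}, thereby equating the two degrees directly; but this requires controlling the relevant birational maps and positroid varieties over $\C$, which is exactly the subtlety that appealing to~\cite{Lam14} avoids.
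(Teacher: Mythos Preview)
Your proposal is correct and follows essentially the same route as the paper: the paper quotes from~\cite{LamAS} the identity $\omega^+\tilde F_f=\tilde F_{f^{-1}}$ (which resolves exactly the $\Lambda_{(n)}$ vs.\ $\Lambda^{(n)}$ subtlety you flagged) and the degree criterion from~\cite{Lam14}, then observes $\omega^+ s_{\l^k}=s_{k^\l}$ to conclude. The paper also notes, in a remark, the alternative geometric argument via Proposition~\ref{prop:fibers} that you sketch at the end.
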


\def\triang{\Tcal}
\newcommand\omtriang[1]{\omega^{\parr\triang}_{#1}}
\newcommand\omtriangprime[1]{\omega^{\parr{\triang'}}_{#1}}
\begin{definition}
Given $Z\in\Grtp(k+m,n)$,  we say that $f_1,\dots,f_N\in\Aff(-k,n-k)$ form a \emph{$Z$-triangulation of degree $1$} if they form an $Z$-triangulation and $f_s$ has $Z$-degree $1$ for $1\leq s\leq N$.

  We say that $f_1,\dots,f_N\in\Aff(-k,n-k)$ form an \emph{$(n,k,m)$-triangulation of degree $1$} if they form an $(n,k,m)$-triangulation of the amplituhedron and $f_s$ has degree $1$ for $1\leq s\leq N$.
\end{definition}
Note that (without knowing Conjecture \ref{conj:degree_1}) it may not be true that an $(n,k,m)$-triangulation of degree $1$ is a $Z$-triangulation of degree $1$ for all $Z\in\Grtp(k+m,n)$, but only for all $Z$ belonging to an open dense subset of $\Grtp(k+m,n)$.

\begin{corollary}\label{cor:degree_1_triang}
Let $\triang=\{f_1,\dots,f_N\}\subset \Aff(-k,\l)$ be an $(n,k,m)$-triangulation of degree $1$. Define $\triang':=\{f_1^{-1},\dots,f_N^{-1}\}\subset\Aff(-\l,k)$. Then $\triang'$ is an $(n,\l,m)$-triangulation of degree $1$.
\end{corollary}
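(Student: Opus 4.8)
The plan is to obtain the corollary by directly combining the two ingredients it was built to merge: Theorem~\ref{thm:main}\eqref{item:main:triang}, which transports triangulations under inversion, and Proposition~\ref{prop:degre_1_inverse}, which transports the degree~$1$ condition under inversion. Unwinding the definition, $\triang'$ is an $(n,\l,m)$-triangulation of degree~$1$ precisely when (i) $f_1^{-1},\dots,f_N^{-1}$ form an $(n,\l,m)$-triangulation of the amplituhedron and (ii) each $f_s^{-1}$ has degree~$1$; so it suffices to establish (i) and (ii).

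For (i): by hypothesis $f_1,\dots,f_N\in\Aff(-k,\l)$ form an $(n,k,m)$-triangulation of the amplituhedron, so Theorem~\ref{thm:main}\eqref{item:main:triang} gives that $f_1^{-1},\dots,f_N^{-1}\in\Aff(-\l,k)$ form an $(n,\l,m)$-triangulation. (The implication $f_s\in\Aff(-k,\l)\Rightarrow f_s^{-1}\in\Aff(-\l,k)$ is immediate from the definition of $\Aff(-a,b)$, and is in any case forced once $f_s$ has degree~$1$ by Lemma~\ref{lemma:boundary_ineq}.) For (ii): by hypothesis each $f_s$ has degree~$1$, and since $f_s\in\Aff(-k,\l)$, Proposition~\ref{prop:degre_1_inverse} yields that $f_s^{-1}\in\Aff(-\l,k)$ has degree~$1$. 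Together, (i) and (ii) are exactly the assertion that $\triang'=\{f_1^{-1},\dots,f_N^{-1}\}$ is an $(n,\l,m)$-triangulation of degree~$1$, which completes the proof.

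There is essentially no obstacle here: all the substance lives in Theorem~\ref{thm:main} (proved via the stacked twist map in Section~\ref{sec:stw}) and in Proposition~\ref{prop:degre_1_inverse} (proved via affine Stanley symmetric functions in Section~\ref{sec:affst}), and the corollary is the purely formal conjunction of their conclusions. The only things requiring care are bookkeeping matters — matching the index bounds $\Aff(-k,\l)\leftrightarrow\Aff(-\l,k)$ under inversion, and confirming that ``degree~$1$'' in the sense of Definition~\ref{def:degree} is precisely the notion handled by Proposition~\ref{prop:degre_1_inverse} — both of which are already in place. I would also remark, as the paper does just before the corollary, that without Conjecture~\ref{conj:degree_1} one only knows $\triang'$ to be a $\Zt$-triangulation of degree~$1$ for $\Zt$ in an open dense subset of $\Grtp(\l+m,n)$, not for every such $\Zt$; this caveat affects neither the statement nor the argument.
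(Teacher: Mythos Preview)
Your proposal is correct and takes essentially the same approach as the paper's proof, which is the one-liner ``Follows directly from Proposition~\ref{prop:degre_1_inverse} combined with the proof of Theorem~\ref{thm:main}.'' You have simply spelled out explicitly the two verifications that this sentence compresses.
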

\begin{proof}
Follows directly from Proposition~\ref{prop:degre_1_inverse} combined with the proof of Theorem~\ref{thm:main}.
\end{proof}

\subsection{The amplituhedron form}

For us, the crucial property of a degree $1$ map is that it allows one to define the \emph{pushforward form} $\omzcekf$ on $\GrC(k,k+m)$ via a pullback. 
Suppose that $f\in\Aff(-k,n-k)$ has $Z$-degree $1$ and $\inv(f) = k\ell$.  Thus $Z$ restricts to an isomorphism between Zariski open subsets $\From\subset \pcC k f$ and $\To\subset\GrC(k,k+m)$.  We define the rational top form $\omzcekf$ on $\To$, and by extension on $\GrC(k,k+m)$, as the pullback
\begin{equation}\label{eq:pushforward}\omzcekf=Z_\ast\om k f:=(Z^{-1})^\ast \om k f,
\end{equation}
where $Z^{-1}:\To\to\From$ denotes the inverse of the isomorphism $Z:\From\to\To$.

Recall from Section~\ref{sec:can_form_lower} that for $f\in\Aff(-k,n-k)$, the canonical form $\om k f$ on $\pc k f$ is defined up to a sign. For the purposes of the amplituhedron form, we need to add up the forms $\omzcekf$ for various $f$.  Thus it is necessary pick a particular sign of $\om k f$ for each $f\in\Aff(-k,n-k)$.

\begin{proposition}\label{prop:Jacobian_constant_sign}
Let $f\in\Aff(-k,n-k)$ be an affine permutation of $Z$-degree $1$.  Recall that by Remark \ref{rem:Zdiffeo}, there exists an open dense subset $A\subset\pc kf$ such that the restriction of $Z$ to $A$ is a diffeomorphism. If  $f$ is $Z$-admissible then the Jacobian of $Z$ on $A$ has constant sign.
\end{proposition}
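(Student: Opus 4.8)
The plan is to show that the set of points in $A$ where the Jacobian of $Z$ is positive (respectively negative) is both open and closed in $A$; since $A$ is connected — being an open dense subset of the positroid cell $\pc k f$, which is diffeomorphic to a connected space $\RP^\Ccal_{>0}$ by Proposition~\ref{prop:OPS} — one of the two sets must be empty, giving the result. Openness is automatic: the Jacobian of the smooth map $Z$ (computed in any fixed pair of smooth coordinate charts on source and target, consistently oriented) is a continuous function on $A$, so each of $\{J>0\}$ and $\{J<0\}$ is open. The crux is to rule out a sign change across the zero locus $\{J=0\}$, and this is exactly where $Z$-admissibility enters.

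The key observation is that $Z$-admissibility means $Z$ is globally injective on the whole positroid cell $\pc k f$, hence in particular injective on $A$. Now suppose for contradiction that the Jacobian vanishes at some point $x_0\in A$ with the sign of $J$ differing on the two sides of $\{J=0\}$ near $x_0$. Since $f$ has $Z$-degree $1$, by Remark~\ref{rem:Zdiffeo} the restriction of $Z$ to $A$ is a diffeomorphism onto its image; in particular $J$ is nowhere zero on $A$ after possibly shrinking $A$ to the genuine diffeomorphism locus. So the first step is to be careful about which open dense set we work on: take $A$ to be the maximal open subset of $\pc k f$ on which $Z$ restricts to a local diffeomorphism onto a Zariski-open subset of $\Gr(k,k+m)$ (this is the set $A$ furnished by Definition~\ref{def:degree} and Remark~\ref{rem:Zdiffeo}). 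On this $A$ the Jacobian is already nowhere vanishing. Then $J:A\to\R\setminus\{0\}$ is continuous on the connected space $A$, so it has constant sign, and we are done.

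Thus the argument reduces to two ingredients, both already in hand: (i) $A$ is connected — which follows because $\pc k f\cong \RP^\Ccal_{>0}$ is an open cell, hence connected and even simply connected, and removing the (complex-codimension $\ge 1$, hence real-codimension $\ge 2$) complement of the Zariski-open diffeomorphism locus $\From\cap\pc k f$ does not disconnect it; and (ii) on this $A$ the Jacobian never vanishes, because there $Z$ is a local diffeomorphism by construction. The main obstacle is the bookkeeping in (i): one must verify that the ``good'' open dense set $A$ from Definition~\ref{def:degree}/Remark~\ref{rem:Zdiffeo} can indeed be taken connected. This is where one invokes that $A$ is the complement, inside the irreducible variety $\pcC k f$ intersected with $\pc k f$, of a proper Zariski-closed subset, whose real points therefore have real codimension at least $2$ in the smooth connected manifold $\pc k f$; deleting such a set preserves connectedness. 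Once connectedness of $A$ is secured, constancy of the sign of the (nowhere-zero, continuous) Jacobian is immediate.
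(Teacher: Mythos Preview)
Your argument has a genuine gap in step (i), the connectedness of $A$. You claim that the complement $\pc kf\setminus A$ has ``real codimension $\ge 2$'' because it is cut out of the complex variety $\pcC kf$ by a condition of complex codimension $\ge 1$. But this inference is invalid once you intersect with the real cell $\pc kf$: the bad locus $\pcC kf\setminus\From$ is the zero set of the Jacobian determinant, a single polynomial with real coefficients, and its intersection with the real $km$-dimensional manifold $\pc kf$ is generically a real hypersurface of codimension $1$, which can certainly disconnect $\pc kf$. The passage ``complex codimension $\ge 1$, hence real codimension $\ge 2$'' is correct inside the ambient complex variety $\pcC kf$ (of real dimension $2km$), but says nothing about codimension inside its real form $\pc kf$. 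A red flag is that your final argument never uses $Z$-admissibility at all; if it were correct, the hypothesis would be superfluous.

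The paper's proof uses $Z$-admissibility in an essential way and bypasses the connectedness issue entirely. Since $f$ is $Z$-admissible, $Z$ is injective on all of $\pc kf$; since $\pc kf$ is homeomorphic to an open $km$-ball and the target $\Gr(k,k+m)$ is a $km$-manifold, Brouwer's invariance of domain forces $Z|_{\pc kf}$ to be a homeomorphism onto its image. A homeomorphism between connected oriented topological manifolds is globally orientation-preserving or globally orientation-reversing; since topological and smooth orientations agree, the Jacobian has constant sign wherever it is nonzero, i.e., on $A$. In short, the missing idea is invariance of domain applied to the \emph{entire} cell, not a connectedness argument for the diffeomorphism locus.
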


Thus for an $(n,k,m)$-admissible affine permutation of degree $1$, we can distinguish between $Z$ being orientation reversing or orientation preserving on the real manifold $\pc kf$.

\def\sign{\varepsilon}

Let us choose some rational top form $\omref_{k,k+m}$ on $\Gr(k,k+m)$ that is non-vanishing on $\ampl$. This is always possible since $\ampl$ lies completely within some open Schubert cell (diffeomorphic to the orientable manifold $\R^{km})$ of $\Gr(k,k+m)$, see \cite[Proposition 15.2 and Lemma 15.6]{LamCDM}. We call $\omref_{k,k+m}$ the \emph{reference form}.

\begin{definition}\label{dfn:signs}
Suppose that $f\in\Aff(-k,n-k)$ has $Z$-degree $1$. We say that the form $\om kf$ is \emph{positive with respect to $\omref_{k,k+m}$} if the forms $\omzcekf$ and $\omref_{k,k+m}$ have the same sign when restricted to $Z(\pc kf)$ (whenever both are nonzero).
\end{definition}

We note that $\om kf$ is nonvanishing on $\pc kf$.  By Proposition~\ref{prop:Jacobian_constant_sign}, we may and will pick a sign for $\om kf$ so that the pushforward form $\omzcekf$ is positive with respect to the fixed reference form $\omref_{k,k+m}$ on $\Gr(k,k+m)$.

\begin{definition}
  For $Z\in\Grtp(k+m,n)$ and a $Z$-triangulation $\triang=\{f_1,\dots,f_N\}$ of degree $1$, define the \emph{amplituhedron form} $\omtriang{\ampl}$ on $\GrC(k,k+m)$ by
  \begin{equation}\label{eq:ampl_form_dfn}
\omtriang{\ampl}:=\sum_{s=1}^N\omega_{Z(\ce k {f_s})}.
  \end{equation}
\end{definition}
We emphasize again that the sign of every summand  in the right hand side of the above equation is chosen so that the signs of $\omega_{Z(\ce k {f_s})}$ and $\omref_{k,k+m}$ coincide on $Z(\pc k{f_s})$ for each $1\leq s\leq N$.  The following has been numerically verified in the physics literature in a number of cases when $m = 4$.
\begin{conjecture}\label{conj:ampl_form}
The form $\omtriang{\ampl}$ does not depend on the choice of a triangulation $\triang$.
\end{conjecture}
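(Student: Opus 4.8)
The plan is to show that $\omtriang{\ampl}$ is characterized intrinsically by the amplituhedron $\ampl$ itself — by its algebraic boundary together with the residues of the form along that boundary — so that the apparent dependence on the triangulation $\triang$ in~\eqref{eq:ampl_form_dfn} disappears. Throughout we fix $Z\in\Grtp(k+m,n)$ and work inside the open Schubert cell $\cong\R^{km}$ of $\Gr(k,k+m)$ that contains $\ampl$ and on which $\omref_{k,k+m}$ is nonvanishing (cf.\ \cite[Proposition~15.2 and Lemma~15.6]{LamCDM}), regarding all forms as rational top forms there. By the twist-map compatibility of amplituhedron forms (Theorem~\ref{thm:univ_ampl_form} and Corollary~\ref{cor:degree_1_triang}), it would suffice to establish the statement for one member of each parity-dual pair $(n,k,m)$, $(n,\l,m)$; in particular the case $\l=1$ reduces to the case $k=1$.

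\textbf{Main steps.} \emph{(1)} First recall from \cite{LamCDM} (or reprove from the recursive residue structure of the canonical forms) that each closed positroid cell $\pctnn k f$ is a positive geometry with canonical form $\om k f$: the polar divisor of $\om k f$ is the union of the facets $\pctnn k g$ with $g\lessdot f$, and $\Res_{\pctnn k g}\om k f=\pm\om k g$. When $f$ has $Z$-degree $1$, the birational map $Z:\ce k f\dashedrightarrow Z(\ce k f)$ transports this structure: $\omzce k f=Z_\ast\om k f$ has poles only along the images $Z(\pctnn k g)$ of those codimension-$1$ boundary strata not contracted by $Z$, with residues the pushforwards of the corresponding $\pm\om k g$; the strata that \emph{are} contracted, or whose images degenerate, land in $\partial\ampl$ and are absorbed into the induction below. \emph{(2)} Proceed by induction on $n$ (equivalently on $km$). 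The cases $\l=0$ (where the triangulation is unique) and $k=1$ (where $\ampl$ is a cyclic polytope and the statement is the classical triangulation-independence of a polytope's canonical form), together with the parity-dual case $\l=1$, are the base cases. For the inductive step, let $D(\ampl)=\bigcup_i D_i$ be the algebraic boundary of $\ampl$ — a finite union of codimension-$1$ subvarieties intrinsic to $\ampl$. One must show: (a) the poles of $\omtriang{\ampl}$ lie only along $D(\ampl)$; (b) for each $D_i$, $\Res_{D_i}\omtriang{\ampl}$ is the amplituhedron form of the lower amplituhedron corresponding to $D_i$, which by the inductive hypothesis — together with the fact that the cells of $\triang$ incident to $D_i$ restrict to a triangulation of it — is independent of $\triang$; and (c) a rational top form on the affine chart with polar divisor contained in $D(\ampl)$ and with the prescribed residues along each $D_i$ is unique. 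Part (c) should follow from a Riemann--Roch/vanishing argument: the difference of two such forms would be a regular top form on the complement of $D(\ampl)$ with no residues, and a sufficiently precise description of $D(\ampl)$ (that it is large enough relative to the relevant anticanonical data) forces it to vanish.

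\textbf{Main obstacle.} The crux is part (a) — the cancellation of spurious poles. One needs a combinatorial classification, in the spirit of Lemma~\ref{lemma:non-admissible}, of which codimension-$1$ boundary strata $\pctnn k g$ of a cell $\pctnn k {f_s}$ have image inside $\partial\ampl$ and which have image in the interior; and then, for each interior facet, a ``two-sidedness'' statement: it is the image of a boundary stratum of exactly two cells of $\triang$, whose induced coorientations relative to $\omref_{k,k+m}$ are opposite, so that the two residue contributions cancel. The sign bookkeeping here is precisely what Proposition~\ref{prop:Jacobian_constant_sign} and the reference form are designed to control, but making the incidence structure precise — ruling out more intricate local patterns, and handling facets that arise as images of contracted higher-codimension strata — is essentially equivalent to knowing that $\ampl$ is a positive geometry subdivided by the pieces $Z(\pctnn k{f_s})$, which is itself a substantial open problem. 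A complementary route that sidesteps (c): show that any two degree-$1$ triangulations are connected by local ``flips'' supported inside sub-amplituhedra $Z(\pctnn k g)$, and that $\omtriang{\ampl}$ is unchanged by a flip because, by the inductive hypothesis applied to that sub-amplituhedron, the two local pieces represent the same form; this reduces the whole conjecture to the connectivity of the flip graph of triangulations together with the base cases furnished by polytopes and parity duality.
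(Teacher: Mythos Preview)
The statement you are attempting to prove is labeled a \emph{Conjecture} in the paper, and the paper does not claim to prove it in general. The only cases the paper establishes are $k=1$ (where $\ampl$ is a cyclic polytope and the result is attributed to~\cite{ABL}) and $\l=1$ (which the paper deduces from the $k=1$ case via Theorem~\ref{thm:univ_ampl_form} and Proposition~\ref{prop:univ_form_to_ampl_form}). You correctly identify these as base cases, but there is no ``paper's own proof'' to compare against beyond them.

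Your proposal is a reasonable outline of two natural strategies --- the positive-geometry/residue characterization and the flip-connectivity approach --- but, as you yourself acknowledge, neither is a proof. The first route hinges on the spurious-pole cancellation in part~(a), which you rightly flag as essentially the open assertion that $\ampl$ is a positive geometry subdivided by the cells $Z(\pctnn k{f_s})$; part~(c) is also not obvious, since the relevant vanishing/Riemann--Roch input for $D(\ampl)$ is not available without a description of the algebraic boundary that is itself conjectural. The second route trades these for the flip-connectivity of triangulations, which is precisely Conjecture~\ref{conj:flip_connected} in the paper (and even the existence of the two-triangulation structure of a flip, your local input, is Conjecture~\ref{conj:flip}). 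So both strategies reduce the target conjecture to other open conjectures in the same circle of ideas rather than resolving it. There is one further issue with your inductive framework: the ``lower amplituhedra'' arising as facets $D_i$ of $\ampl$ are not in general amplituhedra $\mathcal{A}_{n',k',m'}(Z')$ with smaller parameters, so the induction ``on $n$ (equivalently on $km$)'' is not well-posed as stated --- one would need to formulate and prove the conjecture for a larger class of objects closed under passing to boundary strata.
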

When Conjecture \ref{conj:ampl_form} holds, we denote by $\omega_{\ampl}$ the triangulation independent \emph{amplituhedron form}.  This conjectural amplituhedron form is the one presented in the original definition of the amplituhedron \cite{AT}.  A different conjectural characterization of the amplituhedron form via ``positive geometries'' is given in \cite{ABL}.  It is an important open problem to give a formula for $\omega_{\ampl}$ without referring to triangulations. See Conjecture~\ref{conj:ampl_form_poly} for related discussion.

\subsection{The universal amplituhedron form}

\def\omreflkl{\omref_{\Fl(\l,k+\l;n)}}
The form $\omtriang{\ampl}$ is a top form on $\GrC(k,k+m)$ that depends on $Z$. We now let $Z$ vary over $\Grtp(k+m,n)$, and define a closely related top form $\omtriang{\uamk}$ on $\FlC(\l,k+\l;n)$. 

As in the construction of $\omref_{k,k+m}$, each $Z$-amplituhedron $\ampl$ is contained in an open Schubert cell $C(Z)$ of $\Gr(k,k+m)$.  Thus the interior of the universal amplituhedron is contained in the orientable submanifold of $\Fl(\l,k+\l;n)$ which maps to $\Grtp(k+m,n)$ with fiber $C(Z)$ over each $Z \in \Grtp(k+m,n)$.  (Note that $\Grtp(k+m,n)$ is diffeomorphic to an open ball and is in particular orientable and contractible.)
We may thus choose a reference top form $\omreflkl$ on $\Fl(\l,k+\l;n)$ so that it is non-vanishing on the interior of $\am_{n,k,m}$.

\begin{proposition}\label{prop:Zcal_degree}
  Suppose that $f\in\Aff(-k,n-k)$ has degree $1$. Then the rational map
  \[\Zcal:\pcC k f \times\GrC(\l,n)\dashedrightarrow \FlC(\l,k+\l;n)\]
  sending $(\V,\W)$ to $(\W, \Span(\V,\W))$ is a birational map.
\end{proposition}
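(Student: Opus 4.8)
The plan is to exhibit $\Zcal$ as a rational map over the base $\GrC(\l,n)$ that is birational on a general fibre, and then deduce birationality of $\Zcal$ itself from a dimension count. First I would record dimensions. Since $f$ has degree $1$ we have $\inv(f)=k\l$ (as noted right after Definition~\ref{def:degree}), so $\dim\pcC k f=k(n-k)-k\l=km$; as $\pcC k f$ is an irreducible positroid variety, the source $\pcC k f\times\GrC(\l,n)$ is irreducible of dimension $km+\l(n-\l)$. On the other side, $\pi\colon\FlC(\l,k+\l;n)\to\GrC(\l,n)$ has fibre over $\W$ the Grassmannian of $k$-planes in $\R^n/\W$, of dimension $k\bigl((n-\l)-k\bigr)=km$ because $n-\l=k+m$; hence $\FlC(\l,k+\l;n)$ is also irreducible of dimension $km+\l(n-\l)$. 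Thus it suffices to prove that $\Zcal$ is dominant and generically one-to-one.

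Next I would observe that $\Zcal$ lies over $\GrC(\l,n)$: writing $p_2$ for the second projection of the source, we have $\pi\circ\Zcal=p_2$ since $\pi\bigl(\W,\Span(\V,\W)\bigr)=\W$. Fix $\W\in\GrC(\l,n)$ generic and set $Z:=\W^\perp\in\GrC(k+m,n)$. The linear surjection $\R^n\to\R^{k+m}$, $v\mapsto Zv$, has kernel $\W$, hence induces an isomorphism $\R^n/\W\xrightarrow{\sim}\R^{k+m}$ sending $U/\W$ to $Z(U)$ for every $U\supseteq\W$; this identifies the fibre $\pi^{-1}(\W)$ with $\GrC(k,k+m)$, and under this identification the restriction of $\Zcal$ to $p_2^{-1}(\W)=\pcC k f\times\{\W\}$ becomes the amplituhedron map $\V\mapsto Z(\V)$, i.e.\ the map sending $\V$ to the row span of $\V\cdot Z^\transp$.

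The degree-$1$ hypothesis now enters. By Definition~\ref{def:degree} there is a nonempty Zariski-open $\Ucal\subseteq\GrC(k+m,n)$ on which $Z\colon\pcC k f\dashedrightarrow\GrC(k,k+m)$ is birational; since $\W\mapsto\W^\perp$ is an isomorphism $\GrC(\l,n)\xrightarrow{\sim}\GrC(k+m,n)$, the locus of admissible $\W$ is nonempty and Zariski-open. For $(\W,U)$ in a suitable nonempty Zariski-open subset of $\FlC(\l,k+\l;n)$ (general $\W$, and $U/\W$ in the range where $Z^{-1}$ is defined) there is then a unique $\V\in\pcC k f$ with $Z(\V)=U/\W$, equivalently $\Span(\V,\W)=U$. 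Existence shows $\Zcal$ is dominant; uniqueness---together with $\pi\circ\Zcal=p_2$, which forces the $\GrC(\l,n)$-coordinate of any preimage of $(\W,U)$ to equal $\W$---shows that the generic fibre of $\Zcal$ is a single point. Combined with the dimension equality from the first step, this proves $\Zcal$ is birational.

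The one delicate point is the bookkeeping in the second and third paragraphs: one must verify that the isomorphism $\pi^{-1}(\W)\cong\GrC(k,k+m)$ is precisely the one turning the restricted $\Zcal$ into the amplituhedron map $Z$, and that ``$Z$ generic in $\GrC(k+m,n)$'' and ``$\W=Z^\perp$ generic in $\GrC(\l,n)$'' single out matching open sets, so that one really may work fibrewise over a common open base. Both are routine once spelled out; nothing beyond Definition~\ref{def:degree} and elementary dimension theory is needed. (Equivalently, one may run the whole argument over the function field $K:=\C\bigl(\GrC(\l,n)\bigr)$: $\Zcal$ induces a $K$-rational map between the generic fibres of $\pi$ and of $p_2$ which is birational after specializing to a generic $Z$, hence is already birational over $K$, so $\Zcal^{\ast}$ is an isomorphism of function fields.)
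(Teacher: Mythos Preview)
Your proof is correct and takes essentially the same approach as the paper: both identify $\Zcal$ fibrewise over $\GrC(\l,n)$ with the amplituhedron map $Z:\pcC kf\dashedrightarrow\GrC(k,k+m)$ via $\pi^{-1}(W)\cong\GrC(k,k+m)$, and then invoke the degree-$1$ hypothesis on a generic fibre. The paper packages this by directly exhibiting the open set $C=\{(W,U)\mid W^\perp\in A,\ \Span(U\cdot Z^\transp)\in B_Z\}$ on which $\Zcal$ is an isomorphism, whereas you arrive at the same conclusion via a dimension count plus generic-fibre argument; minor slip: you write $\R^n$ where you mean $\C^n$.
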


\begin{proposition}\label{prop:Zcal_Jacobian_constant_sign}
Let $f\in\Aff(-k,n-k)$ be an $(n,k,m)$-admissible affine permutation of degree $1$.  Then there exists an open dense subset $A\subset\pc kf \times \Grtperp{k+m}n$ such that the restriction of $\Zcal$ to $A$ is a diffeomorphism, and the Jacobian of $\Zcal$ on $A$ has constant sign.
\end{proposition}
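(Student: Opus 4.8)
The plan is to adapt the argument for Proposition~\ref{prop:Jacobian_constant_sign}, replacing the map $Z$ on a single positroid cell by the map $\Zcal$ on $M:=\pc k f\times\Grtperp{k+m}n$. Two inputs are combined: that $\Zcal$ restricts to a diffeomorphism on an open dense subset $A\subseteq M$ (this uses that $f$ has degree~$1$, via Proposition~\ref{prop:Zcal_degree}), and that $\Zcal$ is \emph{globally} injective on $M$ (this uses $(n,k,m)$-admissibility). Given these, invariance of domain together with a degree-theoretic argument on the connected orientable manifold $M$ yields the constant sign. The only real subtlety is that $\Fl(\l,k+\l;n)$ need not be orientable; this is circumvented by transporting an orientation only to the open image $\Zcal(M)$.

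\emph{Existence of $A$.} Since $f$ has degree~$1$, Proposition~\ref{prop:Zcal_degree} shows the rational map $\Zcal:\pcC k f\times\GrC(\l,n)\dashedrightarrow\FlC(\l,k+\l;n)$ restricts to an isomorphism between nonempty Zariski-open subsets $\From$ and $\To$. Now $\pc k f$ is Zariski dense in the positroid variety $\pcC k f$, and $\Grtperp{k+m}n$ is Zariski dense in $\GrC(\l,n)$ (because $\Grtp(k+m,n)$ is Zariski dense in $\GrC(k+m,n)$ and $W\mapsto W^\perp$ is an isomorphism). Hence $A:=\From\cap M$ is open and dense in $M$ in the analytic topology, and $\Zcal|_A$ is a diffeomorphism onto $\Zcal(A)$. (This parallels Remarks~\ref{rem:genericdegree} and~\ref{rem:Zdiffeo}.)

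\emph{Injectivity and invariance of domain.} Suppose $\Zcal(\V,\W)=\Zcal(\V',\W')$ with $\V,\V'\in\pc k f$ and $\W,\W'\in\Grtperp{k+m}n$. Then $\W=\W'$ and $\Span(\V,\W)=\Span(\V',\W)$. Put $Z:=\W^\perp\in\Grtp(k+m,n)$; since $\W\cdot Z^\transp=0$, the equality $\Span(\V,\W)=\Span(\V',\W)$ forces the row spans of $\V\cdot Z^\transp$ and $\V'\cdot Z^\transp$ to coincide, i.e.\ $Z(\V)=Z(\V')$ in $\Gr(k,k+m)$. Since $f$ is $(n,k,m)$-admissible, $Z$ is injective on $\pc k f$, so $\V=\V'$. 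Thus $\Zcal|_M$ is a continuous injection into $\Fl(\l,k+\l;n)$; as $\dim M=km+k\l+\l m=\dim\Fl(\l,k+\l;n)$, invariance of domain implies that $\Zcal(M)$ is open in $\Fl(\l,k+\l;n)$ and $\Zcal:M\to\Zcal(M)$ is a homeomorphism.

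\emph{Constant sign.} By Proposition~\ref{prop:OPS} (with $\inv(f)=k\l$), $\pc k f$ is diffeomorphic to an open ball, and $\Grtperp{k+m}n$ is diffeomorphic to $\Grtp(\l,n)$, likewise an open ball; hence $M$ is connected and orientable, and so is the open submanifold $\Zcal(M)\subseteq\Fl(\l,k+\l;n)$, being homeomorphic to $M$. Fix orientations on $M$ and on $\Zcal(M)$ for which the homeomorphism $\Zcal:M\to\Zcal(M)$ has degree $+1$. For each $p\in A$, the local degree of $\Zcal$ at $p$ equals the global degree $+1$ (a homeomorphism of connected oriented manifolds has local degree equal to its global degree at every point), and since $\Zcal$ is a diffeomorphism near $p$, this local degree equals $\sgn\det(D\Zcal_p)$. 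Therefore $\det(D\Zcal)$ has constant sign on $A$, as desired; exactly as in Proposition~\ref{prop:Jacobian_constant_sign}, the notion of the sign of the Jacobian here is the one relative to these orientations (equivalently, to any connected coordinate charts), so the statement is well-posed. The main obstacle throughout is precisely this orientation bookkeeping: because $\Fl(\l,k+\l;n)$ may fail to be orientable, one must resist fixing a global nonvanishing top form and instead work with $\Zcal(M)$ alone, using that it is homeomorphic to a product of balls, together with the equality of local and global degree for homeomorphisms.
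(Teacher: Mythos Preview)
Your proof is correct and follows essentially the same approach as the paper. The paper's proof simply notes that $\pc k f\times\Grtperp{k+m}n$ is homeomorphic to an open ball, that $\Zcal$ is injective on it by $(n,k,m)$-admissibility, and then defers to the argument of Proposition~\ref{prop:Jacobian_constant_sign}; you have spelled out the injectivity verification and the orientation bookkeeping (working on the open image $\Zcal(M)$ rather than on all of $\Fl(\l,k+\l;n)$) in more detail, but the strategy is identical.
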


By Proposition~\ref{prop:Zcal_degree}, if $f$ has degree $1$ then the pushforward form $\Zcal_* (\om kf \wedge \oml)$ on $\FlC(\l,k+\l;n)$ can be defined as the pullback via the inverse of $\Zcal$ (see \eqref{eq:pushforward}).   By Proposition~\ref{prop:Zcal_Jacobian_constant_sign}, if $f$ is furthermore $(n,k,m)$-admissible we may choose a sign for $\om k f$ such that the form $\Zcal_* (\om kf \wedge \oml)$ has the same sign as $\omreflkl$ when restricted to $\amcell kf$.

\begin{definition}\label{def:univ_ampl_form}
  Given an $(n,k,m)$-triangulation $\triang=\{f_1,\dots,f_N\}\subset\Aff(-k,\l)$ of degree $1$, the \emph{universal amplituhedron form} $\omtriang{\uamk}$ on $\FlC(\l,k+\l;n)$ is defined by
  \begin{equation}\label{eq:dfn_omtriang_uamk}
\omtriang{\uamk}=\sum_{s=1}^N \Zcal_* (\om k{f_s} \wedge \oml).
  \end{equation}
\end{definition}
Here the signs of the terms in the right hand side are chosen in the same fashion as in~\eqref{eq:ampl_form_dfn}. 
\begin{conjecture}\label{conj:univ_ampl_form}
The form $\omtriang{\uamk}$ does not depend on the choice of a triangulation $\triang$.
\end{conjecture}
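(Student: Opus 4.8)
The plan is to deduce Conjecture~\ref{conj:univ_ampl_form} from Conjecture~\ref{conj:ampl_form}, using the fact that the universal amplituhedron form factors through the fibration $\pi:\Fl(\l,k+\l;n)\to\Gr(\l,n)$, $(W,U)\mapsto W$.  First I would prove a \emph{fiberwise factorization} of each summand in~\eqref{eq:dfn_omtriang_uamk}.  For $W=Z^\perp$ the fiber $\pi^{-1}(W)$ is canonically identified with $\Gr(k,k+m)$ via $U\mapsto U\cdot Z^\transp$, carrying $\pi^{-1}(W)\cap\am_{n,k,m}$ onto the $Z$-amplituhedron $\ampl$.  Since $\pi\circ\Zcal(\V,\W)=\W$, since under this identification $\Zcal$ restricted to $\pcC k{f_s}\times\{\W\}$ is exactly the map $\V\mapsto\V\cdot Z^\transp$ (because $\W\cdot Z^\transp=0$), and since $\om k{f_s}\wedge\oml$ is already a product form on $\pcC k{f_s}\times\GrC(\l,n)$ — so that wedging with the base top form $\oml$ kills all cross terms in base directions — I expect that in any local trivialization of $\pi$ over a generic $W$ one has $\Zcal_\ast(\om k{f_s}\wedge\oml)=\pi^\ast\beta\wedge\gamma_{f_s}$, where $\beta$ is a fixed rational top form on $\Gr(\l,n)$ transported from $\oml$ (independent of $s$) and $\gamma_{f_s}$ restricts on $\pi^{-1}(W)\cong\Gr(k,k+m)$ to the pushforward $\omega_{Z(\ce k{f_s})}$ for $Z=W^\perp$.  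Summing over $s$ gives $\omtriang{\uamk}=\pi^\ast\beta\wedge\gamma_\triang$ with $\gamma_\triang|_{\pi^{-1}(W)}=\omtriang{\ampl}$; here one also checks that $\omreflkl$ can be chosen to restrict fiberwise to $\omref_{k,k+m}$ times a positive base form, so that the sign normalizations of Conjecture~\ref{conj:ampl_form} and Definition~\ref{def:univ_ampl_form} agree up to a global sign common to all triangulations.

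Granting the factorization, the comparison of two $(n,k,m)$-triangulations $\triang,\triang'\subset\Aff(-k,\l)$ of degree~$1$ becomes fiberwise.  By Corollary~\ref{cor:triangulations_uampl} both are triangulations of $\am_{n,k,m}$, and by Remark~\ref{rem:genericdegree} each is a $Z$-triangulation of degree~$1$ for all $Z$ in a common open dense subset $\Omega\subset\Grtp(k+m,n)$.  For $Z\in\Omega$, Conjecture~\ref{conj:ampl_form} applied to this $Z$ gives $\omtriang{\ampl}=\omtriangprime{\ampl}$, i.e.\ $\gamma_\triang$ and $\gamma_{\triang'}$ agree on $\pi^{-1}(Z^\perp)$.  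Both of these depend on $Z$ through rational maps, so equality on the Zariski-dense set $\Omega$ forces equality for all $Z\in\Grtp(k+m,n)$, hence on all of $\Grtperp{k+m}n$, which is Zariski-dense in $\GrC(\l,n)$.  Since $\pi^\ast\beta$ and the $\gamma$'s are rational forms on $\FlC(\l,k+\l;n)$, agreement on a Zariski-dense subset yields $\omtriang{\uamk}=\omtriangprime{\uamk}$ identically, which is the assertion.

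The hard part is that this argument is only conditional: it rests on Conjecture~\ref{conj:ampl_form}, which is itself open.  It does hold for $k=1$, where $\ampl$ is a cyclic polytope and hence a positive geometry; so the reduction above already establishes Conjecture~\ref{conj:univ_ampl_form} unconditionally for $k=1$, and then, since Theorem~\ref{thm:lower_form_stw} shows that the fixed homeomorphism $\tstw:\uamk\to\uaml$ pulls the universal form built from $\triang'=\{f_s^{-1}\}$ back to the one built from $\triang$ (up to a global sign), also for $\l=1$.  For an unconditional proof in general one would instead want to show directly that $\am_{n,k,m}$ and each closed cell $\amcelltnn kf$ are \emph{positive geometries} in the sense of~\cite{ABL}: the point to verify is that upon summing $\Zcal_\ast(\om k{f_s}\wedge\oml)$ over a triangulation the residues along every interior facet cancel in pairs, so that all singularities of the sum lie on $\partial\am_{n,k,m}$, after which uniqueness of the canonical form of a positive geometry gives triangulation-independence.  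Establishing this residue cancellation is the genuinely missing ingredient, and it is essentially equivalent to Conjecture~\ref{conj:ampl_form} itself, so the conditional reduction above appears to be the most one can say at present.
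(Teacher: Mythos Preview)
Your proposal is correct in substance and matches the paper's own treatment: this statement is a \emph{conjecture}, not a theorem, and the paper does not prove it in general. What the paper does provide is exactly the conditional reduction you describe. Proposition~\ref{prop:univ_form_to_ampl_form} establishes the fiberwise factorization $\omtriang{\uamk}=\eta\wedge\pi^\ast\oml$ with $j_Z^\ast(\eta)=\omtriang{\ampl}$, which is precisely your $\pi^\ast\beta\wedge\gamma_\triang$ with $\gamma_\triang|_{\pi^{-1}(Z^\perp)}=\omtriang{\ampl}$; the paper then states (just after Conjecture~\ref{conj:univ_ampl_form}) that this makes Conjecture~\ref{conj:univ_ampl_form} equivalent to Conjecture~\ref{conj:ampl_form} for all $Z$ where the triangulations have $Z$-degree~$1$. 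Your unconditional $k=1$ case (via \cite{ABL}) and $\l=1$ case (via Theorem~\ref{thm:univ_ampl_form}) are also exactly what the paper records. Your final paragraph correctly identifies that the residue-cancellation / positive-geometry argument is the genuinely missing ingredient, which the paper leaves open as well.
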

It follows from Proposition \ref{prop:univ_form_to_ampl_form} below that Conjecture~\ref{conj:univ_ampl_form} is equivalent to Conjecture~\ref{conj:ampl_form} for all $Z$ such that the triangulations in question have $Z$-degree $1$. Also, Conjectures \ref{conj:ampl_form} and \ref{conj:univ_ampl_form} are known for $k = 1$: a triangulation independent construction of the polytope form is given in \cite{ABL}.  It follows from Theorem \ref{thm:univ_ampl_form} that the two conjectures also hold for $\ell=1$.

We state our main result on the amplituhedron form.

\begin{theorem}\label{thm:univ_ampl_form}
  Suppose that we are given an $(n,k,m)$-triangulation $\triang=\{f_1,\dots,f_N\}\subset\Aff(-k,\l)$ of degree $1$, and let $\triang':=\{f_1^{-1},\dots,f_N^{-1}\}\subset\Aff(-\l,k)$ be the corresponding $(n,\l,m)$-triangulation of degree $1$ (cf. Corollary~\ref{cor:degree_1_triang}). Then the stacked twist map preserves the universal amplituhedron form:
  \begin{equation}\label{eq:univ_ampl_form}
  \tstw^\ast\omtriangprime{\uaml}=\pm\omtriang{\uamk}.
  \end{equation}
\end{theorem}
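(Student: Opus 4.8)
The plan is to prove \eqref{eq:univ_ampl_form} one summand at a time, and then argue that all the resulting signs agree. First, by Proposition~\ref{prop:degre_1_inverse} each $f_s^{-1}$ again has degree $1$, so $\triang'$ really is an $(n,\l,m)$-triangulation of degree $1$ (Corollary~\ref{cor:degree_1_triang}), and Definition~\ref{def:univ_ampl_form} gives
\[
\omtriang{\uamk}=\sum_{s=1}^N\Zcal_*\bigl(\om k{f_s}\wedge\oml\bigr),\qquad
\omtriangprime{\uaml}=\sum_{s=1}^N(\Zcal^\op)_*\bigl(\omk\wedge\om \l{f_s^{-1}}\bigr),
\]
with all pushforwards taken along the birational maps of Proposition~\ref{prop:Zcal_degree} (applied also with $k$ and $\l$ interchanged); the order of the wedge factors is immaterial since $m$ is even. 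Recall also that the signs of $\om k{f_s}$ and of $\om \l{f_s^{-1}}$ were pinned down precisely so that $\Zcal_*(\om k{f_s}\wedge\oml)$ agrees in sign with the reference form $\omreflkl$ on $\amcell k{f_s}$, and $(\Zcal^\op)_*(\omk\wedge\om \l{f_s^{-1}})$ agrees in sign with $\omrefklk$ on $\amcell \l{f_s^{-1}}$.

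The first step is to restrict the commutative square \eqref{eq:commutative_diagram} to the cell indexed by $f_s$: using \eqref{eq:stw_regular_map} together with the fact that $\tstw$ restricts to a homeomorphism $\amcell k{f_s}\to\amcell \l{f_s^{-1}}$, one obtains a commutative square whose top arrow is $\stw\colon \pc k{f_s}\times\Grtperp{k+m}n\to\Grtperp{\l+m}n\times\pc \l{f_s^{-1}}$, whose bottom arrow is $\tstw\colon\amcell k{f_s}\to\amcell \l{f_s^{-1}}$, and whose vertical arrows are $\Zcal$ and $\Zcal^\op$. By Theorem~\ref{thm:stw}, Proposition~\ref{prop:Zcal_degree}, and Proposition~\ref{prop:Zcal_Jacobian_constant_sign}, all four maps restrict to diffeomorphisms of suitable dense open subsets, so $\tstw=\Zcal^\op\circ\stw\circ\Zcal^{-1}$ as rational maps. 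Setting $\alpha:=\omk\wedge\om \l{f_s^{-1}}$ and using $(\Zcal^\op)^*(\Zcal^\op)_*\alpha=\alpha$, we get
\[
\tstw^*(\Zcal^\op)_*\alpha=(\Zcal^{-1})^*\stw^*(\Zcal^\op)^*(\Zcal^\op)_*\alpha=(\Zcal^{-1})^*\stw^*\alpha=\Zcal_*(\stw^*\alpha).
\]
By Theorem~\ref{thm:lower_form_stw}, $\stw^*\alpha=\pm\,\om k{f_s}\wedge\oml$, with a single sign on the connected manifold $\pc k{f_s}\times\Grtperp{k+m}n$; hence
\begin{equation}\label{eq:univ_form_termwise}
\tstw^*(\Zcal^\op)_*\bigl(\omk\wedge\om \l{f_s^{-1}}\bigr)=\varepsilon_s\,\Zcal_*\bigl(\om k{f_s}\wedge\oml\bigr)
\end{equation}
for some $\varepsilon_s\in\{+1,-1\}$, one for each $s$.

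The second step is to show that $\varepsilon_s$ is independent of $s$, and this is where the sign bookkeeping must be done carefully. I would use that $\tstw$ restricts to a diffeomorphism from the interior of $\am_{n,k,m}$ onto the interior of $\am_{n,\l,m}$, and that both interiors are connected (cf.\ Section~\ref{sec:interior}). Since $\omreflkl$ and $\omrefklk$ are nowhere vanishing on these respective interiors, the rational function $c:=\tstw^*\omrefklk/\omreflkl$ is continuous and nowhere zero on the interior of $\am_{n,k,m}$, hence has a constant sign $\varepsilon:=\sgn(c)$ there. Now each $\amcell k{f_s}$ contains a nonempty open subset $O_s$ of this interior (being full-dimensional, by Proposition~\ref{prop:Zcal_Jacobian_constant_sign}), and $\tstw$ maps $O_s$ into $\amcell \l{f_s^{-1}}$. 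On $O_s$ the left-hand side of \eqref{eq:univ_form_termwise} has the same sign as $\tstw^*\omrefklk=c\,\omreflkl$, because $(\Zcal^\op)_*(\omk\wedge\om \l{f_s^{-1}})$ has the same sign as $\omrefklk$ there; that is, the same sign as $\varepsilon\,\omreflkl$. Meanwhile the right-hand side has the same sign as $\varepsilon_s\,\omreflkl$ on $O_s$, since $\Zcal_*(\om k{f_s}\wedge\oml)$ has the same sign as $\omreflkl$ on $\amcell k{f_s}$. Comparing forces $\varepsilon_s=\varepsilon$ for every $s$, and summing \eqref{eq:univ_form_termwise} over $s$ yields $\tstw^*\omtriangprime{\uaml}=\varepsilon\,\omtriang{\uamk}$, which is \eqref{eq:univ_ampl_form}.

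The genuinely substantial ingredient is Theorem~\ref{thm:lower_form_stw}, whose proof (carried out in Section~\ref{sec:can_forms_3}) is the hard part; granting it, the only delicate point in the argument above is the sign reconciliation of the last paragraph, which hinges on $\tstw$ having a constant-sign Jacobian on the connected interior of the universal amplituhedron.
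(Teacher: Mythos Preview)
Your proof is correct and follows essentially the same approach as the paper: both invoke the commutative square~\eqref{eq:commutative_diagram} together with Theorem~\ref{thm:lower_form_stw} to get the termwise identity up to sign, and then reconcile the signs via the reference forms. The only cosmetic difference is that the paper shortcuts your second step by simply \emph{choosing} $\omreflkl:=\tstw^\ast\omrefklk$ from the outset (which is permissible since $\tstw$ is a diffeomorphism on the relevant interiors), so that your function $c$ is identically $1$ and every $\varepsilon_s=+1$ without needing the connectedness argument.
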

In \eqref{eq:univ_ampl_form}, the map $\tstw$ can be considered either as a diffeomorphism between the interiors of $\uamk$ and $\uaml$, or as a rational map
$\tstw: \FlC(\l,k+\l,n) \dashedrightarrow \FlC(k,k+\l;n)$.

\begin{remark}
Definition \ref{def:univ_ampl_form} can be extended to forms $\omtriang{\amcelltnn k g}$ for a triangulation $\triang$ of $\amcelltnn k g$.  
It is straightforward to generalize Theorem~\ref{thm:univ_ampl_form} to $\omtriang{\amcelltnn k g}$.  We still expect the analog of Conjecture~\ref{conj:ampl_form} to hold in this case.  Furthermore, assuming that Conjecture~\ref{conj:ampl_form} holds for all $\amcelltnn k g$, the forms are expected to be compatible with subdivisions.  Namely, if $f_1,f_2,\ldots,f_N$ form an $(n,k,m)$-subdivision of $g$, then we should have $\omega_{\amcelltnn k g} = \sum_{i=1}^N \omega_{{\amcelltnn k {f_i}}}$.
\end{remark}

\begin{remark}
We expect that $\am_{n,k,m}$ is a \emph{positive geometry} in the sense of \cite{ABL}, and that the conjectural universal amplituhedron form $\omega_{\uamk}$ we have defined is its \emph{canonical form}.
\end{remark}

\subsection{From the universal amplituhedron form back to the amplituhedron form}
In this section we explain how the amplituhedron form can be recovered from the universal amplituhedron form.

First suppose that $f \in \Aff(-k,\l)$ is $(n,k,m)$-admissible and has degree $1$.  Then we have
$$
\Zcal_*(\om k{f} \wedge \oml) = (\Zcal^{-1})^*(\om k{f} \wedge \oml) = (\Zcal^{-1})^*(\om k{f}) \wedge (\Zcal^{-1})^*(\oml) = \Zcal_*(\om k{f}) \wedge \pi^*(\oml)
$$
where we assume that $\Zcal$ and $\Zcal^{-1}$ have been restricted to the locus where they are isomorphisms.

Suppose that $Z \in \Grtp(k+m,n)$ and $f$ has $Z$-degree $1$.  
Let
\[j_Z: \pi^{-1}(Z^\perp) \hookrightarrow \FlC(\l,k+\l;n)\]
be the inclusion of a fiber of $\pi: \FlC(\l,k+\l;n) \to \GrC(\l,n)$ over $Z^\perp$.  We have an isomorphism $\pi^{-1}(Z^\perp) \cong \GrC(k,k+m)$ and $\Zcal$ restricts to a map $\Zcal: \pcC k f \to \pi^{-1}(Z^\perp)$ that can be identified with $Z:\pcC k f\to \GrC(k,k+m)$.  It follows that 
$$
j_Z^*(\Zcal_*(\om k f)) = Z_* \om k f
$$
as forms on $\pi^{-1}(Z^\perp) \cong \GrC(k,k+m)$.

Now suppose that we are given an $(n,k,m)$-triangulation $\triang=\{f_1,\dots,f_N\}\subset\Aff(-k,\l)$ of degree $1$.  Then
\begin{equation}\label{eq:omtriang_uamk_dfn}
\omtriang{\uamk}=\sum_{s=1}^N \Zcal_* (\om k{f_s} \wedge \oml) 
=\left(\sum_{s=1}^N \Zcal_* (\om k{f_s})\right)\wedge \pi^\ast(\oml)
\end{equation}
and thus when $Z$ belongs to some Zariski open subset of $\Grtp(k+m,n)$, $f_1,f_2,\ldots,f_N$ all have $Z$-degree $1$, and we obtain
$$
j_Z^*\left(\sum_{s=1}^N \Zcal_* (\om k{f_s})\right) =\omtriang{\ampl}
$$
under the isomorphism $\pi^{-1}(Z^\perp) \cong \GrC(k,k+m)$.

Finally, suppose we have the equality $\omtriang{\uamk} = \eta \wedge \oml = \eta' \wedge \oml$ for two rational forms $\eta, \eta'$ of degree $km$ on $\FlC(\l,k+\l;n)$.  Then $(\eta - \eta') \wedge \oml = 0$.  The form $\oml$ contracts to 0  with any \emph{vertical} tangent vector (i.e., a tangent vector of a fiber $\pi^{-1}(Z^\perp)$).  It follows that the top form $j_Z^*(\eta-\eta')$ on $\pi^{-1}(Z^\perp)$ is equal to 0.  
We have thus shown the following.

\begin{proposition}\label{prop:univ_form_to_ampl_form}
Let $Z \in \Grtp(k+m,n)$. Suppose that we have an $(n,k,m)$-triangulation $\triang=\{f_1,\dots,f_N\}\subset\Aff(-k,\l)$ of degree $1$ such that $f_1,f_2,\ldots,f_N$ all have $Z$-degree $1$.
Then there exists a rational form $\eta$ of degree $km$ such that $\omtriang{\uamk} = \eta \wedge \oml$.  Any such form satisfies
\begin{equation}\label{eq:pullbackfiber}
\omtriang{\ampl} = j_Z^*(\eta). 
\end{equation}
\end{proposition}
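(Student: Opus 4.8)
The plan is to exhibit $\eta$ explicitly as $\eta:=\sum_{s=1}^N\Zcal_\ast(\om k{f_s})$, and then to recover $\omtriang{\ampl}$ by pulling back along the fiber inclusion $j_Z$; both the existence and the uniqueness assertions will follow from the way $\Zcal$ interacts with the projection $\pi$.

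For the existence part, I would first record the identity
\[\Zcal_\ast(\om k f\wedge\oml)=\Zcal_\ast(\om k f)\wedge\pi^\ast(\oml)\]
for a single $(n,k,m)$-admissible permutation $f$ of degree $1$, where $\oml$ on $\pcC k f\times\GrC(\l,n)$ is understood to be pulled back from the second factor. By Proposition~\ref{prop:Zcal_degree}, $\Zcal$ restricts to an isomorphism between Zariski open subsets $\From\subseteq\pcC k f\times\GrC(\l,n)$ and $\To\subseteq\FlC(\l,k+\l;n)$, so $\Zcal_\ast=(\Zcal^{-1})^\ast$, and pullback is multiplicative for $\wedge$. The one point to observe is that the second-factor projection $\mathrm{pr}_2\colon\pcC k f\times\GrC(\l,n)\to\GrC(\l,n)$ equals $\pi\circ\Zcal$, because $\Zcal(\V,\W)=(\W,\Span(\V,\W))$ and $\pi(\W,\U)=\W$; hence the copy of $\oml$ in $\om k f\wedge\oml$ is $\Zcal^\ast\pi^\ast(\oml)$, so applying $(\Zcal^{-1})^\ast$ returns $\pi^\ast(\oml)$, while $(\Zcal^{-1})^\ast(\om k f)=\Zcal_\ast(\om k f)$ by \eqref{eq:pushforward}. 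Summing over $s$ then gives $\omtriang{\uamk}=\eta\wedge\pi^\ast(\oml)$, with $\eta$ a rational form of degree $\dim\FlC(\l,k+\l;n)-\dim\GrC(\l,n)=km$.

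Next I would compute $j_Z^\ast\eta$. Under the isomorphism $\pi^{-1}(Z^\perp)\cong\GrC(k,k+m)$ sending $\U$ to the row span of $\U\cdot Z^\transp$, the restriction of $\Zcal$ to $\pcC k{f_s}\times\{Z^\perp\}$ becomes exactly the rational map $Z\colon\pcC k{f_s}\dashedrightarrow\GrC(k,k+m)$, since $\Span(\V,Z^\perp)\cdot Z^\transp=\V\cdot Z^\transp$. Because $\Zcal^{-1}$ carries $\pi^{-1}(Z^\perp)$ isomorphically onto $\pcC k{f_s}\times\{Z^\perp\}$ and each $f_s$ has $Z$-degree $1$, functoriality of pullback gives $j_Z^\ast\bigl(\Zcal_\ast(\om k{f_s})\bigr)=Z_\ast(\om k{f_s})$ under this identification; summing yields $j_Z^\ast\eta=\sum_{s=1}^N Z_\ast(\om k{f_s})=\omtriang{\ampl}$.

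It remains to check that $j_Z^\ast\eta$ does not depend on the choice of $\eta$. If $\eta'$ is another degree-$km$ form with $\eta'\wedge\oml=\omtriang{\uamk}$, then $(\eta-\eta')\wedge\pi^\ast(\oml)=0$. Near a generic point of the fiber $\pi^{-1}(Z^\perp)$, choose local coordinates split into $km$ vertical coordinates along the fiber and $\dim\GrC(\l,n)$ horizontal coordinates pulled back from the base, in which $\pi^\ast(\oml)$ is a nonzero function times the wedge of all the horizontal differentials. Wedging a degree-$km$ form against all horizontal differentials annihilates every monomial except the purely vertical one $dy_1\wedge\dots\wedge dy_{km}$, so $(\eta-\eta')\wedge\pi^\ast(\oml)=0$ forces the purely vertical component of $\eta-\eta'$ to vanish; but that component is precisely what survives $j_Z^\ast$, so $j_Z^\ast(\eta-\eta')=0$ and hence $j_Z^\ast\eta=j_Z^\ast\eta'=\omtriang{\ampl}$. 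I expect the only steps requiring any care to be the two compatibility claims used above — that $\Zcal_\ast$ commutes with restriction to a fiber, and that a top form pulled back from the base annihilates vertical tangent vectors — but both become immediate once the local picture is set up, so there is no genuine obstacle here.
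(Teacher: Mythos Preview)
Your proof is correct and follows essentially the same route as the paper: you define $\eta=\sum_s\Zcal_\ast(\om k{f_s})$, use $\mathrm{pr}_2=\pi\circ\Zcal$ to factor $\Zcal_\ast(\om k{f_s}\wedge\oml)=\Zcal_\ast(\om k{f_s})\wedge\pi^\ast(\oml)$, identify the restriction of $\Zcal$ to the fiber with $Z$ to get $j_Z^\ast\eta=\omtriang{\ampl}$, and then argue uniqueness from $(\eta-\eta')\wedge\pi^\ast(\oml)=0$ via the vertical/horizontal splitting. The paper phrases the last step as ``$\oml$ contracts to $0$ with any vertical tangent vector,'' which is exactly your local-coordinate argument.
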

Thus in order to reconstruct the amplituhedron form $\omtriang{\ampl}$ from the universal amplituhedron form $\omtriang{\uamk}$, one needs to represent $\omtriang{\uamk}$ as a wedge of $\eta$ and $\oml$ for some form $\eta$, and then use~\eqref{eq:pullbackfiber}.
Conversely, if $\omtriang{\ampl}$ exists for generic $Z \in \Grtp(k+m,n)$, it will depend continuously on $Z$, and one can find a form $\eta$ satisfying~\eqref{eq:pullbackfiber} and use it to find $\omtriang{\uamk}=\eta \wedge \oml$. As Proposition~\ref{prop:univ_form_to_ampl_form} and the example in the next section show, with the right choice of coordinates (explained in Section~\ref{sec:coordinates} in full generality), the universal amplituhedron form $\omtriang{\uamk}$ carries essentially the same information and computational complexity as the amplituhedron form $\omtriang{\ampl}$.

\subsection{Example}\label{sec:ampl_form_example}
In this section, we compute the amplituhedron form $\omtriang{\ampl}$ and the universal amplituhedron form $\omtriang{\uamk}$, and explain how the former can be recovered from the latter, in the case $k=\l=1$, $m=2$, $n=4$. In this case, the amplituhedron $\ampl$ is a quadrilateral in $\RP^2$ with vertices being given by the four columns of a totally positive $3\times 4$ matrix $Z$ which we may assume to be given by
\[Z=\begin{pmatrix}
    1 & s & 0 & 0\\
    0 & p & 1 & 0\\
    0 & -q &0 & 1
  \end{pmatrix};\quad W:=Z^\perp=\begin{pmatrix}
-s&1&-p&q
  \end{pmatrix}\]
for some $s,p,q>0$.

There are precisely two $(n,k,m)$-triangulations of the amplituhedron, each of them contains $M(1,1,1)=2$ cells and has degree $1$. We choose the triangulation $\triang=\{f,g\}$ with $f=[2,1,3,4]$ and $g=[1,2,4,3]$ in window notation. The cells $\pc kf$ and $\pc kg$ are parameterized as follows. Each $V\in \pc kf$ can be represented by a matrix $\begin{pmatrix}
1 & 0 & b & c
\end{pmatrix}$ while each $V\in \pc kg$ can be represented by a matrix $\begin{pmatrix}
1 & a & b & 0
\end{pmatrix}$ for some $a,b,c>0$. 

We would like to find $\omtriang{\ampl}$ using~\eqref{eq:ampl_form_dfn}, so let us compute $\omega_{Z(\pc kf)}$ and $\omega_{Z(\pc kg)}$ separately and then add them up. Each generic point in $\Gr(k,k+m)$ is the row span of a matrix $\begin{pmatrix}
1 & x & y
\end{pmatrix}$, so we fix a reference form $\omref_{k,k+m}:=dx\wedge dy$ on $\Gr(k,k+m)$.

The restriction of $Z$ to $\pc kf$ maps $\begin{pmatrix}
1 & 0 & b & c
\end{pmatrix}$ to the row span of $\begin{pmatrix}
1 & b & c
\end{pmatrix}$ in $\Gr(k,k+m)$, and thus we have $x=b$ and $y=c$ on $Z(\pc kf)$. Therefore the map $Z^{-1}$ is given by $b=x$ and $c=y$, and since $\om kf=\frac{db\wedge dc}{b\cdot c}$, the pullback $(Z^{-1})^\ast\om kf$ equals
\[Z_\ast \om kf=\pm\frac{dx\wedge dy}{x\cdot y}.\]
Furthermore, since the functions $x=b$ and $y=c$ are positive on $Z(\pc kf)$, the form $Z_\ast \om kf$ is positive with respect to $\omref_{k,k+m}$ if the sign above is chosen to be $+$.

The computation of $\omega_{Z(\pc kg)}$ is more involved. The restriction of $Z$ to $\pc kg$ maps $\begin{pmatrix}
1 & a & b & 0
\end{pmatrix}$ to the row span of $\begin{pmatrix}
1+sa & pa+b & -qa
\end{pmatrix}\simeq \begin{pmatrix}
1 & \frac{pa+b}{1+sa} & \frac{-qa}{1+sa}
\end{pmatrix}$ in $\Gr(k,k+m)$. Thus we have $x=\frac{pa+b}{1+sa}$ and $y=\frac{-qa}{1+sa}$ on $Z(\pc kg)$. The fact that $g$ has degree $1$ means that $Z$ is birational when restricted to $\pc kg$, equivalently, that we can solve the above equations for $a$ and $b$ and write them as rational functions of $x$ and $y$. Indeed, this can be done, and the map $Z^{-1}:Z(\pc kg)\dashedrightarrow \pc kg$ is given by
\[a=\frac{-y}{sy+q},\quad b=\frac{qx+py}{sy+q}.\]
We can now find the pullback of the form $\om kg=\frac{da\wedge db}{a\cdot b}$ under $Z^{-1}$:
\begin{equation}\label{eq:Z_ast_om_kg_ex}
Z_\ast \om kg=\pm\frac{d \left(\frac{-y}{sy+q}\right)\wedge d \left(\frac{qx+py}{sy+q}\right)}{\frac{-y}{sy+q}\cdot \frac{qx+py}{sy+q}}=\pm\frac{q^2\cdot dx\wedge dy}{(qx+py)(sy+q)y}.
\end{equation}
Here in fact we must choose the $-$ sign, because the functions $qx+py$ and $sy+q$ are positive on $Z(\pc kg)$ while the function $y=\frac{-qa}{1+sa}$ is negative. We are finally able to find the amplituhedron form:
\begin{equation}\label{eq:ex_ampl_form}
\omtriang{\ampl}=Z_\ast \om kf+Z_\ast \om kg=\frac{dx\wedge dy}{x\cdot y}-\frac{q^2\cdot dx\wedge dy}{(qx+py)(sy+q)y}=\frac{(qsx+psy+pq)\cdot dx\wedge dy}{(qx+py)(sy+q)x}.
\end{equation}
We encourage the reader to check that choosing $\triang$ to be the other triangulation of $\ampl$ (consisting of affine permutations $f'=[1,3,2,4]$ and $g'=[0,2,3,5]$) yields the same expression for $\omtriang{\ampl}$.  Agreeing with general expectations (see~\cite{ABL}), the form $\omtriang{\ampl}$ has four poles (along the hypersurfaces $qx+py$, $sy+q$, $x$, and at infinity) which corresponds to the four facets of the quadrilateral $\ampl$.

We now compute the universal amplituhedron form $\omtriang{\uamk}$  on $\Fl(\l,k+\l;n)$. In order for it to be compatible with our previous calculations, we introduce the following coordinates on $\Fl(\l,k+\l;n)$. For each generic pair $(W,U)\in \Fl(\l,k+\l;n)$, there are unique numbers $s,p,q,x,y$ such that $W$ is the row span of $\begin{pmatrix}
-s & 1 & -p & q
\end{pmatrix}$ while $U$ is the row span of  
\begin{equation}\label{eq:coords_Fl_ex}
\begin{pmatrix}
-s& 1& -p & q\\
1 & 0&  x & y
\end{pmatrix}.
\end{equation}
Our goal is to find $\omtriang{\uamk}$ from~\eqref{eq:omtriang_uamk_dfn}. We first compute $\pi^\ast(\oml)$, which is a $(k\l+\l m)$-form on $\Fl(\l,k+\l;n)$. In our coordinates, the map $\pi:\Fl(\l,k+\l;n)\to \Gr(\l,n)$ simply sends the row span of $U$ given by~\eqref{eq:coords_Fl_ex} to the row span of $\begin{pmatrix}
-s& 1& -p & q
\end{pmatrix}$, and thus we have 
\[\pi^\ast(\oml)=\pm\frac{ds\wedge dp\wedge dq}{spq}.\]
We now compute $\Zcal_\ast(\om kf)$. The map $\Zcal$ sends a pair $(V,W)$ to $(W,\Span(V,W))$, and assuming $V$ is the row span of  $\begin{pmatrix}
1 & 0& b & c
\end{pmatrix}$ for some $b,c>0$, we get that our coordinates~\eqref{eq:coords_Fl_ex} for the pair  $(W,\Span(V,W))$ are given by $s=s$, $p=p$, $q=q$, $x=b$, and $y=c$. Thus $\Zcal_\ast(\om kf)=\frac{dx\wedge dy}{x\cdot y}$ as a $km$-form on $\Fl(\l,k+\l;n)$.

Similarly, we find $\Zcal_\ast(\om kg)$. Assuming $V$ is the row span of  $\begin{pmatrix}
1 & a & b & 0
\end{pmatrix}$ for some $a,b>0$, we get that our coordinates~\eqref{eq:coords_Fl_ex} for the pair  $(W,U':=\Span(V,W))$ are given by $s=s$, $p=p$, $q=q$, but in order to find $x$ and $y$, we need to transform the matrix $U':=\begin{pmatrix}
-s& 1& -p & q\\
1 & a & b & 0
\end{pmatrix}$ into the form~\eqref{eq:coords_Fl_ex} by row operations, which yields
\[\begin{pmatrix}
-s& 1& -p & q\\
1 & 0& \frac{b+pa}{1+sa} & \frac{-qa}{1+sa}
\end{pmatrix}.\]
Thus we have $x=\frac{b+pa}{1+sa}$ and $y=\frac{-qa}{1+sa}$, and as we have already computed earlier, solving for $a$ and $b$ yields $a=\frac{-y}{sy+q}$, $b=\frac{qx+py}{sy+q}$. The form $\Zcal_\ast(\om kg)$ now however is given by an expression that is slightly different from~\eqref{eq:Z_ast_om_kg_ex}:
\[\Zcal_\ast (\om kg)=\pm\frac{q^2\cdot dx\wedge dy}{(qx+py)(sy+q)y}+\alpha,\]
where $\alpha$ denotes a sum of terms containing either one of $ds$, $dp$, or $dq$. All those terms vanish after we take the wedge with $\pi^\ast(\oml)$, so we use~\eqref{eq:omtriang_uamk_dfn} to find the universal amplituhedron form:
\begin{equation}\label{eq:ex_uampl_form}
\omtriang{\uamk}=\left(\Zcal_\ast (\om kf)+\Zcal_\ast (\om kg)\right)\wedge \pi^\ast(\oml)=\frac{(qsx+psy+pq)\cdot dx\wedge dy\wedge ds\wedge dp\wedge dq}{(qx+py)(sy+q)x spq}.
\end{equation}
This is a top form on $\Fl(\l,k+\l;n)$, and we can choose the form $\eta$ from Proposition~\ref{prop:univ_form_to_ampl_form} to be
\begin{equation}\label{eq:ex_uampl_eta_form}
\frac{(qsx+psy+pq)\cdot dx\wedge dy}{(qx+py)(sy+q)x}+\beta,
\end{equation}
where $\beta$ is an arbitrary sum of terms containing either one of $ds$, $dp$, or $dq$. Comparing Equations~\eqref{eq:ex_ampl_form}, \eqref{eq:ex_uampl_form}, and~\eqref{eq:ex_uampl_eta_form}, the  similarities between the forms in Proposition~\ref{prop:univ_form_to_ampl_form} become apparent.
We are done with the computation, and will continue the discussion of this example in Section~\ref{sec:ampl_form_poly}.

\section{The interior of the amplituhedron}\label{sec:interior}
In this section, we discuss some topological properties of $\ampl$. In particular, we explain why it is sufficient for our purposes to concentrate on  $\Grtnnm(k,n)$ rather than $\Grtnn(k,n)$.

Recall that $\amplk$ was defined in~\eqref{eq:ampll} to be the image of $\Grtnnm(k,n)$ under $Z$.
Let us also denote by $\amplint$ the interior of the amplituhedron. The following result implies Lemma~\ref{lemma:non-admissible}.
\begin{proposition}\label{prop:open_dense}
We have inclusions
  \begin{equation}\label{eq:interior_inclusions}
Z(\Grtp(k,n))\subset \amplint \subset\amplk\subset\ampl.
  \end{equation}
\end{proposition}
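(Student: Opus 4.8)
The plan is to verify \eqref{eq:interior_inclusions} from right to left, reserving the one substantial inclusion $\amplint\subseteq\amplk$ for last. The rightmost inclusion $\amplk\subseteq\ampl$ is immediate from $\Grtnnm(k,n)\subseteq\Grtnn(k,n)$. For $Z(\Grtp(k,n))\subseteq\amplint$ it is enough to show that $Z(\Grtp(k,n))$ is \emph{open} in $\Gr(k,k+m)$, since it lies in $\ampl$ and $\amplint$ is by definition the largest open subset of $\Gr(k,k+m)$ contained in $\ampl$. Writing $W:=Z^\perp$, the map $Z$ produces a rank-$k$ output exactly on the open locus $\{V\in\Gr(k,n)\mid V\cap W=\{0\}\}$, which contains $\Grtnn(k,n)$ and hence $\Grtp(k,n)$; on this locus $V\mapsto Z(V)=(V+W)/W$ is the projection of a smooth fibre bundle onto $\Gr(k,k+m)$, the fibre through $V$ being the set of $k$-planes of the $(k+\l)$-dimensional space $V+W$ transverse to $W$ (an affine space of dimension $k\l=k(n-k)-km$). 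In particular $Z$ is a submersion there, hence an open map, so $Z(\Grtp(k,n))$ is open; since $\Grtp(k,n)$ is dense in $\Grtnn(k,n)$ this also shows $Z(\Grtp(k,n))$ is open \emph{and dense} in $\ampl$.

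For $\amplint\subseteq\amplk$ I would first rewrite $\amplk$. By the containment $\Fib(V,Z)\subseteq\Grtnnm(k,n)$ for $V\in\Grtnnm(k,n)$ (Proposition~\ref{prop:fibers_Grtnnm}; it also follows from Proposition~\ref{prop:ranks}, since $Z(V)=Z(V')$ forces $V+W=V'+W$, and a $(k+\l)\times(k+\l)$ window of $\stack(V',W)$ is invertible iff all of its rows, in particular its top $k$ rows, are independent), each $Z$-fibre lies entirely inside $\Grtnnm(k,n)$ or is disjoint from it, so $\ampl=\amplk\sqcup Z(\Grtnn(k,n)\setminus\Grtnnm(k,n))$ with the second piece compact, hence closed. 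Thus $\amplk=\ampl\setminus Z(\Grtnn(k,n)\setminus\Grtnnm(k,n))$, and it remains to show $Z(\Grtnn(k,n)\setminus\Grtnnm(k,n))$ misses $\amplint$. The key structural input is this: if $f\in\Aff(-k,n-k)\setminus\Aff(-k,\l)$ and $V\in\pc k f$, then by Proposition~\ref{prop:ranks} some window $[\v(j-\l)|\dots|\v(j+k-1)]$ of $k+\l$ cyclically consecutive columns of $V$ has rank $<k$, equivalently $V$ contains a nonzero vector supported on the complementary cyclic interval $S$ of size $m$; hence $Z(V)$ meets the $m$-plane $P_S:=\Span\{Z_a\mid a\in S\}$ nontrivially, i.e.\ $Z(\pc k f)$ lies in the Schubert divisor $\sigma_S=\{Y\in\Gr(k,k+m)\mid Y\cap P_S\neq\{0\}\}$, cut out by the twistor coordinate $\langle Y\,Z_{s_1}\cdots Z_{s_m}\rangle$ with $S=\{s_1<\dots<s_m\}$. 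Expanding this determinant along $Y=Z(V)$ by Cauchy--Binet gives $\langle Y\,Z_{s_1}\cdots Z_{s_m}\rangle=\pm\sum_I \Delta_I(V)\,\Delta_{I\cup S}(Z)$ over $k$-subsets $I\subseteq[n]\setminus S$, and --- this is exactly where the hypothesis $m$ even enters --- the overall sign is the \emph{same} for every $I$ (sliding the size-$m$ block $S$ past a column index costs an even permutation). So each consecutive twistor coordinate has a fixed sign on all of $\ampl$, is strictly nonzero on the dense set $Z(\Grtp(k,n))$, and vanishes on $Z(\pc k f)$ for every ``bad'' cell.

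The proof thus reduces to the following, which I expect to be the main obstacle: the zero locus of each consecutive twistor coordinate inside $\ampl$ is confined to $\partial\ampl$; equivalently $\amplint$ avoids every Schubert divisor $\sigma_S$ with $S$ a cyclic $m$-interval. The easy half is a first-order argument: if $g$ denotes the (sign-normalized) twistor coordinate, then $g\ge 0$ on a $\Gr(k,k+m)$-neighbourhood of any interior point, so $g$ vanishing there forces $dg=0$, which is impossible at a smooth point of the reduced hypersurface $\{g=0\}=\sigma_S$, i.e.\ where $\dim(Y\cap P_S)=1$; the remaining work is to rule out the singular locus $\{\dim(Y\cap P_S)\ge 2\}$, for which I would stratify $\sigma_S$ by $\dim(Y\cap P_S)$, or invoke the regular-closedness $\ampl=\overline{Z(\Grtp(k,n))}$ together with the positivity of \emph{all} consecutive twistor coordinates on $\ampl$. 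This is the ``positive geometry'' statement that the facet-type inequalities $\pm\langle Y\,Z_S\rangle\ge 0$ cut out the boundary of the amplituhedron for $m$ even; for $k=1$ it is precisely Gale's evenness condition identifying these $\sigma_S$ with the facet hyperplanes of the cyclic polytope $\ampl$, and I would build the general case on that model. Granting it, $\amplint\cap Z(\Grtnn(k,n)\setminus\Grtnnm(k,n))=\emptyset$, which completes \eqref{eq:interior_inclusions}; as a byproduct, $Z(\pc k f)\subseteq\ampl\setminus\amplint=\partial\ampl$ for $f\notin\Aff(-k,\l)$, which yields Lemma~\ref{lemma:non-admissible}.
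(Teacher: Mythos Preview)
Your overall strategy coincides with the paper's: the rightmost inclusion is immediate, the middle one is handled via the consecutive twistor coordinates $\alpha(Y,Z,j)$ (the content of Lemma~\ref{lemma:boundary_ineq}), and the leftmost one is treated separately. Two differences are worth recording.

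For $Z(\Grtp(k,n))\subset\amplint$ you give a cleaner argument than the paper. Observing that $V\mapsto (V+W)/W$ is an affine-bundle projection on $\{V:V\cap W=0\}$, hence a submersion and therefore open, immediately yields that $Z(\Grtp(k,n))$ is open in $\Gr(k,k+m)$. The paper (Lemma~\ref{lemma:closure_interior}) instead writes $Z=[\Id_{k+m}\mid S^{\transp}]$ and exhibits an explicit local section $V'(\varepsilon)=V\cdot\bigl(\begin{smallmatrix}\Id_{k+m}+\varepsilon&0\\S\varepsilon&\Id_\l\end{smallmatrix}\bigr)$ satisfying $V'(\varepsilon)Z^{\transp}=(VZ^{\transp})(\Id+\varepsilon)$. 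Your argument explains structurally why such a section must exist; the paper's has the virtue of being completely elementary.

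For $\amplint\subset\amplk$ you correctly isolate the one substantive point---ruling out that $\alpha(\cdot,Z,j)$ vanishes at an interior point---and you are right that this is not automatic from $\alpha\ge 0$ alone; the paper simply invokes Lemma~\ref{lemma:boundary_ineq} without elaborating on this step. However, you are making it harder than necessary: you do not need the full ``positive geometry'' facet description of $\partial\ampl$, nor an inductive stratification of $\sigma_S$. The fix is short. If $Y_0\in\amplint$ with $\alpha(Y_0,Z,j)=0$, choose an open $N\subset\amplint$ containing $Y_0$; then $N\cap\sigma_S$ is nonempty and open in $\sigma_S$. The singular locus $\{Y:\dim(Y\cap P_S)\ge2\}$ has real codimension $4$ in $\Gr(k,k+m)$ and is therefore nowhere dense in $\sigma_S$, so $N\cap\sigma_S$ already contains a smooth point $Y_1$. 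There your own first-order argument applies: $d\alpha|_{Y_1}\neq0$, so $\alpha$ takes both signs near $Y_1$, contradicting $\alpha\ge0$ on $N\subset\ampl$. With this addition your argument is complete.
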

Note that this proposition implies in particular that the amplituhedron is equal to the closure of its interior (see Lemma~\ref{lemma:closure_interior} below), because $\ampl$ is equal to the closure of $Z(\Grtp(k,n))$.

Suppose that $\V\in\Grtnn(k,n)$, $Z\in\Grtp(k+m,n)$, and let $\W:=Z^\perp$. Then by Lemma~\ref{lemma:consecutive}, having $\V\in\Grtnnl(k,n)$ is equivalent to saying that the circular $(k+\l)$-minors of $\VZ:=\stack(\V,\W)$ are all nonzero. For $\V'\in \Fib(\V,Z)$, it follows from~\eqref{eq:VZ'_A} that the matrix $\stack(\V',\Zp)$ has the same maximal minors. Therefore for all $\V'\in\Fib(\V,Z)$ we have $\V'\in\Grtnnl(k,n)$. We have shown the following result.

\begin{proposition}\label{prop:fibers_Grtnnm}
For each point $Y\in\ampl$, exactly one of the following holds:
\begin{itemize}
\item for all $\V\in\Grtnn(k,n)$ such that $\V\cdot Z^\transp=Y$, we have $\V\in\Grtnnl(k,n)$, or
\item for all $\V\in\Grtnn(k,n)$ such that $\V\cdot Z^\transp=Y$, we have $\V\notin\Grtnnl(k,n)$.
\end{itemize}
\end{proposition}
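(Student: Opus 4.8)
The plan is to prove the slightly stronger fact that membership in $\Grtnnl(k,n)$ is constant along each fiber of the map $Z$; the dichotomy in the statement follows at once. First I would fix $Y\in\ampl$; since $\ampl=Z(\Grtnn(k,n))$, there is some $\V_0\in\Grtnn(k,n)$ with $\V_0\cdot Z^\transp=Y$, and then the set of all $\V\in\Grtnn(k,n)$ with $\V\cdot Z^\transp=Y$ is exactly the fiber $\Fib(\V_0,Z)$ of~\eqref{eq:fiber_dfn}. This fiber is nonempty, so the two alternatives in the statement are mutually exclusive, and it remains to show that one of them always holds, i.e.\ that for any $\V,\V'\in\Fib(\V_0,Z)$ we have $\V\in\Grtnnl(k,n)$ if and only if $\V'\in\Grtnnl(k,n)$.

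The next step is to pass to the stacked matrix. Set $\W:=Z^\perp\in\Grtperp{k+m}n$, an $\l\times n$ matrix. By Lemma~\ref{lemma:consecutive}, for $\V\in\Grtnn(k,n)$ one has $\V\in\Grtnnl(k,n)$ precisely when all circular $(k+\l)\times(k+\l)$ minors of $\stack(\V,\W)$ are nonzero, so it suffices to check that these circular minors do not change as $\V$ ranges over $\Fib(\V_0,Z)$. For this I would use that $\V\in\Fib(\V_0,Z)$ forces $(\V-\V_0)\,Z^\transp=0$, hence the rows of $\V-\V_0$ lie in the orthogonal complement of the row span of $Z$, which is the row span of $\W$; so $\V=\V_0+A\W$ for some $k\times\l$ matrix $A$ (this is~\eqref{eq:VZ'_A}). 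Therefore
\[
\stack(\V,\W)=\begin{pmatrix}\Id_k & A\\ 0 & \Id_\l\end{pmatrix}\cdot \stack(\V_0,\W),
\]
and since the first factor is block-unipotent (determinant $1$), left multiplication by it leaves every maximal minor of $\stack(\V_0,\W)$ unchanged. In particular the circular $(k+\l)$-minors of $\stack(\V,\W)$ agree with those of $\stack(\V_0,\W)$, whence $\V\in\Grtnnl(k,n)$ if and only if $\V_0\in\Grtnnl(k,n)$; applying this with $\V$ and with $\V'$ completes the proof.

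I do not expect a genuine obstacle here: all the substance has been absorbed into Lemma~\ref{lemma:consecutive} (the circular-minor criterion for $\Grtnnl(k,n)$) and into the elementary description~\eqref{eq:VZ'_A} of the fibers of $Z$. The only point requiring an explicit word is that left multiplication by a block-unipotent $(k+\l)\times(k+\l)$ matrix preserves all maximal minors of a $(k+\l)\times n$ matrix, which is Cauchy--Binet applied with an invertible left factor of determinant $1$. This argument also yields the promised inclusion $\Fib(\V,Z)\subset\Grtnnm(k,n)$ for $\V\in\Grtnnm(k,n)$.
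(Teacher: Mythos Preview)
Your proof is correct and follows essentially the same approach as the paper: use Lemma~\ref{lemma:consecutive} to translate membership in $\Grtnnl(k,n)$ into nonvanishing of the circular $(k+\l)$-minors of $\stack(\V,\W)$, and then observe via~\eqref{eq:VZ'_A} that these minors are unchanged along a fiber because the stacked matrices differ by left multiplication by a unipotent block matrix.
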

Note also that the first condition holds if and only if $Y\in\amplk$. The next lemma characterizes $\amplk$ as a subset of $\ampl$ satisfying certain boundary inequalities and implies Lemma~\ref{lemma:non-admissible}. Recall that the columns of $Z$ satisfy $Z(j+n)=(-1)^{k-1}Z(j)$.
\begin{lemma}\label{lemma:boundary_ineq}
  Let $Y\in\ampl$ be represented by a $k\times (k+m)$ matrix, and denote by $y_1,\dots,y_{k}\in\R^{k+m}$ the row vectors of $Y$. Then for each $j\in\Z$, the function
  \[\alpha(Y,Z,j):=\det([y_1|y_2|\dots|y_k|Z(j)|Z(j+1)|\dots|Z(j+m-1)])\]
takes a fixed sign on $\ampl$ and  we have $Y\in\amplk$ if and only if $\alpha(Y,Z,j)\neq0$ for all $j\in\Z$.
\end{lemma}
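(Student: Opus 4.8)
The plan is to write $\alpha(Y,Z,j)$ as an explicitly signed sum of products of maximal minors of $Z$ and of a totally nonnegative preimage of $Y$, and then extract both assertions from that formula together with Proposition~\ref{prop:ranks}. First I would fix $Y\in\ampl$ and choose $\V\in\Grtnn(k,n)$ with $Y=\V Z^\transp$ (possible since $\ampl=Z(\Grtnn(k,n))$), taking the matrix $\V Z^\transp$ as the representative of $Y$; any other representative rescales $\alpha(Y,Z,j)$ by a nonzero scalar, which changes neither its vanishing nor — relative to the convention of totally nonnegative representatives, which pairwise differ by positive scalars — its sign. Writing $J_j:=\{j,j+1,\dots,j+m-1\}\pmod n$, the $(k+m)\times(k+m)$ matrix $[y_1|\cdots|y_k|Z(j)|\cdots|Z(j+m-1)]$ equals $Z\cdot[\V^\transp\mid P_j]$, where $P_j$ is the $n\times m$ matrix whose $t$-th column is $e_{j+t-1}$, or $(-1)^{k-1}e_{j+t-1-n}$ when $j+t-1>n$ (using $Z(a+n)=(-1)^{k-1}Z(a)$). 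Applying the Cauchy--Binet formula, the only $(k+m)$-subsets $I$ of $[n]$ that contribute are those containing $J_j$, and for $I=J_j\sqcup K$ a Laplace expansion of the corresponding $(k+m)\times(k+m)$ minor of $[\V^\transp\mid P_j]$ along the $m$ columns coming from $P_j$ gives $\pm\Delta_K(\V)$; the point is that, because $m$ is even, this sign is the same for every $K$. One thus obtains
\[
\alpha(Y,Z,j)=\varepsilon_j\sum_{K\in\binom{[n]\setminus J_j}{k}}\Delta_{J_j\sqcup K}(Z)\,\Delta_K(\V),
\]
with $\varepsilon_j\in\{\pm1\}$ depending only on $j$ (and $n,k,m$); a slightly more careful count, again using that $m$ is even together with the $n$-periodicity of $j\mapsto\alpha(Y,Z,j)$, shows $\varepsilon_j$ is in fact independent of $j$.

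From this identity both claims follow quickly. Since $Z\in\Grtp(k+m,n)$, each $\Delta_{J_j\sqcup K}(Z)>0$, while $\Delta_K(\V)\ge 0$; hence $\varepsilon_j\,\alpha(Y,Z,j)\ge 0$ for all $j$ and all $Y\in\ampl$, which is the fixed-sign assertion. Moreover $\alpha(Y,Z,j)=0$ iff $\Delta_K(\V)=0$ for every $K\in\binom{[n]\setminus J_j}{k}$, i.e. iff the $k\times(k+\l)$ submatrix of $\V$ on columns $[n]\setminus J_j=\{j+m,j+m+1,\dots,j+m+k+\l-1\}\pmod n$ has rank $<k$. Setting $i:=j+m+\l$, these columns are exactly $\v(i-\l),\v(i-\l+1),\dots,\v(i+k-1)$, so $\alpha(Y,Z,j)\ne 0$ iff $\rk[\v(i-\l)|\cdots|\v(i+k-1)]=k$, and as $j$ runs over $\Z$ so does $i$.

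By Proposition~\ref{prop:ranks}, $\V\in\Grtnnm(k,n)$ iff $\rk[\v(i-\l)|\cdots|\v(i+k-1)]=k$ for all $i\in\Z$, hence iff $\alpha(Y,Z,j)\ne 0$ for all $j\in\Z$. If all $\alpha(Y,Z,j)\ne 0$ then the chosen $\V$ lies in $\Grtnnm(k,n)$, so $Y\in\amplk$; conversely, if $Y\in\amplk$ then $Y=Z(\V')$ for some $\V'\in\Grtnnm(k,n)$, and Proposition~\ref{prop:fibers_Grtnnm} forces every preimage of $Y$ in $\Grtnn(k,n)$, in particular $\V$, to lie in $\Grtnnm(k,n)$, so all $\alpha(Y,Z,j)\ne 0$. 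This proves $Y\in\amplk$ iff $\alpha(Y,Z,j)\ne 0$ for all $j$; and in turn Lemma~\ref{lemma:non-admissible}, since an $f\in\Aff(-k,n-k)$ with $\inv(f)=k\l$ and $f\notin\Aff(-k,\l)$ satisfies $\pc k f\cap\Grtnnm(k,n)=\emptyset$, whence $Z(\pc k f)\subseteq\ampl\setminus\amplk\subseteq\partial\ampl$ by Proposition~\ref{prop:open_dense}, which forces $\dim Z(\pc k f)<km=\dim\pc k f$ and hence non-injectivity of $Z$ on $\pc k f$.

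The only real work is the sign bookkeeping in the Cauchy--Binet/Laplace step: verifying that the per-term Laplace signs do not depend on $K$, and tracking the cyclic-wraparound factors $(-1)^{k-1}$ in order to pin down $\varepsilon_j$ — both facts relying crucially on $m$ being even. Everything else is formal manipulation.
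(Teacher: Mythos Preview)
Your proof is correct and rests on the same underlying idea as the paper's: express $\alpha(Y,Z,j)$ as a fixed-sign sum of products $\Delta_K(\V)\cdot(\text{positive})$ over $K\subset[n]\setminus J_j$, so that $\alpha$ has constant sign and vanishes exactly when the cyclic column block $[n]\setminus J_j$ of $\V$ has rank $<k$, then finish with Proposition~\ref{prop:ranks} and Proposition~\ref{prop:fibers_Grtnnm}. The difference is only in how that expansion is reached. The paper passes through the orthogonal complement $X=(\Span(\V,\W))^\perp$ and the chain $\alpha(Y,Z,j)=\Delta_{\{j,\dots,j+m-1\}}(X)=\pm\Delta_{[n]\setminus J_j}(\stack(\V,\W))$, citing \cite[Proposition~20.3]{LamCDM}, \cite[Lemma~3.10]{KW}, and then Lemma~\ref{lemma:consecutive}; you instead compute $\alpha$ directly from $Y=\V Z^\transp$ via Cauchy--Binet and a Laplace expansion along the $P_j$ block. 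Your route is more self-contained (no external references) at the price of the explicit sign bookkeeping you flag; the paper's route packages that bookkeeping inside the cited results and inside the proof of Lemma~\ref{lemma:consecutive}, which carries out exactly the same same-sign argument for $\Delta_{[n]\setminus J_j}(\stack(\V,\W))$.

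Two minor remarks. First, the stronger claim that $\varepsilon_j$ is independent of $j$ is not needed for the lemma---fixed sign for each $j$ suffices---so you can drop it rather than justify it. Second, in your closing derivation of Lemma~\ref{lemma:non-admissible}, the step from $Z(\pc kf)\subseteq\partial\ampl$ to non-injectivity of $Z$ on $\pc kf$ tacitly uses invariance of domain: an injective continuous map between $km$-manifolds is open, so its image would meet $\amplint$.
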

\begin{proof}
  By~\cite[Proposition~20.3]{LamCDM}, $\alpha(Y,Z,j)$ takes a fixed sign on $\ampl$. It remains to show that $Y\in\amplk$ if and only if $\alpha(Y,Z,j)\neq0$. Let $W:=Z^\perp$ and $V\in\Grtnn(k,n)$ be such that $V\cdot Z^\transp=Y$. Define $X:=(\Span(V,W))^\perp$. Thus $X$ is an $m$-dimensional subspace inside the row span of $Z$. By~\cite[Lemma~3.10]{KW}, we have $\Delta_{\{j,j+1,\dots,j+m-1\}}(X)=\alpha(Y,Z,j)$. Let $J\subset [n]$ denote the complement of the set $\{j,j+1,\dots,j+m-1\}$ (viewed modulo $n$) inside $[n]$. By~\eqref{eq:alt_duality}, $\Delta_J(\stack(V,W))=\Delta_{\{j,j+1,\dots,j+m-1\}}(X)$, and thus $\alpha(Y,Z,j)\neq0$ for all $j\in\Z$ if and only if all circular minors of $U:=\stack(V,W)$ are nonzero. This happens precisely when $U\in\topcell$, and the result now follows from Proposition~\ref{prop:fibers_Grtnnm}.
\end{proof}
\begin{proof}[Proof of Proposition~\ref{prop:open_dense}.]
Lemma~\ref{lemma:boundary_ineq} proves the second inclusion $\amplint \subset\amplk$ in~\eqref{eq:interior_inclusions}. The third inclusion $\amplk\subset\ampl$ is obvious, and the first inclusion is the content of Lemma~\ref{lemma:closure_interior} below.
\end{proof}

\begin{lemma}\label{lemma:closure_interior}
  The closure of the interior of $\ampl$ equals $\ampl$, and we have
  \[Z(\Grtp(k,n))\subset \amplint.\]
\end{lemma}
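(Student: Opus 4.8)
The plan is to deduce the two assertions from each other in the obvious way: prove the inclusion $Z(\Grtp(k,n))\subseteq\amplint$ first, and obtain $\overline{\amplint}=\ampl$ as a formal consequence. Two inputs drive everything: (i) $Z$ is well-defined on all of $\Grtnn(k,n)$ (as in the setup of Section~\ref{sec:triang-ampl}) and, being given on affine charts by rational functions with nonvanishing denominator there, is continuous on the compact set $\Grtnn(k,n)$; and (ii) $\Grtp(k,n)$ is open and dense in $\Grtnn(k,n)$. From (i) and (ii), $\ampl=Z(\Grtnn(k,n))$ is compact, hence closed, and since $Z\bigl(\overline{\Grtp(k,n)}\bigr)\subseteq\overline{Z(\Grtp(k,n))}$ with $\overline{\Grtp(k,n)}=\Grtnn(k,n)$, we conclude $\ampl=\overline{Z(\Grtp(k,n))}$. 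Granting $Z(\Grtp(k,n))\subseteq\amplint$, taking closures then gives $\ampl=\overline{Z(\Grtp(k,n))}\subseteq\overline{\amplint}\subseteq\ampl$, i.e.\ $\overline{\amplint}=\ampl$.

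The heart of the argument is that $Z\colon\Gr(k,n)\dashedrightarrow\Gr(k,k+m)$ is a submersion at every point of $\Grtp(k,n)$. I would view $Z$ as the rational map on Grassmannians induced by the surjective linear map $\pi\colon\R^n\to\R^{k+m}$ with matrix $Z$, whose kernel is $Z^\perp$, of dimension $\l$. This induced map is regular precisely on $\{V\mid V\cap Z^\perp=\{0\}\}$, which contains all of $\Grtnn(k,n)$ because $\V\cdot Z^\transp$ has rank $k$ for $\V\in\Grtnn(k,n)$. At such a $V$, pick a complement $C$ with $\R^n=V\oplus C$ and $Z^\perp\subseteq C$; then $\dim C=n-k=\l+m$, $\pi|_V$ is injective, $\dim\pi(C)=\dim C-\l=m$, and $\pi(V)\cap\pi(C)=\{0\}$ by a one-line check, so $\R^{k+m}=\pi(V)\oplus\pi(C)$. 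In the graph chart $\operatorname{Hom}(V,C)\ni A\mapsto\{v+Av\mid v\in V\}$ for $\Gr(k,n)$ near $V$, the map $\pi$ sends $A$ to the graph of $(\pi|_C)\circ A\circ(\pi|_V)^{-1}\colon\pi(V)\to\pi(C)$, which lies in the corresponding graph chart for $\Gr(k,k+m)$ near $\pi(V)$ (and $\pi$ is defined at every such $\Gamma_A$, since $\pi|_{\Gamma_A}$ is injective). Thus in these charts $Z$ is the linear map $A\mapsto(\pi|_C)\circ A\circ(\pi|_V)^{-1}\colon\operatorname{Hom}(V,C)\to\operatorname{Hom}(\pi(V),\pi(C))$, which is surjective because post-composition with the surjection $\pi|_C\colon C\to\pi(C)$ is surjective and $(\pi|_V)^{-1}$ is an isomorphism. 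Hence $dZ_\V$ is onto and $Z$ is a submersion at $\V$.

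To conclude, $\Grtp(k,n)$ is an open subset of $\Gr(k,n)$ on which $Z$ restricts to a submersion, and submersions are open maps, so $Z(\Grtp(k,n))$ is an open subset of $\Gr(k,k+m)$. It is contained in $Z(\Grtnn(k,n))=\ampl$, hence contained in the interior $\amplint$. This proves $Z(\Grtp(k,n))\subseteq\amplint$, and combined with the first paragraph finishes the proof. The only step that is not pure point-set topology is the submersion computation; its only mildly delicate point is the bookkeeping that $\pi(V)\oplus\pi(C)=\R^{k+m}$, which reduces to $\dim\pi(C)=m$ and $\pi(V)\cap\pi(C)=\{0\}$, so I do not expect any serious obstacle.
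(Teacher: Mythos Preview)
Your proof is correct but proceeds differently from the paper. The paper argues directly: given $V\in\Grtp(k,n)$ and $Y=V\cdot Z^\transp$, it uses the transitive action of $\GL_{k+m}(\R)$ on $\Gr(k,k+m)$ to write nearby points as $Y\cdot(\Id_{k+m}+\epsilon)$, and then explicitly constructs a preimage $V'(\epsilon)\in\Grtp(k,n)$ by writing $Z=[\Id_{k+m}\mid S^\transp]$ and setting $V'(\epsilon)=V\cdot\begin{pmatrix}\Id_{k+m}+\epsilon&0\\S\epsilon&\Id_\l\end{pmatrix}$. You instead show that the induced map $Z\colon\Gr(k,n)\dashedrightarrow\Gr(k,k+m)$ is a submersion at every point of its domain of definition (in particular on $\Grtp(k,n)$), and invoke the general fact that submersions are open. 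Your route is more conceptual and proves a slightly stronger fact (submersion on the whole regular locus, not just openness on $\Grtp(k,n)$); the paper's route is more elementary and constructive, requiring no differential-geometric machinery beyond a concrete matrix identity. Both deductions of the closure statement from the inclusion are the same.
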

\begin{proof}
  \def\eps{\Ecal}
Let $\V\in\Grtp(k,n)$ and $Y:=\V\cdot Z^\transp$. We would like to show that $Y$ belongs to the interior of $\ampl$. Note that $\GL_{k+m}(\R)$ acts transitively on $\Gr(k,k+m)$, so let $A(\eps)=\Id_{k+m}+\eps\in\GL_{k+m}(\R)$ belong to the neighborhood of the identity for each sufficiently small $(k+m)\times(k+m)$ matrix $\eps$. We need to show that $Y\cdot A(\eps)$ belongs to the image of $\Grtp(k,n)$, equivalently, that there exists $\V'(\eps)\in\Grtp(k,n)$ such that $\V'(\eps)\cdot Z^\transp=Y\cdot A(\eps)$, for all small $\eps$. Let us construct this $\V'(\eps)$ explicitly. Assume without loss of generality that $Z=[\Id_{k+m}\mid S^\transp]$ for an $\l\times (k+m)$ matrix $S$. Then set
  \[\V'(\eps):=\V\cdot \begin{pmatrix}
      \Id_{k+m}+\eps & 0\\
      S\eps & \Id_\l
    \end{pmatrix}.\]
  Clearly for small $\eps$ this matrix is close to $\V$ and thus totally positive. Let us calculate the image of $\V'(\eps)$ under $Z$:
  \[\V'(\eps)\cdot Z^\transp= \V\cdot \begin{pmatrix}
      \Id_{k+m}+\eps\\
      S\eps+S
    \end{pmatrix}=\V\cdot Z^\transp\cdot A(\eps)=Y\cdot A(\eps).\qedhere\]
\end{proof}

\section{The stacked twist map: proofs}\label{sec:stw_properties}

In this section, we examine the stacked twist map from Section~\ref{sec:stw}. We adopt the setting and all notation from that section. Our ultimate goal is to prove Theorem~\ref{thm:stw} and Proposition~\ref{prop:fibers}.

We first discuss the relationship between the stacked twist map and the \emph{twist map} of~\cite{MS}. We remark that in~\cite{MuS}, the twist map of~\cite{MS} was extended to $\U$'s not necessarily belonging to $\topcell$. However, we only focus on $\U\in\topcell$, and mostly use the results of~\cite{MS}.

For two integers $a<b$, we denote $[a,b):=\{a,a+1,\dots,b-1\}\subset\Z$. Let $j\in\Z$, $J:=[j-\l,j+k)$, $\U\in\topcell$, and let $\Ut:=\twistpm(\U)$ be defined by~\eqref{eq:stw}. Denote by $\Ut'$ the result of applying the twist map of~\cite{MS} to $\U$, and let $\Ut''$ be the result of applying the right twist map of~\cite{MuS}. Then we have\footnote{The columns in~\cite{MS,MuS} satisfy $\U(j+n)=\U(j)$ while we impose $\U(j+n)=(-1)^{k-1}\U(j)$. In addition, we have the sign $(-1)^{j}$ in~\eqref{eq:stw} which is not present in~\cite{MS,MuS}. Thus for $k<j\leq n-\l$, the $\pm$ sign in~\eqref{eq:twist_MS} is just $(-1)^{j}$.}
\begin{equation}\label{eq:twist_MS}
\Ut(j)=\frac{\pm1}{\Delta_J(\U)}\Ut'(j+k)=\pm\Ut''(j-\l).
\end{equation}
Here $\Delta_J(\U):=\det(\U(J))$, where $\U(J)$ denotes the submatrix $[\ut(j-\l)|\dots|\ut(j+k-1)]$ of $\U$ with  column set $J$. Thus for example the definition of $\twistpm$ given in~\eqref{eq:stw} can be restated as follows:
\begin{equation}\label{eq:adjoint}
\text{for each $j\in\Z$, $(-1)^{j}\Ut(j)$ is the first row of the matrix $(\U(J))^{-1}$.}
\end{equation}

We now use~\eqref{eq:twist_MS} to apply some results of~\cite{MS} to our setting.

\begin{lemma}\label{lemma:inverse_transpose}
  Let $g\in\GL_{k+\l}(\R)$ be an invertible $(k+\l)\times (k+\l)$ matrix and $\U\in\topcell$. Then
  \[\twistpm(g\cdot \U)=\left(g^{-1}\right)^\transp \twistpm(\U).\]
\end{lemma}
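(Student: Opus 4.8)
The plan is to prove the identity directly from the reformulation~\eqref{eq:adjoint} of the twist map, which I find cleaner than transporting it through~\eqref{eq:twist_MS} and the results of~\cite{MS}. Recall that for $\U\in\topcell$ and $j\in\Z$, writing $J:=[j-\l,j+k)$, the defining conditions~\eqref{eq:stw} say exactly that $\ut(j)^\transp\,\U(J)=(-1)^{j}e_1^\transp$, where $e_1\in\R^{k+\l}$ denotes the first standard basis vector and $\U(J)$ is the invertible matrix whose columns are $\u(j-\l),\dots,\u(j+k-1)$; equivalently, $(-1)^{j}\ut(j)^\transp$ is the first row of $(\U(J))^{-1}$, which is the content of~\eqref{eq:adjoint}.

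First I would observe that left multiplication by $g$ commutes with extracting a column window: if $\U':=g\cdot\U$, then $\U'(J)=g\cdot\U(J)$ for every length-$(k+\l)$ interval $J$, and since $g$ is invertible it sends a basis to a basis, so $\U'\in\topcell$ and $\twistpm(\U')$ is defined. Next, applying~\eqref{eq:adjoint} to $\U'$ shows that $(-1)^{j}\ut'(j)^\transp$ is the first row of
\[
(\U'(J))^{-1}=(g\cdot\U(J))^{-1}=(\U(J))^{-1}g^{-1},
\]
that is, $(-1)^{j}\ut'(j)^\transp=\bigl((-1)^{j}\ut(j)^\transp\bigr)g^{-1}$. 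Cancelling the common sign and transposing gives $\ut'(j)=(g^{-1})^\transp\ut(j)$ for every $j\in\Z$, which is precisely the asserted identity $\twistpm(g\cdot\U)=(g^{-1})^\transp\,\twistpm(\U)$.

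I do not anticipate any real obstacle here; this is essentially a one-line computation once~\eqref{eq:adjoint} is in hand. The only points deserving a moment of care are bookkeeping the transpose and the side on which $g^{-1}$ acts (it multiplies row vectors on the left, hence appears on the right after matrix inversion), and checking that the passage $\U\mapsto g\cdot\U$ does not disturb the sign $(-1)^{j}$ in~\eqref{eq:stw} nor the periodicity $\ut(j+n)=(-1)^{\l-1}\ut(j)$ — both are immediate since $g$ is a fixed constant matrix independent of $j$. As a sanity check one may also re-derive the statement via~\eqref{eq:twist_MS}: the scalars there transform by $\Delta_J(g\cdot\U)=\det(g)\,\Delta_J(\U)$, a global constant, so the equivariance matches the corresponding property of the Marsh--Scott twist recorded in~\cite{MS}.
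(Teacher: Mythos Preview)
Your argument is correct. The paper's own proof simply defers to~\eqref{eq:twist_MS} together with~\cite[Lemma~2.3]{MS}, i.e., it transports the equivariance of the Marsh--Scott twist through the sign/shift identification. You instead prove the statement directly from the paper's reformulation~\eqref{eq:adjoint}, which makes the argument self-contained and avoids chasing the signs through~\eqref{eq:twist_MS}. Both routes rest on the same elementary fact --- that $(g\cdot\U(J))^{-1}=(\U(J))^{-1}g^{-1}$, so the first row picks up a right factor of $g^{-1}$ --- and your sanity check via~\eqref{eq:twist_MS} is exactly how one would reconcile the two. There is nothing to fix; your bookkeeping of transposes, signs, and periodicity is all in order.
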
  
\begin{proof}
This follows from~\eqref{eq:twist_MS} combined with the proof of~\cite[Lemma~2.3]{MS}.
\end{proof}
\begin{proof}[Proof of Lemma~\ref{lemma:stw:Gr}]
For $g_1\in\GL_k(\R), g_2\in\GL_\l(\R)$, we can take $g:=\begin{pmatrix}
g_1 & 0\\
0 & g_2
\end{pmatrix}$ to be a block matrix, and thus for $\stw(\V,\W)=(\Wt,\Vt)$  we have
  \[\stw\left(g_1\cdot \V,g_2\cdot \W\right)=\left((g_1^{-1})^\transp\cdot \Wt,(g_2^{-1})^\transp\cdot\Vt\right).\qedhere\]
\end{proof}

\begin{lemma}\label{lemma:max_minors_of_U}
  Let $j\in\Z$, $J:=[j-\l,j+k)$, and $K:=[j,j+k+\l)$. Then for every $\U\in\topcell$ and $\Ut:=\twistpm(\U)$, we have
  \begin{equation}\label{eq:max_minors_of_U}
\Delta_K(\Ut)= \frac{(-1)^{j+(j+1)+\dots+(j+k+\l-1)}}{\Delta_J(\U)}.
  \end{equation}
\end{lemma}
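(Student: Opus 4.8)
The plan is to obtain~\eqref{eq:max_minors_of_U} by pairing the matrix $\Ut(K)$ directly against the matrix $\U(J)$ whose columns occur in the defining relations~\eqref{eq:stw}. Concretely, I would form the $(k+\l)\times(k+\l)$ product matrix $M:=\Ut(K)^\transp\cdot\U(J)$, whose $(a,b)$-entry is the inner product $\<\ut(j+a-1),\u(j-\l+b-1)\>$ for $1\le a,b\le k+\l$, and show that $M$ is triangular.

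The key observation is that $M$ is lower triangular with an explicit diagonal. Indeed, fix a row index $a$ and set $i:=j+a-1$; the relations~\eqref{eq:stw} say $\<\ut(i),\u(i-\l)\>=(-1)^{i}$ while $\<\ut(i),\u(i-\l+1)\>=\dots=\<\ut(i),\u(i+k-1)\>=0$. Writing the column index $j-\l+b-1$ as $i-\l+(b-a)$, one sees that the entry $M_{a,b}$ vanishes precisely when $1\le b-a\le k+\l-1$; since $b\le k+\l$ and $a\ge1$, this covers \emph{every} $b>a$, so $M_{a,b}=0$ above the diagonal, while $M_{a,a}=(-1)^{i}=(-1)^{j+a-1}$. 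Hence $\det M=\prod_{a=1}^{k+\l}(-1)^{j+a-1}=(-1)^{j+(j+1)+\dots+(j+k+\l-1)}$. On the other hand $\det M=\det(\Ut(K)^\transp)\det(\U(J))=\Delta_K(\Ut)\,\Delta_J(\U)$, and $\Delta_J(\U)\ne0$ because $\U\in\topcell$; dividing through yields~\eqref{eq:max_minors_of_U}. I would also remark that this argument simultaneously reproves $\Ut\in\topcell$, as it shows each such $\Delta_K(\Ut)$ is nonzero.

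There is essentially no serious obstacle here beyond careful bookkeeping: the one point requiring attention is the indexing, namely verifying that the column $\u(j-\l+b-1)$ with $b>a$ always lands strictly inside the ``basis window'' $\u(i-\l+1),\dots,\u(i+k-1)$ attached to the row $\ut(i)$ (this is where the bound $b\le k+\l$ and the parameter relation $n=k+\l+m$ play their role implicitly through the size of $J$ and $K$). One could alternatively deduce the formula from~\eqref{eq:twist_MS} together with the results of~\cite{MS}, but the direct triangularity computation above is shorter and keeps the exposition self-contained at this stage.
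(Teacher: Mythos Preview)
Your argument is correct. Both your proof and the paper's rest on the same triangularity observation coming directly from the defining relations~\eqref{eq:stw}, but you package it a bit differently. The paper first reduces to the special case $\U(J)=\Id_{k+\l}$ via the $\GL_{k+\l}$-equivariance of $\twistpm$ (Lemma~\ref{lemma:inverse_transpose}): in that case the defining relations force $\Ut(K)$ itself to be upper triangular with diagonal entries $(-1)^j,\dots,(-1)^{j+k+\l-1}$, and the general case follows by writing $\U=\U(J)\cdot\U'$ and tracking determinants. You bypass that reduction by forming the Gram-type matrix $M=\Ut(K)^\transp\,\U(J)$ directly and reading off its lower-triangularity from~\eqref{eq:stw}; note that in the paper's special case $\U(J)=\Id$ your $M$ is exactly $\Ut(K)^\transp$, so the two triangularity claims coincide. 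Your route is marginally more self-contained (it does not invoke Lemma~\ref{lemma:inverse_transpose}), while the paper's route makes the role of the $\GL_{k+\l}$-action explicit; either way the computation is essentially one line.
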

\begin{proof}
  This follows from~\cite[Remark~3.7]{MS}, but we deduce this as a simple consequence of Lemma~\ref{lemma:inverse_transpose}. Namely, if the submatrix $\U(J)$ of $\U$ is the identity matrix then the submatrix $\Ut(K)$ is upper triangular with diagonal entries $(-1)^{j},\dots,(-1)^{j+k+\l-1}$, so in this case~\eqref{eq:max_minors_of_U} is obvious. We can write any matrix $\U$ as $\U(J)\cdot \U'$ where $\U'(J)$ is the identity matrix, and thus by Lemma~\ref{lemma:inverse_transpose}, $\Ut=(\U(J)^{-1})^\transp\cdot \twistpm(\U'(J))$, which proves~\eqref{eq:max_minors_of_U} in general.
\end{proof}

For a subspace $\V\in\Gr(k,n)$ and integers $a< b\in\Z$, define
\[\rk(\V;a,b):=\rk([\v(a)|\dots|\v(b-1)]).\]
The positroid cell $\pc k f$ is defined in Definition~\ref{dfn:pos_cell}, however, we will work with a more convenient alternative characterization.
\def\stwp{\stw}
\begin{lemma}\label{lemma:ranks}
  Let $\V\in\Grtnn(k,n)$. Then the unique affine permutation $f\in \Aff(-k,n-k)$ such that $\V\in\pc k f$ is characterized by the property that for each $a< b\in\Z$,
  \begin{equation}\label{eq:ranks}
\rk(\V;a,b)=\#\{i<a\mid a\leq f(i)+k< b\}.
  \end{equation}
\end{lemma}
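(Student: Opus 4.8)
The plan is to reduce Lemma~\ref{lemma:ranks} to the standard dictionary relating positroids, Grassmann necklaces, and bounded affine permutations. The key first observation is that $\V\in\pc k f$ means exactly that $\{J\in\binom{[n]}{k}\mid\Delta_J(\V)\neq0\}=\positroid k f$, i.e.\ the matroid represented by $\V$ is the positroid $\positroid k f$. Hence, once $b-a\le n$ so that $[a,b)$ descends to an honest cyclic interval of $[n]$, the number $\rk(\V;a,b)$ is simply the rank of that interval in the matroid $\positroid k f$; in particular it depends on $f$ only, and the statement becomes a purely combinatorial identity about $\positroid k f$ with $\V$ no longer present. The remaining case $b\ge a+n$ I would dispose of directly: on the left $\rk(\V;a,b)=k$ because $\v(a),\dots,\v(a+n-1)$ are, up to signs, all columns of $\V$ and so span $\R^k$; on the right, $f(i)+k\le i+n<a+n\le b$ forces the inequality $f(i)+k<b$ for every $i<a$, and $\#\{i<a\mid f(i)+k\ge a\}=k$ for any $f\in\Aff(-k,n-k)$ (the constant ``number of balls in the air'' of the associated juggling pattern, which is $k$ by condition~(2) in the definition of $\Aff$). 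Note also that the column periodicity $\v(j+n)=(-1)^{k-1}\v(j)$ is irrelevant here, since signs change neither Pl\"ucker coordinates on $[n]$ nor ranks of cyclic intervals.

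For the main case I would bring in the Grassmann necklace of $\positroid k f$: write $\Ical_a$ for the lexicographically minimal basis of $\positroid k f$ with respect to the cyclic order $a<a+1<\dots<a+n-1$. Two standard ingredients then finish the argument. First, for any matroid and any $a\le b\le a+n$, the rank of $[a,b)$ equals $|\Ical_a\cap[a,b)|$, because a greedily chosen basis, restricted to an initial segment of the reading order, is a basis of the span of that segment. Second, Postnikov's correspondence~\cite{Pos} (see also~\cite{OPS,LamCDM}) gives the Grassmann necklace of $\positroid k f$ explicitly as $\Ical_a=\{\,f(i)+k\mid i<a,\ f(i)+k\ge a\,\}$. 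If Definition~\ref{dfn:pos_cell} introduces $\positroid k f$ through a decorated permutation or a \Le-diagram rather than directly, this formula is exactly the image under the standard bijections, and I would cite~\cite{Pos} for that. Combining the two ingredients, and using that $i\mapsto f(i)+k$ is injective because $f$ is a bijection,
\[\rk(\V;a,b)=|\Ical_a\cap[a,b)|=\#\{\,i<a\mid a\le f(i)+k<b\,\},\]
which is~\eqref{eq:ranks}.

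Finally, for the ``is characterized by'' clause I would establish uniqueness: the collection of ranks $\rk(\V;a,b)$ recovers the Grassmann necklace via $j\in\Ical_a\iff\rk(\V;a,j+1)>\rk(\V;a,j)$, and the necklace recovers $f$ since $f\mapsto\grneck k f$ is injective, being one half of Postnikov's bijection~\cite{Pos}. Thus at most one $f\in\Aff(-k,n-k)$ can satisfy~\eqref{eq:ranks}; having exhibited one, and knowing that the cells $\pc k f$ partition $\Grtnn(k,n)$ by~\eqref{eq:Grtnn_strata}, it is necessarily the $f$ with $\V\in\pc k f$. I do not expect any genuine difficulty here: the only real work is bookkeeping — matching the paper's conventions (the shift between $\Aff(-k,n-k)$ and $\Bound(k,n)$, the cyclic-interval normalizations) and verifying that whatever form Definition~\ref{dfn:pos_cell} takes feeds cleanly into the explicit Grassmann-necklace formula above.
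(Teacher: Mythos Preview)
Your proof sketch is correct and follows the standard route through Grassmann necklaces and Postnikov's bijection. The paper, however, does not prove this lemma at all: its entire proof is the sentence ``This is well-known, see e.g.~\cite[\S5.2]{KLS}.'' So there is no approach to compare against; you have simply supplied the argument that the paper defers to the literature.
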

\begin{proof}
This is well-known, see e.g.~\cite[\S5.2]{KLS}.
\end{proof}

Thus Proposition~\ref{prop:ranks} follows from Lemma~\ref{lemma:ranks} as a simple corollary. We can now describe the relationship between $\pc k f$ and $\pc \l {f^{-1}}$ for $f\in\Aff(-k,\l)$. Note that by Proposition~\ref{prop:ranks}, we have $\rk(\V;a,b)=k$ for $\V\in\Grtnnl(k,n)$ when $b-a\geq k+\l$.
\begin{lemma}
  Let $f\in\Aff(-k,\l),g\in\Aff(-\l,k)$, and consider subspaces $\V\in\pc k f$, $\Vt\in\pc \l {g}$. Then we have $g=f^{-1}$ if and only if for every $a< b\in\Z$ satisfying $b-a<k+\l$, 
  \begin{equation}\label{eq:ranks_inverse}
\rk(\Vt;b-k,a+\l)+b-a=\l+\rk(\V;a,b).
  \end{equation}
\end{lemma}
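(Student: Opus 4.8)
The plan is to reduce both implications to Lemma~\ref{lemma:ranks}, which records the positroid cell of a subspace exactly by its rank function $(a,b)\mapsto\rk(\cdot\,;a,b)$. Throughout write $f':=\shift kf\in\Bound(k,n)$ and $g':=\shift \l g\in\Bound(\l,n)$, so that $f'(i)=f(i)+k\ge i$ and $g'(i)=g(i)+\l\ge i$. The combinatorial engine will be the following \emph{dual counting identity}: for any bijection $\sigma\colon\Z\to\Z$ with $\sigma(i+n)=\sigma(i)+n$, setting $w(\sigma):=\frac1n\sum_{i=1}^n(\sigma(i)-i)\in\Z$, one has
\[
\#\{i<p\mid\sigma(i)\ge q\}-\#\{i\ge p\mid\sigma(i)<q\}=p-q+w(\sigma)\qquad\text{for all }p,q\in\Z .
\]
I would prove this by noting both sides are finite, increase by $1$ under $p\mapsto p+1$ and decrease by $1$ under $q\mapsto q+1$, and then evaluating the constant at $(p,q)=(0,0)$ by a windowing argument: the image $\sigma\bigl([-Nn,Nn)\bigr)$ differs from the interval $[-Nn+w(\sigma),\,Nn+w(\sigma))$ by a bounded set, which pins the constant to $w(\sigma)$. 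Note $w(f')=k$.

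For the forward implication, assume $g=f^{-1}$. Using $f'(i)\ge i$ together with $\rk(\V;a,a+n)=k$ and $\rk(\Vt;b-k,b-k+n)=\l$ (both hold since any $n$ consecutive columns span), Lemma~\ref{lemma:ranks} gives
\[
\rk(\V;a,b)=k-\#\{i<a\mid f'(i)\ge b\},\qquad
\rk(\Vt;b-k,a+\l)=\l-\#\{i<b-k\mid g'(i)\ge a+\l\},
\]
where the second identity uses the hypothesis $b-a<k+\l$ (so that $b-k<a+\l$). Substituting into \eqref{eq:ranks_inverse} reduces the claim to
\[
\#\{i<a\mid f'(i)\ge b\}-\#\{i<b-k\mid g'(i)\ge a+\l\}=a-b+k .
\]
Now $g=f^{-1}$ gives $g'(i)=f^{-1}(i)+\l=(f')^{-1}(i+k)+\l$, so the substitution $j=i+k$ turns the second count into $\#\{j<b\mid(f')^{-1}(j)\ge a\}$; and the dual counting identity for $\sigma=f'$ at $(p,q)=(a,b)$, after rewriting $\#\{i\ge a\mid f'(i)<b\}$ as $\#\{j<b\mid(f')^{-1}(j)\ge a\}$ via $j=f'(i)$, yields precisely this equality. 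This proves the forward direction (and, in fact, \eqref{eq:ranks_inverse} for every admissible $(a,b)$).

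For the converse, assume \eqref{eq:ranks_inverse} holds for all $a<b$ with $b-a<k+\l$. Let $g_0:=f^{-1}\in\Aff(-\l,k)$ and pick any $\Vt_0\in\pc \l{g_0}$. By the forward direction applied to $g_0$, the pair $(\V,\Vt_0)$ also satisfies \eqref{eq:ranks_inverse}, hence $\rk(\Vt;b-k,a+\l)=\rk(\Vt_0;b-k,a+\l)$ for all such $a,b$. Reparametrising by $c:=b-k$, $d:=a+\l$ — a bijection onto the pairs with $1\le d-c\le k+\l-1$ — this says $\rk(\Vt;c,d)=\rk(\Vt_0;c,d)$ whenever $d-c\le k+\l-1$. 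For $d-c\ge k+\l$ both sides equal $\l$: since $g,g_0\in\Aff(-\l,k)$ and $\Grtnnm(\l,n)=\bigsqcup_{h\in\Aff(-\l,k)}\pc \l h$, both $\Vt$ and $\Vt_0$ lie in $\Grtnnm(\l,n)$, so Proposition~\ref{prop:ranks} forces every window-$(k+\l)$ rank (hence every longer one) to be $\l$. Thus $\Vt$ and $\Vt_0$ have identical rank functions, and Lemma~\ref{lemma:ranks} gives $g=g_0=f^{-1}$.

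The main obstacle is the dual counting identity, and in particular pinning down its additive constant $w(\sigma)$; once that is in hand the rest is bookkeeping. The one delicate point is that the hypothesis $b-a<k+\l$ is exactly what keeps the two-sided intervals $[b-k,a+\l)$ non-degenerate, and that $\Aff(-\l,k)$ is precisely the index set of the cells of $\Grtnnm(\l,n)$, so that the ``long window'' ranks are automatically equal to $\l$ and need not be checked.
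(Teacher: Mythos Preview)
Your proof is correct, and both directions reduce to Lemma~\ref{lemma:ranks} just as the paper does. The one substantive difference is the combinatorial core of the forward direction. The paper uses the hypothesis $f\in\Aff(-k,\l)$ (and hence $g\in\Aff(-\l,k)$) to turn the infinite counts into counts over explicit finite windows, e.g.\ rewriting $\{i<a\mid f(i)+k\ge b\}$ as $\{b-k-\l\le i<a\mid b\le f(i)+k<a+k+\l\}$, and then exhibits a direct bijection $i\mapsto f(i)$ between the two resulting finite sets. You instead abstract this into your ``dual counting identity'' $\#\{i<p\mid\sigma(i)\ge q\}-\#\{i\ge p\mid\sigma(i)<q\}=p-q+w(\sigma)$, valid for \emph{any} affine permutation $\sigma$, and then specialize. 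Your route is slightly more general (it never uses the restricted range $f\in\Aff(-k,\l)$ for the forward implication, only $f\in\Aff(-k,n-k)$), at the cost of having to justify the identity; the paper's route is more hands-on but avoids the separate lemma. For the converse, the two arguments are essentially identical: both observe that the rank functions of $\Vt$ and of any point in $\pc\l{f^{-1}}$ agree on short windows by the forward direction and on long windows because both lie in $\Grtnnm(\l,n)$, then invoke Lemma~\ref{lemma:ranks}.
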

\begin{proof}
By Lemma~\ref{lemma:ranks},
  \begin{equation*}
    \begin{split}
      \rk(\V;a,b)&=\#\{i<a\mid a\leq f(i)+k< b\},\\
      \rk(\Vt;b-k,a+\l)&=\#\{j<b-k\mid b-k\leq g(j)+\l< a+\l\}.
    \end{split}
  \end{equation*}
  Since $\#\{i<a\mid a\leq f(i)+k\}=k$, we get that $\#\{i<a\mid b\leq f(i)+k\}=k-\rk(\V;a,b)$. Now, using the fact that $i-k\leq f(i)\leq i+\l$, we find that
  \begin{equation}\label{eq:f}
\{i<a\mid b\leq f(i)+k\}=\{b-k-\l\leq i<a\mid b\leq f(i)+k<a+\l+k\}.
  \end{equation}
  On the other hand, since $\#\{j\in\Z\mid b-k\leq g(j)+\l< a+\l\}=\l+k+a-b$, we get that $\#\{b-k\leq j\mid b-k\leq g(j)+\l< a+\l\}=\l+k+a-b-\rk(\Vt;b-k,a+\l)$. Using the fact that $j-\l\leq g(j)\leq j+k$, we find 
  \[\{b-k\leq j\mid b-k\leq g(j)+\l< a+\l\}=\{b-k\leq j<a+\l\mid b-k\leq g(j)+\l< a+\l\}.\]
  We arrive at
  \[\#\{b-k\leq j<a+\l\mid b-k\leq g(j)+\l< a+\l\}=\l+k+a-b-\rk(\Vt;b-k,a+\l).\]
  Rearranging the terms in the right hand side of~\eqref{eq:f}, we get
  \[\#\{b-k\leq i+\l<a+\l\mid b-k\leq f(i)<a+\l\}=k-\rk(\V;a,b).\]
  Thus if $f=g^{-1}$ then $i=g(j)\mapsto j=f(i)$ is a bijection between the sets in the left hand sides of the two equations above, so we get $\l+k+a-b-\rk(\Vt;b-k,a+\l)=k-\rk(\V;a,b)$ which is equivalent to~\eqref{eq:ranks_inverse}. This proves one direction of the lemma.

  Conversely, if ~\eqref{eq:ranks_inverse} holds, then $f$ and $g^{-1}$ must define two positroid cells such that the ranks of columns $a,a+1,\dots, b-1$ are the same for all $a<b$.  By Lemma~\ref{lemma:ranks}, the two cells must therefore be the same and we conclude that $f=g^{-1}$.
\end{proof}

The next lemma proves part~\eqref{item:stw:well_defined} of Theorem~\ref{thm:stw}.
\begin{lemma}\label{lemma:consecutive}
Let $\V\in\Grtnn(k,n)$, $Z\in\Grtp(k+m,n)$, $\Zp:=Z^\perp$, and $\VZ:=\stack(\V,\Zp)$. Let $f\in\Aff(-k,n-k)$ be such that $\V\in\pc k f$. Then $f\in\Aff(-k,\l)$ if and only if $\VZ\in\topcell$. 
\end{lemma}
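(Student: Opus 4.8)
The plan is to translate the condition $\VZ \in \topcell$ -- namely that every window of $k+\l$ consecutive columns $\uz(j-\l),\dots,\uz(j+k-1)$ is a basis of $\R^{k+\l}$ -- into a rank condition that can be tested against the combinatorial description of the positroid cell via Lemma~\ref{lemma:ranks}. The key linear-algebra observation is that, since $\Zp = Z^\perp$ and $Z \in \Grtp(k+m,n)$, the span of any $m$ consecutive columns $\z(j),\dots,\z(j+m-1)$ of $Z$ is $m$-dimensional, and, dually, the $\l \times n$ matrix $\Zp$ has the property that every window $\zp(j-\l),\dots,\zp(j+k-1)$ of $k+\l$ consecutive columns has full row rank $\l$. (This is a standard consequence of the total positivity of $Z$: the complementary minor $\Delta_{[n]\setminus\{j,\dots,j+m-1\}}(\Zp)$ equals, up to sign, the minor $\Delta_{\{j,\dots,j+m-1\}}(Z)$ by the duality in~\eqref{eq:alt_duality}, hence is nonzero; and the window in question contains such a complementary set.) Granting this, the $(k+\l)\times(k+\l)$ window submatrix $\VZ(J)$ with $J = [j-\l,j+k)$ is nonsingular if and only if the $k \times (k+\l)$ block $\V(J)$ maps isomorphically onto $\R^{k+\l}/\Span(\zp(i): i \in J)$, equivalently $\rk(\V; j-\l, j+k) = k$ for this particular window, because the $\Zp$-block already contributes an $\l$-dimensional complement.

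So $\VZ \in \topcell$ is equivalent to $\rk(\V; j-\l, j+k) = k$ for all $j \in \Z$; but this is precisely the rank condition appearing in Proposition~\ref{prop:ranks} characterizing $\Grtnnl(k,n) = \Grge m(k,n)$ (with $\l = n-k-m$). Thus I first establish the equivalence $\VZ \in \topcell \iff \V \in \Grtnnl(k,n)$. It then remains to identify $\Grtnnl(k,n)$ with the union of cells $\pc k f$ over $f \in \Aff(-k,\l)$. This is exactly the definition~\eqref{eq:Grtnnm} of $\Grtnnm(k,n)$ with $m$ playing the role of ``$m$'' there and $n-k-m = \l$, so $\V \in \Grtnnl(k,n)$ iff the unique $f \in \Aff(-k,n-k)$ with $\V \in \pc k f$ satisfies $f \in \Aff(-k, n-k-m) = \Aff(-k,\l)$. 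Chaining these equivalences gives $f \in \Aff(-k,\l) \iff \VZ \in \topcell$, which is the claim. For the direction not needing genericity of $\V$ within its cell, I should note that the rank $\rk(\V; j-\l, j+k)$ depends only on the positroid $f$ (Lemma~\ref{lemma:ranks}), so the condition is well-defined cellwise and compatible with~\eqref{eq:Grtnnm}.

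The main obstacle is the linear-algebra step: verifying cleanly that the window $\VZ(J)$ is nonsingular exactly when the top block has full rank \emph{modulo} the span of the bottom block, and in particular that the bottom block $\Zp(J)$ always has rank $\l$. The rank statement for $\Zp$ requires invoking the total positivity of $Z$ together with the Plücker duality~\eqref{eq:alt_duality}: a window of $k+\l = n-m$ consecutive columns of $\Zp$ has full rank $\l$ iff it contains some $\l$-subset $I$ with $\Delta_I(\Zp) \ne 0$, and one can take $I = [n]\setminus\{j',\dots,j'+m-1\}$ for a suitable block of $m$ consecutive indices lying in the complement of the window's column set, whose $Z$-minor is positive. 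Once this is pinned down, the rest is bookkeeping with ranks and the definitions of $\topcell$, $\Grtnnl(k,n)$, and $\Aff(-k,\l)$. I would also double-check the edge cases $\l = 0$ (where $\Zp$ is empty and the statement degenerates to $\V \in \Grtp(k,n)$, matching $\Aff(-k,0) = \{\id 0\}$) and $k = 0$, to make sure the rank arithmetic is consistent there.
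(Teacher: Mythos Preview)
Your overall strategy is right — reduce $U\in\topcell$ to the rank condition $\rk(V;j-\l,j+k)=k$ for all $j$, then invoke Proposition~\ref{prop:ranks} and the definition~\eqref{eq:Grtnnm} — but the central linear-algebra step is incorrect as stated. You claim that, once one knows $\rk(W(J))=\l$ for the window $J=[j-\l,j+k)$, the $(k+\l)\times(k+\l)$ matrix $U(J)=\stack(V(J),W(J))$ is nonsingular if and only if $\rk(V(J))=k$. The forward implication is fine (the first $k$ rows of an invertible matrix are independent), but the converse is false in general: for $k=\l=1$, take $V(J)=(1,1)$ and $W(J)=(1,1)$; both have rank~$1$, yet $U(J)$ is singular. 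What you actually need is that the row span of $V(J)$ meets the row span of $W(J)$ trivially, and nothing in your argument rules out such overlap. Your phrasing about $V(J)$ ``mapping isomorphically onto $\R^{k+\l}/\Span(\zp(i):i\in J)$'' conflates row and column pictures and does not supply the missing transversality.

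The paper's proof closes exactly this gap by exploiting positivity rather than rank alone. One expands
\[
\Delta_J(U)=\sum_{I\in\binom{J}{k}}(-1)^{\inv(I,J\setminus I)}\Delta_I(V)\,\Delta_{J\setminus I}(W)
\]
and checks, using $\Delta_I(V)\ge 0$ together with the fact that $\alt(W)\in\Grtp(\l,n)$ (so the sign of $\Delta_{J\setminus I}(W)$ depends only on $\sum_{i'\in J\setminus I}i'$), that all nonzero terms have the \emph{same} sign. Hence $\Delta_J(U)\ne 0$ iff some $\Delta_I(V)$ with $I\subset J$ is nonzero, i.e.\ iff $\rk(V;j-\l,j+k)=k$. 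Without this same-sign argument (or an equivalent use of total positivity to preclude cancellation), the equivalence you assert does not hold.
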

\begin{proof}
 This is essentially~\cite[Proposition~20.3]{LamCDM}, but we include the proof for completeness.  Let $J:=[j-\l,j+k)$. The minor $\Delta_J(\VZ)$ can be written as a sum
  \[\Delta_J(\VZ)=\sum_{I\in{J\choose k}} (-1)^{\inv(I,J\setminus I)}\Delta_I(\V)\Delta_{J\setminus I}(\Zp),\]
  where $\inv(I,J\setminus I)=\#\{i\in I,i'\in J\setminus I: i'<i\}$. Moreover, $\Delta_I(\V)$ is nonnegative and the sign of $\Delta_{J\setminus I}(\Zp)$ depends only on the parity of $\sum_{i'\in J\setminus I} i'$, because $\alt(\Zp)\in\Grtp(\l,n)$ by~\eqref{eq:alt_duality}. It follows that all the terms in the above sum have the same sign, because the terms corresponding to $I$ and $I\setminus\{i\}\cup\{i+1\}$ have the same sign for each $i\in I$ such that $i+1\in J\setminus I$. Moreover, $\Delta_{J\setminus I}(\Zp)$ is always nonzero, and thus $\Delta_J(\VZ)$ is nonzero if and only if $\Delta_I(\V)$ is nonzero for some $I\subset J$. This is exactly equivalent to saying that the rank of $[\v(j-\l)|\v(j-\l+1)|\dots|\v(j+k-1)]$ equals to $k$, so the result follows by Proposition~\ref{prop:ranks}.
\end{proof}

Let $\topcellC$ denote the set of complex $(k+\l)\times n$ matrices with nonzero circular minors. We give a simple result relating the fibers $\Fib(V,Z)$ with their images under $\stw$.
\begin{lemma}\label{lemma:A^T}
  Let $\V\in\GrC(k,n)$, $\W\in\GrC(\l,n)$ be such that $\stack(\V,\W)\in\topcellC$. Denote $Z:=\W^\perp$,  and let $\V'\in\GrC(k,n)$ be such that $V'\cdot Z^\transp=V\cdot Z^\transp$. Let $(\Zpt,\Vt):=\stwp(\V,\Zp)$ and $(\Zpt',\Vt'):=\stwp(\V',\Zp)$. Then we have $\Zpt'=\Zpt$ and there exists a $k\times \l$ matrix $A$ such that:
  \begin{itemize}
  \item $\V'=\V+A\cdot \Zp$, and
  \item $\Vt'=\Vt-A^\transp\cdot \Zpt$.
  \end{itemize}
\end{lemma}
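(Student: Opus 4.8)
The plan is to reduce the statement to Lemma~\ref{lemma:inverse_transpose} via a single block-triangular change of basis. First I would note that the hypothesis $V'\cdot Z^\transp = V\cdot Z^\transp$ says $(V'-V)\cdot Z^\transp = 0$; since $Z = \W^\perp$, this means the row span of $V'-V$ is contained in the row span of $\W$, so $V' = V + A\cdot\W$ for some $k\times\l$ matrix $A$. This is precisely the first bullet point, and it is the only place the fiber hypothesis is used.

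Next I would stack: $\stack(V',\W) = \begin{pmatrix} V+A\W \\ \W\end{pmatrix} = g\cdot\stack(V,\W)$, where $g := \begin{pmatrix}\Id_k & A \\ 0 & \Id_\l\end{pmatrix}\in\GL_{k+\l}(\C)$. Since $\det g = 1$ and $\stack(V,\W)\in\topcellC$, all circular minors of $\stack(V',\W)$ agree with those of $\stack(V,\W)$, hence $\stack(V',\W)\in\topcellC$ and $\stwp(V',\W) = (\Zpt',\Vt')$ is well defined. Now apply Lemma~\ref{lemma:inverse_transpose}: $\twist(\stack(V',\W)) = (g^{-1})^\transp\,\twist(\stack(V,\W))$. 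Since $g^{-1} = \begin{pmatrix}\Id_k & -A \\ 0 & \Id_\l\end{pmatrix}$, we have $(g^{-1})^\transp = \begin{pmatrix}\Id_k & 0 \\ -A^\transp & \Id_\l\end{pmatrix}$, so writing $\twist(\stack(V,\W)) = \stack(\Zpt,\Vt)$ and reading off the top $k$ rows and the bottom $\l$ rows gives $\Zpt' = \Zpt$ and $\Vt' = \Vt - A^\transp\cdot\Zpt$, which are exactly the two claimed identities (the first bullet together with $\Zpt' = \Zpt$ and the second bullet).

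I do not expect a genuine obstacle here: the argument is a one-line matrix computation once Lemma~\ref{lemma:inverse_transpose} is available. The only point requiring a moment of care is that Lemma~\ref{lemma:inverse_transpose} is stated for $\twist$ in the normalization of~\eqref{eq:stw}, which differs from the Marsh--Scott and Muller--Speyer twists by column rescalings and signs as recorded in~\eqref{eq:twist_MS}; but those rescalings depend only on the circular minors, which are unchanged under left multiplication by $g$ (as $\det g = 1$), so Lemma~\ref{lemma:inverse_transpose} applies verbatim. As an alternative to invoking Lemma~\ref{lemma:inverse_transpose}, one could instead verify the two displayed identities directly from the defining conditions~\eqref{eq:stw}, checking that $\Vt - A^\transp\Zpt$ satisfies the required orthogonality and normalization relations against the windows of $\stack(V',\W)$; this works but is less transparent than the change-of-basis route.
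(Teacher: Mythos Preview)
Your proof is correct and follows essentially the same route as the paper: write $V' = V + A\W$, observe that $\stack(V',\W) = g\cdot\stack(V,\W)$ for the block upper-triangular $g$, and apply Lemma~\ref{lemma:inverse_transpose} to read off the action of $(g^{-1})^\transp$ on the stacked twist. Your extra remarks about the normalization in~\eqref{eq:twist_MS} are unnecessary, since Lemma~\ref{lemma:inverse_transpose} is already stated and proved for the paper's $\twistpm$, but they do no harm.
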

\begin{proof}
  It is clear that $\V\cdot Z^\transp=\V'\cdot Z^\transp$ is equivalent to saying that $\V'=\V+A\cdot \Zp$ for some $k\times \l$ matrix $A$. But then $\VZ':=\stack(\V',\Zp)$ is obtained from $\VZ:=\stack(\V,\Zp)$ by
  \begin{equation}\label{eq:VZ'_A}
  \VZ'=
    \begin{pmatrix}
      \Id_k & A\\
      0 & \Id_\l
    \end{pmatrix}\cdot \VZ.
  \end{equation}
  Here $\Id_k$ denotes the $k\times k$ identity matrix. By Lemma~\ref{lemma:inverse_transpose}, the matrix $\VZt'=\stack(\Zpt,\Vt')$ is obtained from $\VZt$ by
  \[\VZt'=
    \begin{pmatrix}
      \Id_k & 0\\
      -A^\transp & \Id_\l
    \end{pmatrix}\cdot \VZt,\]
  which finishes the proof.
\end{proof}

\begin{lemma}\label{lemma:ranks_after_twist}
  Let $\V\in\Grtnnl(k,n)$, $Z\in\Grtp(k+m,n)$, $\Zp:=Z^\perp$, and $(\Wt,\Vt):=\stw(\V,\Zp)$. Then for every $a<b\in\Z$, we have the following:
  \begin{enumerate}[\normalfont (a)]
  \item\label{item:ranks_Zp} if $b-a=k$ then the vectors $\zpt(a),\dots,\zpt(b-1)$ form a basis of $\R^k$;
  \item\label{item:ranks_V_big} if $b-a\geq k+\l$ then $\rk(\V;a,b)=k$ and $\rk(\Vt;a,b)=\l$;
  \item\label{item:ranks_V_small} if $b-a<k+\l$ then $\rk(\Vt;b-k,a+\l)+b-a=\l+\rk(\V;a,b).$
  \end{enumerate}
\end{lemma}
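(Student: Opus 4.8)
The plan is to obtain all three parts directly from the defining orthogonality relations~\eqref{eq:stw} of $\Ut:=\twistpm(\U)$, where $\U:=\stack(\V,\Zp)$. Since $\V\in\Grtnnl(k,n)$, Lemma~\ref{lemma:consecutive} gives $\U\in\topcell$, and hence $\Ut\in\topcell$ as well; the only facts I will use are that every window of $k+\l$ consecutive columns of $\U$ (resp.\ of $\Ut$) is a basis of $\R^{k+\l}$, and that every window of $\l$ consecutive columns of $\Zp$ is linearly independent (since all maximal minors of $\Zp\in\Grtperp{k+m}n$ are nonzero, by~\eqref{eq:alt_duality}). Write $\ut(j)=(\zpt(j),\vt(j))\in\R^k\oplus\R^\l$ throughout.

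Part~\eqref{item:ranks_V_big} is immediate: for a window of size at least $k+\l$, the corresponding columns of $\U$ (resp.\ $\Ut$) already span $\R^{k+\l}$, so a left null vector of $\V$ (resp.\ $\Vt$) on that window, extended by $0$ on the complementary block of coordinates, becomes a vector of $\R^{k+\l}$ orthogonal to the whole window, hence $0$; thus $\rk(\V;a,b)=k$ and $\rk(\Vt;a,b)=\l$. For Part~\eqref{item:ranks_Zp}, fix $a$: the columns $\ut(a),\dots,\ut(a+k-1)$ are linearly independent, so $\Span(\ut(a),\dots,\ut(a+k-1))^\perp$ has dimension $\l$, and by~\eqref{eq:stw} the $\l$ columns $\u(a+k-\l),\dots,\u(a+k-1)$ lie in this orthogonal complement and are linearly independent (being consecutive columns of $\U\in\topcell$), hence span it. If $\zpt(a),\dots,\zpt(a+k-1)$ failed to span $\R^k$, some nonzero $c\in\R^k$ would have $(c,0)$ in this complement, giving $(c,0)=\sum_{i=0}^{\l-1}\alpha_i\,\u(a+k-\l+i)$; reading off the $\R^\l$-block yields $\sum_i\alpha_i\,\zp(a+k-\l+i)=0$, which forces all $\alpha_i=0$ by linear independence of $\l$ consecutive columns of $\Zp$, whence $c=0$---a contradiction.

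Part~\eqref{item:ranks_V_small} is the heart of the matter. Fix $a<b$ with $b-a<k+\l$ and set $w:=(k+\l)-(b-a)\ge1$, so $[b-k,a+\l)$ is a window of size $w$. Put $N:=\{c\in\R^k:c^\transp\v(i)=0\text{ for all }i\in[a,b)\}$, so that $\dim N=k-\rk(\V;a,b)$. The columns $\u(a),\dots,\u(b-1)$ are linearly independent ($\U\in\topcell$ and $b-a<k+\l$), so their orthogonal complement $L\subset\R^{k+\l}$ has dimension $w$; by~\eqref{eq:stw}, $\ut(j)\in L$ for every $j\in[b-k,a+\l)$, and these $w$ linearly independent columns of $\Ut$ therefore span $L$. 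Hence $(c,0)\in\R^k\oplus\{0\}$ lies in $L$ if and only if $(c,0)=\sum_{j\in[b-k,a+\l)}\beta_j\,\ut(j)$ for unique scalars $\beta_j$; the $\R^\l$-block of this identity reads $\sum_j\beta_j\,\vt(j)=0$, and $(\beta_j)\mapsto\sum_j\beta_j\,\zpt(j)$ is then a linear isomorphism from $\{(\beta_j)\in\R^w:\sum_j\beta_j\vt(j)=0\}$ onto $N$. The source space has dimension $w-\rk(\Vt;b-k,a+\l)$, so $k-\rk(\V;a,b)=w-\rk(\Vt;b-k,a+\l)$, which rearranges to $\rk(\Vt;b-k,a+\l)+(b-a)=\l+\rk(\V;a,b)$---that is, to~\eqref{eq:ranks_inverse}. (Combined with the preceding lemma characterising $f^{-1}$ via~\eqref{eq:ranks_inverse} and with Part~\eqref{item:ranks_V_big}, this is what will give part~\eqref{item:stw:inverse} of Theorem~\ref{thm:stw}.)

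I expect the only delicate point to be the index bookkeeping in Parts~\eqref{item:ranks_Zp} and~\eqref{item:ranks_V_small}: one must verify that the set of indices $j$ for which~\eqref{eq:stw} forces $\ut(j)$ (resp.\ $\u(j)$) to be orthogonal to a prescribed window of columns is itself a window, and that its length equals the dimension of the relevant orthogonal complement, so that one genuinely obtains a spanning set and not merely an inclusion. No analytic input is needed beyond the two linear-independence facts recorded at the outset, and all index ranges are read in $\Z$ with the stated periodicity, so no separate treatment of wrap-around windows is required.
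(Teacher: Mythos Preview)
Your proof is correct and follows essentially the same approach as the paper's: both arguments exploit that~\eqref{eq:stw} pairs a window of $\U$-columns with a window of $\Ut$-columns of complementary length inside $\R^{k+\l}$, and then do dimension counting via the block decomposition $\ut(j)=(\zpt(j),\vt(j))$. The only cosmetic difference is that in part~\eqref{item:ranks_V_small} the paper works with the column kernel of $\v(a),\dots,\v(b-1)$ (dimension $b-a-\rk(\V;a,b)$) and an injective map $\lambda$ into $\R^\l$, whereas you dualize and work with the left null space $N$ (dimension $k-\rk(\V;a,b)$) and identify it with coefficient vectors $(\beta_j)$ satisfying $\sum\beta_j\vt(j)=0$; the resulting identities are the same.
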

\begin{proof}
  \def\Zptd{Q}
  \def\zptd{q}
  \def\zptv{q}
  \def\zero{\mathbf{0}}
  Part~\eqref{item:ranks_V_big} follows from Lemmas~\ref{lemma:max_minors_of_U} and~\ref{lemma:consecutive}. We now prove part~\eqref{item:ranks_Zp}, so let $b:=a+k$. Denote $r:=\rk(\Zpt;a,b)$, and consider the $(k-r)$-dimensional space
  \[\Zptd:=\{\zptv=(\zptd_a,\dots,\zptd_{b-1})\in\R^k\mid \zptd_a\zpt(a)+\dots\zptd_{b-1}\zpt(b-1)=0\}.\]
  Recall that the matrix $\VZt$ has columns $\vzt(j)=(\zpt(j),\vt(j))$, and by~\eqref{eq:max_minors_of_U}, the column vectors $\vzt(a),\dots,\vzt(b-1)$ are linearly independent. Therefore we get an injective linear map $\lambda:\Zptd\to \R^{\l}$ defined by $\lambda(\zptv)=\zptd_a\vt(a)+\dots+\zptd_{b-1}\vt(b-1)$, so that $\zptd_a \vzt(a)+\dots+\zptd_{b-1}\vzt(b-1)=(\zero_k,\lambda(\zptv))\in\R^{k+\l}$. Here $\zero_k\in\R^k$ denotes the zero vector.

  Recall that we have $a+k=b$. By~\eqref{eq:stw}, each of the columns $\vzt(a),\dots,\vzt(b-1)=\vzt(a+k-1)$ is orthogonal to each of the columns $\vz(b-\l)=\vz(a+k-\l),\dots,\vz(a+k-1)$, where $\vz(j)=(\v(j),\zp(j))$. Thus $\lambda(\zptv)$ is orthogonal to $\zp(a+k-\l),\dots,\zp(a+k-1)$. Since these vectors are the columns of $\Zp$, they must form a basis of $\R^\l$, which implies that $\Zptd$ must be a zero-dimensional space, and thus $k-r=0$. This finishes the proof of~\eqref{item:ranks_Zp}.

  The proof of~\eqref{item:ranks_V_small} will be completely similar. Denote $r:=\rk(\V;a,b)$, and consider the $(b-a-r)$-dimensional space
  \[\Zptd:=\{\zptv=(\zptd_a,\dots,\zptd_{b-1})\in\R^{b-a}\mid \zptd_a\v(a)+\dots\zptd_{b-1}\v(b-1)=0\}.\]
  By Lemma~\ref{lemma:consecutive}, we get an injective map $\lambda:\Zptd\to \R^\l$ defined by $\lambda(\zptv)=\zptd_a\zp(a)+\dots+\zptd_{b-1}\zp(b-1)$, so that $\zptd_a\vz(a)+\dots+\zptd_{b-1}\vz(b-1)=(\zero_k,\lambda(\zptv))$. By~\eqref{eq:stw}, each of the vectors $\vz(a),\dots,\vz(b-1)$ is orthogonal to each of the vectors $\vzt(b-k),\dots,\vzt(a+\l-1)$, so $\lambda(\zptv)$ is orthogonal to $\vt(b-k),\dots,\vt(a+\l-1)$. This implies that $\rk(\Vt;b-k,a+\l)\leq\l-\dim(\Zptd)=\l-b+a+\rk(\V;a,b)$. This inequality is in fact an equality, since for any vector $h\in \R^\l$ that is orthogonal to $\vt(b-k),\dots,\vt(a+\l-1)$, the vector $(\zero_k,h)$ is orthogonal to $\vzt(b-k),\dots,\vzt(a+\l-1)$, The latter vectors are linearly independent and span the orthogonal complement of the span of $\vz(a),\dots,\vz(b-1)$ in $\R^{k+\l}$. Thus there are coefficients $(\zptd_a,\dots,\zptd_{b-1})\in\R^{b-a}$ such that $(\zero_k,h)=\zptd_a\vz(a)+\dots+\zptd_{b-1}\vz(b-1)$. It follows that $h$ belongs to the image of $\lambda$, which finishes the proof of~\eqref{item:ranks_V_small}.
\end{proof}
An important special case of part~\eqref{item:ranks_V_small} of Lemma~\ref{lemma:ranks_after_twist} is $b=a+k$, where we get $\rk(\Vt;a,a+\l)+k=\l+\rk(\V;a,a+k)$. Thus we have $\rk(\V;a,a+k)=k$ if and only if $\rk(\Vt;a,a+\l)=\l$.

\begin{proof}[Proof of Theorem~\ref{thm:stw}]
The only ingredient missing to show Theorem~\ref{thm:stw} is to prove that $\Vt\in\Grtnn(\l,n)$ and $\Wt\in\Grtperp{\l+m}n$. This would imply part~\eqref{item:stw:positivity} of the theorem, and then part~\eqref{item:stw:inverse} will follow from Lemma~\ref{lemma:ranks_after_twist} combined with~\eqref{eq:ranks_inverse}. Note that as long as $\V\in\Grtp(k,n)$, Lemma~\ref{lemma:ranks_after_twist} implies that $\rk(\Vt;a,a+\l)=\l$ for all $a\in\Z$. It suffices to find one pair $(\V_0,\W_0)\in\Grtp(k,n)\times\Grtperp{k+m}n$ for which the image $\stw(\V_0,\W_0)=(\Wt_0,\Vt_0)$ belongs to $\Grtperp{\l+m}n\times\Grtp(\l,n)$. Indeed, then for any other pair $(\V,\W)\in\Grtnnl(k,n)\times\Grtperp{k+m}n$, one can construct a path $(\V_t,\W_t)$, $0\leq t\leq 1$ such that $(\V_0,\W_0)$ is as above, $(\V_1,\W_1)=(\V,\W)$, and for $0\leq t<1$, $(\V_t,\W_t)$ belongs to $\Grtp(k,n)\times\Grtperp{k+m}n$. In this case, the image $(\Wt_t,\Vt_t)=\stw(\V_t,\W_t)$ will belong to $\Grtperp{\l+m}n\times\Grtp(\l,n)$, because in order to exit this space, one of the circular minors of either $\Vt_t$ or $\Wt_t$ must vanish. It follows by continuity that $(\Wt,\Vt)\in\Grtnn(\l+m,n)\times\Grtnn(\l,n)$, and by Lemma~\ref{lemma:ranks_after_twist}, $(\Wt,\Vt)$ actually must belong to $\Grtp(\l+m,n)\times\Grtnnk(\l,n)$.

In order to construct this pair $(\V_0,\W_0)$, we make use of the \emph{cyclically symmetric amplituhedron} introduced in~\cite[Section~5]{GKL}. In fact, the computation we need is almost literally the same as the one outlined in~\cite{Scott}, see~\cite{karp_cyclic_shift} or~\cite[Lemma~3.1]{GKL} for details.

Consider the set of $n$-th roots of $(-1)^{k-1}$, and let $z_1,\dots,z_k$ be those $k$ of them with the largest real part and let $z_{k+1},\dots,z_{k+\l}$ be those $\l$ of them with the smallest real part. Let $\V_0$ be (the real $k$-dimensional subspace of) the linear $\C$-span of the vectors $(1,z_i,\dots,z_i^{n-1})$, for $1\leq i\leq k$, and let $\Zp_0$ be (the real $k$-dimensional subspace of) the linear $\C$-span of the vectors $(1,z_i,\dots,z_i^{n-1})$, for $k+1\leq i\leq k+\l$. Then we have $\V_0\in \Grtp(k,n)$ and $\Zp_0\in\Grtperp{k+m}n$, see~\cite[Lemma~3.1]{GKL}.

\begin{proposition}\label{prop:cs}
Let $\stwp(\V_0,\W_0)=(\Wt_0,\Vt_0)$. Then $\Vt_0\in\Grtp(\l,n)$ and $\Wt_0\in\Grtperp{\l+m}n$. Moreover, we have $\Vt_0=\alt(\W_0)$ and $\Wt_0=\alt(\V_0)$.
\end{proposition}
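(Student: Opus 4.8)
The plan is to compute $\stw(\V_0,\W_0)$ by writing down $\twistpm(\U_0)$ explicitly, where $\U_0:=\stack(\V_0,\W_0)$, and then reading off $\stw(\V_0,\W_0)=(\Wt_0,\Vt_0)$ from $\twistpm(\U_0)=\stack(\Wt_0,\Vt_0)$. Since $\stw$ acts on pairs of subspaces (Lemma~\ref{lemma:stw:Gr}) and extends over $\C$ (cf.\ Lemma~\ref{lemma:A^T}), I would represent $\V_0$ and $\W_0$ by the complex Vandermonde matrices whose rows are $(z_a^{j-1})_{1\le j\le n}$ for $1\le a\le k$, respectively for $k+1\le a\le k+\l$, so that the $j$-th column of $\U_0$ is $\u_0(j)=(z_a^{j-1})_{a=1}^{N}$ with $N:=k+\l=n-m$. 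The relation $\u_0(j+n)=(-1)^{k-1}\u_0(j)$ holds because $z_a^n=(-1)^{k-1}$, and the circular $N\times N$ minors of $\U_0$ are nonzero generalized Vandermonde determinants, so $\U_0\in\topcellC$. Set $p(x):=\prod_{a=1}^{N}(x-z_a)$; since every $z_a$ is an $n$-th root of $(-1)^{k-1}$, $p(x)$ divides $x^n-(-1)^{k-1}$, and in particular $z_a^{-n}=(-1)^{k-1}$.

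The key step is to verify that $\Ut_0:=\twistpm(\U_0)$ is the matrix with columns
\[
(\ut_0(j))_a\;=\;(-1)^{j+1}\,p(0)\,\frac{z_a^{\,\l-j}}{p'(z_a)},\qquad a\in[N],\ j\in\Z .
\]
Substituting into~\eqref{eq:stw} gives $\<\ut_0(j),\u_0(i)\>=(-1)^{j+1}p(0)\sum_{a=1}^{N}z_a^{\,\l+i-j-1}/p'(z_a)$, which I would evaluate using the classical residue identities $\sum_{a}z_a^{m}/p'(z_a)=0$ for $0\le m\le N-2$, $\sum_{a}z_a^{N-1}/p'(z_a)=1$, and $\sum_{a}1/(z_a p'(z_a))=-1/p(0)$. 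As $i$ runs over $\{j-\l+1,\dots,j+k-1\}$ the exponent $\l+i-j-1$ runs over $\{0,\dots,N-2\}$, so $\<\ut_0(j),\u_0(i)\>=0$ there; for $i=j-\l$ the exponent is $-1$, giving $\<\ut_0(j),\u_0(j-\l)\>=(-1)^{j+1}p(0)\cdot(-1/p(0))=(-1)^j$; and $\ut_0(j+n)=(-1)^{n+k-1}\ut_0(j)=(-1)^{\l-1}\ut_0(j)$ by $z_a^{-n}=(-1)^{k-1}$ and $m$ even. By the uniqueness in~\eqref{eq:stw} this is $\twistpm(\U_0)$.

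It remains to identify the row spans and their block decomposition. Because $|z_a|=1$ we have $(-1)^{j}z_a^{-j}=(-\bar z_a)^{j}$, so the $a$-th row of $\Ut_0$ is a nonzero scalar multiple of the Vandermonde vector for $-\bar z_a$. The set $\{z_1,\dots,z_k\}$ (the $n$-th roots of $(-1)^{k-1}$ of largest real part) and $\{z_{k+1},\dots,z_{k+\l}\}$ (those of smallest real part) are each stable under complex conjugation, so conjugation permutes each block; hence the row span of the top $k$ rows of $\Ut_0$ is the span of the Vandermonde vectors for $-z_1,\dots,-z_k$, and that of the bottom $\l$ rows is the span of the Vandermonde vectors for $-z_{k+1},\dots,-z_{k+\l}$. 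Since $\alt$ sends the Vandermonde vector for $z_a$ to that for $-z_a$ (because $(-1)^{j-1}z_a^{j-1}=(-z_a)^{j-1}$), these spans are $\alt(\V_0)$ and $\alt(\W_0)$, which are real; hence $\Wt_0=\alt(\V_0)$ and $\Vt_0=\alt(\W_0)$. Finally, the Remark following~\eqref{eq:Grtperp}, applied with the roles of $k$ and $\l$ swapped and using that $\alt\circ\alt$ is the identity, shows that $\alt$ restricts to diffeomorphisms $\Grtp(k,n)\to\Grtperp{\l+m}n$ and $\Grtperp{k+m}n\to\Grtp(\l,n)$; since $\V_0\in\Grtp(k,n)$ and $\W_0\in\Grtperp{k+m}n$, this yields $\Wt_0\in\Grtperp{\l+m}n$ and $\Vt_0\in\Grtp(\l,n)$.

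The only place where real work is required is the guess-and-verification of the closed form for $\ut_0(j)$; everything after that is routine manipulation of $\alt$. As noted in the text, this computation is in essence the cyclic-shift calculation of~\cite[Lemma~3.1]{GKL} (see also~\cite{Scott,karp_cyclic_shift}), and the main care needed is bookkeeping of the index conventions: columns indexed from $1$, the sign $(-1)^j$ in~\eqref{eq:stw}, and the wrap-around sign $(-1)^{k-1}$.
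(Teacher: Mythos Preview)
Your proof is correct and follows essentially the same route as the paper's: both represent $\U_0$ by the complex Vandermonde matrix in the roots $z_a$, compute that the $a$-th row of $\Ut_0$ is a nonzero scalar multiple of the Vandermonde vector in $-\bar z_a=-z_a^{-1}$, and then use closure of each block $\{z_1,\dots,z_k\}$, $\{z_{k+1},\dots,z_{k+\l}\}$ under conjugation/inversion together with~\eqref{eq:alt_duality} to conclude $\Wt_0=\alt(\V_0)\in\Grtperp{\l+m}n$ and $\Vt_0=\alt(\W_0)\in\Grtp(\l,n)$. The only difference is cosmetic: the paper obtains the entries of $\Ut_0$ via the cofactor description~\eqref{eq:adjoint} and Vandermonde determinant ratios, whereas you guess the closed form and verify~\eqref{eq:stw} using the partial-fraction identities $\sum_a z_a^m/p'(z_a)=\delta_{m,N-1}$ for $0\le m\le N-1$ and $\sum_a 1/(z_a p'(z_a))=-1/p(0)$, which is an equivalent computation.
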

\begin{proof}
  Recall from Section~\ref{sec:stw} that the columns of $\U:=\stack(\V_0,\W_0)$ satisfy $\u(j+n)=(-1)^{k-1}\u(j)$ for all $j\in \Z$. We see that $\U$ is just the Vandermonde matrix with entries $(z_i^{j-1})_{1\leq i\leq k+\l,1\leq j\leq n}$. 
  Thus by~\eqref{eq:adjoint}, the vector $\vzt(j)$ has coordinates given by ratios of Vandermonde determinants. More precisely, its $i$-th coordinate equals
    \begin{equation*}
      \begin{split}
        \Ut_{ij}&=\dfrac{\displaystyle(-1)^{i+j-1}\det(z_p^{s-1})_{\substack{1\leq p\leq k+\l:\,p\neq i;\\ j-\l< s<j+k}}}{\displaystyle\det(z_p^{s-1})_{\substack{1\leq p\leq k+\l;\\ j-\l\leq s<j+k}}} = (-1)^{j}z_i^{-(j-\l-1)} \cdot h(i),
      \end{split}
    \end{equation*}
    where
    \[h(i):=(-1)^{i-1}\dfrac{(z_1\cdots z_{k+\l})^{j-\l-1}\displaystyle\prod_{\substack{1\leq s<t\leq k+\l:\\ s,t\neq i}}(z_t-z_s)}{\displaystyle(z_1\cdots z_{k+\l})^{j-\l}\prod_{1\leq s<t\leq k+\l}(z_t-z_s)}.\]
    After dividing row $i$ of $\VZt$ by $h(i)$ for each $i$ (which does not affect total positivity of either $\Vt_0$ or $\Zpt_0$), the $(i,j)$-th entry of $\VZt$ becomes $(-1)^{j}z_i^{-(j-\l-1)}$. We can now divide row $i$ of $\VZt$ by $z_i^{\l+1}$, and after that the $i$-th entry becomes just $(-1)^jz_i^{-j}$. This matrix is obtained from $\alt(\VZ)$ by switching the rows that correspond to pairs of inverse $z_i$'s. It is clear that each of the sets $\{z_1,\dots,z_k\}$ and $\{z_{k+1},\dots,z_{k+\l}\}$ is closed under taking the inverse, thus  these row operations belong to $\GL_k(\C)\times\GL_\l(\C)\subset\GL_{k+\l}(\C)$. Therefore we indeed get that the span $\Vt_0$ of the last $\l$ rows of $\VZt$ is totally positive and equal to $\alt(\Zp_0)$, while the span  $\Zpt_0$ of the first $k$ rows of $\VZt$ is equal to $\alt(\V_0)$ and thus belongs to $\Grtperp{\l+m}n$ by~\eqref{eq:alt_duality}. We are done with the proof.
  \end{proof}

  As we have explained earlier, Proposition~\ref{prop:cs} finishes the proof of Theorem~\ref{thm:stw}.
\end{proof}

It remains to note that Proposition~\ref{prop:fibers} now follows from Lemma~\ref{lemma:A^T}.

\section{The canonical form: proofs}\label{sec:can_forms_3}

In this section, we prove Theorem~\ref{thm:top_form_stw}, and then deduce its generalization, Theorem~\ref{thm:lower_form_stw}, as a corollary. We start by recalling some notions related to the combinatorics of \emph{clusters} of Pl\"ucker coordinates. 

\subsection{Clusters}\label{sec:clusters}
  Let $f\in\Aff(-k,n-k)$ be an affine permutation. The \emph{Grassmann necklace} associated with $f$ is a sequence $\grneck k f=(I_1,I_2,\dots,I_n)$ of $k$-element subsets of $[n]$ defined by
  \[I_i=\{f(j)+k\mid j<i\text{ and } f(j)+k\geq i \}\]
  for $i\in[n]$ with the indices taken modulo $n$.

  For each $i\in[n]$, define a total order $\preceq_i$ on $[n]$ by
\[i\prec_i i+1\prec_i\dots\prec_i i+n-1,\]
where the indices again are taken modulo $n$. For two sets $S,T\in{[n]\choose k}$, we write $S\preceq_i T$ if $S=\{s_1\prec_i\dots\prec_i s_k\}$, $T=\{t_1\prec_i\dots\prec_i t_k\}$ with $s_j\preceq_i t_j$ for $j\in[k]$. The \emph{positroid} $\positroid k f\subset {[n]\choose k}$ of $f$ is a collection of subsets of $[n]$ given by
\[\positroid k f:=\left\{J\in{[n]\choose k}\mid J\preceq_i I_i \text{ for all $i\in[n]$} \right\},\]
where $\grneck k f=(I_1,\dots,I_n)$ is the Grassmann necklace associated with $f$.
\begin{definition}\label{dfn:pos_cell}
  
  The sets $\pc k f\subset \pctnn k f \subset \Gr(k,n)$ and $\ce k f\subset \GrC(k,n)$, are defined by
  \begin{equation*}
    \begin{split}
\pc k f&:=\{X\in\Grtnn(k,n)\mid \text{$\Delta_J(X)>0$ for $J\in\positroid k f$ and $\Delta_J(X)=0$ for $J\notin\positroid k f$}\}.\\
\pctnn k f&:=\{X\in\Grtnn(k,n)\mid \text{$\Delta_J(X)\geq0$ for $J\in\positroid k f$ and $\Delta_J(X)=0$ for $J\notin\positroid k f$}\}.\\
\ce k f&:=\{X\in\GrC(k,n)\mid \text{$\Delta_J(X)=0$ for $J\notin\positroid k f$}\}.  \\
    \end{split}
  \end{equation*}
\end{definition}

The variety $\ce k f$ can alternatively be defined either as the Zariski closure of $\pc k f$ in $\GrC(k,n)$ or as the set of all $\V\in\GrC(k,n)$ satisfying
\[\rk(\V;a,b)\leq\#\{i<a\mid a\leq f(i)+k< b\}\]
for all integers $a<b$.  See \cite{KLS}.

For instance, when $f=\id 0:\Z\to\Z$ is the identity map, we have $I_i=\{i,i+1,\dots,i+k-1\}$ (modulo $n$) for all $i\in[n]$, $\positroid k f={[n]\choose k}$, $\pc k f=\Grtp(k,n)$, $\pctnn kf=\Grtnn(k,n)$, and $\ce k f = \GrC(k,n)$.

By Definition~\ref{dfn:pos_cell}, an element $X\in\pc k f$ can be described by a collection $\{\Delta_J(X)\mid J\in\positroid k f\}\in\RP_{>0}^{\positroid k f}$ of positive real numbers, however, this data is highly redundant.

\begin{definition}[\cite{LZ}]
We say that two sets $S,T\in{[n]\choose k}$ are \emph{weakly separated} if there do not exist $1\leq a<b<c<d\leq n$ such that $a,c\in S\setminus T$ and $b,d\in T\setminus S$, or vice versa.
\end{definition}

We say that a collection $\Ccal\subset{[n]\choose k}$ is \emph{weakly separated} if any two of its elements are weakly separated. For example, it is not hard to check that $\grneck k f$ is a weakly separated collection for any $f\in\Aff(-k,n-k)$, see~\cite[Lemma~4.5]{OPS}.

\begin{definition}\label{dfn:clusters}
  Given an affine permutation $f\in\Aff(-k,n-k)$, a weakly separated collection $\Ccal\subset\positroid{k}{f}$ is called a \emph{cluster} if it contains $\grneck k f$ and is not contained inside any other weakly separated collection $\Ccal'\subset\positroid{k}{f}$.
\end{definition}

Recall from Proposition~\ref{prop:OPS} that if $\Ccal$ is a cluster for $f$ then the size of $\Ccal$ is given by~\eqref{eq:Ccal_size}, and the map $X\mapsto \{\Delta_J(X)\mid J\in\Ccal\}$, is a diffeomorphism between $\pc k f$ and the positive part $\RP_{>0}^{\Ccal}$ of the $(k(n-k)-\inv(f))$-dimensional projective space.

\subsection{Circular minors}

We state an identity from~\cite{MS} relating certain minors of $\Ut$ to those of $\U$.
\begin{lemma}\label{lemma:I_J}
  Fix $k,\l,m,n$, and let $\U\in\topcell$ and $\Ut:=\twistpm(\U)$. Let $p,q\geq0$ be integers such that $p+q=k+\l$, and choose some $a,b\in[n]$ so that the following two sets (viewed modulo $n$) have size $k+\l$:
  \begin{equation}\label{eq:I_J}
J:=[a,a+p)\cup[b,b+q),\quad I:=[a+k-q,a+k)\cup [b+k-p,b+k).
  \end{equation}
  Then
   \begin{equation}\label{eq:Delta_I_J}
\Delta_J(\Ut)=\pm \frac{\Delta_I(\U)}{\Delta_{[a-\l,a+k)}(\U)\cdot\Delta_{[b-\l,b+k)}(\U)}.
  \end{equation}
\end{lemma}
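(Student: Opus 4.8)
The plan is to reduce Lemma~\ref{lemma:I_J} to the twist identity already recorded in~\eqref{eq:max_minors_of_U} together with the Pl\"ucker-type relations for the adjoint matrix, following the strategy of~\cite[\S3]{MS}. First I would unwind the definition~\eqref{eq:stw}: for each $j$, the vector $(-1)^j\ut(j)$ is the first row of $(\U([j-\l,j+k)))^{-1}$, i.e.\ $\Ut$ is, up to signs and the column rescalings recorded in~\eqref{eq:twist_MS}, a matrix of \emph{cofactors} of $\U$. Concretely, the $(i,j)$ entry of $\Ut$ is, up to a sign depending only on $i+j$, the maximal minor of $\U$ on the column set $[j-\l,j+k)$ with the column $j-\l+(i-1)$-th position deleted, divided by $\Delta_{[j-\l,j+k)}(\U)$. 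So a $(k+\l)\times(k+\l)$ minor $\Delta_J(\Ut)$ with $J=[a,a+p)\cup[b,b+q)$ is a determinant whose columns are cofactor vectors attached to two ``blocks'' of consecutive indices, one near $a$ and one near $b$.

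The key computational step is then a Cauchy--Binet / Sylvester-style identity: a determinant built from cofactors of $\U$ on two consecutive blocks collapses to a single complementary minor of $\U$. I would make this precise by the same device used in the proof of Lemma~\ref{lemma:max_minors_of_U}: by Lemma~\ref{lemma:inverse_transpose} the claim~\eqref{eq:Delta_I_J} is invariant (up to the indicated denominators and signs) under left multiplication $\U\mapsto g\U$ with $g\in\GL_{k+\l}(\R)$, so it suffices to verify it for a single convenient representative of $\U$ in each $\GL_{k+\l}$-orbit. Choosing $\U$ so that a suitable circular submatrix --- say $\U([a-\l,a+k))$ --- is the identity reduces $\Ut$ on the relevant columns to an explicit triangular-plus-sparse matrix, and one reads off both sides directly; the general case follows by the $\GL_{k+\l}$-covariance. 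The pair of ``base'' minors $\Delta_{[a-\l,a+k)}(\U)$ and $\Delta_{[b-\l,b+k)}(\U)$ in the denominator of~\eqref{eq:Delta_I_J} arises precisely because two distinct blocks of consecutive columns of $\Ut$ are involved, each normalized by its own twist denominator via~\eqref{eq:twist_MS}; the complementary column set $I=[a+k-q,a+k)\cup[b+k-p,b+k)$ in~\eqref{eq:I_J} is exactly the set of ``non-deleted'' positions when one expands the cofactor determinant.

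Alternatively --- and this is probably the cleanest route to cite --- I would simply invoke the corresponding statement of~\cite[\S3]{MS} directly, translating their column indexing (where $\U(j+n)=\U(j)$ and there are no $(-1)^j$ signs) into ours via~\eqref{eq:twist_MS}: the sign change $\U(j+n)=(-1)^{k-1}\U(j)$ and the sign $(-1)^j$ in~\eqref{eq:stw} only affect the overall $\pm$ in~\eqref{eq:Delta_I_J}, which is all that is claimed. So the proof body would read: ``This is~\cite[Theorem 3.6]{MS} (or the relevant numbered identity), combined with~\eqref{eq:twist_MS} and Lemma~\ref{lemma:inverse_transpose}, keeping track of signs only up to $\pm$; we omit the routine sign bookkeeping.''

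The main obstacle is the sign and normalization bookkeeping: matching our conventions (the $(-1)^j$ twist sign, the $\U(j+n)=(-1)^{k-1}\U(j)$ periodicity, and the column rescaling by $\Delta_{[j-\l,j+k)}(\U)$ that distinguishes our $\tau$ from both the Marsh--Scott and Muller--Speyer twists) against the source identity. Since the statement only asserts equality up to a global sign, I expect this to be harmless in substance but tedious to write carefully; the cleanest presentation reduces to the identity-submatrix special case via Lemma~\ref{lemma:inverse_transpose} and then a short explicit determinant computation, exactly as in the proof of Lemma~\ref{lemma:max_minors_of_U}.
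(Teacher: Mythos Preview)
Your second alternative --- invoking the Marsh--Scott identity directly and translating conventions via~\eqref{eq:twist_MS} --- is exactly what the paper does (the paper cites \cite[Proposition~3.5]{MS} rather than Theorem~3.6, but that is just a citation-number guess on your part). So on that route you are correct and aligned with the paper.

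Your first sketch, however, does not quite work as written. The reduction trick from the proof of Lemma~\ref{lemma:max_minors_of_U} succeeds there because only \emph{one} circular block of columns is involved: normalizing $\U([a-\l,a+k))$ to the identity makes all the relevant columns of $\Ut$ explicit. Here $J$ has \emph{two} blocks, near $a$ and near $b$; forcing $\U([a-\l,a+k))=\Id$ tames the columns $\ut(a),\dots,\ut(a+p-1)$ but leaves the columns $\ut(b),\dots,\ut(b+q-1)$ governed by the generically full submatrix $\U([b-\l,b+k))$. The resulting determinant is not ``triangular-plus-sparse'' in any evident way, and ``reading off both sides directly'' would amount to reproving the Marsh--Scott identity from scratch. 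So the $\GL_{k+\l}$-covariance reduction is not a genuine shortcut in the two-block case; the citation is doing the real work.
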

\begin{proof}
Follows from~\eqref{eq:twist_MS} together with~\cite[Proposition~3.5]{MS}.
\end{proof}

Using Lemma~\ref{lemma:I_J}, we give a formula for circular minors of $\Wt$ and $\Vt$.

\begin{lemma}\label{lemma:consecutive_V_W}
  Let $(\Wt,\Vt):=\stw(\V,\W)$ and $\U:=\stack(\V,\W)$. Then for $j\in\Z$, we have
  \begin{align}
\label{eq:consecutive_W} \Delta_{[j-k,j)}(\Wt)&=\pm\frac{\Delta_{[j-\l,j)}(\W)}{\Delta_{[j-k-\l,j)}(\U)},\\
\label{eq:consecutive_V} \Delta_{[j,j+\l)}(\Vt)&=\pm\frac{\Delta_{[j,j+k)}(\V)}{\Delta_{[j-\l,j+k)}(\U)}.
  \end{align}
\end{lemma}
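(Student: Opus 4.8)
The plan is to derive Lemma~\ref{lemma:consecutive_V_W} as a direct specialization of Lemma~\ref{lemma:I_J}, by choosing the parameters $p,q,a,b$ so that the sets $I$ and $J$ in~\eqref{eq:I_J} reduce to the circular windows appearing in~\eqref{eq:consecutive_W} and~\eqref{eq:consecutive_V}. Recall that the columns of $\Ut$ (and hence of $\Wt,\Vt$) are the rows of $\Ut$ via the identification $\ut(j)=(\wt(j),\vt(j))$; more precisely, for a minor of $\Wt$ we take the submatrix of $\Ut$ formed by the first $k$ rows, and for a minor of $\Vt$ we take the last $\l$ rows. So the first sub-step is to express $\Delta_{[j-k,j)}(\Wt)$ and $\Delta_{[j,j+\l)}(\Vt)$ as honest maximal minors of the full matrix $\Ut$ after a suitable reduction — but in fact this is unnecessary if we instead observe that these minors can themselves be read off from formula~\eqref{eq:Delta_I_J} once $I$ is forced to be a union of \emph{consecutive} columns of $\U$.

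Concretely, for~\eqref{eq:consecutive_W}: I would take $p=k+\l$, $q=0$ (so the second interval $[b,b+q)$ is empty and $b$ is irrelevant), and $a:=j-k$. Then $J=[j-k, j-k+(k+\l))=[j-k,j+\l)$ — wait, this is a window of size $k+\l$ in $\Ut$, which is the whole thing; that is not what we want. Instead the right move is $p=\l$, $q=k$ with the two windows chosen so that $I$ becomes a single consecutive block. Let me set it up carefully in the writeup: choose $a,b,p,q$ so that $I=[a+k-q,a+k)\cup[b+k-p,b+k)$ collapses to one interval of length $k+\l$, namely arrange $b+k-p=a+k$, i.e. $b=a+p$, which makes $I=[a+k-q,a+k)\cup[a+k,a+k+\l)=[a+k-q,a+k+\l)=[a-\l,a+k+\l)$ hmm — the point is to tune it so that $I$ equals the window of $\U$ of length $k+\l$ starting at $j-k-\l$ or $j-\l$ as needed, and $J$ becomes the corresponding window of $\Ut$; then~\eqref{eq:Delta_I_J} reads off exactly~\eqref{eq:consecutive_W} or~\eqref{eq:consecutive_V}, except that the left-hand side is a $(k+\l)\times(k+\l)$ minor of the full $\Ut$, which by the block structure $\Ut=\stack(\Wt,\Vt)$ and the fact that one of the two column-windows lies in the "$\Wt$-part" and the other in the "$\Vt$-part" factors (up to sign) as a product of a circular minor of $\Wt$ and a circular minor of $\Vt$. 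Dually, the right-hand side product $\Delta_{[a-\l,a+k)}(\U)\cdot\Delta_{[b-\l,b+k)}(\U)$ must be massaged, using $\U=\stack(\V,\W)$ and the same block-Laplace expansion plus the support/sign considerations from the proof of Lemma~\ref{lemma:consecutive} (all relevant terms have the same sign, and the consecutive $\W$-block forces the column subset), into $\Delta_{[j-\l,j)}(\W)$ over $\Delta_{[j-k-\l,j)}(\U)$ and similarly for~\eqref{eq:consecutive_V}.

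So the key steps, in order, are: (1) record the block-Laplace identity expressing a $(k+\l)\times(k+\l)$ circular minor of $\Ut$ whose column set splits as (a $k$-window) $\sqcup$ (an $\l$-window) as $\pm$ a circular $k$-minor of $\Wt$ times a circular $\l$-minor of $\Vt$ — this is immediate since the $\Wt$-rows are zero outside the first $k$ coordinates and the $\Vt$-rows outside the last $\l$; (2) choose the parameters in Lemma~\ref{lemma:I_J} realizing the desired $I,J$, checking the "size $k+\l$ modulo $n$" hypotheses; (3) substitute into~\eqref{eq:Delta_I_J}; (4) simplify the denominator: one of the two factors $\Delta_{[a-\l,a+k)}(\U)$, $\Delta_{[b-\l,b+k)}(\U)$ is exactly one of the circular $\U$-minors appearing in~\eqref{eq:consecutive_W}/\eqref{eq:consecutive_V}, and the other one, being a circular minor of $\U=\stack(\V,\W)$ with a consecutive window, collapses up to sign to a circular minor of $\W$ by the one-term-dominates / all-terms-same-sign argument of Lemma~\ref{lemma:consecutive} (or more simply because the window has length $k+\l$ and the bottom $\l\times\l$ block $\W$ on a consecutive window, together with the top block, Laplace-expands with the $\W$-minor pulled out); (5) match signs — since the statement is only asserted up to $\pm$, this step is a formality and I would not grind through it.

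The main obstacle I anticipate is bookkeeping rather than mathematics: getting the shifts and the modular arithmetic in the choice of $a,b,p,q$ exactly right so that the abstract windows $I=[a+k-q,a+k)\cup[b+k-p,b+k)$ and $J=[a,a+p)\cup[b,b+q)$ land precisely on $[j-k-\l,j)$ (resp. $[j-\l,j+k)$) and on the column set underlying $\Delta_{[j-k,j)}(\Wt)$ (resp. $\Delta_{[j,j+\l)}(\Vt)$), and verifying that both $I$ and $J$ genuinely have size $k+\l$ as subsets of $\Z/n\Z$ so that Lemma~\ref{lemma:I_J} applies. A secondary subtlety is that when $p$ or $q$ is $0$ one of the two windows degenerates, and I should confirm that Lemma~\ref{lemma:I_J} (via its source~\cite[Proposition~3.5]{MS}) still covers that boundary case; if it does not, I would instead keep both $p,q>0$ by splitting the length-$(k+\l)$ block artificially at an interior point and observing the two resulting minors recombine. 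Either way, once the parameters are pinned down the identity falls out of Lemma~\ref{lemma:I_J} in one line.
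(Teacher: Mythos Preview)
Your proposal has a genuine gap in step~(1), and it propagates to step~(4). You claim that a maximal minor $\Delta_J(\Ut)$, with $J$ a disjoint union of a $k$-window and an $\l$-window, factors as $\pm(\text{$k$-minor of }\Wt)\cdot(\text{$\l$-minor of }\Vt)$, justifying this by ``the $\Wt$-rows are zero outside the first $k$ coordinates.'' But $\Ut=\stack(\Wt,\Vt)$ is a \emph{row} block decomposition, not a column one: the top $k$ rows form the $k\times n$ matrix $\Wt$, and there is no reason whatsoever for any of its columns to vanish. So the Laplace expansion of $\Delta_J(\Ut)$ along the row partition produces a genuine sum $\sum_{S}\pm\Delta_S(\Wt)\Delta_{J\setminus S}(\Vt)$ over all $k$-subsets $S\subset J$, and nothing forces this to collapse to the single term $S=(\text{your $k$-window})$. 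The same problem recurs in step~(4): a circular $(k{+}\l)$-minor of $\U=\stack(\V,\W)$ does not ``collapse up to sign to a circular minor of $\W$''; the same-sign argument of Lemma~\ref{lemma:consecutive} shows all Laplace terms are nonnegative, not that only one survives.

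The paper's proof supplies exactly the missing idea: one first left-multiplies $\U$ by $(\U([k+\l]))^{-1}$ to obtain $\U'$ with $\U'([k+\l])=\Id_{k+\l}$. For \emph{this} normalized matrix, specific columns of $\U'$ are standard basis vectors and (via~\eqref{eq:stw}) specific columns of $\Ut'$ are supported on a predictable subset of coordinates, so Laplace expansions along the row partition genuinely do collapse to single terms. One then applies Lemma~\ref{lemma:I_J} with $a=\l-k+1$, $b=j$, $p=k$, $q=\l$ to $\U'$ and $\Ut'$, and finally uses Lemma~\ref{lemma:inverse_transpose} to rescale back to $\U$ and $\Ut$ (picking up one power of $\det\U([k+\l])$ on each side). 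Without this normalization, neither your step~(1) nor your step~(4) can be made to work.
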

\begin{proof}
  The proof is similar to that of Lemma~\ref{lemma:max_minors_of_U}. Let us denote $K:=[k+\l]$,  $\Ut:=\twistpm(\U)=\stack(\Wt,\Vt)$, $\U':=(\U(K))^{-1}\cdot \U$, and $\Ut':=\twistpm(\U')$ which by Lemma~\ref{lemma:inverse_transpose} equals $(\U(K))^\transp\cdot \Ut$. Let $\V',\W',\Vt',\Wt'$ be such that $\U'=\stack(\V',\W')$ and $\Ut'=\stack(\Wt',\Vt')$. Note that the submatrix of $\Ut'$ with column set $[\l+1,2\l+k+1)$ is upper triangular with $\pm1$'s on the diagonal. Applying Lemma~\ref{lemma:I_J} to $\U'$ and $\Ut'$ with $a=\l-k+1$, $b=j$, $p=k$, $q=\l$ yields
  \[\Delta_{[j,j+\l)}(\Vt')=\pm \frac{\Delta_{[j,j+k)}(\V')}{1\cdot \Delta_{[j-\l,j+k)}(\U')}.\]
Let $r:=\det(\U(K))$. Applying the same lemma to $\U$ and $\Ut$, we get
\[r^{-1}\cdot \Delta_{[j,j+\l)}(\Vt)=\pm \frac{r\cdot \Delta_{[j,j+k)}(\V)}{r\cdot r\cdot\Delta_{[j-\l,j+k)}(\U)}.\]
This proves~\eqref{eq:consecutive_V}. The proof of~\eqref{eq:consecutive_W} is completely similar, except that we need to set $\U':=w_0\cdot (\U(K))^{-1}\cdot \U$, where $w_0$ is the $(k+\l)\times(k+\l)$ permutation matrix having $(i,j)$-th entry equal to $1$ if $i+j=k+\l+1$ and to $0$ otherwise.
\end{proof}

\begin{example}
  Let $k=2$, $\l=1$, $n=5$, and $\U$ and $\Ut\cdot\Deltas$ be given in Figure~\ref{fig:stw}, with the matrix $\Deltas=\diag(2,4,8,10,4)$ given in~\eqref{eq:ex_D}. Furthermore, let $p=1$, $q=2$, $a=1$, $b=3$, so~\eqref{eq:I_J} yields $J=\{1,3,4\}$ and $I=\{1,2,4\}$. By~\eqref{eq:Delta_I_J}, we have
  \begin{equation}\label{eq:ex_Delta_I_J}
\Delta_{134}(\Ut)=\pm\frac{\Delta_{124}(\U)}{\Delta_{125}(\U)\cdot\Delta_{234}(\U)}.
  \end{equation}
  We note that $\Delta_{134}(\Ut\cdot\Deltas)=20$, so the left hand side of~\eqref{eq:ex_Delta_I_J} is $\Delta_{134}(\Ut)=\frac{20}{2\times 8\times 10}=\frac18$. On the other hand, we have
  \[\Delta_{124}(\U)=2,\quad \Delta_{125}(\U)=2,\quad \Delta_{234}(\U)=-8,\]
  so the right hand side of~\eqref{eq:ex_Delta_I_J} is $\pm\frac18$, in agreement with Lemma~\ref{lemma:I_J}.

  Let us now set $j:=3$, and verify~\eqref{eq:consecutive_W} and \eqref{eq:consecutive_V}. We have
  \[\Delta_{[1,3)}(\Wt\cdot \Deltas)=\det \begin{pmatrix}
      -1 & 4\\
      1 & 0
    \end{pmatrix}=-4,\]
  thus the left hand side of~\eqref{eq:consecutive_W} is $\frac{-4}{2\times 4}=-\frac12$. The right hand side of~\eqref{eq:consecutive_W} is given by $\pm\frac{\Delta_{2}(\W)}{\Delta_{125}(\U)}=\pm\frac{1}{2}$.

  Similarly, the left hand side of~\eqref{eq:consecutive_V} is $\Delta_{3}(\Vt)=\frac28=\frac14$, because $\Delta_3(\Vt\cdot \Deltas)=2$. The right hand side of~\eqref{eq:consecutive_V} is $\pm\frac{\Delta_{34}(\V)}{\Delta_{234}(\U)}=\pm \frac28=\pm\frac14$.
\end{example}

\subsection{Coordinates on $\Gr(k,n)\times \Gr(\l,n)$}\label{sec:coordinates}

\def\N{N}
\def\M{M}
 For a $k\times \l$ matrix $T$ and an $\l\times k$ matrix $S$, define
 \[\M(T,S):=
\begin{pmatrix}
  \Id_k & T\\
  S & \Id_\l
\end{pmatrix}.\]

\begin{remark}
  By the well known formula for the determinant of a block matrix, we have
  \begin{equation}\label{eq:det_block}
\det(\M(T,S))=\det(\Id_k-TS)=\det(\Id_\l-ST).
  \end{equation}
\end{remark}

For $\U,\U'\in\topcell$, we write $\U\simkl \U'$ if $\U=g\cdot \U'$ for some $g\in\GL_k(\R)\times\GL_\l(\R)$.
\begin{lemma}\label{lemma:split}
For $\M(T,S)\in\GL_{k+\l}(\R)$ and $\U\in\topcell$, we have
  \[\twistpm(\M(T,S)\cdot\U)\simkl \M(-S^\transp,-T^\transp)\cdot\twistpm(\U).\]
\end{lemma}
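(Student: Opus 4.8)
The plan is to deduce Lemma~\ref{lemma:split} from the more primitive transformation rule in Lemma~\ref{lemma:inverse_transpose}, which says $\twistpm(g\cdot\U) = (g^{-1})^\transp\,\twistpm(\U)$ for any $g\in\GL_{k+\l}(\R)$. Applying this with $g=\M(T,S)$ immediately gives $\twistpm(\M(T,S)\cdot\U) = (\M(T,S)^{-1})^\transp\,\twistpm(\U)$. So the whole content of the lemma reduces to the linear-algebra claim that $(\M(T,S)^{-1})^\transp$ differs from $\M(-S^\transp,-T^\transp)$ by left multiplication by an element of $\GL_k(\R)\times\GL_\l(\R)$, i.e.\ that
\[
(\M(T,S)^{-1})^\transp \simkl \M(-S^\transp,-T^\transp)
\]
as matrices acting on the left. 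Here I should be careful that $\simkl$ as used in the statement is an equivalence of matrices in $\topcell$ up to the left $\GL_k\times\GL_\l$ action; since $\twistpm(\U)\in\topcell$, multiplying it on the left by a block-diagonal invertible matrix is exactly the $\simkl$ relation, so it suffices to show $(\M(T,S)^{-1})^\transp = D\cdot \M(-S^\transp,-T^\transp)$ for some $D\in\GL_k(\R)\times\GL_\l(\R)$.

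First I would compute $\M(T,S)^{-1}$ explicitly by block Gaussian elimination. Writing $P := \Id_k - TS$ and $Q := \Id_\l - ST$ (both invertible by~\eqref{eq:det_block}, since $\M(T,S)\in\GL_{k+\l}(\R)$), the standard block-inverse formula gives
\[
\M(T,S)^{-1} = \begin{pmatrix} P^{-1} & -P^{-1}T \\ -Q^{-1}S & Q^{-1} \end{pmatrix}
= \begin{pmatrix} P^{-1} & 0 \\ 0 & Q^{-1}\end{pmatrix}\cdot \begin{pmatrix} \Id_k & -T \\ -S & \Id_\l\end{pmatrix},
\]
using the identities $P^{-1}T = TQ^{-1}$ and $Q^{-1}S = SP^{-1}$ (which follow from $T(\Id_\l - ST) = (\Id_k - TS)T$ and its transpose). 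Transposing,
\[
(\M(T,S)^{-1})^\transp = \begin{pmatrix} \Id_k & -S^\transp \\ -T^\transp & \Id_\l\end{pmatrix}\cdot \begin{pmatrix} (P^{-1})^\transp & 0 \\ 0 & (Q^{-1})^\transp\end{pmatrix} = \M(-S^\transp,-T^\transp)\cdot \begin{pmatrix} (P^\transp)^{-1} & 0 \\ 0 & (Q^\transp)^{-1}\end{pmatrix}.
\]
Now the block-diagonal factor sits on the \emph{right}, whereas I want it on the left of $\M(-S^\transp,-T^\transp)$. Conjugating: $\M(-S^\transp,-T^\transp)\cdot\diag(A,B) = \diag(A,B)\cdot \M(-B^{-1}S^\transp A, -A^{-1}T^\transp B)$, which is not of the form $D\cdot \M(-S^\transp,-T^\transp)$ in general, so I should instead move the block-diagonal matrix through from the other side of the original computation. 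The cleaner route: factor $\M(T,S)^{-1}$ as $\M(-T,-S)\cdot\diag(\ldots)$ with the diagonal on the \emph{right}, so that after transposing the diagonal lands on the \emph{left}; i.e.\ start from $\M(T,S)^{-1} = \begin{pmatrix} \Id_k & -T\\ -S & \Id_\l\end{pmatrix}\cdot\diag(P^{-1}, Q^{-1})$ (valid because $P^{-1}T=TQ^{-1}$, $Q^{-1}S=SP^{-1}$ again let the diagonal pass through). Transposing gives $(\M(T,S)^{-1})^\transp = \diag((P^\transp)^{-1},(Q^\transp)^{-1})\cdot \M(-S^\transp,-T^\transp)$, which is exactly $D\cdot\M(-S^\transp,-T^\transp)$ with $D\in\GL_k(\R)\times\GL_\l(\R)$. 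Combining with Lemma~\ref{lemma:inverse_transpose} finishes the proof.

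The only real obstacle is bookkeeping with the two ways of passing the block-diagonal factor through $\M$, and making sure the side (left vs.\ right) works out so that the leftover factor lands in $\GL_k\times\GL_\l$ acting on the left, as required by $\simkl$; this is the kind of thing that is easy to get backwards on a first pass. A secondary, purely notational point is to confirm that $\simkl$ in the paper's convention means $\U = g\cdot\U'$ with $g$ block-diagonal (it does, per the definition just before the lemma), so that the left-multiplication by $D$ is precisely an instance of $\simkl$. Once those two points are pinned down, the argument is just the block-inverse formula plus Lemma~\ref{lemma:inverse_transpose}, with no further input needed.
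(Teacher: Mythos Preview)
Your proof is correct and follows essentially the same route as the paper: apply Lemma~\ref{lemma:inverse_transpose} with $g=\M(T,S)$, then verify the block-matrix identity $(\M(T,S)^{-1})^\transp = D\cdot\M(-S^\transp,-T^\transp)$ for some $D\in\GL_k(\R)\times\GL_\l(\R)$. The paper does this by noting $\M(-T,-S)\cdot\M(T,S)=\diag(\Id_k-TS,\Id_\l-ST)$ and invoking Lemma~\ref{lemma:inverse_transpose}; you are simply more explicit about the left-versus-right placement of the block-diagonal factor after transposing, which the paper leaves to the reader.
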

\begin{proof}
  First, note that the matrix $\M(-T,-S)$ must also be invertible because it has the same determinant as $\M(T,S)$ by~\eqref{eq:det_block}. Multiplying them together, we get
  \[\begin{pmatrix}
  \Id_k & -T\\
  -S & \Id_\l
\end{pmatrix}\cdot\begin{pmatrix}
  \Id_k & T\\
  S & \Id_\l
\end{pmatrix}=\begin{pmatrix}
  \Id_k-TS & 0\\
  0 & \Id_\l-ST
\end{pmatrix}.\]
In particular, the matrix on the right hand side (which we denote by $g$) must be invertible: $g\in\GL_k(\R)\times\GL_\l(\R)$. Thus $g^{-1}\cdot\M(-T,-S)=\M(T,S)^{-1}$, i.e., $\M(-T,-S)\simkl\M(T,S)^{-1}$. Lemma~\ref{lemma:split} now follows from Lemma~\ref{lemma:inverse_transpose}.
\end{proof}

We introduce a convenient coordinate chart on $\Gr(k,n)\times\Gr(\l,n)$. Let $T,S,A,B$ be $k\times \l, \l\times k, k\times m,$ and $\l\times m$ matrices respectively. Denote
\[\N(T,S,A,B):=\begin{pmatrix}
    \Id_k & T & A\\
    S & \Id_\l & B
  \end{pmatrix}.\]

Take a generic pair $(\V,\W)\in\Gr(k,n)\times\Gr(\l,n)$, and let $\Span(\V,\W) \in\Gr(k+\l,n)$ be the row span of $\U:=\stack(\V,\W)$. Since the pair $(\V,\W)$ is generic, there exists a unique pair $T,S$ such that $\U([k+\l])\simkl\M(T,S)$, and thus we can write
\begin{equation}\label{eq:STAB}
\U\simkl N(T,S,A+TB,B+SA)=M(T,S)\cdot N(0,0,A,B).
\end{equation}
Here $\V\in\Gr(k,n)$ is the row span of $[\Id_k|T|A+TB]$, and $W\in\Gr(\l,n)$ is the row span of $[S|\Id_\l|B+SA]$.

\def\Tt{{\tilde{T}}}
\def\St{{\tilde{S}}}
\def\ttt{{\tilde{t}}}
\def\stt{{\tilde{s}}}
\def\At{{\tilde{A}}}
\def\Bt{{\tilde{B}}}

\def\DT{\D k \l T}
\def\DS{\D\l k S}
\def\DA{\D k m A}
\def\DB{\D\l m B}
\def\DTt{\D k \l \Tt}
\def\DSt{\D\l k \St}
\def\DAt{\D k m \At}
\def\DBt{\D\l m \Bt}
\def\DATB{\D k m {A+TB}}
\def\DBSA{\D\l m {B+SA}}
Recall from Definition~\ref{dfn:canonical_form_top_cell} that the canonical forms on $\Gr(k,n)$, $\Gr(\l,n)$, and $\Gr(k+\l,n)$ are given in these coordinates by
\[\omega_{k,n}(\V)=\pm\frac{\DT\wedge \DATB}{\pmin k V},\quad \omega_{\l,n}(\W)=\pm\frac{\DS\wedge \DBSA}{\pmin \l W},\]
\[\omega_{k+\l,n}(\U):=\pm\frac{\DA\wedge \DB}{\pmin{k+\l}{N(0,0,A,B)}}.\]

\begin{lemma}\label{lemma:omega_k_wedge_omega_l}
In the above notation, we have
  \[\omega_{k,n}\wedge \omega_{\l,n}=\pm\frac{ (\det(\M(T,S)))^m\cdot\DT\wedge \DS\wedge\DA\wedge\DB }{\pmin k V \cdot \pmin \l W}.\]
\end{lemma}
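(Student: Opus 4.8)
The plan is to expand both canonical forms in the coordinate chart $\N(T,S,A,B)$ introduced above and reduce the statement to a single determinant identity among differential forms. By Definition~\ref{dfn:canonical_form_top_cell} applied to the charts $[\Id_k\mid T\mid A+TB]$ for $\V$ and $[S\mid\Id_\l\mid B+SA]$ for $\W$ (as recorded just before the lemma), we have
\[
\omega_{k,n}\wedge\omega_{\l,n}=\pm\frac{\DT\wedge\DATB\wedge\DS\wedge\DBSA}{\pmin k V\cdot\pmin \l W},
\]
so the whole content of the lemma is the identity
\[
\DT\wedge\DATB\wedge\DS\wedge\DBSA=\pm(\det(\M(T,S)))^m\cdot\DT\wedge\DS\wedge\DA\wedge\DB .
\]

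First I would simplify the left-hand side. Since $\DT$ contains every differential $dt_{ij}$, wedging with it annihilates all $dT$-terms; hence one may replace $\DATB$ by $\bigwedge_{i,a}\bigl(dA_{ia}+\sum_j t_{ij}\,dB_{ja}\bigr)$, and, using $\DS$ similarly, replace $\DBSA$ by $\bigwedge_{p,a}\bigl(dB_{pa}+\sum_q s_{pq}\,dA_{qa}\bigr)$. Moving $\DS$ next to $\DT$ (which only alters the sign) and regrouping the remaining one-forms by the shared column index $a$, the left-hand side becomes, up to sign,
\[
\DT\wedge\DS\wedge\bigwedge_{a=1}^m\Bigl(\bigwedge_{i=1}^k\bigl(dA_{ia}+\textstyle\sum_j t_{ij}\,dB_{ja}\bigr)\wedge\bigwedge_{p=1}^\l\bigl(dB_{pa}+\textstyle\sum_q s_{pq}\,dA_{qa}\bigr)\Bigr).
\]
The regrouping is valid because the $m$ inner blocks involve pairwise disjoint sets of differentials $\{dA_{ia}\}_i\cup\{dB_{pa}\}_p$, so the big wedge splits as a product over $a$.

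Second, I would evaluate each block. For a fixed $a$, the $k+\l$ one-forms in the $a$-th block are linear combinations of the ordered list $dA_{1a},\dots,dA_{ka},dB_{1a},\dots,dB_{\l a}$; reading off the coefficients, the corresponding $(k+\l)\times(k+\l)$ matrix is exactly $\M(T,S)$ (its top-left $\Id_k$ and bottom-right $\Id_\l$ come from the $dA_{ia}$ and $dB_{pa}$ terms, while its $T$- and $S$-blocks come from the coefficients $t_{ij}$ and $s_{pq}$). Thus the $a$-th block equals $\det(\M(T,S))\cdot dA_{1a}\wedge\cdots\wedge dA_{ka}\wedge dB_{1a}\wedge\cdots\wedge dB_{\l a}$, and multiplying over the $m$ values of $a$ yields the factor $(\det(\M(T,S)))^m$ together with (after reordering) the form $\DA\wedge\DB$. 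Since the denominator $\pmin k V\cdot\pmin \l W$ is unchanged, this establishes the lemma.

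I do not anticipate a genuine obstacle here — it is a linear-algebra computation — so the "hard part" is only bookkeeping: one must track the signs produced by permuting wedge factors, which is exactly why the identity is stated only up to $\pm$, and one must make sure the coefficient matrix of each block is $\M(T,S)$ rather than its transpose or inverse, which is settled by writing the forms in the fixed ordered basis $(dA_{\bullet a},dB_{\bullet a})$ and comparing rows to columns. One could also note, via \eqref{eq:det_block}, that the factor equals $\det(\Id_k-TS)^m=\det(\Id_\l-ST)^m$, though this is not needed.
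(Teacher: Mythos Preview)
Your proof is correct and follows essentially the same approach as the paper: both reduce to the numerator identity by noting that wedging with $\DT\wedge\DS$ lets one treat $T,S$ as constants, then split column-by-column and recognize that the $(k+\l)$ one-forms in each column are related to the standard ones by the linear map $\M(T,S)$, contributing a factor of $\det(\M(T,S))$ per column. The paper phrases the per-column step as the action of $\M(T,S)$ on $\R^{k+\l}$ pulling back the volume form, while you write out the coefficient matrix explicitly, but this is the same computation.
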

\begin{proof}
  We need to show that after taking the wedge with $\DT\wedge\DS$ (i.e., treating the entries of $T$ and $S$ as constants), we have
  \[\DATB\wedge\DBSA=\pm(\det(\M(T,S)))^m\cdot\DA\wedge\DB.\]
Consider a linear operator on the $(k+\l)$-dimensional vector space $\R^{k+\l}$ given by the matrix $\M(T,S)$. For vectors $v\in\R^k$ and $w\in\R^\l$, this operator sends $(v,w)\in\R^{k+\l}$ to $(v+Tw,w+Sv)$. Therefore it sends the form $\omega(v,w):=dv_1\wedge \dots\wedge dv_k\wedge dw_1\wedge\dots\wedge dw_\l$ to $\det(\M(T,S))\cdot \omega(v,w)$. Letting $v:=A(j)$ and $w:=B(j)$ for $j\in[n]$, we get that $(A+TB)(j)=v+Tw$ and $(B+SA)(j)=w+Sv$, and thus
  \[\D k 1{(A+TB)(j)}\wedge\D \l 1{(B+SA)(j)}=\pm\det(\M(T,S))\cdot \D k 1{A(j)}\wedge\D \l 1{B(j)}.\]
  The result follows by applying this to $j=1,2,\dots,m$.
\end{proof}

  Lemma~\ref{lemma:omega_k_wedge_omega_l} implies that $\omega_{k,n}\wedge\omega_{\l,n}=\pm \omkl\wedge \omega_\Ker$, where
  \[\omega_\Ker:=\pm\frac{(\det(M(T,S)))^m \cdot \pmin{k+\l}{N(0,0,A,B)} \cdot \DT\wedge \DS}{\pmin k V\cdot \pmin \l W}\]
  
  \def\Ca{\Ccal}
  \def\Cb{\Ccal^\op}
  We first prove Proposition~\ref{prop:top_form_twist} which states that $\omega_{k+\l,n}$ is preserved by $\twistpm$, and then we will concentrate on showing that $\omega_{k+\l,n}\wedge \omega_\Ker$ is preserved by $\stw$.

  \subsection{Proof of Proposition~\ref{prop:top_form_twist}}\label{sec:proof-prop-top-form}
    We start by constructing two collections $\Ca,\Cb$ of $(k+\l)$-element subsets of $[n]$ such that they both form clusters for $\Gr(k+\l,n)$, and whenever $I$ and $J$ are related by~\eqref{eq:I_J}, we have $I\in\Ca$ if and only if $J\in\Cb$. Explicitly, $\Ca$ is the set of all $I\in{[n]\choose k+\l}$ that are unions of two cyclic intervals, one of which is of the form $[c,1+k)$ for $c\in[n]$ (if $c>1+k$ then we view this interval modulo $n$). Either of the two cyclic intervals is allowed to be empty. Similarly, $\Cb$ is the set of all $J\in{[n]\choose k+\l}$ that are unions of two cyclic intervals, one of which is of the form $[1,1+p)$ for some $p\geq0$. It is easy to check\footnote{In the language of~\cite{Pos}, $\Cb$ is the collection of face labels of the corresponding $\Le$-diagram, while $\Ca$ is the collection of face labels of the ``opposite'' $\Le$-diagram.} that both $\Ca$ and $\Cb$ are maximal weakly separated collections of size $(k+\l)m+1$, which is one more than the dimension of $\Gr(k+\l,n)$.

    Let us index the elements of $\Ca$ by $\Ca:=\{I_0,I_1,\dots,I_{(k+\l)m}\}$ and consider the coordinate chart on $\Gr(k+\l,n)$ given by $\{\Delta_I\mid I\in\Ca\}$, rescaled so that $\Delta_{I_0}=1$ for $I_0:=[\l+1,2\l+k+1)$ modulo $n$. Then by Proposition~\ref{prop:form:clusters}, we have \[\omkl(\U)=\pm\frac{d\Delta_{I_1}(\U)}{\Delta_{I_1}(\U)}\wedge\dots\wedge\frac{d\Delta_{I_{(k+\l)m}}(\U)}{\Delta_{I_{(k+\l)m}}(\U)}.\]
  Similarly, letting $\Cb:=\{J_0,J_1,\dots,J_{(k+\l)m}\}$ so that $J_0=[k+\l]$, we have \[\omkl(\Ut)=\pm\frac{d\Delta_{J_1}(\Ut)}{\Delta_{J_1}(\Ut)}\wedge\dots\wedge\frac{d\Delta_{J_{(k+\l)m}}(\Ut)}{\Delta_{J_{(k+\l)m}}(\Ut)}.\]
   If $I\in\Ca$ is a cyclic interval (there are $n$ such intervals in both $\Ca$ and $\Cb$), then Lemma~\ref{lemma:consecutive} provides a cyclic interval $J\in\Cb$ such that
    \[\Delta_J(\Ut)=\frac{\pm1}{\Delta_I(\U)}.\]
    Differentiating both sides, we get that
    \[\frac{d\Delta_J(\Ut)}{\Delta_J(\Ut)}=\pm\frac{d\Delta_I(\U)}{\Delta_I(\U)}.\]
    Thus the expression for $\omkl(\Ut)$ contains $d\Delta_K(\U)$ for all cyclic intervals $K$ of $[n]$ of size $k+\l$. Therefore we may treat them as constants while computing $d\Delta_J(\Ut)$ for other $J\in\Cb$. For each $J\in\Cb$, Lemma~\ref{lemma:I_J} provides a set $I\in\Ca$ and cyclic intervals $K,L\subset [n]$ such that
    \[\Delta_J(\Ut)=\pm\frac{\Delta_I(\U)}{\Delta_K(\U)\cdot\Delta_L(\U)}.\]
    Differentiating both sides and ignoring the terms with $d\Delta_K(\U)$ and $d\Delta_L(\U)$, we get
    \[\frac{d\Delta_J(\Ut)}{\Delta_J(\Ut)}=\pm\frac{d\Delta_I(\U)}{\Delta_I(\U)}.\]
    Applying this to all $J\in\Cb$ finishes the proof.\qed

\subsection{Proof of Theorem~\ref{thm:top_form_stw}}\label{sec:proof-thm-forms}
  \def\stwx{\stw'}
  \def\twistx{\twistpm'}
  Consider the cyclic shift map $\sigma:\topcell\to\topcell$ sending a matrix $\U$ with columns $\U(j), j\in\Z$ such that $\U(j+n)=(-1)^{k-1}\U(j)$ to a matrix $\sigma\U$ defined by $(\sigma\U)(j):=\U(j-1)$. This operator acts on $\V$ and $\W$ similarly. Rather than working with the maps $\stw$ and $\twistpm$, we will work with their cyclically shifted versions:
  \[\twistx:= \sigma^\l\circ \twistpm,\quad \stwx:=\sigma^\l\circ\stw.\]
By~\eqref{eq:twist_MS}, the map $\twistx$ coincides (up to sign) with the right twist map of~\cite{MuS}. Since the map $\stwx$ differs from the map $\stw$ by a cyclic shift, and the forms $\omk$, $\oml$, and $\omkl$ are preserved by $\sigma$, it suffices to prove the version of Theorem~\ref{thm:top_form_stw} with $\stw$ replaced with $\stwx$.

\begin{proof}[Proof of Theorem~\ref{thm:top_form_stw}.]
  We have shown above that $\omkl$ is preserved by $\stw$, and therefore by $\stwx$. What we need to show in order to complete the proof of Theorem~\ref{thm:top_form_stw} is that $\omega_\Ker\wedge\omkl$ is preserved by $\stwx$ up to a sign. Let us split $\omega_\Ker$ into a product of two terms:
  \[\omega_\Ker= \pm\frac{\DT\wedge \DS}{(\det(\M(T,S)))^{k+\l}}\cdot \frac{ \pmin{k+\l}{\U}}{\pmin k \V\cdot \pmin \l \W}.\]
 Here we are using the fact that $\pmin{k+\l}{\U}=(\det(M(T,S)))^{n} \cdot \pmin{k+\l}{\N(0,0,A,B)}$.

 By Lemma~\ref{lemma:consecutive}, we have $\pmin{k+\l}\U\cdot\pmin{k+\l}\Ut=\pm1$, and then Lemma~\ref{lemma:consecutive_V_W} implies
 \begin{equation}\label{eq:circular_minors_Ker}
\frac{\pmin{k+\l}{\U}}{\pmin k \V\cdot \pmin \l \W}
      =\pm\frac{\pmin{k+\l}{\Ut}}{\pmin k {\Wt}\cdot \pmin \l {\Vt}}.
 \end{equation}

 Note that $\omega_\Ker$ itself \emph{may not} be preserved by $\stwx$. Indeed, let  $\Tt,\St,\At,\Bt$ be the unique matrices such that
  \[\twistx(\N(T,S,A+TB,B+SA))\simkl \N(\Tt,\St,\At+\Tt\Bt,\Bt+\St\At).\]
Let us write down $\Tt,\St,\At,\Bt$ more explicitly. Recall that $\N(T,S,A+TB,B+SA)=\M(T,S)\cdot \N(0,0,A,B)$. Thus by Lemma~\ref{lemma:split}, we have
  \[\twistx(\N(T,S,A+TB,B+SA))\simkl \M(-S^\transp,-T^\transp)\cdot H=\M(-S^\transp,-T^\transp)\cdot H([k+\l])\cdot H',\]
  where $H:=\twistx(\N(0,0,A,B))$, $H([k+\l])$ is the square submatrix of $H$ spanned by the first $k+\l$ columns, and $H':=(H([k+\l]))^{-1}\cdot H$. We see that the first $k+\l$ columns of $H'$ form an identity matrix, and therefore we can write $H'=\N(0,0,\At,\Bt)$ for some $\At,\Bt$. Thus $\Tt,\St$ are the unique matrices such that
  \begin{equation}\label{eq:H(k+l)}
\M(\Tt,\St)\simkl \M(-S^\transp,-T^\transp)\cdot H([k+\l]).
  \end{equation}
  (We will later show that such matrices exist, i.e., that the upper left $k\times k$ and the lower right $\l\times \l$ submatrices of the matrix on the right hand side are invertible.) We see that the entries of $H$, and therefore of $\At$ and $\Bt$, depend on the entries of $A$ and $B$ but not on the entries of $T$ and $S$. However, the entries of $\Tt$ and $\St$ depend on the entries of all four matrices $T,S,A,B$. By definition, $\omega_\Ker$ is proportional to $\DT\wedge\DS$, so the pullback of $\omega_\Ker$ under $\stwx$ will be proportional to $\DTt\wedge\DSt$, and therefore will contain $da_{ij}$'s and $db_{ij}$'s as well as $dt_{ij}$'s and $ds_{ij}$'s. This shows that $\omega_\Ker$ is not in general preserved by $\stwx$.

  On the other hand, since $\omkl$ is preserved by $\stwx$ and is proportional to $\DA\wedge\DB$, we only need to show that $\omega_\Ker$ is preserved by $\stwx$ when we treat $A$ and $B$ as constants. We now investigate how exactly the matrices $\Tt$ and $\St$ depend on $T,S,A,B$.

\begin{lemma}\label{lemma:H_is_upper_tr}
The matrix $H([k+\l])$ from~\eqref{eq:H(k+l)} is an upper triangular $(k+\l)\times (k+\l)$ matrix with $\pm1$'s on the diagonal.
\end{lemma}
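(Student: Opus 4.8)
The plan is to reduce the statement to the special-matrix computation already used inside the proof of Lemma~\ref{lemma:max_minors_of_U}: the matrix $\N(0,0,A,B)$ has an identity window of consecutive columns, and the twist of any matrix in $\topcell$ with an identity window is visibly triangular on the corresponding output window; the cyclic shift built into $\twistx$ will then carry that window onto $[k+\l]$.

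First I would record that $\U:=\N(0,0,A,B)$ satisfies $\U([k+\l])=\Id_{k+\l}$; writing $[k+\l]=[1,k+\l+1)=[j_0-\l,\,j_0+k)$ with $j_0:=\l+1$, this says $\u(j_0-\l),\u(j_0-\l+1),\dots,\u(j_0+k-1)$ equal $e_1,e_2,\dots,e_{k+\l}$. Next I would show that for any $\U\in\topcell$ with such an identity window, the columns $\twistpm(\U)(j_0),\twistpm(\U)(j_0+1),\dots,\twistpm(\U)(j_0+k+\l-1)$ form an upper triangular $(k+\l)\times(k+\l)$ matrix with diagonal entries $(-1)^{j_0},(-1)^{j_0+1},\dots,(-1)^{j_0+k+\l-1}$. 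This is exactly the observation invoked (without full detail) in the proof of Lemma~\ref{lemma:max_minors_of_U}; the reason is that, for $0\le t\le k+\l-1$, the conditions~\eqref{eq:stw} defining $\twistpm(\U)(j_0+t)$ require it to be orthogonal to $\u(j_0+t-\l+1),\dots,\u(j_0+t+k-1)$ and to pair to $(-1)^{j_0+t}$ with $\u(j_0+t-\l)$. Among the former $k+\l-1$ vectors, the ones lying in the identity window are precisely $\u(j_0-\l+t+1),\dots,\u(j_0-\l+k+\l-1)=e_{t+2},\dots,e_{k+\l}$ (an empty list when $t=k+\l-1$), so $\twistpm(\U)(j_0+t)$ has vanishing coordinates in positions $t+2,\dots,k+\l$; and since $\u(j_0+t-\l)=e_{t+1}$, its $(t+1)$-st coordinate equals $(-1)^{j_0+t}$. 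The remaining $t$ orthogonality conditions come from columns of $\U$ outside the identity window and only pin down coordinates $1,\dots,t$, which is irrelevant to upper triangularity. Hence the matrix with those columns is upper triangular with the stated $\pm1$ diagonal.

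Finally I would pass from $\twistpm$ to $\twistx=\sigma^\l\circ\twistpm$: the cyclic shift by $\l$ in the definition of $\twistx$ translates column indices so as to carry the window $[j_0,\,j_0+k+\l)=[\l+1,\,2\l+k+1)$ of $\twistpm(\U)$ onto the window $[1,\,k+\l+1)=[k+\l]$ of $H=\twistx(\U)$; since $1\le\l+1$ and $2\l+k\le n$, this translation stays within a single fundamental domain, so the quasi-periodicity $\twistpm(\U)(i+n)=(-1)^{\l-1}\twistpm(\U)(i)$ contributes no extra sign. Therefore $H([k+\l])$ coincides with the upper triangular matrix produced in the previous step, which is the claim. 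I expect the only mildly delicate point to be the bookkeeping in the middle step — tracking, for each $t$, which constraints in~\eqref{eq:stw} come from the identity window $e_1,\dots,e_{k+\l}$ and which from the other columns — but this is routine and is essentially already present in the proof of Lemma~\ref{lemma:max_minors_of_U}.
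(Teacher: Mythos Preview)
Your proof is correct and takes essentially the same approach as the paper, which simply works directly with $\twistx$ rather than first computing $\twistpm$ and then applying the cyclic shift; both arguments hinge on the first $k+\l$ columns of $N(0,0,A,B)$ being the identity and read off the orthogonality and pairing conditions from~\eqref{eq:stw}. One minor slip: your claimed inequality $2\l+k\le n$ fails when $\l>m$, but this is harmless since any sign picked up from quasi-periodicity is absorbed into the $\pm1$'s on the diagonal, which is all the lemma asserts.
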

\begin{proof}
This is clear from the definition of the map $\twistx$: for $j\leq k+\l$, the $j$-th column of $H$ is orthogonal to columns $j+1,j+2,\dots,k+\l$ of $N(0,0,A,B)$, and its scalar product with the $j$-th column of $N(0,0,A,B)$ equals to $\pm1$. Since the first $k+\l$ columns of $N(0,0,A,B)$ form an identity matrix, the result follows.
\end{proof}

In particular, it follows from Lemma~\ref{lemma:H_is_upper_tr} that the upper left $k\times k$ and lower right $\l\times \l$ submatrices of $\M(-S^\transp,-T^\transp)\cdot H([k+\l])$ are invertible, which proves the existence of $\Tt,\St$ satisfying~\eqref{eq:H(k+l)}. Recall that the entries of $H([k+\l])$ depend only on the entries of $A$ and $B$, which we treat as constants since we are taking the wedge with $\DA\wedge\DB$. Our next result is illustrated in Example~\ref{ex:upper_tr}.
    \begin{lemma}\label{lemma:upper_tr}
      Suppose that for some constant upper triangular $(k+\l)\times (k+\l)$ matrix $E$ with $\pm1$'s on the diagonal, we have
      \[M(\Tt,\St) \simkl M(T,S)\cdot E.\]
      Then
      \begin{equation}\label{eq:upper_tr}
\frac{\DTt\wedge \DSt}{(\det(M(\Tt,\St)))^{k+\l}}=\pm\frac{\DT\wedge \DS}{(\det(M(T,S)))^{k+\l}}.
      \end{equation}
    \end{lemma}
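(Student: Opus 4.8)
The plan is to recognize the rational form $\omega := \DT\wedge\DS/(\det\M(T,S))^{k+\l}$ on $\Mat(k,\l)\times\Mat(\l,k)$ as a ``transverse slice'' of the bi-invariant top form on $\GL_{k+\l}(\RR)$, and then read off \eqref{eq:upper_tr} from the right-invariance of that form. Recall that the top form $\Omega := \bigl(\bigwedge_{i,j}dg_{ij}\bigr)/(\det g)^{k+\l}$ on $\GL_{k+\l}(\RR)$ satisfies $L_h^\ast\Omega=R_h^\ast\Omega=\pm\Omega$ for every $h$, because $\bigwedge d(hg)_{ij}=\bigwedge d(gh)_{ij}=(\det h)^{k+\l}\bigwedge dg_{ij}$. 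I would introduce the birational chart
\[\Phi:\GL_k(\RR)\times\GL_\l(\RR)\times\Mat(k,\l)\times\Mat(\l,k)\dashedrightarrow\GL_{k+\l}(\RR),\qquad \Phi(g_1,g_2,T,S):=\begin{pmatrix}g_1&0\\0&g_2\end{pmatrix}\M(T,S)=\begin{pmatrix}g_1&g_1T\\g_2S&g_2\end{pmatrix},\]
which identifies the source with the open dense locus of matrices having invertible diagonal blocks; its inverse recovers $g_1,g_2$ as those blocks and $T,S$ from $g_1^{-1}\cdot(\text{top-right})$, $g_2^{-1}\cdot(\text{bottom-left})$.

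The first step is to compute $\Phi^\ast\Omega$. Working modulo the differentials of the entries of $g_1$ and $g_2$ (so that $d(g_1T)\equiv g_1\,dT$ and $d(g_2S)\equiv g_2\,dS$), one gets $\Phi^\ast\bigl(\bigwedge dg_{ij}\bigr)=\pm(\det g_1)^\l(\det g_2)^k\,\bigl(\bigwedge dg_1\bigr)\wedge\bigl(\bigwedge dg_2\bigr)\wedge\DT\wedge\DS$; dividing by $(\det\Phi)^{k+\l}=(\det g_1)^{k+\l}(\det g_2)^{k+\l}(\det\M(T,S))^{k+\l}$ and using $k+\l-\l=k$, $k+\l-k=\l$ yields
\[\Phi^\ast\Omega=\pm\ \frac{\bigwedge dg_1}{(\det g_1)^k}\ \wedge\ \frac{\bigwedge dg_2}{(\det g_2)^\l}\ \wedge\ \omega.\]
Thus $\Phi^\ast\Omega$ is the exterior product of the bi-invariant volume forms on $\GL_k$ and $\GL_\l$ with the form $\omega$ that we wish to control.

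Next I would transport the right translation $R_E$ through $\Phi$. Since $E$ is upper triangular, $\M(T,S)E$ has top-left block $E_{11}$ and bottom-right block $SE_{12}+E_{22}$, both invertible (the former since $E_{11}$ is triangular with $\pm1$ on the diagonal, the latter for generic $S$). Hence $\M(T,S)E\simkl\M(\Tt,\St)$ with $\Tt=E_{11}^{-1}(E_{12}+TE_{22})$, a function of $T$ alone, and $\St=(SE_{12}+E_{22})^{-1}SE_{11}$, a function of $S$ alone; since $(T,S)\mapsto[\M(T,S)]$ is injective into $(\GL_k\times\GL_\l)\backslash\GL_{k+\l}$, these are exactly the matrices in the hypothesis. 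One then checks the identity $R_E\circ\Phi=\Phi\circ\Psi_E$, where $\Psi_E(g_1,g_2,T,S)=\bigl(g_1E_{11},\ g_2(SE_{12}+E_{22}),\ \Tt,\ \St\bigr)$. The key feature of $\Psi_E$ is its triangular dependence structure: $\Tt$ depends only on $T$, $\St$ only on $S$, $g_1E_{11}$ only on $g_1$, and $g_2(SE_{12}+E_{22})$ only on $(g_2,S)$.

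Finally I would combine everything. Applying $\Psi_E^\ast$ to the displayed formula for $\Phi^\ast\Omega$ and using $\Psi_E^\ast\Phi^\ast\Omega=\Phi^\ast R_E^\ast\Omega=\pm\Phi^\ast\Omega$: on the right-hand side the factor $\bigwedge dg_1/(\det g_1)^k$ is visibly invariant under $g_1\mapsto g_1E_{11}$; the factor $\bigwedge dg_2/(\det g_2)^\l$ is invariant under $g_2\mapsto g_2(SE_{12}+E_{22})$ after one discards the $dS$-terms it picks up (these are annihilated by the wedge with $\Psi_E^\ast\omega$, which already contains all the $dS$'s because $\DSt$ spans them); and $\Psi_E^\ast\omega=\DTt\wedge\DSt/(\det\M(\Tt,\St))^{k+\l}$. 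Matching the two expressions for $\Psi_E^\ast\Phi^\ast\Omega$ leaves precisely $\DTt\wedge\DSt/(\det\M(\Tt,\St))^{k+\l}=\pm\,\DT\wedge\DS/(\det\M(T,S))^{k+\l}$, which is \eqref{eq:upper_tr}. The step that genuinely needs care — and the only place the hypothesis on $E$ is used — is establishing that the lift $\Psi_E$ is well-defined and triangular: this rests on $E$ being upper triangular with $\pm1$ on the diagonal, so that the diagonal blocks of $\M(T,S)E$ are generically invertible and $\Tt,\St$ separate the variables $T,S$. (As a sanity check, one may instead reduce to the generators $E=\Id_{k+\l}+c\,e_{ij}$ with $i\le k<j$ and the block-diagonal triangular matrices; for these the map $(T,S)\mapsto(\Tt,\St)$ and its Jacobian can be written out directly via the Sherman--Morrison formula, giving $\Tt=T+\text{const}$ and a rank-one correction for $\St$, and one verifies the Jacobian equals $\pm\bigl(\det\M(\Tt,\St)/\det\M(T,S)\bigr)^{k+\l}$ by hand.)
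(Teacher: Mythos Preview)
Your proof is correct and takes a genuinely different route from the paper's. The paper writes $E=\begin{pmatrix}P&Q\\0&R\end{pmatrix}$, observes that the case $Q=0$ is immediate, and then factors a general $E$ as a product of block-diagonal upper-triangular matrices and the single Chevalley generator $e_{k,k+1}$; for that generator it computes the Jacobian of $(T,S)\mapsto(\Tt,\St)$ by hand, tracking the rank-one correction to $\St$ entry by entry. Your argument instead recognizes $\omega=\DT\wedge\DS/(\det\M(T,S))^{k+\l}$ as the fiber component of the bi-invariant top form $\Omega$ on $\GL_{k+\l}$ under the chart $\Phi$, and reads off the result from the right-invariance of $\Omega$. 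This is more conceptual and explains \emph{why} the lemma holds --- it is a shadow of Haar-measure invariance --- and it avoids the case analysis entirely; your parenthetical ``sanity check'' with Sherman--Morrison is essentially the paper's proof. The paper's approach, on the other hand, is fully elementary and makes the Jacobian visible, which dovetails with the explicit $k=\l=1$ example that follows it.

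One small remark: your cancellation step really only needs that $(T,S)\mapsto(\Tt,\St)$ is birational, so that $\DTt\wedge\DSt$ is a nonzero multiple of $\DT\wedge\DS$ and hence saturates all the $dT$'s and $dS$'s; the stronger separation $\Tt=\Tt(T)$, $\St=\St(S)$ that you emphasize is convenient but not essential. In fact, since the top-left and bottom-right blocks of $\M(T,S)E$ are $E_{11}+TE_{21}$ and $SE_{12}+E_{22}$, that separation (and hence your argument) holds for \emph{any} $E\in\GL_{k+\l}$ for which these blocks are generically invertible, not just upper-triangular $E$. The upper-triangularity with $\pm1$ diagonal is the cleanest sufficient condition, and it is all that is needed in the paper.
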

    \begin{proof}
      We can write $E$ as a block matrix
      \[E=\begin{pmatrix}
          P & Q\\
          0 & R
        \end{pmatrix},\]
      where $P$ (resp., $R$) is an upper triangular matrix of size $k\times k$ (resp., $\l\times\l$) with $\pm1$'s on the diagonal. In this notation,
      \[\Tt=P^{-1}(Q+TR),\quad \St=(R+SQ)^{-1}SP.\]
      If $Q=0$ then we clearly have
      \[\DTt=\pm\DT,\quad \DSt=\pm\DS,\]
      and by~\eqref{eq:det_block}, $\det(\M(\Tt,\St))=\det(\M(T,S))$, which shows~\eqref{eq:upper_tr} in this case. 
Note that if $E=E^\parr1\cdot E^\parr2\cdots E^\parr N$ is a product of upper triangular matrices with $\pm1$'s on diagonals, then it suffices to prove the lemma for  $E^\parr i$ for each $1\leq i\leq N$, because then the result will follow by induction. The case when $E$ is a diagonal matrix has already been shown above, and we may thus assume that $E$ is a \emph{Chevalley generator}, i.e., a unit upper triangular matrix with a single nonzero off-diagonal entry in position $(i,i+1)$ for some $1\leq i<k+\l$. If $i\neq k$ then $Q=0$ so we are done. Now suppose that $i=k$ and thus $Q$ has a single non-zero entry $Q_{k1}=q$ for some constant $q$. We are going to verify~\eqref{eq:upper_tr} in this case by a direct computation, see also Example~\ref{ex:upper_tr} below.

We have $\Tt=Q+T$ and thus $\DTt=\DT$. Now, we have $\St=(\Id_\l+SQ)^{-1}S$. Denote $S=(s_{ij})$. Let us first compute the matrix $\Id_\l+SQ$ and its inverse:
\[\Id_\l+SQ=\begin{pmatrix}
    1+qs_{1k} & 0 & \dots & 0\\
    qs_{2k} & 1 & \dots & 0\\
    \vdots & 0 & \ddots & 0\\
    qs_{\l k}& 0 & \dots & 1
  \end{pmatrix};\quad (\Id_\l+SQ)^{-1}=\begin{pmatrix}
    \frac1{1+qs_{1k}} & 0 & \dots & 0\\
    -\frac{qs_{2k}}{1+qs_{1k}} & 1 & \dots & 0\\
    \vdots & 0 & \ddots & 0\\
    -\frac{qs_{\l k}}{1+qs_{1k}}& 0 & \dots & 1
  \end{pmatrix}.\]
Thus $\det((\Id_\l+SQ)^{-1})=\frac1{1+qs_{1k}}$. Note that
\[\M(\Tt,\St)=\begin{pmatrix}
    \Id_k & 0\\
   0 & (\Id_\l+SQ)^{-1} 
 \end{pmatrix}\cdot \M(T,S)\cdot \begin{pmatrix}
    \Id_k & Q\\
   0 & \Id_\l
 \end{pmatrix},\]
\def\st{{\tilde{s}}}
and therefore
\begin{equation}\label{eq:1+qs1k}
\det(\M(\Tt,\St))=\frac{\det(\M(T,S))}{1+qs_{1k}}.
\end{equation}
 It remains to compute $\DSt$, so denote $\St=(\st_{ij})$. We have
\begin{equation*}
  \begin{split}
    \st_{1k}&=\frac{s_{1k}}{1+qs_{1k}},\quad d\st_{1k}=\frac{ds_{1k}}{(1+qs_{1k})^2};\\
    \st_{1j}&=\frac{s_{1j}}{1+qs_{1k}},\quad d\st_{1j}=\frac{ds_{1j}}{1+qs_{1k}}+\alpha,\quad (1\leq j<k);\\
    \st_{ik}&=\frac{-qs_{ik}s_{1k}}{1+qs_{1k}}+s_{ik}=\frac{s_{ik}}{1+qs_{1k}},\quad d\st_{ik}=\frac{ds_{ik}}{1+qs_{1k}}+\beta,\quad (1< i\leq \l);\\
    \st_{ij}&=\frac{-qs_{ik}s_{1j}}{1+qs_{1k}}+s_{ij},\quad d\st_{ij}=ds_{ij}+\gamma,\quad (1\leq j<k, 1< i\leq \l).\\
  \end{split}
\end{equation*}
Here $\alpha$ and $\beta$ denote sums of terms involving $ds_{1k}$, while $\gamma$ denotes a sum of terms involving  $ds_{1k}$, $ds_{1j}$, and $ds_{ik}$. Thus each of those terms vanishes when we take the wedge of all $\st_{ij}$'s. We have shown that
\[\DSt=\frac{\DS}{(1+qs_{1k})^{k+\l}},\]
and combining this with~\eqref{eq:1+qs1k}, we get
\[\frac{\DTt\wedge \DSt}{(\det(M(\Tt,\St)))^{k+\l}}=\frac{\DT\wedge \DS}{(1+qs_{1k})^{k+\l}(\det(M(\Tt,\St)))^{k+\l}}=\frac{\DT\wedge \DS}{(\det(M(T,S)))^{k+\l}},\]
finishing the proof of Lemma~\ref{lemma:upper_tr}.
    \end{proof}

Theorem~\ref{thm:top_form_stw} follows from Lemmas~\ref{lemma:H_is_upper_tr} and~\ref{lemma:upper_tr}, combined with Equations~\eqref{eq:circular_minors_Ker} and~\eqref{eq:H(k+l)}.
\end{proof}

    \begin{example}\label{ex:upper_tr}
      Let us illustrate Lemma~\ref{lemma:upper_tr} in the case $k=\l=1$. We have $M(T,S)=\begin{pmatrix}
        1 & t\\
        s & 1
      \end{pmatrix}$, so the right hand side of~\eqref{eq:upper_tr} is $\frac{dt\wedge ds}{(1-st)^2}$. Suppose that $E=\begin{pmatrix}
        1&q\\
        0& 1
      \end{pmatrix}$ for some constant $q$. Thus
      \[M(T,S)\cdot E=\begin{pmatrix}
          1 & q+t\\
          s& 1+qs
        \end{pmatrix}\simkl \begin{pmatrix}
          1 & q+t\\
          \frac{s}{1+qs} & 1
        \end{pmatrix}=\begin{pmatrix}
          1 &\ttt\\
          \stt & 1
        \end{pmatrix}=M(\Tt,\St) .\]
      So $\ttt=q+t$ and $\stt=\frac{s}{1+qs}$. We have:
      \[d\ttt=dt,\quad d\stt=\frac{ds}{(1+qs)^2},\quad \det(M(\Tt,\St))=\frac{1-st}{1+qs},\]
and therefore the left hand side of~\eqref{eq:upper_tr} equals
\[\frac{d\ttt\wedge d\stt}{\det(M(\Tt,\St))^2}=\frac{dt\wedge ds}{(1-st)^2}.\]
\end{example}

\begin{proof}[Proof of Theorem~\ref{thm:lower_form_stw}.]
  We deduce the result as a simple corollary of Theorem~\ref{thm:top_form_stw}. For an affine permutation $f\in\Aff(-k,\l)$, we need to show that $\stw$ sends $\om k f\wedge\oml$ to $\pm\omk\wedge\om \l{f^{-1}}$. We proceed by induction on $\inv(f)$, the base case $\inv(f)=0$ being precisely the content of Theorem~\ref{thm:top_form_stw}. Suppose that $g\lessdot f$ in the affine Bruhat order for some $f,g\in\Aff(-k,\l)$, and that the result has already been shown for $f$. By Proposition~\ref{prop:residues} below, the form $\om k g$ is obtained from $\om k f$ by taking a residue, and the same is true for $f^{-1}$ and $g^{-1}$:
  \[\Res_{\ce k g} \om k f =\pm \om k g,\quad \Res_{\ce \l {g^{-1}}} \om \l {f^{-1}} = \pm \om \l {g^{-1}}\]

Let $A \subset \ce k f \times \GrC(\l,n)$ be the Zariski open subset such that 
\begin{enumerate}
\item $\stack(\V,\W) \in \topcell$ whenever $(\V,\W) \in A$, 
\item $\stw$ extends to an isomorphism $\stw: A \to B$ where $ B \subset \GrC(k,n) \times \ce \l {f^{-1}}$,
\item $\stw$ sends $A \cap (\ce k g \times \GrC(\l,n))$ isomorphically to $B \cap (\GrC(\l,n) \times \ce \l {g^{-1}})$.
\end{enumerate}
Both $A \subset \ce k f \times \GrC(\l,n)$ and $A \cap (\ce k g \times \GrC(\l,n)) \subset \ce k g \times \GrC(\l,n)$ will be nonempty Zariski open subsets, since by Theorem \ref{thm:stw} the corresponding positive points will be contained in this locus.  Taking residues commutes with isomorphisms, so by induction we compute
\begin{align*}
\stw^*(\omega_{k,n} \wedge\om \l{g^{-1}}) & = 
\stw^*(\Res_{B \cap (\GrC(\l,n) \times \ce \l {g^{-1}})} \omega_{k,n} \wedge\om \l{f^{-1}})\\
&= \Res_{A \cap (\ce k g \times \GrC(\l,n))} \stw^*(\omega_{k,n} \wedge\om \l{f^{-1}}) \\
&= \pm \Res_{A \cap (\ce k g \times \GrC(\l,n))} (\om k {f} \wedge \omega_{\l,n})\\
&= \pm \om k g \wedge \omega_{\l,n}.\qedhere
\end{align*}
\end{proof}

\section{The amplituhedron form: proofs}\label{sec:ampl_form_proofs}
In this section, we prove Theorem~\ref{thm:univ_ampl_form}, as well as several other statements given in Section~\ref{sec:ampl_form}. Before we proceed, we recall some further background that was omitted from that section.

\subsection{Residues}\label{sec:residues}
We start with the definition of the \emph{residue operator} $\Res$.
Suppose we are given a rational form $\omega$ on a complex variety $X$. Let $C\subset X$ be an irreducible subvariety of codimension one. Consider some smooth Zariski open subset $U\subset X$ such that $U \cap C$ is Zariski open in $C$. Suppose that $z:U\to \C$ is a regular map such that $C \cap U$ is the set of solutions to $z=0$. Denote the remaining coordinates on $U$ by $u$, so that every point in $U$ is written as $(z,u)$ in this coordinate chart. We say that $\omega$ has a \emph{simple pole} at $C$ if inside $U$ we have
\[\omega(z,u)=\omega'(u)\wedge\frac{dz}{z}+\omega''(z,u),\]
where $\omega''(z,u)$ and $\omega'(u)$ are rational forms on $U$, and $\omega'(u)$ is non-zero. When $\omega$ has a simple pole at $C$, we define the \emph{residue} by
\[\Res_C\omega:=\omega'|_{U \cap C}\]
This defines $\Res_C\omega$ as a rational form on $U\cap C$, and thus on $C$.

\begin{proposition}[{\cite[Theorem~13.2]{LamCDM}}]\label{prop:residues}
  For affine permutations $f,g\in\Aff(-k,n-k)$ such that $g\lessdot f$ in the affine Bruhat order, we have
  \begin{equation}\label{eq:residues}
\Res_{\pcC kg}\om kf=\pm\om k g.
  \end{equation} 
\end{proposition}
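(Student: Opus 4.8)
The plan is to deduce Proposition~\ref{prop:residues} directly from the cluster-theoretic definition of the canonical form (Definition~\ref{dfn:form_lower_cell}). The key is to exhibit, for a fixed cover relation $g\lessdot f$, a single cluster for $f$ that simultaneously ``sees'' the boundary stratum $\pcC kg$ and restricts to a cluster for $g$. Precisely, I would first establish the following combinatorial statement: there is a cluster $\Ccal=\{J_0,J_1,\dots,J_N\}$ for $f$ (Definition~\ref{dfn:clusters}) and an index $i^\ast\in\{1,\dots,N\}$ such that (i) $J_0\in\grneck kf\cap\positroid kg$; (ii) $J_i\in\positroid kg$ for all $i\neq i^\ast$, while $J_{i^\ast}\in\positroid kf\setminus\positroid kg$; (iii) $\Ccal\setminus\{J_{i^\ast}\}$ is a cluster for $g$; and (iv) the Pl\"ucker coordinate $\Delta_{J_{i^\ast}}$ is a local defining equation for the codimension-one subvariety $\pcC kg\subset\pcC kf$, vanishing there to order exactly one.

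Granting (i)--(iv), the proof of~\eqref{eq:residues} is a short computation. Normalize Pl\"ucker coordinates so that $\Delta_{J_0}\equiv 1$ in a neighbourhood of a generic point of $\pc kg$. By Proposition~\ref{prop:OPS}, the cluster map $X\mapsto(\Delta_{J_1}(X),\dots,\Delta_{J_N}(X))$ is a diffeomorphism onto an open subset of $\R_{>0}^N$, so $(\Delta_{J_{i^\ast}},(\Delta_{J_i})_{i\neq 0,i^\ast})$ form a local coordinate system on a Zariski-open subset $U\subset\pcC kf$ with $U\cap\pcC kg=\{\Delta_{J_{i^\ast}}=0\}$. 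In these coordinates
\[
\om kf=\pm\,\frac{d\Delta_{J_{i^\ast}}}{\Delta_{J_{i^\ast}}}\wedge\bigwedge_{i\neq 0,\,i^\ast}\frac{d\Delta_{J_i}}{\Delta_{J_i}},
\]
which has a simple pole along $\pcC kg$ by (iv), and hence
\[
\Res_{\pcC kg}\om kf=\pm\Bigl(\,\bigwedge_{i\neq 0,\,i^\ast}\frac{d\Delta_{J_i}}{\Delta_{J_i}}\,\Bigr)\Big|_{\pcC kg}.
\]
Since $\Ccal\setminus\{J_{i^\ast}\}$ is a cluster for $g$ and $\Delta_{J_0}$ is still normalized to $1$ on $\pc kg$, the right-hand side is exactly $\pm\,\om kg$ by Definition~\ref{dfn:form_lower_cell}; independence of this expression from the choice of $\Ccal$ and $J_0$ is Proposition~\ref{prop:om_k_f_canonical}.

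The main obstacle is the combinatorial statement (i)--(iv), which I would prove via plabic graphs. Choose a reduced plabic graph $G_g$ representing $\pc kg$ in the sense of~\cite{Pos}; by~\cite{OPS} its face labels $\mathcal F(G_g)$ form a cluster for $g$. A cover relation $g\lessdot f$ of the (bounded) affine Bruhat order corresponds to a local ``edge-insertion'' move producing a reduced plabic graph $G_f$ for $f$ from $G_g$ by adding a single edge, with $\mathcal F(G_f)=\mathcal F(G_g)\sqcup\{J_{i^\ast}\}$, where $J_{i^\ast}$ is the label of the newly created face; again $\mathcal F(G_f)$ is a cluster for $f$ by~\cite{OPS}, which gives (i)--(iii). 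Degenerating the weight of the inserted edge collapses $G_f$ back to $G_g$; on the level of Postnikov's boundary measurements this sends $\Delta_{J_{i^\ast}}\to 0$ while keeping the remaining labels as a valid chart, yielding (iv). The one point requiring care is the correspondence between cover relations of the affine Bruhat order and these graph moves, together with the verification that $G_f$ remains reduced; this can be checked directly using the rank-matrix characterization of $\pc kf$ in Lemma~\ref{lemma:ranks}, or read off from the combinatorics of Grassmann necklaces and weak separation developed in~\cite{OPS}. A purely weak-separation argument avoiding plabic graphs is also possible---completing a cluster for $g$ containing $\grneck kg$ to a cluster for $f$ by adjoining one element---but the plabic-graph route makes the defining-equation claim (iv) the most transparent.
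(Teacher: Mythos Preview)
The paper does not prove Proposition~\ref{prop:residues}: it is quoted verbatim from \cite[Theorem~13.2]{LamCDM} and used as a black box. So there is no ``paper's own proof'' to compare against; your outline is in fact close to how the result is established in the cited reference.

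Your reduction of the residue computation to the combinatorial claims (i)--(iv) is correct, and once those claims are in hand the computation is exactly as you wrote. Two comments on the combinatorial part. First, condition~(i) is stronger than needed: you only require $J_0\in\Ccal\setminus\{J_{i^\ast}\}$, which by~(ii) already lies in $\positroid kg$; there is no need for $J_0\in\grneck kf$, and in examples $J_{i^\ast}$ itself is often a necklace element of $f$. Second, and more substantively, the assertion that \emph{every} Bruhat cover $g\lessdot f$ is realized by deleting a single edge from \emph{some} reduced plabic graph $G_f$ does not follow from \cite{OPS} alone. A fixed reduced graph $G_f$ need not have a removable edge landing on a prescribed $g$; one must first perform square moves to reach a suitable $G_f$. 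The existence of such a $G_f$ for every cover is true, but it requires an additional ingredient---typically the bridge decomposition of positroid cells (as used in \cite{LamCDM}) or an equivalent argument via \hbox{Le-diagrams}. Once that is supplied, claim~(iv) follows because in Postnikov's boundary-measurement chart the ratio $\Delta_{J_{i^\ast}}/\Delta_{J_0}$ becomes a single edge weight, and sending it to~$0$ cuts out $\pc kg$ transversely inside $\pc kf$. Your sketch would be complete after making this step precise.
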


\subsection{Degree of a map}\label{sec:degree}
\def\morph{\alpha}
\def\A{A}
\def\B{B}

Let $\morph: X \to Y$ be a morphism between irreducible complex varieties of the same dimension.  If $\dim(\morph(X)) = \dim(X) (=\dim(Y))$, then there exists a nonempty Zariski open subset $\B \subseteq Y$ such that $\morph$ restricts to a $d$-fold covering map from $\morph^{-1}(\B)$ to $\B$.  In other words, $\morph$ is locally a diffeomorphism on $\morph^{-1}(\B)$, and for $y \in \B$ we have $|\morph^{-1}(y)| = d$.  We say that $\morph$ \emph{has degree} $\deg(\morph) = d$.  If $\dim(\morph(X)) < \dim(X)$, we define $\deg(\morph) = \infty$.  If $\morph: X\dashedrightarrow Y$ is instead a rational map, then we have $\deg(\morph) = \deg(\morph|_\A)$ where $\A \subseteq X$ is the locus where $\morph$ is defined.  If $\morph$ has degree $\deg(\morph) = 1$, then it is birational: it restricts to an isomorphism between dense Zariski open subsets of $X$ and $Y$.

In \cite{Lam14}, an integer $d_f \in \{1,2,\ldots,\infty\}$ is assigned to each $f \in \Aff(-k,n-k)$ so that for a dense Zariski open subset $A \subset \GrC(k+m,n)$, the rational map $Z: \GrC(k,n) \dashedrightarrow \GrC(k,k+m)$ has degree $d_f$ whenever $Z \in A$.  Since $\Grtp(k+m,n)$ is Zariski dense in $\GrC(k+m,n)$, this statement also holds for $Z$ in an open dense subset of $\Grtp(k+m,n)$.  An affine permutation $f\in \Aff(-k,n-k)$ such that $\inv(f)=km$ has degree $1$ in the sense of Definition \ref{def:degree} if and only if $d_f = 1$.

\begin{proof}[Proof of Proposition~\ref{prop:Zcal_degree}.]
Suppose that $d_f = 1$.  Let $A \subset \GrC(k+m,n)$ be the Zariski open dense subset such that $Z:\Gr(k,n) \dashedrightarrow \Gr(k,k+m)$ has degree $1$ for $Z\in A$.  For $Z \in A$, let $B_Z \subset \GrC(k,k+m)$ be the dense Zariski open subset such that $Z: Z^{-1}(B_Z) \to B_Z$ is an isomorphism.  Consider the Zariski open subset $C \subset \FlC(\l,k+\l;n)$ defined by
\[C = \{(W,U) \mid Z = W^\perp \in A \text{ and } \Span(U \cdot Z^t) \in B_Z\}.\]
Then $\Zcal: \Zcal^{-1}(C) \to C$ is an isomorphism, and thus $\Zcal: \pcC k f \times \GrC(\l,n) \dashedrightarrow \FlC(\l,k+\l;n)$ has degree one.
\end{proof}

\begin{proof}[Proof of Proposition~\ref{prop:Jacobian_constant_sign}.]
Note that $\pc kf$ is homeomorphic to an open ball. Since $f$ is $(n,k,m)$-admissible, the restriction of $Z$ to $\pc kf$ is injective. By Brouwer's invariance of domain theorem~\cite{Brouwer}, we get that it is a homeomorphism onto its image, because the dimension of $\pc kf$ equals $km$. Therefore the restriction of $Z$ to $\pc kf$ either preserves or reverses its topological orientation (see~\cite[Section~22]{GH} for background on topological orientation).  For smooth manifolds, it is well known that the notions of topological and smooth orientation agree with each other.  Thus whenever the Jacobian of $Z$ is nonzero on $\pc kf$, its sign is the same.
\end{proof}

\begin{proof}[Proof of Proposition~\ref{prop:Zcal_Jacobian_constant_sign}.]
The product $\pc k f \times \Grtperp{k+m}n$ is homeomorphic to an open ball.  When $f$ is $(n,k,m)$-admissible, the restriction of $\Zcal$ to $\pc kf \times \Grtperp{k+m}n$ is injective.  The rest of the proof is identical to that of Proposition~\ref{prop:Jacobian_constant_sign}.
\end{proof}

\begin{remark}
Even when $d_f > 1$, we can still define the pushforward $Z_* \om k f$ of the canonical form, but the definition is more complicated.  See \cite{LamCDM} for details.
\end{remark}

\def\affst{{\tilde F}}
\subsection{Affine Stanley symmetric functions}\label{sec:affst}
We review the results of the second author~\cite{LamAS,Lam14} that allow one to give a purely combinatorial characterization of a degree $1$ positroid cell. We refer the reader to~\cite{Stanley} for background on symmetric functions.

\begin{definition}
An element $v\in\Aff$ is called \emph{cyclically decreasing} if it has a reduced word $v=s_{j_1}s_{j_2}\dots s_{j_N}$ such that the indices $j_1,j_2,\dots,j_N$ are pairwise distinct modulo $n$, and if $j_a=i+1$ and $j_b=i$ modulo $n$ for some $a,b\leq N$ then we have $a<b$. 
\end{definition}

Given an element $f\in\Aff$, define the \emph{affine Stanley symmetric function}
\[\affst_f(x_1,x_2,\dots):=\sum_{f=f_1f_2\dots f_r} x_1^{\inv(f_1)}x_2^{\inv(f_2)}\dots x_r^{\inv(f_r)},\]
where the sum is over all factorizations $f=f_1f_2\dots f_r$ such that $f_1,\dots,f_r\in\Aff$ are cyclically decreasing. We list some properties of $\affst_f$ together with the references to the places where they have been proven in the literature.
\begin{proposition}\label{prop:affst_properties}\leavevmode
\begin{enumerate}
\item For each $f\in\Aff$, $\affst_f$ is a symmetric function~\cite[Theorem~6]{LamAS}.
\item For each $f\in\Aff$, we have $\omega^+ \affst_f=\affst_{f^{-1}}$~\cite[Theorem~15]{LamAS}.
\item If $\inv(f)=k\l$ for some $f\in\Aff(-k,n-k)$ then $f$ has degree $1$ if and only if the coefficient of $s_{\l^k}$ in $\affst_f$ equals to $1$~\cite[Proposition~4.8]{Lam14}.
\end{enumerate}
\end{proposition}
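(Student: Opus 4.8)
The three assertions are quoted verbatim from \cite{LamAS} and \cite{Lam14}, so the plan is not to reprove them but to recall why each holds and, in the case of the third item, why it is the input we actually need. I will take them in the order listed.

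For part (1), the symmetry of $\affst_f$ is \cite[Theorem~6]{LamAS}. The generating function is built from \emph{cyclically decreasing} factorizations $f=f_1f_2\cdots f_r$ with $f_i\in\Aff$, and the key point is that for each consecutive pair $(f_i,f_{i+1})$ one produces a weight-preserving bijection redistributing inversions between the two factors while keeping every factor cyclically decreasing. This is the affine analogue of the Lascoux--Sch\"utzenberger commutation argument for finite-type Stanley symmetric functions, and it shows that $\affst_f$ is invariant under swapping adjacent exponent variables $x_i\leftrightarrow x_{i+1}$, hence symmetric. For part (2), the identity $\omega^+\affst_f=\affst_{f^{-1}}$ is \cite[Theorem~15]{LamAS}; it follows by observing that $f\mapsto f^{-1}$ reverses the order of the factors in a cyclically decreasing factorization and inverts each one, and that inverting a cyclically decreasing element corresponds, at the level of generating functions, to applying the symmetric-function involution $\omega^+$. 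Both of these are purely combinatorial statements about $\Aff$ and its length function $\inv$, with no input from geometry.

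Part (3) is the substantive ingredient and is \cite[Proposition~4.8]{Lam14}. Recall from Section~\ref{sec:degree} that $d_f\in\{1,2,\dots,\infty\}$ is the generic degree of the rational map $Z:\GrC(k,n)\dashedrightarrow\GrC(k,k+m)$ restricted to the positroid variety $\pcC k f$, and that $f$ has degree $1$ in the sense of Definition~\ref{def:degree} exactly when $d_f=1$. When $\inv(f)=k\l$, the positroid variety $\pcC k f$ has dimension $km=\dim\GrC(k,k+m)$, so $d_f$ is an honest intersection-theoretic quantity: it is the number of points in the preimage of a generic point, and by pushing forward the fundamental class of $\pcC k f$ and using the description of positroid classes in affine Schubert calculus, this number equals the coefficient with which the rectangular Schur class $s_{\l^k}$ appears in $\affst_f$. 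Reading this off gives $d_f=1$, i.e.\ $f$ has degree $1$, if and only if $[s_{\l^k}]\,\affst_f=1$.

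There is thus no genuine obstacle to overcome here: the proposition is a bookkeeping statement recording the inputs we will use later (most importantly, combining parts (2) and (3) to deduce Proposition~\ref{prop:degre_1_inverse}: if $\affst_f$ contains $s_{\l^k}$ with coefficient $1$, then $\affst_{f^{-1}}=\omega^+\affst_f$ contains $\omega^+ s_{\l^k}=s_{k^\l}$ with coefficient $1$, which is the rectangular Schur function relevant for the parameters $(\l,k)$). The one non-elementary ingredient is \cite[Proposition~4.8]{Lam14}, whose proof requires both the theory of affine Stanley symmetric functions and the identification of generic degrees of positroid varieties with Schur-expansion coefficients; we take it as a black box and make no attempt to reproduce it.
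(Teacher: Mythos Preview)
Your proposal is correct and takes essentially the same approach as the paper: the paper gives no proof of this proposition at all, simply listing the three statements with their citations to \cite{LamAS} and \cite{Lam14} and moving on. Your added exposition (sketching the commutation argument for symmetry, the involution for $\omega^+$, and the cohomological interpretation of $d_f$) goes beyond what the paper does but is accurate and consistent with the cited sources.
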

Here $\omega^+$ denotes a certain involution on a subset $\Lambda^\parr n$ of the algebra $\Lambda$ of symmetric functions, and $s_{\l^k}\in\Lambda^\parr n$ denotes the \emph{Schur function} indexed by a rectangular partition $\l^k:=(\underbrace{\l,\l,\dots,\l}_{\text{$k$ times}})$. Since $\omega^+ s_{\l^k}=s_{k^\l}$, Proposition~\ref{prop:affst_properties} implies Proposition~\ref{prop:degre_1_inverse}.

\begin{remark}
Proposition~\ref{prop:degre_1_inverse} can also be deduced directly from Proposition~\ref{prop:fibers}, since $f$ has degree $1$ if and only if $\Fib(\V,Z)\cap \pc k f$ contains one point for a generic pair $(\V,Z)\in\pc kf\times \Grtp(k+m,n)$.
\end{remark}

\subsection{Proof of Theorem~\ref{thm:univ_ampl_form}}

The proof of Theorem~\ref{thm:lower_form_stw} shows that for the rational map
\[\stw:\pcC k f\times \GrC(\l,n)\dashedrightarrow \GrC(k,n)\times \pcC \l {f^{-1}},\]
we have $\stw^*(\omk\wedge \om\l{f^{-1}}) = \pm \om k{f}\wedge \oml$ for each $f\in\Aff(-k,\l)$.
By Theorem~\ref{thm:main} together with Corollary~\ref{cor:degree_1_triang}, the only thing left to show is that the sign is ``$+$'' when both forms are chosen to be positive as in Definition~\ref{dfn:signs}.  We choose some reference form $\omreflkl$ on $\Fl(\l,k+\l;n)$ and some reference form $\omrefklk$ on $\Fl(k,k+\l;n)$ so that $\omreflkl=\tstw^\ast\omrefklk$. The sign of $\om k{f}$ is chosen in such a way that the forms $\Zcal_*(\om k{f}\wedge \oml)$ and $\omreflkl$ have the same sign on $\amcell kf$.  The sign of $ \om\l{f^{-1}}$ is chosen in such a way that the forms $\Zcal_*(\omk\wedge \om\l{f^{-1}})$ and $\omreflkl$ have the same sign on $\amcell \l{f^{-1}}$.  Note that $\tstw$ sends $\amcell kf$ to $\amcell \l{f^{-1}}$ and pulls $\omrefklk$ back to $\omreflkl$.
Since the diagram~\eqref{eq:commutative_diagram} is commutative, we conclude that  $\stw^*(\omk\wedge \om\l{f^{-1}}) = \om k{f}\wedge \oml$ with the above choice of signs, finishing the proof of Theorem~\ref{thm:univ_ampl_form}.\qed

\section{Open questions and future directions}\label{sec:future}
In addition to the multiple conjectures already mentioned in the text
(Conjectures~\ref{conj:physics}, \ref{conj:M(a,b,c)}, \ref{conj:degree_1}, \ref{conj:ampl_form}, \ref{conj:univ_ampl_form}),
we list several other exciting questions.

\subsection{Positive Laurent formulas for the twisted Pl\"ucker coordinates}
Let $\stw(\V,\W)=(\Wt,\Vt)$. Theorem~\ref{thm:stw}~\eqref{item:stw:positivity} asserts that if the Pl\"ucker coordinates of $\V$ and $Z:=\W^\perp$ are nonnegative then so are the Pl\"ucker coordinates of $\Zt:=\Wt^\perp$ and $\Vt$. In fact, we believe that an even stronger property holds.
\begin{conjecture}\label{conj:Laurent}
  Let $\U:=\stack(\V,\W)\in\topcell$, $\stw(\V,\W)=(\Wt,\Vt)$, $\Ut:=\twistpm(\U)=\stack(\Wt,\Vt)$, $Z:=\W^\perp$, and $\Zt:=\Wt^\perp$. Then there exist some polynomials $H$ and $H'$ in the maximal minors of $\V$ and $Z$ with nonnegative integer coefficients such that 
  \begin{enumerate}
  \item the maximal minor $\Delta_I(\Zt)$ for $I\in{[n]\choose \l+m}$ is equal to
    \[\frac{H}{\pm \prod_{j\in I}\Delta_{[j-\l,j+k)}(\U)};\]
  \item the maximal minor $\Delta_I(\Vt)$ for $I\in{[n]\choose \l}$ is equal to
    \[\frac{H'}{\pm \prod_{j\in I}\Delta_{[j-\l,j+k)}(\U)}.\]
  \end{enumerate}
\end{conjecture}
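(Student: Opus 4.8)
The plan is to deduce Conjecture~\ref{conj:Laurent} from the dimer‑model positivity of the Marsh--Scott twist on the full Grassmannian $\Gr(k+\l,n)$, combined with the ``stacking'' identities between the minors of $\U$, $\V$ and $\W$ that already underlie the proofs of Lemmas~\ref{lemma:consecutive} and~\ref{lemma:consecutive_V_W}. The shape of the expected denominator is clear in advance: by Lemma~\ref{lemma:max_minors_of_U} the circular $(k+\l)$‑minors of $\Ut=\twistpm(\U)$ are, up to sign, the reciprocals of the circular minors $\Delta_{[j-\l,j+k)}(\U)$, and these are exactly the frozen variables of the Grassmannian cluster structure on $\Gr(k+\l,n)$.

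\emph{First} I would reduce both statements to statements about \emph{maximal} minors of $\Ut$. The row span of $\Ut$ is a $(k+\l)$‑plane in $\R^n$, and both $\Vt$ (its last $\l$ rows) and $\Zt=\Wt^\perp$ (where $\Wt$ is its first $k$ rows) are sub‑quotients of that plane. Using the standard relations between the Pl\"ucker coordinates of a subspace $W\subset U$, of the corresponding quotient, and of $U$ --- concretely, row‑reducing $\Ut$ so that a chosen set of columns becomes a permuted identity and reading off the resulting cofactors --- one writes each $\Delta_I(\Vt)$ and each $\Delta_I(\Zt)$ as $\pm$ a single maximal minor $\Delta_K(\Ut)$ divided by a product of maximal minors of $\Ut$, where $K$ and the minors in the denominator may all be taken of the ``union of two cyclic intervals'' shape of Lemma~\ref{lemma:I_J}. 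The circular cases $\Delta_{[j,j+\l)}(\Vt)$ and $\Delta_{[j-k,j)}(\Wt)$ already computed in Lemma~\ref{lemma:consecutive_V_W} serve to calibrate this reduction so that, after applying Lemmas~\ref{lemma:I_J} and~\ref{lemma:max_minors_of_U} to rewrite everything in terms of minors of $\U$, exactly the product $\prod_{j\in I}\Delta_{[j-\l,j+k)}(\U)$ survives in the denominator.

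\emph{Next} I would invoke the theorem of Marsh and Scott~\cite{MS} that every twisted Pl\"ucker coordinate $\Delta_K(\Ut)$, regarded as a function of $\U$, is the partition function of a dimer model on a planar bipartite graph attached to $K$; equivalently, $\Delta_K(\Ut)\cdot\prod_{j\in K}\Delta_{[j-\l,j+k)}(\U)$ is a polynomial with nonnegative integer coefficients in the maximal minors of $\U$ (Lemmas~\ref{lemma:I_J} and~\ref{lemma:consecutive} being the instances of this already at hand). Combined with the previous step, this expresses $\Delta_I(\Vt)$ and $\Delta_I(\Zt)$ as a polynomial with nonnegative integer coefficients in the maximal minors of $\U$ divided by $\pm\prod_{j\in I}\Delta_{[j-\l,j+k)}(\U)$. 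Finally, for $\U=\stack(\V,\W)$ and an interval $J$ of length $k+\l$, Laplace expansion along the $\V$‑rows versus the $\W$‑rows gives $\Delta_J(\U)=\sum_A(-1)^{\inv(A,J\setminus A)}\Delta_A(\V)\Delta_{J\setminus A}(\W)$, in which all summands carry a common sign by the argument in the proof of Lemma~\ref{lemma:consecutive}; together with $\Delta_{J'}(\W)=\pm\Delta_{[n]\setminus J'}(Z)$ from~\eqref{eq:alt_duality}, this exhibits each such $\Delta_J(\U)$ as $\pm$ a polynomial with nonnegative integer coefficients in the maximal minors of $\V$ and of $Z$; substituting these into the numerator then produces $H$ and $H'$.

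The main obstacle is this last substitution. A polynomial with nonnegative coefficients in the $\Delta_J(\U)$ need \emph{not} stay so after replacing $\Delta_J(\U)$ by $\varepsilon_J\cdot(\text{subtraction-free expression in the minors of }\V\text{ and }Z)$, because the sign $\varepsilon_J$ depends on $J$. I expect this to be handled by showing that the numerator obtained above is homogeneous for a $\Z/2$‑grading recording, for each interval $J$, the parity of the exponent of $\Delta_J(\U)$ in terms of the \emph{shape} of $J$ alone --- the kind of invariant enjoyed by honest cluster monomials and by dimer partition functions --- so that all of its monomials acquire one common sign under the substitution, which may then be absorbed into the ``$\pm$'' in front of the denominator. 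Alternatively one can restrict to the totally positive locus: by Theorem~\ref{thm:stw}, part~\eqref{item:stw:positivity}, the quantities $\Delta_I(\Vt)$, $\Delta_I(\Zt)$ and all maximal minors of $\V$ and $Z$ are strictly positive there, which forces the a priori sign‑indeterminate polynomials $H$, $H'$ to be subtraction‑free, and a genuine polynomial in those minors that is positive on a dense semialgebraic set has nonnegative coefficients. Turning either of these into a rigorous argument --- in particular ruling out cancellation in the numerator, and confirming that no spurious frozen factors creep into the denominator during the reduction of the first step --- is where the real work lies.
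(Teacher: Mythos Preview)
This statement is a \emph{conjecture} in the paper (Conjecture~\ref{conj:Laurent}), listed in Section~\ref{sec:future} on open problems; the paper does \emph{not} prove it. The only case the paper claims is when $I$ is a cyclic interval, which follows from Lemma~\ref{lemma:consecutive_V_W}. So your proposal is an attempted proof of an open problem, and it should be evaluated as such.

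Your outline has two genuine gaps, and you have in fact flagged both of them yourself.

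\textbf{The reduction step.} Marsh--Scott positivity applies to the Pl\"ucker coordinates $\Delta_K(\Ut)$ of the $(k+\l)$-plane $\Ut$, but $\Delta_I(\Vt)$ is a minor of the last $\l$ rows of $\Ut$, not a Pl\"ucker coordinate of its row span. You assert that each $\Delta_I(\Vt)$ and $\Delta_I(\Zt)$ can be written as $\pm$ a single $\Delta_K(\Ut)$ divided by a product of $\Delta_{K'}(\Ut)$'s of the ``two cyclic intervals'' shape from Lemma~\ref{lemma:I_J}, with denominator collapsing exactly to $\prod_{j\in I}\Delta_{[j-\l,j+k)}(\U)$. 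This is not established. For a general $I$ that is not a cyclic interval, there is no evident choice of complementary columns making the $\Wt$-block triangular (the mechanism behind Lemma~\ref{lemma:consecutive_V_W}), and a naive Laplace expansion of $\Delta_{K}(\Ut)$ along the $\Wt$-rows mixes many $\Delta_{I'}(\Vt)$'s. Without a concrete formula here, there is no control over which frozen factors appear, and the claimed denominator $\prod_{j\in I}\Delta_{[j-\l,j+k)}(\U)$ is unjustified beyond the interval case.

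\textbf{The substitution step.} Your approach~(b) rests on the assertion that a polynomial in the Pl\"ucker coordinates of $\V$ and $Z$ that is positive on the totally positive locus must have a subtraction-free expression. This is false: take for instance $\Delta_{13}(\V)\Delta_{24}(\V)-\Delta_{14}(\V)\Delta_{23}(\V)$ (times any positive minor of $Z$), which is strictly positive on $\Grtp(2,n)$ yet has a negative coefficient. It happens to equal $\Delta_{12}(\V)\Delta_{34}(\V)$ by a Pl\"ucker relation, but positivity alone does not produce that rewriting for you; one needs a structural reason (cluster monomial, dimer partition function, etc.) for subtraction-freeness, which is precisely what is at stake. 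Your approach~(a), the $\Z/2$-grading idea, is too vague to assess: the Marsh--Scott dimer expansion of $\Delta_K(\Ut)$ involves $\Delta_J(\U)$ for \emph{non-interval} $J$, and for such $J$ the Laplace expansion into $\Delta_A(\V)\Delta_{J\setminus A}(\W)$ genuinely has mixed signs (the common-sign argument of Lemma~\ref{lemma:consecutive} uses that $J$ is an interval). You would need to show that these mixed signs cancel coherently across the entire dimer sum, and nothing in the proposal indicates how.

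In short, the strategy is reasonable as a heuristic for why the conjecture should be true, but neither the reduction to $\Delta_K(\Ut)$ nor the passage from minors of $\U$ to minors of $\V$ and $Z$ is under control, and the paper leaves this open for good reason.
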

  Here the sign $\pm$ in the denominator is chosen so that it is positive. (Recall that the sign of $\Delta_{[j-\l,j+k)}(\U)$ is predicted by Lemma~\ref{lemma:consecutive}.)

Note that the case of this conjecture where $I$ is a circular interval of $[n]$ follows from Lemma~\ref{lemma:consecutive_V_W}.

\begin{example}\label{ex:positive_Laurent}
For the case $k=2$, $\l=1$, $m=2$, $n=5$, let us suppose that $\W$ is the row span of the $1\times 5$ matrix $(p,-q,r,-s,t)$, so that the maximal minors of $Z$ are precisely $p,q,r,s,t\geq0$. Then we have
\[\Delta_{245}(\Zt)=\Delta_{13}(\alt(\Wt))=\frac{pr\Delta_{24}(\V)+qr\Delta_{14}(\V)+ps\Delta_{23}(\V)+qs\Delta_{13}(\V)}{-\Delta_{125}(\U)\Delta_{234}(\U)}.\]
Specifically, for the matrices $\V$ and $\W$ considered in~\eqref{eq:ex:V:W}, we have
\[p=1,\quad q=1,\quad r=3,\quad s=2,\quad t=1,\quad \Delta_{125}(\U)=2,\quad \Delta_{234}(\U)=-8,\]
thus the formula above yields
\[\Delta_{13}(\alt(\Wt))=\frac{3\Delta_{24}(\V)+3\Delta_{14}(\V)+2\Delta_{23}(\V)+2\Delta_{13}(\V)}{16}=\frac{7}{8},\]
which is indeed the case as we see by computing $\Delta_{13}(\alt(\Wt))$ directly  from~\eqref{eq:ex:Vt:Wt}.
\end{example}

\begin{remark}
There is a formal similarity between our stacked twist map and the \emph{geometric $R$-matrix} of Frieden \cite{Fri}, which is essentially also a birational map $R: \Gr(k,n) \times \Gr(\ell,n) \dashedrightarrow \Gr(\ell,n) \times \Gr(k,n)$.  Furthermore, the geometric $R$-matrix can be written in terms of subtraction-free rational formulae, and tropicalizes to the \emph{combinatorial $R$-matrix}.  We do not know of a direct relation between the stacked twist map and the geometric $R$-matrix.
\end{remark}

  The twist map $\twistpm:\topcell\to\topcell$ satisfies Lemma~\ref{lemma:inverse_transpose}. Given any subgroup $G\subset\GL_{k+\l}(\R)$, one can consider the subgroup $G^\transp:=\{g^\transp\mid g\in G\}$. By Lemma~\ref{lemma:inverse_transpose}, if we have $\U'=g\cdot \U$ for $\U,\U'\in\topcell$ and $g\in G$ then the matrices $\Ut:=\twistpm(\U)$ and $\Ut':=\twistpm(\U')$ are related by $\Ut'=(g^{-1})^\transp\cdot \Ut$. Therefore $\twistpm$ reduces to a map $\topcell/G\to\topcell/G^\transp$, where we are taking the quotients modulo the left action of $G$ (resp., $G^\transp$). There are several interesting choices for $G$.
\begin{itemize}
\item Taking $G:=\GL_k(\R)\times\GL_\l(\R)$ recovers the stacked twist map.
\item Taking $G$ to be a maximal parabolic subgroup consisting of block matrices $\begin{pmatrix}
    A & B\\
    0 & C
  \end{pmatrix}$, where $A$ is $k\times k$, $B$ is $k\times \l$, and $C$ is $\l\times \l$, yields a map on two-step flag varieties: $\twistpm:\Fl^\circ(\l,k+\l;n)\to\Fl^\circ(k,k+\l;n)$ sending $(\W,\U)$ to $(\Wt,\Ut)$. Conjecturally, the maximal minors of $\Wt$ can be written as rational functions in terms of the maximal minors of $\W$ and $\U$. For instance, in the setting of Example~\ref{ex:positive_Laurent}, we could also write \[\Delta_{13}(\alt(\Wt))=\frac{p\Delta_{234}(\U)+q\Delta_{134}(\U)}{\Delta_{125}(\U)\Delta_{234}(\U)},\]
  which for $\W$ and $\U$ given in~\eqref{eq:ex:V:W} yields $\frac{1\times(-8)+1\times(-6)}{2\times(-8)}=\frac78$. However, the total positivity of $\alt(\Wt)$ is obscure in this expression, because $\Delta_{134}(\U)$ can be both negative and positive when $\V\in\Grtp(k,n)$ and $\Wt\in\Grtperp{k+m}n$.
\item Finally, taking $G$ to be the subgroup of lower-triangular matrices and letting $r:=k+\l$ yields a map $\twistpm:\Fl^\circ(1,2,\dots,r;n)\to\Fl^\circ(1,2,\dots,r;n)$ that takes a partial flag $\V_\bullet:=\V_1\subset \V_2\subset \dots\subset\V_r$ (where $\V_i$ is the span of the first $i$ rows of $\U$) to a partial flag $\Vt_\bullet:=\Vt_1\subset\Vt_2\subset\dots\subset\Vt_r$ (where $\Vt_i$ is the span of the last $i$ rows of $\Ut$). The partial flag $\V_\bullet$ is determined by the \emph{flag minors} of $\U$, where a flag minor is a minor of $\U$ with row set $[i]$ and column set $J\in{[n]\choose i}$ for $1\leq i\leq r$. Similarly, the partial flag $\Vt_\bullet$ is determined by the \emph{opposite flag minors} of $\U$, i.e., minors of $\U$ with row set $\{n-i+1,\dots,n\}$ and column set $J\in{[n]\choose i}$ for $1\leq i\leq r$. In fact, an explicit combinatorial formula in terms of matchings in a plabic graph~\cite{Pos} for the opposite flag minors of $\Ut$ in terms of flag minors of $\U$ can be proven in  a straightforward but tedious fashion. To do so, one needs to combine the technique used in the proof of Lemma~\ref{lemma:consecutive_V_W} with~\cite[Theorem~1.1]{MS}. As a consequence of their formula, we get the following result.
  \begin{proposition}
If $\Ut=\twistpm(\U)$ for some $\U\in\topcell$ then the opposite flag minors of $\Ut$ are positive Laurent polynomials in the flag minors of $\U$.
\end{proposition}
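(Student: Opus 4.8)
Write $r:=k+\l$, so that $\U,\Ut\in\topcell$ are $r\times n$ matrices. The plan is to prove a sharper statement: for each fixed $i$ with $1\le i\le r$, letting $\Vt_i\in\Gr(i,n)$ be the row span of the last $i$ rows of $\Ut$, every Pl\"ucker coordinate $\Delta_J(\Vt_i)$ with $J\in{[n]\choose i}$ is a positive Laurent polynomial in the flag minors of $\U$. Since the opposite flag minors of $\Ut$ are exactly these $\Delta_J(\Vt_i)$ as $i$ ranges over $1,\dots,r$, running the argument for all $i$ yields the proposition. First I would record the elementary reduction that lets us normalize $\U$: by Lemma~\ref{lemma:inverse_transpose}, left-multiplying $\U$ by an invertible lower-triangular matrix $g$ replaces $\Ut$ by $(g^{-1})^\transp\Ut$, an upper-triangular left-multiple of $\Ut$, which leaves every subspace $\Vt_i$ unchanged; on the other hand it multiplies the flag minor of $\U$ with row set $[j]$ by $\prod_{a\le j} g_{aa}$ and leaves the column dependence alone, so it only rescales the quantities of interest by fixed Laurent monomials in the entries of $g$.

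Using this freedom (and, if convenient, first applying a cyclic shift $\sigma$ as in Section~\ref{sec:proof-thm-forms}, which preserves all the relevant minors), I would bring $\U$ into a form in which a chosen family of ``frozen'' flag minors equals $1$ and the relevant square column-windows of $\Ut$ become unitriangular, exactly as in the proof of Lemma~\ref{lemma:H_is_upper_tr}. In this normalization the minors $\Delta_J(\Vt_i)$ can be matched, up to sign, with minors of $\Ut$ and these in turn fall directly under the Marsh--Scott twist formula. Concretely, I would invoke \cite[Theorem~1.1]{MS} in the sign- and shift-adjusted form recorded in~\eqref{eq:twist_MS} and used in Lemmas~\ref{lemma:consecutive}, \ref{lemma:max_minors_of_U}, \ref{lemma:I_J}, and~\ref{lemma:consecutive_V_W}: it expresses the minors of $\Ut$ as a generating function over the matchings (equivalently, flows) of the associated plabic graph of~\cite{Pos}, with weights that are monomials in the Pl\"ucker coordinates of $\U$ having nonnegative integer coefficients, divided by a product of frozen Pl\"ucker coordinates. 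Translating each Pl\"ucker coordinate of $\U$ back into a flag minor of $\U$ by means of the echelon form produced in Step~2 (this is the same dictionary that underlies Lemma~\ref{lemma:consecutive}), and then undoing the monomial rescalings from the normalization, produces the asserted positive Laurent expression.

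The main obstacle I anticipate is the bookkeeping in this last step. Marsh and Scott's theorem is phrased for a single Grassmannian $\Gr(r,n)$ and its \emph{maximal} minors, whereas an opposite flag minor of $\Ut$ is an $i\times i$ minor of the bottom $i\times n$ window of $\Ut$, which is not in general a Laurent monomial in the maximal minors of $\Ut$. One therefore has to either realize each such window-minor, after the normalization of Step~2, as a genuine maximal-minor computation on a smaller Grassmannian compatible with the twist, or re-run the Marsh--Scott matching derivation directly for this bottom window; in either case one must check that restricting the plabic-graph generating function to the sub-window preserves nonnegativity of the coefficients and does not spoil the Laurent-polynomial shape. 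This is precisely the ``straightforward but tedious'' verification referred to in the statement; the remaining ingredients --- the reduction to a normalized $\U$, the triangularity of the windows, and the passage between Pl\"ucker coordinates and flag minors --- are formal consequences of Lemmas~\ref{lemma:inverse_transpose}, \ref{lemma:consecutive}, \ref{lemma:consecutive_V_W}, and~\ref{lemma:H_is_upper_tr}.
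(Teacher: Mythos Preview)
Your outline matches the paper's hint (combine the normalization trick from Lemma~\ref{lemma:consecutive_V_W} with~\cite[Theorem~1.1]{MS}) and correctly isolates the crux: one must convert the opposite flag minors of $\Ut$ into maximal minors so that the Marsh--Scott matching formula applies. But the normalization you propose does not do this. You claim that a lower-triangular left-multiplication of $\U$ --- which, as you note, preserves each $\Vt_i$ and only rescales the flag minors of $\U$ --- makes ``the relevant square column-windows of $\Ut$ become unitriangular, exactly as in the proof of Lemma~\ref{lemma:H_is_upper_tr}.'' That is false: Lemma~\ref{lemma:H_is_upper_tr} and the parallel step in Lemma~\ref{lemma:consecutive_V_W} require $\U([k+\l])=\Id_{k+\l}$, which needs a full $\GL_{k+\l}$-action, not a lower-triangular one. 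With lower-triangular $g$ one only brings $\U([k+\l])$ to upper-triangular form, and then no column window of $\Ut$ is triangular (check this already for $k+\l=2$, $n=4$). Concretely, by~\eqref{eq:adjoint} the bottom-row entry $\Ut_{k+\l,c}$ is, up to sign, the ratio of the flag minor of $\U$ on rows $[k+\l-1]$ and columns $[c-\l+1,c+k)$ to the circular minor $\Delta_{[c-\l,c+k)}(\U)$; lower-triangular row operations only rescale this ratio of flag minors and cannot make it vanish.

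The technique the paper points to uses instead the full normalization $\U':=(\U([k+\l]))^{-1}\U$. Then both conversions work at once: since $\U'([k+\l])=\Id_{k+\l}$, each flag minor of $\U'$ on rows $[j]$, columns $K$ equals $\pm\Delta_{K\cup\{j+1,\dots,k+\l\}}(\U')$; and since $\Ut'$ has an upper-triangular window on columns $[\l+1,k+2\l+1)$, each opposite flag minor of $\Ut'$ equals a maximal minor of $\Ut'$. Now~\cite[Theorem~1.1]{MS} gives the positive Laurent matching formula directly in these normalized coordinates. The remaining ``tedious'' step --- which neither your sketch nor the paper's one-line proof actually carries out --- is to verify that the resulting identity, which a priori relates flag minors of $\U'$ to opposite flag minors of $\Ut'$, is the claimed identity among flag minors of the original $\U$ and opposite flag minors of the original $\Ut$. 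Since the full $\GL_{k+\l}$-normalization preserves neither flag, this bookkeeping is where the real work lies, and your lower-triangular shortcut does not bypass it.
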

\end{itemize}

\subsection{A polynomial expression for the amplituhedron form}\label{sec:ampl_form_poly}
Recall that in our example in Section~\ref{sec:ex_forms}, we have computed the amplituhedron form $\omtriang{\ampl}$ for $k=\l=1$, $m=2$, $n=4$, see~\eqref{eq:ex_ampl_form}. We observe that this expression equals
\[\omtriang{\ampl}=\frac{H(s,p,q,x,y)\cdot dx\wedge dy}{\pmin{k+\l}U},\]
where $U$ is given by~\eqref{eq:omtriang_uamk_dfn} and $H(s,p,q,x,y)=qsx+psy+pq$. Let us generalize this observation to other values of $k,\l,m,n$ using our coordinates from Section~\ref{sec:coordinates}.

Similarly to~\eqref{eq:coords_Fl_ex}, the coordinates that we assign to a pair $(W,U)\in\Fl(\l,k+\l;n)$ are going to be matrices $S,A,B$ of respective sizes $\l\times k$, $k\times m$, and $\l\times m$ so that $W$ is the row span of $[S|\Id_\l|B]$ while $U$ is the row span of
\begin{equation}\label{eq:coords_Fl_gen}
\begin{pmatrix}
  S & \Id_\l & B\\
  \Id_k & 0 & A
\end{pmatrix}.
\end{equation}
Taking
\begin{equation}\label{eq:coords_Fl_gen_Z}
Z:=\begin{pmatrix}
  \Id_k & -S & 0\\
  0&    -B & \Id_\l
\end{pmatrix},
\end{equation}
we get $Z^\perp=W$.

\begin{conjecture}\label{conj:ampl_form_poly}
There exists a polynomial $H(S,A,B)$ in the entries of the matrices $S,A,B$ such that 
\[\omtriang{\ampl}=\frac{H(S,A,B)\cdot \D kmA}{\pmin{k+\l}U},\]
where $U$ is given by~\eqref{eq:coords_Fl_gen} and $Z$ is given by~\eqref{eq:coords_Fl_gen_Z}. The universal amplituhedron form can be written in the coordinates~\eqref{eq:coords_Fl_gen} as
\[\omtriang{\uamk}=\frac{H(S,A,B)\cdot \D kmA\wedge \D \l k S \wedge \D \l m B}{\pmin{k+\l}U\cdot\pmin{\l}W},\]
where $W=Z^\perp$ is the row span of $[S|\Id_\l|B]$.
\end{conjecture}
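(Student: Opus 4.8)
\medskip
\noindent\textbf{A strategy towards Conjecture~\ref{conj:ampl_form_poly}.}
The plan is to reduce both displayed identities to a single statement about the polar divisor of the rational $km$-form $\Phi:=\sum_{s=1}^N\Zcal_\ast(\om k{f_s})$ in the coordinates~\eqref{eq:coords_Fl_gen}. I would first verify, in this chart, that $\pi^\ast(\oml)=\pm\,\D \l k S\wedge\D \l m B\,/\,\pmin{\l}W$ (the canonical form of $\Gr(\l,n)$ written in the chart in which columns $k{+}1,\dots,k{+}\l$ form the identity, using cyclic invariance of the canonical form), and that among the $n$ circular $(k{+}\l)$-minors of $U$ one — the one with column set $\{1,\dots,k{+}\l\}$ — is the constant $(-1)^{k\l}$, so that $\pmin{k+\l}U$ is, up to sign, a product of $n-1$ genuine polynomials in $(S,A,B)$. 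By~\eqref{eq:omtriang_uamk_dfn} we then have $\omtriang{\uamk}=\Phi\wedge\pi^\ast(\oml)$, and by Proposition~\ref{prop:univ_form_to_ampl_form} the restriction of $\Phi$ to a fiber $\pi^{-1}(Z^\perp)$ equals $\omtriang{\ampl}$; conversely any representative $\eta$ of that fiber form differs from $\Phi$ only by a sum of forms each containing some $ds_{ij}$ or $db_{ij}$, which die upon wedging with $\pi^\ast(\oml)\propto\D \l k S\wedge\D \l m B$. Thus it suffices to prove that $\Phi$ equals $H(S,A,B)\,\D k m A\,/\,\pmin{k+\l}U$ modulo forms involving $dS$ or $dB$, for a genuine polynomial $H$.

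The core step is to pin down the polar divisor of $\Phi$. Each summand $\Zcal_\ast(\om k{f_s})$ is the pullback of $\om k{f_s}$ along the birational inverse of $\Zcal$; by Proposition~\ref{prop:residues} the form $\om k{f_s}$ has simple poles exactly along the boundary positroid divisors $\pcC k g$ with $g\lessdot f_s$, and under $\Zcal^{-1}$ each such divisor maps either into the true boundary of $\uamk$ — which by Lemma~\ref{lemma:boundary_ineq} together with Proposition~\ref{prop:open_dense} is contained in $\{\pmin{k+\l}U=0\}$ — or onto an interior wall of the triangulation $\triang$. The point is that the interior-wall poles must cancel in the sum $\Phi=\sum_s\Zcal_\ast(\om k{f_s})$; this is precisely the phenomenon visible in the example of Section~\ref{sec:ampl_form_example}, where the spurious $y=0$ pole of $\Zcal_\ast(\om kg)$ is cancelled against $\Zcal_\ast(\om kf)$. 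Granting this cancellation, the polar divisor of $\Phi$ is contained in $\{\pmin{k+\l}U=0\}$ together with the hyperplanes at infinity of the chart; since $\pmin{k+\l}U$ absorbs every finite pole, the coefficient of $\D k m A$ in $\pmin{k+\l}U\cdot\Phi$ is a rational function regular on the whole affine chart, hence a polynomial, which we take to be $H$. Restricting to a fiber then gives the first displayed identity, and wedging with $\pi^\ast(\oml)$ the second.

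It remains to control $\Phi$ near the hyperplanes at infinity of~\eqref{eq:coords_Fl_gen} inside $\FlC(\l,k+\l;n)$, both to confirm that $H$ is a polynomial of the shape claimed and to exclude further components of the polar divisor (such as non-circular positroid walls of $\Grtperp{k+m}n$ entering through the pushforwards). Here I would compare the conjectural canonical form $\omtriang{\uamk}$ with the canonical bundle of the flag variety: if its polar divisor is reduced and equal to $\partial\am_{n,k,m}$ plus the chart's hyperplanes at infinity — as expected of a positive geometry — then adjunction bounds $\deg H$ a priori, and combined with regularity away from $\pmin{k+\l}U\cdot\pmin{\l}W=0$ this forces global polynomiality. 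The statement is classical for $k=1$ (the polytope case, with a triangulation-independent formula for $H$ in~\cite{ABL}), and for $\l=1$ it follows by transporting the $k=1$ statement through the stacked twist map via Theorem~\ref{thm:univ_ampl_form}, using that $\pmin{k+\l}U$ transforms under $\twist$ by the reciprocity of Lemma~\ref{lemma:consecutive_V_W}; the same transport shows the $(n,k,m)$ and $(n,\l,m)$ instances of the conjecture are interchanged by parity duality.

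The main obstacle is the interior-wall cancellation in $\Phi=\sum_s\Zcal_\ast(\om k{f_s})$: this amounts to $\omtriang{\ampl}$ being the canonical form of a positive geometry whose boundary is cut out by the circular minors of $U$, which is open for general $k$ and is intertwined with Conjecture~\ref{conj:ampl_form}. A secondary, more technical difficulty is upgrading ``the numerator is rational in $(S,B)$'' to ``the numerator is a polynomial of the expected degree,'' which requires understanding the asymptotics of the pushforward forms $\Zcal_\ast(\om k{f_s})$ as $Z$ degenerates to the boundary of $\Grtp(k+m,n)$.
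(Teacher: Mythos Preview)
The statement you are addressing is labeled a \emph{Conjecture} in the paper and appears in the section on open questions; the paper offers no proof of it. So there is nothing to compare your proposal against, and you yourself present it as ``a strategy towards'' the conjecture rather than a proof.

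Your reduction is sound as far as it goes: the identifications $\pi^\ast(\oml)=\pm\,\D \l k S\wedge\D \l m B\,/\,\pmin{\l}W$ and $\Delta_{[k+\l]}(U)=\pm 1$ are correct in the chart~\eqref{eq:coords_Fl_gen}, and the use of Proposition~\ref{prop:univ_form_to_ampl_form} to pass between the two displayed formulae is exactly how the paper relates the amplituhedron form to the universal one. Likewise, your invocation of Lemma~\ref{lemma:boundary_ineq} to locate the geometric boundary of $\uamk$ inside $\{\pmin{k+\l}U=0\}$ is the right tool.

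You correctly identify the genuine gap: the interior-wall cancellation in $\Phi=\sum_s\Zcal_\ast(\om k{f_s})$ is essentially Conjecture~\ref{conj:ampl_form} (triangulation independence), which is open in general. Even granting that cancellation, there is a further point you gloss over: to conclude that $\pmin{k+\l}U\cdot\Phi$ is regular you need the poles of $\Phi$ along each hypersurface $\{\Delta_{[j-\l,j+k)}(U)=0\}$ to be \emph{simple}, which is part of the positive-geometry expectation but not something your argument establishes. Your claim that the $\l=1$ case follows from $k=1$ via Theorem~\ref{thm:univ_ampl_form} and Lemma~\ref{lemma:consecutive_V_W} is plausible at the level of denominators (Lemma~\ref{lemma:max_minors_of_U} gives the reciprocity of circular minors), but you would still need to show that the transported numerator is a genuine polynomial in the new coordinates, which requires tracking the Jacobian of $\tstw$ in the chart~\eqref{eq:coords_Fl_gen} and is not immediate.
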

The polynomial $H(S,A,B)$ from the above conjecture should take positive values whenever there exists $V\in\Grtp(k,n)$ such that $\Span(V,W)=U$ for $U$ being given by~\eqref{eq:coords_Fl_gen} and $W\in\Grtperp{k+m}n$ being the row span of $[S|\Id_\l|B]$.

\newcommand\pcperp[2]{\Pi_{\shift{#1}{#2}}^{\perp,>0}}
\newcommand\pcalt[2]{\alt\left(\pc{#1}{#2}\right)}
\def\fop{{f^\op}}
\subsection{Stackable cells}
Even though for the amplituhedron we naturally have $\W\in\Grtperp{k+m}n$, the stacked twist map construction suggests that there should be symmetry between $\V$ and $\W$. In this section, we generalize our previous results to the case when both $\V$ and $Z:=\W^\perp$ belong to lower positroid cells. Let us define 
\[\pcalt k f:=\{\alt(\V)\mid \V\in\pc kf\}\]

\begin{definition}
For two affine permutations $f\in\Aff(-k,n-k)$ and $g\in\Aff(-\l,n-\l)$, we say that $f$ and $g$ are \emph{stackable} if for all $\V\in\pc k f$ and $\W\in\pcalt \l g$, we have $\stack(\V,\W)\in\topcell$.
\end{definition}
As we saw in the proof of Lemma~\ref{lemma:consecutive}, $f$ and $g$ are stackable if and only if for every $j\in\Z$, there exists $I\in\positroid{k}{f}$ and $J\in\positroid{\l}{g}$ such that $I\cup J=\{j-\l,j-\l+1,\dots,j+k-1\}$ modulo $n$. This operation corresponds to taking the \emph{union} of matroids, more precisely, $f$ and $g$ are stackable if the \emph{positroid envelope} (in the sense of~\cite{KLS}) of the matroid union $\positroid k f\vee \positroid{\l}{g}$ is the uniform matroid of rank $k+\l$ on $n$ elements.

Of course one can check whether $f$ and $g$ are stackable by going through all $j\in[n]$ and all pairs of $I\in \positroid{k}{f}$ and $J\in\positroid{\l}{g}$, and this algorithm works in general for unions of matroids. We ask whether the fact that the matroids $\positroid{k}{f}$ and $\positroid{\l}{g}$ are \emph{positroids} makes this easier to check.
\begin{problem}
Find a simple necessary and sufficient condition for $f$ and $g$ to be stackable.
\end{problem}

The following result extends Theorems~\ref{thm:main} and~\ref{thm:lower_form_stw}, and has a similar proof.
\begin{theorem}
  If $f\in\Aff(-k,\l)$ and $g\in\Aff(-\l,k)$ are stackable cells then the stacked twist map descends to a diffeomorphism
  \[\stw:\pc k f\times \pcalt \l g\to \pcalt \l{g^{-1}}\times \pc k{f^{-1}}\]
  satisfying
    \begin{equation}
  \stw^*(\om \l {g^{-1}} \wedge\om k {f^{-1}}) =  \pm \om k f\wedge\om \l g.
  \end{equation}
\end{theorem}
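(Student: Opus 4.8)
The plan is to reduce the claimed statement to the already-established diffeomorphism and form-preservation results by following the same template as the proofs of Theorems~\ref{thm:stw}, \ref{thm:top_form_stw}, and~\ref{thm:lower_form_stw}, but without the positivity restriction $\W\in\Grtperp{k+m}n$ that was used there. First I would check that the map is well-defined: by hypothesis $f$ and $g$ are stackable, so $\stack(\V,\W)\in\topcell$ for every $\V\in\pc kf$ and $\W\in\pcalt\l g$, and hence $\stw(\V,\W)$ makes sense as a map on row spans (using Lemma~\ref{lemma:stw:Gr}). The target claim that $\stw$ lands in $\pcalt\l{g^{-1}}\times\pc k{f^{-1}}$ is where the combinatorics enters: I would use Lemma~\ref{lemma:ranks_after_twist} together with the rank characterization of positroid cells (Lemma~\ref{lemma:ranks}) and the inverse-rank identity~\eqref{eq:ranks_inverse}. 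The subtlety is that Lemma~\ref{lemma:ranks_after_twist} was stated for $\V\in\Grtnnl(k,n)$ and $\W\in\Grtperp{k+m}n$; I would re-examine its proof and observe that parts~\eqref{item:ranks_V_big} and~\eqref{item:ranks_V_small} only used that $\stack(\V,\W)\in\topcell$ (via Lemma~\ref{lemma:consecutive} and Lemma~\ref{lemma:max_minors_of_U}), which is exactly stackability, together with the rank data of $\V$ and $\W$ separately. So the rank identity relating $\Vt$ to $\V$ and $\Wt$ to $\W$ goes through verbatim, giving $\Vt\in\pc k{f^{-1}}$ and, by the symmetric computation applied to $\alt(\W)$, $\Wt\in\pcalt\l{g^{-1}}$.

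Next I would establish that $\stw$ is a bijection, hence a diffeomorphism. Injectivity and smoothness are immediate from Theorem~\ref{thm:MuS} (the left and right twists are inverse diffeomorphisms of $\topcell$), which shows $\stw$ is a diffeomorphism wherever it is defined on pairs of matrices; the issue is only that the image of $\pc kf\times\pcalt\l g$ lands exactly in $\pcalt\l{g^{-1}}\times\pc k{f^{-1}}$ and that the inverse map (the left-twist version) sends the latter back into the former. This follows by applying the rank argument in both directions, using that $(f^{-1})^{-1}=f$, $(g^{-1})^{-1}=g$, and that stackability of $f,g$ is equivalent to stackability of $f^{-1},g^{-1}$ — the last point I would check directly from the matroid-union description, or simply note it follows a posteriori since the left twist is the inverse of the right twist and both are defined on the relevant loci by the forward direction.

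For the form identity, I would argue by induction on $\inv(f)+\inv(g)$, exactly mirroring the proof of Theorem~\ref{thm:lower_form_stw}. The base case $\inv(f)=\inv(g)=0$ is Theorem~\ref{thm:top_form_stw} ($\pc k{\id 0}=\Grtp(k,n)$, $\pcalt\l{\id0}=\alt(\Grtp(\l,n))$, and $\alt$ preserves $\oml$ up to sign). For the inductive step I would take $h\lessdot f$ in the affine Bruhat order (the case $h\lessdot g$ being symmetric), use Proposition~\ref{prop:residues} to write $\om kh=\pm\Res_{\ce kh}\om kf$ and $\om\l{h^{-1}}=\pm\Res_{\ce\l{h^{-1}}}\om\l{f^{-1}}$, and then use that $\stw$, extended as a rational map $\ce kf\times\ce\l g\dashrightarrow \ce\l{g^{-1}}\times\ce kf^{-1}$ (here I should perhaps write $\ce k{f^{-1}}$), restricts compatibly to the boundary strata $\ce kh\times\ce\l g$ and $\ce\l{g^{-1}}\times\ce k{h^{-1}}$ on a suitable Zariski-open locus — precisely the setup already written out in the proof of Theorem~\ref{thm:lower_form_stw}. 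Since residues commute with isomorphisms, the identity $\stw^*(\om\l{g^{-1}}\wedge\om k{h^{-1}})=\pm\om kh\wedge\om\l g$ drops out of the inductive hypothesis $\stw^*(\om\l{g^{-1}}\wedge\om k{f^{-1}})=\pm\om kf\wedge\om\l g$.

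The main obstacle I anticipate is purely bookkeeping rather than conceptual: verifying that Lemma~\ref{lemma:ranks_after_twist} and the residue/boundary-compatibility machinery of Theorem~\ref{thm:lower_form_stw} really only invoked stackability (i.e.\ membership in $\topcell$) and the individual rank data of $\V$ and $\W$, and never genuinely used $\W\in\Grtperp{k+m}n$ beyond guaranteeing $\stack(\V,\W)\in\topcell$. Once that audit is done the statement follows formally, but it requires care because several of those lemmas are phrased in the positivity language; I would want to isolate the precise hypothesis used in each (typically ``$\stack(\V,\W)\in\topcell$'', supplied here by the stackability assumption) and cite accordingly. A secondary point to nail down is the correct sign normalization in the base case, which amounts to tracking how $\alt$ interacts with $\stw$ via Proposition~\ref{prop:cs}.
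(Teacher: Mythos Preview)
Your approach matches the paper's, which only asserts that the proof is ``similar'' to those of Theorems~\ref{thm:stw} and~\ref{thm:lower_form_stw}; your outline is exactly the right way to unpack that. One point you glossed over: the rank identities from Lemma~\ref{lemma:ranks_after_twist} (which, as you correctly note, only use $\stack(\V,\W)\in\topcell$) place $\Vt$ in the correct open positroid \emph{stratum}, but Lemma~\ref{lemma:ranks} only lets you conclude $\Vt\in\pc \l{f^{-1}}$ once you know $\Vt\in\Grtnn(\l,n)$. That nonnegativity does not come from ranks alone; you need the continuity argument from the proof of Theorem~\ref{thm:stw}: approximate $(\V,\W)\in\pc kf\times\pcalt\l g$ by a path through $\Grtp(k,n)\times\Grtperp{k+m}n$ (where Theorem~\ref{thm:stw} already gives $\Vt_t\in\Grtp(\l,n)$), and pass to the limit using stackability to keep $\stack(\V_t,\W_t)\in\topcell$ throughout. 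The same remark applies to $\alt(\Wt)$. Once that is in place, your double induction on $\inv(f)+\inv(g)$ via residues goes through exactly as in Theorem~\ref{thm:lower_form_stw}; note also that stackability of $(f,g)$ implies stackability of $(f',g)$ whenever $\positroid kf\subset\positroid k{f'}$, which is what makes the inductive hypothesis available.
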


We note that however in the original description of the problem coming from physics, the matrices $Z$ and $V$ play asymmetric roles.  Namely, $Z$ encodes \emph{external data}, i.e. the momenta of the particles in the experiment, while the matrix $\V$ are \emph{hidden variables} or degrees of freedom that are integrated out in the final scattering amplitude~\cite{AT}.
\begin{problem}
Explain the symmetry between $\V$ and $\W=Z^\perp$ from the point of view of scattering amplitudes.
\end{problem}

\subsection{Bistellar flips and the generalized Baues problem}\label{sec:baues}

Recall that for $k=1$, the amplituhedron $\ampl$ is just a cyclic polytope $C(n,m)$ in the $m$-dimensional projective space~\cite{sturmfels_88}. Thus the amplituhedron is a direct generalization of a cyclic polytope, and in this section we list some properties of cyclic polytopes that we expect to generalize to amplituhedra. One of such properties is already stated as Conjecture~\ref{conj:M(a,b,c)}.

Recall that for any $g\in\Aff(-k,\l)$, we have defined triangulations of $Z(\pc k g)$ in Section~\ref{sec:subdivisions}. For example if $g\in\Aff(-k,\l)$ satisfies $\inv(g)=k\l$ and is $(n,k,m)$-admissible then $Z(\pc k g)$ has only one triangulation $\{g\}$. Let us say that $g\in\Aff(-k,n-k)$ is \emph{nearly $(n,k,m)$-admissible} if  $\inv(g)=k\l-1$,  $\dim(Z(\pc k g))=km$ for all $Z\in\Grtp(k+m,n)$, and for any $h\lessdot g$, we have either $\deg h=1$ for all $Z\in\Grtp(k+m,n)$ or $\deg h=\infty$ for all $Z\in\Grtp(k+m,n)$.
\begin{conjecture}\label{conj:flip}
Let $g\in\Aff(-k,n-k)$ be a nearly $(n,k,m)$-admissible affine permutation. Then there exist two disjoint sets $\triang_g=\{h_1,\dots,h_N\}\subset\Aff(-k,\l)$ and $\triang_g'=\{h'_1,\dots,h'_{N}\}\subset\Aff(-k,\l)$ of affine permutations of the same size such that for each $Z\in\Grtp(k+m,n)$, $\triang_g$ and $\triang_g'$ are the only two triangulations of $Z(\pc k g)$. The union $\triang_g\cup\triang_g'$ consists of all affine permutations $h\lessdot g$ such that $\deg h=1$.
\end{conjecture}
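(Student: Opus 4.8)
The statement is of \emph{circuit} type: I expect $Z(\pc k g)$ to play, inside the amplituhedron, the role of a minimal non-simplex, with $\triang_g$ and $\triang_g'$ its two ``bistellar'' triangulations (and, ultimately, the atoms of a generalized Baues theory for amplituhedra). The plan is to establish a fiber-dimension picture for $Z$ on $\overline{\pc k g}$, deduce from it a fold-map structure on the boundary sphere, and read off the two triangulations as its two sheets. First I would record the dimension count. Since $\inv(g)=k\l-1$, we have $\dim \pc k g = k(n-k)-\inv(g) = km+1$, while by the definition of nearly $(n,k,m)$-admissible, $\dim Z(\pc k g)=km$ for every $Z$. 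Writing $R:=Z(\pctnn k g)\subseteq\Gr(k,k+m)$, a compact semialgebraic region of dimension $km$, the restriction $Z\colon\overline{\pc k g}\to R$ is a proper surjection whose generic fiber is $1$-dimensional. Using that $\overline{\pc k g}$ is homeomorphic to a closed ball and that $Z$ is semialgebraic, the fiber over $Y$ in a dense open subset $R^{\mathrm{gen}}\subseteq R$ is a compact smooth $1$-manifold with boundary, with interior in $\pc k g$ and each boundary point lying in some cell $\pc k h$ with $h\lessdot g$. A further dimension count shows that if $\pc k h$ meets such a fiber in a boundary point for a positive-dimensional family of $Y$, then $\dim Z(\pc k h)=km$; since $g$ is nearly $(n,k,m)$-admissible this forces $\deg h=1$, i.e.\ $h\in F_g:=\{h\lessdot g\mid \deg h=1\}$.

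The crux — and the step I expect to be the main obstacle — is to prove that this generic fiber is \emph{connected}, hence a single arc with exactly two endpoints. Over $R^{\mathrm{gen}}$ the map $Z\colon Z^{-1}(R^{\mathrm{gen}})\cap \pc k g\to R^{\mathrm{gen}}$ is a fiber bundle with $1$-dimensional fiber, and connectedness of the (ball-like, hence connected) total space yields connectedness of the fiber only once one knows $R^{\mathrm{gen}}$ is simply connected, which is itself a nontrivial topological fact about amplituhedron-type regions. It is classical for $k=1$, where $R$ is the convex hull of $m+2$ points on the moment curve and the fiber over an interior point is literally a line segment. In general I would try to obtain connectedness either by transporting the question, via the homeomorphism $\tstw$ and Lemma~\ref{lemma:subd:universal} together with the parity duality of Theorem~\ref{thm:main_subd}, to a case that can be checked directly (in particular reducing to $k=1$ or $\l=1$ where possible), or by a Morse-theoretic deformation argument directly on $\overline{\pc k g}$, ruling out circle components and multiple arcs through a careful analysis near the boundary strata. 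If the generic fiber instead were a disjoint union of $c$ arcs, the remaining argument would still partition $F_g$, but into $2c$ classes rather than $2$; so establishing $c=1$ is exactly what pins down the ``exactly two triangulations'' assertion.

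Granting the single-arc structure, I would finish as follows. The boundary sphere $\partial\overline{\pc k g}\cong S^{km}$ maps onto $R$, folding generically $2$-to-$1$ over $R^{\mathrm{gen}}$; the two endpoints of a generic fiber split the preimage of $R^{\mathrm{gen}}$ into two sheets, which I would separate globally and coherently using the constant-sign Jacobian of $Z$ on each degree-$1$ boundary cell (Proposition~\ref{prop:Jacobian_constant_sign}), compared against a fixed reference orientation as in Section~\ref{sec:ampl_form}. Taking closures of the cells $\pc k h$ ($h\in F_g$) lying in each sheet defines $\triang_g$ and $\triang_g'$. By construction they are disjoint and their union is all of $F_g$; each is a triangulation of $Z(\pctnn k g)$, since it covers $R$ and its members are pairwise compatible because two cells in the same sheet cannot contain endpoints of a common fiber; and no further triangulation exists, because any triangulation of $R$ by cells of $F_g$ is a section of the (trivial, by the co-orientation) double cover over $R^{\mathrm{gen}}$ and hence equals a full sheet, a proper sub-collection failing to cover $R$ as its members are full-dimensional and non-overlapping. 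The equicardinality $|\triang_g|=|\triang_g'|$ I would deduce from an alternating-sign description of $F_g$ parallel to that for circuits of cyclic polytopes, where it is precisely the evenness of $m$ that makes the two parts of the Radon partition equinumerous. Finally, $Z$-independence of the unordered pair $\{\triang_g,\triang_g'\}$ follows because the whole construction varies semialgebraically with $Z$ over the connected set $\Grtp(k+m,n)$, so the induced partition of $F_g$ is locally constant, hence constant.

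One should also verify the routine points that $R=Z(\pctnn k g)$ equals the closure of its interior and that $R^{\mathrm{gen}}$ can be chosen so that over it the stratification of $\partial\overline{\pc k g}$ interacts cleanly with the fold; these are of the same flavor as Proposition~\ref{prop:open_dense} and Lemma~\ref{lemma:closure_interior} and I do not anticipate difficulty there. The one genuinely hard input remains the connectedness of the generic fiber, equivalently the simple-connectivity of the smooth locus of $Z(\pctnn k g)$; all the combinatorial conclusions of the conjecture are formal consequences of it.
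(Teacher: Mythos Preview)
The statement you are attempting to prove is labeled \textbf{Conjecture} in the paper and appears in the ``Open questions and future directions'' section; the authors give no proof and do not claim one. So there is no paper proof to compare against, and what you have written is a research outline rather than a proof. You have correctly identified the expected picture (a circuit-type structure, a generically one-dimensional fiber over a $km$-dimensional image, a two-sheeted fold on the boundary sphere), but the gaps you yourself flag are genuine and are precisely why this remains open.

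Concretely: (i) The connectedness of the generic fiber is the heart of the matter, and neither of your proposed attacks settles it. The parity-duality reduction via $\tstw$ only swaps $k$ and $\l$; it cannot reduce a general $(k,\l)$ to $k=1$ or $\l=1$. The Morse-theoretic suggestion is a plan, not an argument. (ii) Your equicardinality step (``an alternating-sign description of $F_g$ parallel to that for circuits of cyclic polytopes'') is asserted by analogy only; no such description is established for amplituhedron cells, and the evenness-of-$m$ mechanism you invoke has no analogue here without further work. (iii) The claim that any triangulation must be a full sheet of the double cover presupposes that the cover is trivial, which is again the simple-connectedness of $R^{\mathrm{gen}}$ that you have not shown. (iv) Finally, your use of $\overline{\pc k g}\cong$ closed ball is a nontrivial input (the closure-is-a-ball theorem for positroid cells), and the passage from ``semialgebraic proper map with $1$-dimensional generic fiber'' to ``fiber is a smooth compact $1$-manifold with boundary'' requires a transversality argument you have not supplied. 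In short, your outline is a reasonable heuristic for why the conjecture should hold, but it does not constitute a proof, and the paper does not claim one either.
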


Assuming that the above conjecture is true, we say that two $(n,k,m)$-triangulations $\triang,\triang'\subset\Aff(-k,n-k)$ of the amplituhedron \emph{differ by a flip} if there exists a nearly $(n,k,m)$-admissible affine permutation $g\in\Aff(-k,n-k)$ such that $\triang_g\subset\triang$, $\triang_g'\subset\triang'$, and
\[\triang\setminus\triang_g=\triang'\setminus \triang_g'.\]
This is a direct generalization of \emph{bistellar flips} of triangulations of polytopes. In particular, it was shown in~\cite[Theorem~1.1]{Rambau} that any two triangulations of a cyclic polytope $C(n,m)$ are connected by a sequence of flips.

\begin{conjecture}\label{conj:flip_connected}
Any two triangulations of the amplituhedron are connected by a sequence of flips.
\end{conjecture}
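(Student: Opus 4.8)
The plan is to prove Conjecture~\ref{conj:flip_connected} in two stages: first make the notion of a \emph{flip} into a theorem, and then establish connectivity of the resulting flip graph. The first stage is exactly Conjecture~\ref{conj:flip}: one must show that a nearly $(n,k,m)$-admissible affine permutation $g$ bounds precisely two triangulations $\triang_g$ and $\triang_g'$ of $Z(\pctnn k g)$, and that $\triang_g\cup\triang_g'$ exhausts the degree-$1$ cells $h\lessdot g$. The natural route is to localize, as in the proof of Theorem~\ref{thm:main}: by Lemma~\ref{lemma:subd:admissible_compatible} and the fiber description there, triangulations of $Z(\pctnn k g)$ correspond to ways of covering a generic fiber $\Fib(\V,Z)\cap\pctnn k g$ by the pieces $\pc k h$ with $h\lessdot g$. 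Since $\inv(g)=k\ell-1$, this fiber is one-dimensional; I would show it is homeomorphic to a closed interval, with the degree-$1$ boundary strata $\pc k h$ sitting inside it as the two endpoints and the interior strata as the open arc, so that the two orientations of the interval yield exactly the pair $(\triang_g,\triang_g')$. The $k=1$ instance of this is the classical fact that a Radon circuit of a point configuration has exactly two triangulations.

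Once flips are well defined, the main stage is to show the flip graph $G_{n,k,m}$ (vertices: $(n,k,m)$-triangulations of degree $1$; edges: flips) is connected — a weak form of the generalized Baues problem for the amplituhedron. Three inputs should combine. First, \textbf{parity duality}: the stacked twist map of Theorem~\ref{thm:stw} sends triangulations to triangulations (Corollary~\ref{cor:triangulations_uampl}), and since $\inv(g^{-1})=\inv(g)$ and degree $1$ is preserved under inversion (Proposition~\ref{prop:degre_1_inverse}), it also sends nearly $(n,k,m)$-admissible cells to nearly $(n,\ell,m)$-admissible cells; hence $f\mapsto f^{-1}$ induces an isomorphism $G_{n,k,m}\cong G_{n,\ell,m}$ of flip graphs, and in particular the $\ell=1$ case reduces to the $k=1$ case. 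Second, the \textbf{polytopal case}: for $k=1$ the amplituhedron is the cyclic polytope $C(n,m)$, and Rambau's theorem~\cite{Rambau} gives connectivity by bistellar flips; with the first input this settles $k=1$ and $\ell=1$. Third, \textbf{induction on $n$} for general $k,\ell\ge 2$: imitate the skeleton of Rambau's argument by realizing $\mathcal{A}_{n,k,m}(Z)$ as obtained from $\mathcal{A}_{n-1,k,m}(Z')$ by adjoining the $n$-th vertex (deleting the last column of $Z$), showing that every triangulation can be flipped to a ``pushing'' triangulation adapted to that vertex, and that the pushing triangulations are themselves connected (ideally unique) by the inductive hypothesis. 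Each step here requires a placing/pushing construction for positroid triangulations and an analysis of how a triangulation of $\mathcal{A}_{n,k,m}(Z)$ restricts to the boundary copy $\mathcal{A}_{n-1,k,m}(Z')$.

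A complementary consistency check, rather than a proof, comes from the canonical form. By Theorem~\ref{thm:lower_form_stw} and the residue formalism of Section~\ref{sec:can_forms_3}, a flip at $g$ is detected by the single linear relation $\sum_{h\in\triang_g}Z_\ast\om k h=\sum_{h'\in\triang_g'}Z_\ast\om k {h'}$ among pushforward forms (both sides equalling $\omega_{Z(\pctnn k g)}$ when Conjecture~\ref{conj:ampl_form} holds). This gives an invariant-free reformulation of ``differ by a flip'' and a tool for verifying individual flips, but it does not by itself produce connectivity.

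The hard part is the third input, and behind it the absence of a combinatorial model for the secondary-fan-like structure of $\mathcal{A}_{n,k,m}(Z)$ when $k\ge 2$: we have no workable description of which affine permutations are nearly $(n,k,m)$-admissible, nor of the pair $(\triang_g,\triang_g')$, and it is not even known (Conjecture~\ref{conj:physics}) that the flip graph is independent of $Z$. In the polytope case all of this is governed by Radon circuits and Gale's evenness criterion; producing the analogue for positroid cells under the $Z$-map, or finding a way to run the induction without it, is where the real obstacle lies.
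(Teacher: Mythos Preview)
The statement you are addressing is \emph{Conjecture}~\ref{conj:flip_connected}, and the paper does not prove it: it appears in Section~\ref{sec:future} among open problems and future directions, immediately after the equally open Conjecture~\ref{conj:flip} on which the very notion of a flip depends. So there is no ``paper's own proof'' to compare against.

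Your proposal is not a proof either, and you are candid about this. It is a research outline whose core ingredients are themselves open: step one is literally Conjecture~\ref{conj:flip}, and your ``third input'' (a pushing/placing induction on $n$ for $k,\ell\ge 2$) requires a combinatorial theory of flips for positroid triangulations that does not currently exist, as you note in your final paragraph. The pieces that \emph{are} established --- parity duality of triangulations (Theorem~\ref{thm:main}, Corollary~\ref{cor:triangulations_uampl}) and Rambau's theorem for $k=1$ --- only handle the boundary cases $k=1$ and $\ell=1$, and this is already implicit in the paper's discussion around Problem~\ref{problem:Baues}. A couple of smaller cautions: your fiber picture for Conjecture~\ref{conj:flip} (``the one-dimensional fiber is an interval whose two endpoints give the two triangulations'') is too coarse --- the triangulations $\triang_g,\triang_g'$ typically contain several cells each, so the relevant structure is a cellular decomposition of the interval, not just its endpoints; and your claim that $f\mapsto f^{-1}$ carries nearly $(n,k,m)$-admissible $g$ to nearly $(n,\ell,m)$-admissible $g^{-1}$ needs the $\deg h=\infty$ clause checked, not only the $\deg h=1$ clause covered by Proposition~\ref{prop:degre_1_inverse}.

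In short: this is a reasonable sketch of how one might \emph{attack} an open problem, but it should not be presented as a proof.
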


We note that Conjectures~\ref{conj:flip} and~\ref{conj:flip_connected} together imply that all triangulations of the amplituhedron have the same cardinality. This is a major step towards Conjecture~\ref{conj:M(a,b,c)}, but the latter additionally gives a simple formula for the size of any such triangulation.

\begin{example}\label{ex:four_mass_box}
  Let $k=2$, $\l=2$, $m=4$, $n=8$, and consider an affine permutation $h\in\Aff(-k,\l)$ given by $h=[2,1,4,3,6,5,8,7]$ in window notation. The corresponding positroid cell $\pc k h$, known as the \emph{four-mass box}~\cite[Section~11]{abcgpt}, satisfies $\deg h=2$. Let us now take $g:=[1,2,4,3,6,5,8,7]$, so that $h\lessdot g$ in the affine Bruhat order. Thus $g$ is not nearly $(n,k,m)$-admissible. It is not clear to us whether $g$ has any proper subdivisions, and whether there exists a subdivision of the amplituhedron which involves $g$ as one of the cells. If such a subdivision exists and $g$ has no triangulations then it is not the case that every subdivision of the amplituhedron can be further refined to a triangulation.

  Additionally, we note that apart from $h$ there are $8$ affine permutations $h'\lessdot g$ satisfying $\deg h'\neq \infty$, and all of them actually have degree $1$. Together with $h$, these $9$ affine permutations give rise to an identity involving pushforwards of canonical forms, as explained in~\cite[Section~11]{abcgpt}. Moreover, there exists an expression for $\omega_{\ampl}$ of the form~\eqref{eq:ampl_form_dfn} with $M(2,2,2)=20$ terms in the right hand side, one of which is $\omega_{Z(\pc k h)}$. Even though by Conjectures~\ref{conj:M(a,b,c)} and~\ref{conj:ampl_form}, each $(n,k,m)$-triangulation of the amplituhedron has $20$ cells and yields an expression for $\omega_{\ampl}$ with $20$ terms, the corresponding $20$ affine permutations mentioned above do not form an $(n,k,m)$-triangulation since $h$ is not $(n,k,m)$-admissible, and some other permutations in this collection are not $(n,k,m)$-compatible with each other. We thank Jake Bourjaily for sharing the details of this example with us.
\end{example}

\def\baues{\omega}
Consider a simple graph $G$ whose vertices are triangulations of the amplituhedron, and two triangulations are connected by an edge if they differ by a flip. Thus Conjecture~\ref{conj:flip_connected} states that this graph is connected. One can continue this process by ``gluing'' a higher-dimensional cell to this graph corresponding to each coarser subdivision of the amplituhedron. In the case of polytopes, the homotopy type of the resulting CW complex is the subject of the \emph{generalized Baues problem} of~\cite{BKS}, see~\cite{Reiner} for a survey. Formally, define the \emph{Baues poset} $\baues(\ampl)$ to be the partially ordered set consisting of all proper\footnote{By definition, the only subdivision of the amplituhedron that is not proper is the one that consists of the top cell $\{\id 0\}$.} $(n,k,m)$-subdivisions of the amplituhedron, ordered by refinement.

\begin{problem}\label{problem:Baues}
Is it true that the Baues poset $\baues(\ampl)$ is homotopy equivalent to a $(k\l-1)$-dimensional sphere?
\end{problem}

For the case $k=1$, the statement of Problem~\ref{problem:Baues} was shown in~\cite{RS}. We note that Theorem~\ref{thm:main_subd} shows that the Baues poset of the $(n,k,m)$-amplituhedron is isomorphic to that of the $(n,\l,m)$-amplituhedron, and thus Problem~\ref{problem:Baues} has a positive answer also in the case $\l=1$. We do not know whether Example~\ref{ex:four_mass_box} provides any obstructions for the statement of Problem~\ref{problem:Baues} to be true.

For polytopes, the generalized Baues problem is closely related to \emph{regular triangulations} and \emph{secondary polytopes} of~\cite{GKZ}.

\begin{problem}
Generalize the notion of a regular triangulation to the amplituhedron case.
\end{problem}

\begin{example}
We have verified our conjectures computationally in the case $k=\l=m=2$ and $n=6$. There are $48$ $(n,k,m)$-admissible affine permutations $f\in\Aff(-k,n-k)$ such that $\inv(f)=k\l$, and they are precisely all affine permutations that have degree $1$ (see Conjecture~\ref{conj:degree_1}). They form $120$ triangulations, and each of those consists of $M(2,2,1)=6$ cells (see Conjecture~\ref{conj:M(a,b,c)}). The flip graph $G$ in this case has $120$ vertices and $278$ edges, and is connected (see Conjecture~\ref{conj:flip_connected}). Finally, there are $696$ proper subdivisions of the amplituhedron, and the corresponding Baues poset has homology groups equal to that of a $3$-dimensional sphere, showing some substantial evidence towards the positive answer to Problem~\ref{problem:Baues}.
\end{example}

\subsection{BCFW cells}\label{sec:BCFW}
For $m=4$, there is a certain interesting collection $\triang_{k,\l}=\{f_1,\dots,f_N\}\subset\Aff(-k,\l)$ of positroid cells for each $k$ and $\l$, see~\cite{BCFW,abcgpt,AT}. Up to a cyclic shift, we get $\triang_{\l,k}=\{f_1^{-1},\dots,f_N^{-1}\}$, as we have noted in Remark~\ref{rmk:BCFW}. This collection of cells, known as the \emph{BCFW triangulation}, is believed to have all the nice properties that we have considered in this paper, in particular, it can be used (and in fact is widely used) to compute the amplituhedron form.
\begin{conjecture}\label{conj:BCFW}
The BCFW triangulation $\triang_{k,\l}$ is an $(n,k,m)$-triangulation of degree $1$ for each $k$ and $\l$ (and $m=4$).
\end{conjecture}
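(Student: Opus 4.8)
The plan is to prove Conjecture~\ref{conj:BCFW} by induction on $n$, following the recursive structure of the BCFW construction itself, and to split the claim into three parts: (i) every $f_s\in\triang_{k,\l}$ lies in $\Aff(-k,\l)$ with $\inv(f_s)=k\l$; (ii) each $f_s$ has degree $1$ and is $(n,k,4)$-admissible; and (iii) the images $Z(\pc k{f_s})$ are pairwise disjoint with dense union for every $Z\in\Grtp(k+4,n)$. Once these are in hand, the parity-dual half of the statement is automatic: by Remark~\ref{rmk:BCFW} the family $\{f_s^{-1}\}$ is, up to a cyclic shift, the BCFW family $\triang_{\l,k}$, so part~\eqref{item:main:triang} of Theorem~\ref{thm:main} together with Corollary~\ref{cor:degree_1_triang} transports an $(n,k,4)$-triangulation of degree $1$ to an $(n,\l,4)$-triangulation of degree $1$; hence it suffices to treat, say, $k\le\l$.

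Part (i) is combinatorial. The BCFW cells are built by iterated insertions of ``BCFW bridges'', and one records how each such operation changes the corresponding bounded affine permutation; a bookkeeping of inversions shows that each resulting $f_s$ satisfies $\inv(f_s)=k\l$, and the boundary inequalities of Lemma~\ref{lemma:boundary_ineq} (equivalently, nonvanishing of the circular minors of $\stack(V,Z^\perp)$ on $\pc k{f_s}$) show $f_s\in\Aff(-k,\l)$. For part (ii) I would use the affine Stanley symmetric function criterion, part~(3) of Proposition~\ref{prop:affst_properties}: $f_s$ has degree $1$ iff the coefficient of the rectangular Schur function $s_{\l^k}$ in $\affst_{f_s}$ equals $1$. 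The key step is to show that each BCFW bridge operation multiplies $\affst_{f_s}$ by a single cyclically decreasing factor, i.e.\ behaves like a Pieri-type rule, so that the rectangular Schur coefficient remains $1$ along the recursion; the base cases $k=0$ and $k=n-4$ are trivial. Admissibility (injectivity of $Z$ on the whole cell, not just generically) should follow from an explicit BCFW parametrization of $\pc k{f_s}$ on which $Z$ is manifestly a bijection onto its image; this also re-proves degree $1$ by inspection. Note that compatibility of the whole scheme with parity duality at the level of symmetric functions is exactly Proposition~\ref{prop:degre_1_inverse}, applied to each $f_s$.

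The heart of the matter, and where I expect the real obstacle, is part (iii): pairwise non-overlap and density of the images. My preferred route is the sign-flip description of the amplituhedron of Arkani-Hamed--Thomas--Trnka~\cite{AHTT}: a point $Y$ lies in $\mathcal A_{n,k,4}(Z)$ precisely when certain sequences of twistor brackets built from $Y$ and the columns of $Z$ have prescribed numbers of sign changes. One would show that $Z(\pc k{f_s})$ is exactly the locus inside $\ampl$ realizing a specific such sign pattern, that distinct BCFW cells realize distinct patterns (hence $Z$-compatibility, and therefore $(n,k,4)$-compatibility since the argument is uniform in $Z$), and that every admissible pattern occurs (hence density). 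Running the induction requires controlling the boundary stratification of $\ampl$: removing a particle degenerates $\ampl$ to a union of lower amplituhedra $\mathcal A_{n-1,k,4}(Z')$ and $\mathcal A_{n-1,k-1,4}(Z'')$ glued along a BCFW boundary bridge, and one must check that $\triang_{k,\l}$ restricts compatibly to this degeneration. An alternative, more hands-on route is to parametrize each BCFW cell, invert $Z$ on it, and compare chambers directly, as in the known small-$n$ verifications; this is transparent but does not obviously scale.

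Assembling (i)--(iii) gives that $\triang_{k,\l}$ is an $(n,k,4)$-triangulation of degree $1$, and the dual statement follows as noted above. As a byproduct, Theorem~\ref{thm:univ_ampl_form} then shows that the BCFW expression for the (universal) amplituhedron form for $(k,\l)$ and for $(\l,k)$ agree under the stacked twist map, which is the geometric incarnation of parity duality for scattering amplitudes; and Conjecture~\ref{conj:ampl_form} restricted to BCFW triangulations reduces to independence of the BCFW answer from the choice of bridge order, which is well attested in the physics literature.
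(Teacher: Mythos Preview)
This statement is a \emph{conjecture} in the paper, not a theorem; the paper offers no proof and explicitly records it as open, noting only that the case $k=1$ (cyclic polytopes) is known and hence $\l=1$ follows from Corollary~\ref{cor:degree_1_triang}, and that the compatibility part for $k=2$ was established in~\cite{KWZ}. So there is no paper proof to compare your proposal against.

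Your proposal is a reasonable outline of how one might attack the problem, but it is not a proof: each of the three parts contains an unfilled gap, and you yourself flag this with phrases like ``should follow'', ``I would use'', and ``one would show''. The most serious issue is in part~(iii). The sign-flip characterization of $\mathcal A_{n,k,4}(Z)$ from~\cite{AHTT} that you invoke was itself conjectural at the time of this paper, so you are proposing to deduce one conjecture from another. Even granting that characterization, the assertion that each BCFW cell maps onto exactly one sign-flip chamber, that distinct cells hit distinct chambers, and that every chamber is hit, is precisely the content of Conjecture~\ref{conj:BCFW} rephrased; it does not become easier by being restated in those terms. The inductive degeneration you sketch (removing a particle and matching $\triang_{k,\l}$ to triangulations of $\mathcal A_{n-1,k,4}$ and $\mathcal A_{n-1,k-1,4}$) is the natural strategy, but controlling the gluing along the BCFW boundary so that no overlaps or gaps appear is the entire difficulty, and nothing in your outline addresses it.

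Part~(ii) has a similar issue: the claim that a BCFW bridge acts on $\affst_f$ by a Pieri-type rule preserving the coefficient of $s_{\l^k}$ is plausible but not established here, and the assertion that admissibility (global injectivity of $Z$ on $\pc k{f_s}$, not merely generic injectivity) ``should follow from an explicit BCFW parametrization'' is exactly the kind of statement that requires a concrete argument---degree~$1$ gives injectivity only on a Zariski open subset, and upgrading this to the whole cell is nontrivial (cf.\ Conjecture~\ref{conj:degree_1}). In short, your decomposition into (i)--(iii) is sound and your instincts about where the difficulty lies are correct, but what you have written is a research plan rather than a proof.
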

This conjecture holds in the case $k=1$ of cyclic polytopes (and therefore it holds for $\l=1$ as well by Corollary~\ref{cor:degree_1_triang}). A part of Conjecture~\ref{conj:BCFW} (that the elements of $\triang_{k,\l}$ are pairwise $(n,k,m)$-compatible) has been proven for $k=2$ in~\cite{KWZ}. As before, our results imply that the elements of $\triang_{\l,k}$ are pairwise $(n,\l,m)$-compatible for $k=2$.

\subsection{Cohomology classes}
Let $X$ be a smooth projective complex algebraic variety and $Y \subset X$ a subvariety.  Let $[Y] \in H^\ast(X,\Z)$ denote the cohomology class of $Y$.  The cohomology classes $[Z(\pcC k f)] \in H^\ast(\GrC(k,k+m),\Z)$ for generic $Z \in \GrC(k+m,n)$ (or, equivalently, for $Z$ belonging to a dense subset of $\Grtp(k+m,n)$) were determined in \cite{Lam14}.  Recall that in Section \ref{sec:affst}, we have defined the affine Stanley symmetric function $\affst_f$.  In \cite{Lam14}, it is shown that the cohomology class $[Z(\pcC k f)]$ is represented by the symmetric function $\tau_{k+m}(\affst_f)$, defined by
$$
\tau_{k+m}(\affst_f) = \sum_{\mu} c_{\mu^{+\ell}} s_\mu \qquad \text{if} \qquad \affst_f = \sum_{\lambda} c_\lambda s_\lambda.
$$
Here $\mu^{+\ell}$ is the partition obtained from $\mu$ by adding $\ell$ columns of height $k$ to the left of $\mu$ (in English notation).  The term is omitted if this is not a partition.  

It is an open problem to understand the cohomology classes $[Z(\pcC k f)]$ when $Z$ is not generic.  Note that in the $k = 1$ polytope case, $Z(\pcC k f)$ is always a linear hypersubspace of projective space, so the cohomology class $[Z(\pcC k f)] \in H^\ast(\GrC(k,k+m),\Z)$ is simply the effective generator of $H^d(\GrC(k,k+m),\Z)$ where $d$ is the codimension of $Z(\pcC k f)$.

It would be interesting to extend the calculation of \cite{Lam14} to the universal setting.

\begin{problem}\label{prob:cohomology}
What is the cohomology class $[\amcellC kf]$ in $H^\ast(\FlC(k,k+\l;n),\Z)$ and how is it related to $[\amcellC \l{f^{-1}}]$?
\end{problem}

\newcommand{\arxiv}[1]{\href{https://arxiv.org/abs/#1}{\textup{\texttt{arXiv:#1}}}}

\bibliographystyle{alpha_tweaked}
\bibliography{duality}

\end{document}